\newtheorem{theo}{Theorem}[section]
\newtheorem{lem}[theo]{Lemma}
\theoremstyle{definition}
\newtheorem{defin}[theo]{Definition}%[section]
\newtheorem*{lem*}{Lemma}
\newtheorem{rem}[theo]{Remark}
\newtheorem*{cor*}{Corollary}
\newtheorem*{theo*}{Theorem}
\DeclareMathOperator*{\osc}{osc}
\newcommand{\norm}[1]{\lVert#1\rVert}
\DeclareMathOperator*{\esssup}{ess\,sup}
\DeclareMathOperator*{\essinf}{ess\,inf}
\DeclareMathOperator*{\essosc}{ess\,osc}
\DeclareMathOperator*{\supp}{supp}
\def\XXint#1#2#3{{\setbox0=\hbox{$#1{#2#3}{\int}$ }
\vcenter{\hbox{$#2#3$ }}\kern-.6\wd0}}
\def\XXiint#1#2#3{{\setbox0=\hbox{$#1{#2#3}{\iint}$ }
\vcenter{\hbox{$#2#3$ }}\kern-.55\wd0}}
\renewcommand{\d}{\:\:\!\!\mathrm{d}}
\newcommand{\N}{\ensuremath{\mathbb{N}}}
\newcommand{\R}{\ensuremath{\mathbb{R}}}
\newcommand{\Q}{\mathbb{Q}}
\renewcommand{\b}{\mathfrak{b}}
\numberwithin{equation}{section}
\begin{document}
\renewcommand{\refname}{References}
\renewcommand{\abstractname}{Abstract}

\title[ Regularity of solutions to doubly singular equations]{An extensive study of the regularity of solutions to doubly singular equations}
\date{\today}
\subjclass[2010]{35B65, 35D30, 35K10}
\keywords{Doubly nonlinear parabolic equations, H\"older continuity, Harnack inequality}

\author[V. Vespri]{Vincenzo Vespri}
\address{Vincenzo Vespri\\
Universit\`a degli Studi di Firenze, Dipartimento di Matematica ed Informatica "Ulisse Dini" \\
Viale Morgagni 67/a, 50134 Firenze, Italy \\
Member of G.N.A.M.P.A. (I.N.d.A.M.) }
\email{vincenzo.vespri@unifi.it} 

\author[M. Vestberg]{Matias Vestberg}
\address{Matias Vestberg\\
Department of Mathematics and Systems Analysis, Aalto University\\
P.~O.~Box 11100, FI-00076 Aalto University, Finland}
\email{matias.vestberg@aalto.fi}

\begin{abstract}
In recent years, many papers have been devoted to the regularity of doubly nonlinear singular evolution equations. Many of the proofs are unnecessarily complicated, rely on superfluous assumptions or follow an inappropriate approximation procedure. This makes the theory unclear and quite chaotic to a nonspecialist. The aim of this paper is to fix all the misprints, to follow correct procedures, to exhibit, possibly, the shortest and most elegant proofs and to give a complete and self-contained overview of the theory.
\end{abstract}
\maketitle
 
\section{Introduction}
This work is concerned with the regularity properties of weak solutions to doubly nonlinear equations whose model case is
\begin{align}\label{DNPE}
\partial_t u - \nabla\cdot (u^{m-1} |\nabla u|^{p-2}\nabla u) = 0 \quad \text{ in } \Omega_T:=\Omega\times (0,T),
\end{align}
where $\Omega\subset \R^n$ is an open bounded set, and the parameters $m$ and $p$ are restricted to the range
\begin{align}\label{parameter-range}
p\in (1,2), \qquad \qquad m > 1, \qquad\text{ and } \qquad 2<m+p<3.
\end{align}
The term doubly nonlinear refers to the fact that the diffusion part depends nonlinearly both on the gradient and the solution itself. Such kind of equations describe several physical phenomena and were introduced by \cite{Li} (see also the nice survey by Kalashnikov \cite{Ka}). Moreover, these equations have an intrinsic mathematical interest because they represent a natural bridge between the more natural generalisations of the heat equation: the parabolic $p$-Laplace and the Porous Medium equations. 

Especially in recent years, many papers have been devoted to this topic. The approaches are sometimes not rigorous, sometimes not with sharp assumptions or with unnecessarily long proofs.
The natural definition of weak solutions is obtained from \eqref{DNPE} by a formal application of the chain rule and requires that a certain power of $u$ (rather than $u$ itself) has a weak gradient. This is perhaps the most delicate point: too many papers devoted to this topic do not take this aspect into account carefully, and use incorrect approximations or non-admissible test-functions. For more details, we refer the reader to Section \ref{weaksolsect}.

Analogously, some results presented below, such as the $L^1$-Harnack inequality and the expansion of positivity have been obtained previously under the assumption that the function $u$ itself has weak gradient, see \cite{FoSoVe} and \cite{FoSoVe2}. Since this is not necessarily true in our setting we have included detailed proofs showing that the strategies developed in \cite{FoSoVe} and \cite{FoSoVe2} are applicable also without assuming the existence of $\nabla u$.
But we do not limit ourselves to fix this aspect. We go through the regularity theory and we use a unified approach giving shorter and different proofs with respect to the ones known in literature. In this way, a reader can have a self-contained overview of the theory of doubly nonlinear singular parabolic equations.
We obtain different results under various ranges for the parameters. The time continuity, mollified weak formulation, energy estimates, expansion of positivity and $L^1$-Harnack inequality are obtained in the full range \eqref{parameter-range}. Local boundedness of weak solutions is shown in the smaller range
\begin{align}\label{nicerange}
m+p>3-\frac{p}{n-(\frac{n-p}{p})}.
\end{align}
We recall that this range is sharp. In the special case $m=1$, \eqref{DNPE} becomes the singular parabolic $p$-Laplace equation. Then the condition \eqref{nicerange} and the integrability required of $u$ in Definition \ref{weakdef} below reduce to $p>\frac{2n}{n+2}$ and $u\in L^2$ respectively, which are well-known sharp conditions to guarantee local boundedness for this equation, see for example Chapter V of \cite{DiBene}.
 
The local H\"older continuity will be proven only in the so-called supercritical range
\begin{align}\label{supercritical}
m+p>3-\frac{p}{n}.
\end{align}
Note that \eqref{supercritical} is a stricter condition than \eqref{nicerange}. We decided that it was too much dispersive for the reader to prove H\"older continuity also in the sub-critical case because requires a slighty different approach (and assumptions).
In the last section, we prove Harnack estimates in the supercritical range. Note that, as proven in \cite{DiGiaVe1} for the p-Laplacian, this result is sharp.
\medskip

\noindent
{\bf Acknowledgments.} M. Vestberg wants to express gratitude to the Academy of Finland. Moreover, we thank Juha Kinnunen for useful discussions and feedback during the writing of this article.

\section{Setting and main result}\label{weaksolsect}
In order to motivate the natural definition of weak solutions, we reformulate \eqref{DNPE}. Formally applying the chain rule, we can write the equation in the form
\begin{align}\label{reformulated}
\partial_t u-\nabla \cdot (\beta^{1-p}|\nabla u^\beta |^{p-2}\nabla u^\beta )= 0,
\end{align}
where
\begin{align}\label{def:beta}
\beta:=1 + \frac{m-1}{p-1} >1.
\end{align}
For later reference we note that \eqref{nicerange} can be expressed conveniently in terms of $\beta$, $p$ and $n$ as
\begin{align}\label{nicerange-rephrased}
\frac{p(\beta+1)}{1-\beta(p-1)} > n.
\end{align}
We will prove our result not only for solutions to \eqref{reformulated}, but for all equations of the form
\begin{align}\label{general}
\partial_t u-\nabla \cdot A(x,t, u,\nabla u^\beta)=0,
\end{align}
where $A(x,t, u,\xi)$ is a vector field satisfying
\begin{align}
\label{structcond1}|A(x,t, u,\xi)|&\leq C_1 |\xi|^{p-1}
\\
\label{structcond2}A(x,t, u,\xi)\cdot \xi &\geq C_0|\xi|^p
\end{align}
An example of an equation that satisfies these conditions is
\begin{align}\label{DNPEG}
\partial_t u - \sum_{i,j=1}^n(a_{ij}(x,t)\beta^{1-p} |\nabla u^\beta|^{p-2} u^{\beta}_{x_i})_{x_j} = 0 \quad \text{ in } \Omega_T:=\Omega\times (0,T),
\end{align}
where the coefficients $a_{ij} $ are bounded and measurable and where the matrix $(a_{ij}(x,t))^n_{i,j=1}$ is positive definite uniformly in $(x,t)$. 
We arrive at the definition of weak solutions by multiplying \eqref{general} by a smooth test function and integrating formally by parts.
\begin{defin}\label{weakdef}
A function $u\colon\Omega_T\to \R $ is a weak solution to \eqref{general} if and only if $u\geq 0$, $u^\beta \in L^p(0,T;W^{1,p}(\Omega))$, $u\in L^{\beta+1}(\Omega_T)$ and
\begin{align}\label{weakform2}
&\iint_{\Omega_T} A(x,t,u,\nabla u^\beta)\cdot \nabla \varphi- u\partial_t \varphi\d x\d t=0,
\end{align}
for all $\varphi \in C^\infty_0(\Omega_T)$.
\end{defin}
\begin{rem}
The extra integrability condition $u\in L^{\beta+1}(\Omega_T)$ is made to justify a test function containing $u^\beta$. The condition is needed since we are considering the fast diffusion case, in which $\beta p< \beta +1$. By contrast, in the slow diffusion case $m+p>3$ which is not considered in this article, the inequality holds in the reverse direction, which means that no additional integrability is needed. For explicit calculations illustrating this point, consider the flat case of the equation studied in \cite{SiVe} and \cite{SiVe2}. Earlier works treating the slow diffusion case (although not necessarily with the same definition) are \cite{PoVe} and \cite{Iv3}.
\end{rem}

\section{Preliminaries}
\label{Preliminaries}
Here we introduce some notation and present auxiliary tools that will be useful in the course of the paper.
\subsection{Notation}
With $B_\rho(x_o)$ we denote the open ball in $\R^n$ with radius $\rho$ at center $x_o$, and the corresponding closed ball is denoted $\bar B_\rho(x_o)$. Furthermore, we use the notation $Q_{\rho,\theta}(z_o):=B_\rho(x_o)\times (t_o-\theta,t_o)$ for space-time cylinders, where $z_o:=(x_o,t_o)\in \Omega_T$.
For $w,v \geq 0$ we define
\begin{align}
\label{definition:b}
&\b[v,w]:= \tfrac{1}{\beta+1} (v^{\beta+1}-w^{\beta+1})- w^\beta (v-w)
\\
\notag &\hphantom{\b[v,w]:}= \tfrac{\beta}{\beta+1} (w^{\beta+1}-v^{\beta+1})- v (w^\beta-v^\beta),
\\ \notag
\\
&\b[v,w]^+:=\b[v,w]\chi_{(w,\infty)}(v),
\end{align}
where $\beta$ is defined by \eqref{def:beta}. For any real-valued essentially bounded function $g$ defined on a measurable set $E\subset \R^{n+1}$ we define its essential oscillation in $E$ as
\begin{align*}
\essosc_E g:= \esssup_E g-\essinf_E g.
\end{align*}
The oscillation $\osc_E g$ of a bounded function $g$ is defined analogously, using the ordinary supremum and infimum. The parameters $C_0, C_1,m,n,p$ will collectively be referred to as the data. 
\subsection{Auxiliary tools}
We now recall some elementary lemmas that will be used later, and start by defining a mollification in time as in \cite{KiLi}, see also \cite{BoeDuMa}. For $T>0$, $t\in [0,T]$, $h\in (0,T)$ and $v\in L^1(\Omega_T)$ we set
\begin{align}
\label{def:moll}
v_h(x,t):=\frac{1}{h}\int^t_0 e^\frac{s-t}{h}v(x,s)\d s.
\end{align}
Moreover, we define the reversed analogue by
\begin{align*}
v_{\overline h}(x,t) :=\frac{1}{h}\int^T_t e^\frac{t-s}{h}v(x,s)\d s.
\end{align*}
For details regarding the properties of the exponential mollification we refer to \cite[Lemma 2.2]{KiLi}, \cite[Lemma 2.2]{BoeDuMa}, \cite[Lemma 2.9]{St}. The properties of the mollification that we will use have been collected for convenience into the following lemma:
\begin{lem}
\label{expmolproperties} Suppose that $v \in L^1(\Omega_T)$, and let $p\in[1,\infty)$. Then the mollification $v_h$ defined in \eqref{def:moll} has the following properties:
\begin{enumerate}
\item[(i)]
If $v\in L^p(\Omega_T)$ then $v_h\in L^p(\Omega_T)$,
$$
\norm{v_h}_{L^p(\Omega_T)}\leq \norm{v}_{L^p(\Omega_T)},
$$
and $v_h\to v$ in $L^p(\Omega_T)$.
\item[(ii)]
In the above situation, $v_h$ has a weak time derivative $\partial_t v_h$ on $\Omega_T$ given by
\begin{align*}
\partial_t v_h=\tfrac{1}{h}(v-v_h),
\end{align*}
whereas for $v_{\overline h}$ we have
\begin{align*}
\partial_t v_{\overline h}=\tfrac{1}{h}(v_{\overline h}-v).
\end{align*}
\item[(iii)]
If $v\in L^p(0,T;W^{1,p}(\Omega))$ then $v_h\to v$ in $L^p(0,T;W^{1,p}(\Omega))$ as $h\to 0$.
\item[(iv)] If $v\in L^p(0,T;L^{p}(\Omega))$ then $v_h \in C([0,T];L^{p}(\Omega))$.
\end{enumerate}
\end{lem}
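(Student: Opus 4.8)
The plan is to establish the four properties by direct computation, using Fubini's theorem, Jensen's inequality, and a routine density argument; no part requires a genuinely new idea. It is convenient to write $v_h(x,t)=\int_0^t k_h(t-s)\,v(x,s)\d s$ with $k_h(\sigma):=\tfrac1h e^{-\sigma/h}$, so that $\int_0^\infty k_h(\sigma)\d\sigma=1$ and $\int_0^t k_h(t-s)\d s=1-e^{-t/h}\le 1$; thus at each fixed $(x,t)$ the measure $k_h(t-s)\d s$ on $(0,t)$ is a sub-probability measure.

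For (i), applying Jensen's inequality with this measure gives $\abs{v_h(x,t)}^p\le\int_0^t k_h(t-s)\abs{v(x,s)}^p\d s$ (the total mass being $\le 1$, no constant is lost); integrating over $\Omega_T$, swapping the order of integration by Fubini, and using $\int_s^T k_h(t-s)\d t=1-e^{(s-T)/h}\le 1$ yields $\norm{v_h}_{L^p(\Omega_T)}\le\norm{v}_{L^p(\Omega_T)}$. The same computation at fixed $t$ shows $v_h(\cdot,t)\in L^p(\Omega)$ for every $t\in[0,T]$, which will be used in (iv). For the convergence $v_h\to v$ in $L^p(\Omega_T)$ one uses that the maps $v\mapsto v_h$ are uniformly bounded on $L^p(\Omega_T)$ together with $w_h\to w$ in $L^p(\Omega_T)$ for every $w\in C(\overline{\Omega_T})$; the latter is checked by splitting the kernel into a neighbourhood of the diagonal (handled by uniform continuity of $w$) and its complement (exponentially small mass), plus a boundary layer $w(x,t)e^{-t/h}$ near $t=0$ whose $L^p$ norm is $O(h^{1/p})$. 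Since $C(\overline{\Omega_T})$ is dense in $L^p(\Omega_T)$, the three-$\varepsilon$ estimate $\norm{v_h-v}_{L^p(\Omega_T)}\le 2\norm{v-w}_{L^p(\Omega_T)}+\norm{w_h-w}_{L^p(\Omega_T)}$ finishes the argument.

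For (ii), note that $s\mapsto e^{s/h}v(x,s)$ lies in $L^1(0,T)$ for a.e.\ $x$, so $t\mapsto\int_0^t e^{s/h}v(x,s)\d s$ is absolutely continuous; hence so is $t\mapsto v_h(x,t)=\tfrac1h e^{-t/h}\int_0^t e^{s/h}v(x,s)\d s$, and differentiating the product yields $\partial_t v_h(x,t)=\tfrac1h\bigl(v(x,t)-v_h(x,t)\bigr)$ for a.e.\ $t$, hence also in the weak sense on $\Omega_T$ after testing against $\varphi\in C^\infty_0(\Omega_T)$ and applying Fubini. The formula for $v_{\overline h}$ follows by the same computation with the orientation of the time interval reversed. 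For (iii), one checks that the time mollification commutes with the weak spatial derivative: testing $v_h(\cdot,t)$ against $\partial_{x_i}\psi$ with $\psi\in C^\infty_0(\Omega)$ and using Fubini reduces the identity to the definition of the weak gradient of $v(\cdot,s)$, giving $\nabla v_h=(\nabla v)_h$ a.e.\ in $\Omega_T$; applying (i) to $v$ and to each component of $\nabla v$ then gives $v_h\to v$ in $L^p(0,T;W^{1,p}(\Omega))$.

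Finally, for (iv), combining (i) and (ii) we have $v_h\in L^p(\Omega_T)$ and $\partial_t v_h=\tfrac1h(v-v_h)\in L^p(\Omega_T)$. From the absolute continuity established in (ii), for $0\le t_1<t_2\le T$ we obtain
\begin{align*}
\norm{v_h(\cdot,t_2)-v_h(\cdot,t_1)}_{L^p(\Omega)}\le\tfrac1h\int_{t_1}^{t_2}\Bigl(\norm{v(\cdot,\tau)}_{L^p(\Omega)}+\norm{v_h(\cdot,\tau)}_{L^p(\Omega)}\Bigr)\d\tau ,
\end{align*}
and since the integrand belongs to $L^1(0,T)$, the right-hand side tends to $0$ as $t_2-t_1\to0$ by absolute continuity of the Lebesgue integral. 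Hence $t\mapsto v_h(\cdot,t)$ is continuous from $[0,T]$ into $L^p(\Omega)$, i.e.\ $v_h\in C([0,T];L^p(\Omega))$. The only mildly delicate points are the density argument in (i) and the Fubini/commutation step in (iii); everything else is a direct calculation.
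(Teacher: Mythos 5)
Your proof is correct and self-contained; the paper does not prove this lemma but simply cites \cite{KiLi}, \cite{BoeDuMa}, and \cite{St}, so yours supplies detail the paper leaves to the references. The individual steps all check out: for (i), Jensen with the sub-probability measure $k_h(t-s)\d s$ (valid since $t\mapsto t^p$ is convex and vanishes at $0$) followed by Fubini gives the norm bound, and the density plus uniform-boundedness argument handles convergence; for (ii), the product rule on the absolutely continuous representative $\tfrac1h e^{-t/h}\int_0^t e^{s/h}v(x,s)\d s$ gives the pointwise identity, which passes to the weak formulation; for (iii), Fubini shows $\nabla v_h=(\nabla v)_h$ so (i) applies componentwise; and for (iv), $\partial_t v_h=\tfrac1h(v-v_h)\in L^p(\Omega_T)\subset L^1(0,T;L^p(\Omega))$, so absolute continuity of the Bochner integral yields the stated modulus of continuity — the same route taken in the cited sources.
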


The next Lemma provides us with some useful estimates for the quantity $\b[v,w]$ that was defined in \eqref{definition:b}. The proof can be found in \cite[Lemma 2.3]{BoDuKoSc}.
\begin{lem}
\label{estimates:boundary_terms}
Let $v,w \geq 0$ and $\beta> 1$. Then there exists a constant $c$ depending only on $\beta$ such that:
\begin{enumerate}
\item[(i)] $\tfrac 1 c\big| w^{\frac{\beta+1}{2}}-v^{\frac{\beta+1}{2}} \big|^2 \leq \b[v,w] \leq c \big| w^{\frac{\beta+1}{2}}-v^{\frac{\beta+1}{2}} \big|^2$ \vspace{2mm}
\item[(ii)] $\tfrac 1 c | w^{\beta}-v^{\beta}|^2 \leq \left( w^{\beta-1}+v^{\beta-1}\right) \b[v,w] \leq c |w^{\beta}-v^{\beta}|^2 $
\item[(iii)]$\b[v,w] \leq c |v^{\beta}-w^{\beta}|^{\frac{\beta+1}{\beta}}$
\end{enumerate}
\end{lem}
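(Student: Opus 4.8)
I would begin from the observation that $\b[v,w]$ is precisely the Bregman divergence of the strictly convex function $\phi(t):=\tfrac{1}{\beta+1}t^{\beta+1}$, whose derivative is $\phi'(t)=t^{\beta}$: indeed $\b[v,w]=\phi(v)-\phi(w)-\phi'(w)(v-w)$, so $\b[v,w]\ge 0$, with equality precisely when $v=w$. The plan is to use homogeneity to reduce all three double inequalities to a single one-variable problem. A direct computation gives $\b[\lambda v,\lambda w]=\lambda^{\beta+1}\b[v,w]$ for $\lambda>0$; moreover $\bigl|w^{\frac{\beta+1}{2}}-v^{\frac{\beta+1}{2}}\bigr|^{2}$ and $\bigl|v^{\beta}-w^{\beta}\bigr|^{\frac{\beta+1}{\beta}}$ are also homogeneous of degree $\beta+1$ in $(v,w)$, while both sides of (ii), namely $\bigl(w^{\beta-1}+v^{\beta-1}\bigr)\b[v,w]$ and $|w^{\beta}-v^{\beta}|^{2}$, are homogeneous of degree $2\beta$. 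Hence all three statements are invariant under $(v,w)\mapsto(\lambda v,\lambda w)$. First I would dispose of the degenerate cases by hand: if $v=w=0$ everything vanishes, while if $w=0<v$ then $\b[v,0]=\tfrac{1}{\beta+1}v^{\beta+1}$ and each inequality reduces to a trivial comparison of constant multiples of $v^{\beta+1}$ or of $v^{2\beta}$. After that one may assume $w>0$.

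Normalising by $s:=v/w\ge 0$ and using homogeneity, I would write $\b[v,w]=w^{\beta+1}g(s)$ with
\begin{align*}
g(s):=\tfrac{1}{\beta+1}\bigl(s^{\beta+1}-1\bigr)-(s-1),
\end{align*}
so that likewise $\bigl|w^{\frac{\beta+1}{2}}-v^{\frac{\beta+1}{2}}\bigr|^{2}=w^{\beta+1}\bigl(s^{\frac{\beta+1}{2}}-1\bigr)^{2}$, $\;\bigl(w^{\beta-1}+v^{\beta-1}\bigr)\b[v,w]=w^{2\beta}\bigl(1+s^{\beta-1}\bigr)g(s)$ and $\bigl|v^{\beta}-w^{\beta}\bigr|^{\frac{\beta+1}{\beta}}=w^{\beta+1}\bigl|s^{\beta}-1\bigr|^{\frac{\beta+1}{\beta}}$. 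Thus (i), (ii) and (iii) become the one-variable claims, to hold for all $s\ge 0$,
\begin{align*}
\tfrac{1}{c}\bigl(s^{\frac{\beta+1}{2}}-1\bigr)^{2}&\le g(s)\le c\,\bigl(s^{\frac{\beta+1}{2}}-1\bigr)^{2},\\
\tfrac{1}{c}\bigl(s^{\beta}-1\bigr)^{2}&\le \bigl(1+s^{\beta-1}\bigr)g(s)\le c\,\bigl(s^{\beta}-1\bigr)^{2},\\
g(s)&\le c\,\bigl|s^{\beta}-1\bigr|^{\frac{\beta+1}{\beta}}.
\end{align*}
Here $g(1)=g'(1)=0$, $g''(s)=\beta s^{\beta-1}>0$ for $s>0$ and $g(0)=\tfrac{\beta}{\beta+1}$, so $g$ is strictly convex and nonnegative on $[0,\infty)$ and vanishes only at $s=1$, which is also the sole zero of each right-hand side.

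The argument is then the same for each inequality. For (i) I would look at the quotient $q(s):=g(s)\big/\bigl(s^{\frac{\beta+1}{2}}-1\bigr)^{2}$, which is continuous and strictly positive on $(0,\infty)\setminus\{1\}$, and compute its limits at the three critical values of $s$. As $s\to 0^{+}$ one gets $q(s)\to g(0)=\tfrac{\beta}{\beta+1}$; as $s\to\infty$, comparing leading powers gives $q(s)\to\tfrac{1}{\beta+1}$; and at $s=1$ the expansions $g(s)=\tfrac{\beta}{2}(s-1)^{2}+o((s-1)^{2})$ and $s^{\frac{\beta+1}{2}}-1=\tfrac{\beta+1}{2}(s-1)+o(s-1)$ give $q(s)\to\tfrac{2\beta}{(\beta+1)^{2}}$. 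Hence $q$ extends to a continuous, strictly positive function on the compact set $[0,\infty]$, so it is pinched between two positive constants depending only on $\beta$, which is (i). Part (ii) is handled identically: the quotient $\bigl(1+s^{\beta-1}\bigr)g(s)\big/\bigl(s^{\beta}-1\bigr)^{2}$ has the finite, strictly positive limits $\tfrac{1}{\beta}$, $\tfrac{1}{\beta+1}$ and $\tfrac{\beta}{\beta+1}$ at $s=1,\infty,0$ (here the hypothesis $\beta>1$ enters, ensuring $s^{\beta-1}\to 0$ as $s\to 0^{+}$), so the same compactness argument again gives both bounds. For (iii), since $\tfrac{\beta+1}{\beta}<2$ the right-hand side vanishes at $s=1$ strictly more slowly than $g$, so the quotient $g(s)\big/\bigl|s^{\beta}-1\bigr|^{\frac{\beta+1}{\beta}}$ tends to $0$ at $s=1$ and to the finite values $\tfrac{1}{\beta+1}$ and $\tfrac{\beta}{\beta+1}$ at $\infty$ and $0$; being continuous it is bounded above on $[0,\infty]$, which is the one-sided estimate. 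Finally I would take $c$ to be the largest constant arising in the three parts and the degenerate cases, so that it depends only on $\beta$.

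The one genuinely delicate point---and the only place that requires care---is the behaviour of these quotients at $s=1$, where numerator and denominator both degenerate: one must expand to the correct order to see that the two sides vanish quadratically in $s-1$ for (i) and (ii), producing a finite nonzero limiting ratio, whereas for (iii) the right-hand side vanishes only to order $\tfrac{\beta+1}{\beta}<2$, which is exactly why there one should expect only an upper bound and no matching lower bound. Everything else---the explicit limits at $0$ and at $\infty$ and the compactness conclusion---is routine.
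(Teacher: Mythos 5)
Your proof is correct, and it is worth noting that the paper itself does not prove this lemma---it simply cites Lemma 2.3 of B\"ogelein, Duzaar, Korte and Scheven---so there is no in-paper argument to compare against. Your route is the clean one: all three quantities are homogeneous in $(v,w)$ of the same degree on each side, so after disposing of $v=w=0$ and $w=0<v$ by hand one may normalise $s=v/w$ and work with $g(s)=\tfrac{1}{\beta+1}(s^{\beta+1}-1)-(s-1)$, which satisfies $g(1)=g'(1)=0$, $g''(s)=\beta s^{\beta-1}$, $g(0)=\tfrac{\beta}{\beta+1}$. The quotients in (i)--(iii) then extend continuously to the compactified half-line $[0,\infty]$, and your computed limits at $s=0$, $s=1$ and $s=\infty$ (respectively $\tfrac{\beta}{\beta+1}$, $\tfrac{2\beta}{(\beta+1)^2}$, $\tfrac{1}{\beta+1}$ for (i); $\tfrac{\beta}{\beta+1}$, $\tfrac{1}{\beta}$, $\tfrac{1}{\beta+1}$ for (ii), with $\beta>1$ used exactly where you say; and $\tfrac{\beta}{\beta+1}$, $0$, $\tfrac{1}{\beta+1}$ for (iii)) are all correct, so the two-sided bounds in (i), (ii) and the one-sided bound in (iii) follow by compactness. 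The only trade-off against the explicit algebraic manipulations one typically finds in the literature for such estimates (mean-value or integral representations of differences of powers, with case splits on $v\ge w$ versus $v<w$) is that the compactness argument is non-constructive: it gives a $c=c(\beta)$ without a formula. For the purposes of this paper that is immaterial, and your version is shorter and makes the structure---in particular why (iii) only admits an upper bound---immediately transparent.
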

 
Next, we recall a well-known parabolic Sobolev inequality, which can be found for example in \cite{DiBene}. For the proof, we refer to \cite[Lemma 3.2]{Sche}.
\begin{lem}
\label{lemma:Gagliardo}
Let $z_o=(x_o,t_o)\in \R^{n+1}$ and $\theta>0$. Suppose that $q>0$, $p>1$. Then for every
$$
u\in L^\infty (t_o-\theta,t_o;L^q(B_r(x_o))) \cap L^p(t_o-\theta,t_o;W^{1,p}_0(B_r(x_o)))
$$
we have
\begin{align*}
\iint_{Q_{r,\theta}(z_o)} |u|^{p(1+\frac qn)} \d x\d t& \leq c\bigg(\esssup_{t\in (t_o-\theta,t_o)} \int_{B_r(x_o)\times \{t\}} |u|^q \d x\bigg)^{\frac pn} \iint_{Q_{r,\theta}(z_o)} |\nabla u|^p \d x\d t
\end{align*}
for a constant $c=c(n,p,q)$.
\end{lem}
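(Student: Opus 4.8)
\medskip
\noindent\textit{Proof idea.} The plan is to reduce the statement to a time-independent Gagliardo--Nirenberg--Sobolev inequality on the spatial slices and then integrate in time, using the hypothesis $u\in L^\infty(t_o-\theta,t_o;L^q(B_r(x_o)))$ to absorb the $L^q$-factor into its essential supremum. More precisely, I would first show that for a.e.\ $t\in(t_o-\theta,t_o)$, writing $v:=u(\cdot,t)\in W^{1,p}_0(B_r(x_o))$,
\begin{align}\label{plan:slice}
\int_{B_r(x_o)}|v|^{p(1+\frac qn)}\d x\leq c\bigg(\int_{B_r(x_o)}|\nabla v|^p\d x\bigg)\bigg(\int_{B_r(x_o)}|v|^q\d x\bigg)^{\frac pn}
\end{align}
with $c=c(n,p,q)$. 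Integrating \eqref{plan:slice} over $t\in(t_o-\theta,t_o)$, the factor $\int_{B_r(x_o)}|v|^q\d x$ is for a.e.\ $t$ bounded by $\esssup_{t}\int_{B_r(x_o)}|u|^q\d x$ and can therefore be pulled out of the time integral, while the remaining integral of $\int_{B_r(x_o)}|\nabla v|^p\d x$ over $t$ equals $\iint_{Q_{r,\theta}(z_o)}|\nabla u|^p\d x\d t$; this yields the asserted inequality. No measurability issue arises, since by assumption $t\mapsto\norm{\nabla u(\cdot,t)}_{L^p(B_r(x_o))}^p\in L^1(t_o-\theta,t_o)$ and $t\mapsto\norm{u(\cdot,t)}_{L^q(B_r(x_o))}^q\in L^\infty(t_o-\theta,t_o)$.

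To prove \eqref{plan:slice} in the principal range $1<p<n$, I would split $|v|^{p(1+q/n)}=|v|^p\cdot|v|^{pq/n}$ and apply H\"older's inequality with the conjugate exponents $\tfrac{n}{n-p}$ and $\tfrac np$, which gives
\begin{align*}
\int_{B_r(x_o)}|v|^p\,|v|^{\frac{pq}{n}}\d x\leq\bigg(\int_{B_r(x_o)}|v|^{\frac{np}{n-p}}\d x\bigg)^{\frac{n-p}{n}}\bigg(\int_{B_r(x_o)}|v|^q\d x\bigg)^{\frac pn}.
\end{align*}
Since $\tfrac{np}{n-p}\cdot\tfrac{n-p}{n}=p$, the first factor on the right equals $\norm{v}_{L^{np/(n-p)}(B_r(x_o))}^p$, and by the Sobolev inequality for $W^{1,p}_0$-functions this is at most $c(n,p)^p\norm{\nabla v}_{L^p(B_r(x_o))}^p$, proving \eqref{plan:slice}. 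In the remaining range $p\geq n$ (still allowed by $p>1$) the Sobolev embedding carries an $r$-dependent constant, so there I would instead invoke the scale-invariant Gagliardo--Nirenberg inequality $\norm{v}_{L^{r_1}(B_r(x_o))}\leq c\,\norm{\nabla v}_{L^p(B_r(x_o))}^{\vartheta}\norm{v}_{L^q(B_r(x_o))}^{1-\vartheta}$ with $r_1:=p(1+\tfrac qn)$ and $\vartheta:=\tfrac{n}{n+q}$, and raise it to the power $r_1$; one checks that $\vartheta r_1=p$, $(1-\vartheta)r_1=\tfrac{pq}{n}$, and that the interpolation identity $\tfrac1{r_1}=\vartheta(\tfrac1p-\tfrac1n)+(1-\vartheta)\tfrac1q$ holds, so this again yields \eqref{plan:slice} with $c=c(n,p,q)$.

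The main point requiring care is not the computation but ensuring that the constant depends only on the data $n,p,q$ and is independent of $r$ and $\theta$; this is transparent from the scaling of the Sobolev inequality when $1<p<n$, and it is exactly what the scale-invariant formulation of the Gagliardo--Nirenberg inequality provides when $p\geq n$. Everything else --- the H\"older step on the slices, the arithmetic of the exponents, and the passage from the pointwise-in-time estimate to the space-time one via Fubini --- is routine.
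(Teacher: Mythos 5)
Your proof is correct and follows the standard route of slicing in time, applying a Gagliardo--Nirenberg--type interpolation on each spatial slice, and integrating via Fubini; the paper does not prove this lemma itself but defers to \cite[Lemma 3.2]{Sche}, whose argument is of the same type. The exponent bookkeeping checks out (conjugate pair $\tfrac{n}{n-p},\tfrac{n}{p}$; $p^*\cdot\tfrac{n-p}{n}=p$; $\vartheta r_1=p$, $(1-\vartheta)r_1=\tfrac{pq}{n}$), the Sobolev step gives an $r$-independent constant since $W^{1,p}_0$ inherits the dimensional constant from $\R^n$, and the case split $1<p<n$ versus $p\ge n$ (the latter relevant only for $n=1$ given $p\in(1,2)$ in this paper) is handled appropriately.
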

The following lemma can be proven using an inductive argument, see for example \cite[Lemma 7.1]{Gi}.
\begin{lem}
\label{fastconvg} Let $(Y_j)^\infty_{j=0}$ be a positive sequence such that
\begin{equation*}
Y_{j+1}\leq C b^j Y^{1+\delta}_j,
\end{equation*}
where $C, b >1$ and $\delta>0$. If
\begin{equation*}
Y_0\leq C^{-\frac{1}{\delta}}b^{-\frac{1}{\delta^2}},
\end{equation*}
then $(Y_j)$ converges to zero as $j\to\infty$.
\end{lem}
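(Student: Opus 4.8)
The plan is to run a straightforward induction showing that the sequence decays geometrically all the way from $j=0$. Concretely, I would prove by induction on $j$ that
\[
Y_j \leq b^{-j/\delta} Y_0 \qquad \text{for every } j \geq 0 .
\]
The base case $j=0$ is an equality. For the inductive step, suppose the bound holds for some $j$ and insert it into the recursion:
\[
Y_{j+1} \leq C b^j Y_j^{1+\delta} \leq C b^j \big(b^{-j/\delta} Y_0\big)^{1+\delta} = C\, b^{\,j - j(1+\delta)/\delta}\, Y_0^{1+\delta} = C\, b^{-j/\delta}\, Y_0^{1+\delta},
\]
since the exponent of $b$ collapses to $j - j(1+\delta)/\delta = -j/\delta$.

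It then remains to check that $C\, b^{-j/\delta}\, Y_0^{1+\delta} \leq b^{-(j+1)/\delta} Y_0$, which is equivalent to $C Y_0^{\delta} \leq b^{-1/\delta}$, and upon taking $\delta$-th roots this is exactly the smallness assumption $Y_0 \leq C^{-1/\delta} b^{-1/\delta^2}$. Hence the induction closes and we obtain $0 < Y_j \leq b^{-j/\delta} Y_0$ for all $j$. Since $b > 1$ and $\delta > 0$, the factor $b^{-j/\delta}$ tends to $0$ as $j \to \infty$, and therefore $Y_j \to 0$, as claimed.

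The argument has no real obstacle: the only thing to be careful about is the bookkeeping of the powers of $b$ in the inductive step, and in fact the smallness threshold on $Y_0$ in the hypothesis is precisely what makes the geometric ansatz $Y_j \leq b^{-j/\delta} Y_0$ self-propagating — one could even discover the threshold by demanding that this ansatz survive one step of the recursion. (Alternatively, the same computation can be phrased multiplicatively by tracking $\log Y_j$, but the inductive bound above is the shortest route.)
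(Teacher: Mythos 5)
Your proof is correct, and it is exactly the standard inductive argument that the paper defers to (the paper cites Giusti, Lemma 7.1, rather than spelling it out). The inductive hypothesis $Y_j \leq b^{-j/\delta}Y_0$, the exponent bookkeeping giving $j - j(1+\delta)/\delta = -j/\delta$, and the reduction of the closing step to the smallness assumption $Y_0 \leq C^{-1/\delta}b^{-1/\delta^2}$ are all correct.
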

A form of the following lemma was originally proven by De Giorgi\cite{DeGi}, see also \cite{DiBene}.
\begin{lem}\label{isoperim}
Let $v\in W^{1,1}(B_\rho(x_o))$ for some $\rho>0$ and $x_o\in \R^n$. Let $k$ and $l$ be real numbers such that $k<l$. Then there exists a constant $c$ depending only on $n$ (and thus independent of $k,l,v,x_o$ and $\rho$) such that for any representative of $v$, we have
\begin{align*}
(l-k)|\{x\in B_\rho(x_o) : v(x)>l\}|\leq \frac{c \rho^{n+1}}{|\{x\in B_\rho(x_o) : v(x)<k\}|}\int_{\{k<v<l\}\cap B_\rho(x_o)}|\nabla v|\d x.
\end{align*}
\end{lem}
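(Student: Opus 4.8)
The plan is to derive the inequality from a Riesz-type potential estimate applied to a truncation of $v$. We may assume $|A|>0$, where $A:=\{x\in B_\rho(x_o):v(x)<k\}$, since otherwise the right-hand side is infinite and there is nothing to prove; likewise set $B:=\{x\in B_\rho(x_o):v(x)>l\}$. I would introduce the truncated function
\[
w:=\min\{(v-k)^+,\,l-k\}\in W^{1,1}(B_\rho(x_o)),
\]
which satisfies $0\le w\le l-k$, $w\equiv 0$ on $A$, $w\equiv l-k$ on $B$, and $\nabla w=\nabla v\,\chi_{\{k<v<l\}}$ a.e. Changing the representative of $v$ modifies $A$, $B$ and $\{k<v<l\}$ only by null sets, so the asserted inequality is insensitive to the representative, and it suffices to work with a convenient one for $w$.

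The core of the argument is the pointwise bound: for a.e.\ $x\in B_\rho(x_o)$,
\[
|A|\,w(x)\le c(n)\,\rho^{\,n}\int_{B_\rho(x_o)}\frac{|\nabla w(z)|}{|x-z|^{n-1}}\d z ,
\]
where $\nabla w$ is extended by zero outside $B_\rho(x_o)$. To obtain it I would use convexity of the ball: for a.e.\ $x,y\in B_\rho(x_o)$ the fundamental theorem of calculus along the segment joining $y$ to $x$ gives $w(x)-w(y)=\int_0^1\nabla w\big((1-t)y+tx\big)\cdot(x-y)\d t$. Averaging this identity over $y\in A$, on which $w$ vanishes, and passing to polar coordinates $y=x+r\omega$ centred at $x$ — so that $|x-y|\le 2\rho$ and the Jacobian factor $r^{n-1}$ may be bounded by $(2\rho)^{n-1}$ — an integration in the radial variables followed by a return to Cartesian coordinates $z=x+r\omega$ produces the displayed estimate. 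Since the segment formula is elementary only for smooth functions, I would carry this step out first for mollified approximations $w_\varepsilon\to w$ in $W^{1,1}$ on slightly smaller balls and then pass to the limit; alternatively one can simply invoke the classical potential representation of Sobolev functions, with the usual average over $B_\rho(x_o)$ replaced by the average over the set $A$ of positive measure.

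To conclude, I would integrate the pointwise bound over $x\in B$, where $w\equiv l-k$, apply Fubini's theorem, and use the uniform estimate
\[
\int_{B_\rho(x_o)}\frac{\d x}{|x-z|^{n-1}}\le\int_{B_{2\rho}(z)}\frac{\d x}{|x-z|^{n-1}}\le c(n)\,\rho
\]
valid for every $z$. This yields $|A|\,(l-k)\,|B|\le c(n)\,\rho^{\,n+1}\int_{B_\rho(x_o)}|\nabla w|\d z=c(n)\,\rho^{\,n+1}\int_{\{k<v<l\}\cap B_\rho(x_o)}|\nabla v|\d z$, and dividing by $|A|$ gives exactly the claimed inequality. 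The only genuinely delicate point is the rigorous justification of the segment representation for a merely $W^{1,1}$ function — which is precisely why representative-independence is built into the statement — while the polar-coordinate change of variables, the application of Fubini, and the integrability of the kernel $|x-z|^{1-n}$ are all routine.
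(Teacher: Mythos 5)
Your proof is correct; the paper does not supply its own argument but simply cites De~Giorgi and DiBenedetto, and your truncation-plus-Riesz-potential argument (i.e., the estimate $|w(x)-w_S|\leq \frac{(\operatorname{diam}\Omega)^n}{n|S|}\int_\Omega|\nabla w(z)|\,|x-z|^{1-n}\d z$ with $S=\{v<k\}$, as in Gilbarg--Trudinger Lemma~7.16, followed by integration over $\{v>l\}$ and Fubini) is precisely the classical proof found in those references. The only wrinkle in your sketch is that when working with mollifications $w_\varepsilon$ one cannot assert $w_\varepsilon\equiv 0$ on $A$; one must instead carry the term $\fint_A w_\varepsilon$ through the estimate and use that it tends to $\fint_A w=0$, which is exactly what the potential-representation lemma you invoke already packages.
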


The following lemma is a special case of Theorem 1.1 in section IV.1 of \cite{DiBene}.
\begin{lem}\label{lem:p-parabolic_hold_cont}
Let $1<p<2$ and suppose that $v\in L^p(0,T;W^{1,p}(\Omega))\cap L^\infty(\Omega_T)$ is a weak solution to the equation
\begin{align*}
\partial_t v-\nabla \cdot \big(\tilde{A}(x,t,v,\nabla v)\big) =0,
\end{align*}
where $\tilde{A}$ satisfies the structure conditions
\begin{align*}
|\tilde{A}(x,t,v,\xi)|&\leq \tilde{C}_1|\xi|^{p-1}
\\
\tilde{A}(x,t,v,\xi)\cdot \xi &\geq \tilde{C}_0|\xi|^p.
\end{align*}
Then $v$ is locally H\"older continuous in $\Omega_T$ and there are constants $c>1$ and $\nu \in (0,1)$ depending only on $n,p, \tilde{C}_0, \tilde{C}_1$ such that for any subset $K\subset \Omega_T$, compactly contained in $\Omega\times (0,T]$, we have for all $(x,t),(y,s)\in K$ that
\begin{align*}
|v(x,t)-v(y,s)|\leq c\norm{v}_{L^\infty(\Omega_T)}\bigg( \frac{\norm{v}_{L^\infty(\Omega_T)}^\frac{2-p}{p}|x-y|+|t-s|^\frac{1}{p}}{d_p(K)} \bigg)^\nu,
\end{align*}
where
\begin{align*}
d_p(K):=\inf_{\substack{ (x,t)\in K \\ (y,s)\in \partial_p \Omega_T}}\Big( \norm{v}_{L^\infty(\Omega_T)}^\frac{2-p}{p}|x-y|+|t-s|^\frac{1}{p} \Big).
\end{align*}
\end{lem}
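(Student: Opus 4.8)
\proof
The plan is to first reduce to a normalised solution, and then to invoke the intrinsic-scaling De~Giorgi argument underlying \cite[Theorem~IV.1.1]{DiBene}. Assume $M:=\norm{v}_{L^\infty(\Omega_T)}>0$, the case $M=0$ being trivial, and put
\[
w(x,\tau):=\tfrac1M\,v\big(x,M^{2-p}\tau\big),\qquad (x,\tau)\in\Omega\times\big(0,M^{p-2}T\big).
\]
A chain-rule computation shows that $w$ is a bounded weak solution, with $\norm{w}_{L^\infty}=1$, of $\partial_\tau w-\nabla\cdot\bar{A}(x,\tau,w,\nabla w)=0$, where $\bar{A}(x,\tau,s,\xi):=M^{1-p}\tilde{A}(x,M^{2-p}\tau,Ms,M\xi)$; since $M^{1-p}|M\xi|^{p-1}=|\xi|^{p-1}$, both structure conditions transfer to $\bar{A}$ with the \emph{same} constants $\tilde{C}_0,\tilde{C}_1$. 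It therefore suffices to prove, for solutions normalised by $\norm{w}_{L^\infty}=1$, an estimate of the form $|w(x,\tau)-w(y,\sigma)|\leq c\big(d^{-1}(|x-y|+|\tau-\sigma|^{1/p})\big)^{\nu}$, with $d$ the parabolic distance of the compact set to $\partial_p$ and $c,\nu$ depending only on $n,p,\tilde{C}_0,\tilde{C}_1$; undoing the substitution, so that $|\tau-\sigma|^{1/p}=M^{-\frac{2-p}{p}}|t-s|^{1/p}$, and multiplying numerator and denominator of the fraction by $M^{\frac{2-p}{p}}$, turns the numerator into $\norm{v}_{L^\infty}^{\frac{2-p}{p}}|x-y|+|t-s|^{1/p}$ and the denominator into $d_p(K)$, which is exactly the stated estimate for $v$.

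For the normalised problem I would run the classical intrinsic-scaling scheme. Fixing $z_o$ and writing $\omega$ for the essential oscillation of $w$ on a reference cylinder, one works on the intrinsically scaled cylinders $Q_{\rho,\theta}(z_o)$ with the choice $\theta\simeq\omega^{2-p}\rho^{p}$ appropriate to the singular range $1<p<2$. Combining the energy (Caccioppoli) and logarithmic estimates for $w$ with the measure-shrinking lemmas --- using here precisely the auxiliary tools collected above, namely the geometric iteration of Lemma~\ref{fastconvg}, the parabolic Sobolev inequality of Lemma~\ref{lemma:Gagliardo}, and the De~Giorgi isoperimetric inequality of Lemma~\ref{isoperim} --- one obtains, in whichever of the two complementary measure alternatives occurs, a quantitative reduction
\[
\essosc_{Q_{\rho/\kappa,\theta'}(z_o)} w\leq\eta\,\omega
\]
for fixed $\eta\in(0,1)$ and a fixed dilation factor $\kappa>1$, both depending only on $n,p,\tilde{C}_0,\tilde{C}_1$.

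Iterating this decay along the geometric sequence $\rho_j:=\rho_0\kappa^{-j}$ gives $\essosc_{Q_{\rho_j,\theta_j}(z_o)}w\leq\omega\,\eta^{j}$, hence a power-type decay $\essosc_{Q_{\rho,\theta}(z_o)}w\lesssim\omega\,(\rho/\rho_0)^{\nu}$ with $\nu:=\log(1/\eta)/\log\kappa\in(0,1)$; passing back from intrinsic to standard parabolic cylinders costs only a data-dependent constant once $\norm{w}_{L^\infty}=1$, and choosing the initial radius $\rho_0$ comparable to the parabolic distance of the base point to $\partial_p$ localises the estimate and produces the factor $d$ in the denominator. Together with the first step this yields the assertion. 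I expect the only genuinely delicate point to be the measure-theoretic core of the second step --- that a definite portion of a smaller intrinsic cylinder lying above (respectively below) a level forces a fixed decrease of $\essosc w$, uniformly in the size of $\omega$ --- which is precisely what \cite[Theorem~IV.1.1]{DiBene} establishes, and to which we refer for the details; the remaining steps are routine scaling and bookkeeping.
\qed
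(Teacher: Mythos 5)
The paper does not give its own proof of this lemma: it states immediately before the statement that it ``is a special case of Theorem 1.1 in section IV.1 of \cite{DiBene}'' and leaves it at that. Your proposal ultimately does the same --- you explicitly defer the measure-theoretic core (the quantitative two-alternative oscillation reduction) to \cite[Theorem IV.1.1]{DiBene} --- so the two ``proofs'' are essentially identical in substance, both amounting to a citation of DiBenedetto. Your additional preliminary scaling $w(x,\tau)=M^{-1}v(x,M^{2-p}\tau)$ is carried out correctly (the structure constants are indeed invariant under it) and is a clean way to explain where the factor $\norm{v}_{L^\infty}^{(2-p)/p}$ in the intrinsic parabolic metric comes from, but it is not logically necessary, since DiBenedetto's theorem is already stated for general bounded solutions with the $\|v\|_{L^\infty}$-dependence built into the intrinsic geometry; the rescaling just reproduces, by hand, the bookkeeping that the cited theorem already does. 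In short: correct, same approach, slightly more scaffolding than the paper uses.
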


The next lemma shows that weak solutions to \eqref{general} which are bounded from below and above by positive constants are in fact also solutions to an equation of parabolic $p$-Laplace type (in the case $M=1$). It also investigates how solutions are affected by re-scaling.
\begin{lem}\label{re-scaling}
Let $A$ satisfy the structure conditions \eqref{structcond1} and \eqref{structcond2} and suppose that $u$ is a weak solution to \eqref{general} in the cylinder $B_R(x_o)\times (0,M^{3-m-p}\tau)$. Suppose furthermore that
\begin{align}\label{ubounds}
\beta_0 M \leq u \leq \beta_1 M,
\end{align}
for some positive constants $\beta_0,\beta_1$. Then the function
\begin{align*}
v(x,t)=M^{-1}u(x, M^{3-m-p}t), \hspace{5mm} (x,t)\in B_R(x_o)\times (0,\tau),
\end{align*}
has a weak $p$-integrable gradient, and is a weak solution in $B_R(x_o)\times (0,\tau)$ to the equation
\begin{align}\label{tildeweakform}
\partial_t v-\nabla \cdot \big(\tilde{A}(x,t,\nabla v)\big)=0,
\end{align}
where
\begin{align*}
\tilde{A}(x,t,\xi):= M^{2-m-p}A\big(x, M^{3-m-p}t, Mv(x,t), \beta M^\beta v^{\beta-1}(x,t)\xi\big).
\end{align*}
The vector field $\tilde{A}$ satisfies the structure conditions
\begin{align*}
|\tilde{A}(x,t,\xi)|&\leq C_1\beta^{p-1}\beta_1^{(\beta-1)(p-1)}|\xi|^{p-1}
\\
\tilde{A}(x,t,\xi)\cdot \xi &\geq C_0\beta^{p-1}\beta_0^{(\beta-1)(p-1)}|\xi|^p,
\end{align*}
where $C_0$ and $C_1$ are the constants appearing in the structure conditions \eqref{structcond1} and \eqref{structcond2}.
\end{lem}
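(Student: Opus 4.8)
The plan is to perform two elementary reductions. The first promotes the weak differentiability of $u^\beta$ to weak differentiability of $u$ itself (and hence of $v$); the second transforms the weak formulation \eqref{weakform2} under the parabolic change of variables $t\mapsto M^{3-m-p}t$ together with the dilation $u\mapsto M^{-1}u$. Throughout write $\lambda:=M^{3-m-p}$.

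For the first reduction, note that \eqref{ubounds} forces the range of $u^\beta$ into the interval $[(\beta_0M)^\beta,(\beta_1M)^\beta]$, which is bounded away from zero, so $s\mapsto s^{1/\beta}$ is Lipschitz there. The chain rule for Sobolev functions then gives $u=(u^\beta)^{1/\beta}\in L^p(0,\lambda\tau;W^{1,p}(B_R(x_o)))$ with $\nabla u=\tfrac1\beta u^{1-\beta}\nabla u^\beta$, and since $u^{1-\beta}$ is bounded this gradient lies in $L^p$. A change of variables in time then shows that $v(x,t)=M^{-1}u(x,\lambda t)$ has the weak gradient $\nabla v(x,t)=M^{-1}(\nabla u)(x,\lambda t)\in L^p(B_R(x_o)\times(0,\tau))$. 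Because $\beta_0\le v\le\beta_1$, the same chain rule also yields $\nabla v^\beta=\beta v^{\beta-1}\nabla v$, which is needed in the next step.

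For the second reduction, given a test function $\psi\in C^\infty_0(B_R(x_o)\times(0,\tau))$, I would insert $\varphi(x,t):=\psi(x,\lambda^{-1}t)\in C^\infty_0(B_R(x_o)\times(0,\lambda\tau))$ into \eqref{weakform2} and substitute $t=\lambda s$. Using $u(x,\lambda s)=Mv(x,s)$, $\nabla u^\beta(x,\lambda s)=\beta M^\beta v^{\beta-1}(x,s)\nabla v(x,s)$, $\partial_t\varphi(x,t)=\lambda^{-1}\partial_s\psi(x,\lambda^{-1}t)$ and $\nabla\varphi=\nabla\psi$, and then dividing by $M$, the coefficient of $\nabla\psi$ becomes $M^{-1}\lambda\,A(x,\lambda s,Mv,\beta M^\beta v^{\beta-1}\nabla v)=M^{2-m-p}A(\cdots)=\tilde A(x,s,\nabla v)$, while the parabolic term becomes $v\,\partial_s\psi$. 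Since $\psi$ ranges over all admissible test functions, this is exactly the weak form of \eqref{tildeweakform}. The structure conditions for $\tilde A$ then follow by feeding \eqref{structcond1} and \eqref{structcond2} into the definition of $\tilde A$: from $\beta=1+\tfrac{m-1}{p-1}$ one reads off the identity $\beta(p-1)=m+p-2$, hence $M^{2-m-p}\cdot M^{\beta(p-1)}=1$, so $|\tilde A(x,t,\xi)|\le C_1\beta^{p-1}v^{(\beta-1)(p-1)}|\xi|^{p-1}$ and, after writing $A(\cdots)\cdot\xi=(\beta M^\beta v^{\beta-1})^{-1}A(\cdots)\cdot(\beta M^\beta v^{\beta-1}\xi)$, also $\tilde A(x,t,\xi)\cdot\xi\ge C_0\beta^{p-1}v^{(\beta-1)(p-1)}|\xi|^p$; bounding $v$ above by $\beta_1$ and below by $\beta_0$ (legitimate because $(\beta-1)(p-1)>0$) yields the stated constants.

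The argument is essentially computational; the only genuine care needed is the bookkeeping of the exponents of $M$, and the reason everything cancels is precisely the relation $\beta(p-1)=m+p-2$ built into the definition of $\beta$. The two-sided bound \eqref{ubounds} is indispensable in the first reduction — without a positive lower bound on $u$ the factor $u^{1-\beta}$ is unbounded and one cannot conclude that $u$ (rather than merely $u^\beta$) has a $p$-integrable gradient — and is merely convenient in the last step, where it lets us absorb the $v$-dependent factors into constants.
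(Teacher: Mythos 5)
Your proof is correct and follows essentially the same route as the paper's (very terse) argument: use the two-sided bound \eqref{ubounds} to invoke the chain rule and promote $\nabla u^\beta$ to a genuine $p$-integrable $\nabla u$, change variables in time in the weak formulation, and read off the structure constants for $\tilde A$ from $\beta(p-1)=m+p-2$. The paper leaves the bookkeeping implicit; you have simply spelled out the cancellation $M^{2-m-p}M^{\beta(p-1)}=1$ and the monotonicity used to replace $v^{(\beta-1)(p-1)}$ by $\beta_0^{(\beta-1)(p-1)}$ and $\beta_1^{(\beta-1)(p-1)}$.
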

\begin{proof}[Proof]
The bounds on $u$ show that the chain rule holds in the following form:
\begin{align}\label{nabla_u}
\nabla u=\nabla (u^\beta)^\frac{1}{\beta}=\beta^{-1} u^{1-\beta}\nabla u^\beta.
\end{align}
Note especially that the lower bound on $u$ guarantees that $u^{1-\beta}$ stays bounded despite the negative exponent. From these observations it follows that also $v$ has a weak gradient which is $p$-integrable. By a change of variables in the time variable in the weak formulation \eqref{weakform2}, and by taking note of \eqref{nabla_u}, one can see that $v$ satisfies \eqref{tildeweakform} weakly. The structure conditions for $\tilde{A}$ follow from the corresponding conditions satisfied by $A$, and the bounds \eqref{ubounds}.
\end{proof}
 
\subsection{Continuity in time and mollified weak formulation}\label{subsec:time-cont}
In this subsection we show that weak solutions are continuous in time as maps into $L^{\beta+1}_{\textrm{loc}}(\Omega)$. The proof is adapted from \cite{St}. We start with a lemma.
\begin{lem}\label{lem:time-cont}
Suppose that $u$ is a weak solution in the sense of Definition \ref{weakdef} and define
\begin{align*}
\mathcal{V}:=\big\{ w\in L^{\beta+1}(\Omega_T)\, |\, w^\beta\in L^p(0,T;W^{1,p}(\Omega)), \, \partial_t w^\beta \in L^{\frac{\beta+1}{\beta}}(\Omega_T) \big\}.
\end{align*}
Then, for every $\zeta\in C^\infty_0(\Omega_T,\R_{\geq 0})$ and $w\in \mathcal V$ we have
\begin{align}
\label{eq:time_1}
\iint_{\Omega_T} \partial_t \zeta \b[u,w] \d x\d t = \iint_{\Omega_T} A(x,t,u,\nabla u^\beta) \cdot \nabla[\zeta(u^\beta-w^\beta)] + \zeta \partial_t w^\beta (u-w) \d x\d t.
\end{align}
\end{lem}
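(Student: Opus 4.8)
The plan is to replace \eqref{weakform2} by its exponentially mollified version, insert the test function $\varphi=\zeta(u^\beta-w^\beta)$, and rewrite the parabolic contribution with the pointwise identity
\[
\partial_t\b[u_h,w]=(u_h^\beta-w^\beta)\,\partial_t u_h-(u_h-w)\,\partial_t w^\beta ,
\]
which follows from \eqref{definition:b} by differentiating and using $\partial_t w^\beta=\beta w^{\beta-1}\partial_t w$ (the product and chain rules being legitimate because $w\in L^{\beta+1}(\Omega_T)$ and $\partial_t w^\beta\in L^{\frac{\beta+1}{\beta}}(\Omega_T)$). The crucial point is that performing this with the forward mollification $u_h$ yields \eqref{eq:time_1} with an \emph{extra nonnegative} term, whereas the reversed mollification $u_{\overline h}$ yields \eqref{eq:time_1} with an \emph{extra nonpositive} term; the two inequalities then combine to the equality.

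\textbf{Mollified formulation and choice of test function.} Set $\mathcal A(x,t):=A(x,t,u,\nabla u^\beta)$, so $\mathcal A\in L^{p/(p-1)}(\Omega_T)$ by \eqref{structcond1} and $u^\beta\in L^p(0,T;W^{1,p}(\Omega))$. As in \cite{KiLi,BoeDuMa,St}, \eqref{weakform2} implies the mollified weak formulations
\[
\iint_{\Omega_T}\mathcal A_h\cdot\nabla\varphi+\partial_t u_h\,\varphi\d x\d t=0
\quad\text{and}\quad
\iint_{\Omega_T}\mathcal A_{\overline h}\cdot\nabla\varphi+\partial_t u_{\overline h}\,\varphi\d x\d t=0 ,
\]
where $\partial_t u_h=\tfrac1h(u-u_h)$, $\partial_t u_{\overline h}=\tfrac1h(u_{\overline h}-u)\in L^{\beta+1}(\Omega_T)$ by Lemma \ref{expmolproperties}, and $\mathcal A_h,\mathcal A_{\overline h}$ are the componentwise mollifications of $\mathcal A$. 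Since no time derivative of $\varphi$ appears, a density argument (mollify in space–time, keep the support) makes these valid for any $\varphi$ with $\nabla\varphi\in L^p(\Omega_T)$, $\varphi\in L^{\frac{\beta+1}{\beta}}(\Omega_T)$ and $\spt\varphi\Subset\Omega_T$. The function $\varphi:=\zeta(u^\beta-w^\beta)$ qualifies: $\nabla\varphi=(u^\beta-w^\beta)\nabla\zeta+\zeta\nabla(u^\beta-w^\beta)\in L^p(\Omega_T)$, and $\varphi\in L^{\frac{\beta+1}{\beta}}(\Omega_T)$ because $u,w\in L^{\beta+1}(\Omega_T)$.

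\textbf{The computation.} Insert $\varphi=\zeta(u^\beta-w^\beta)$ into the forward formulation and split $u^\beta-w^\beta=(u_h^\beta-w^\beta)+(u^\beta-u_h^\beta)$. By the pointwise identity above and an integration by parts in time (legitimate since $\zeta$ has compact support and, for a.e.\ $x$, $\b[u_h,w]$ is absolutely continuous in $t$ with derivative in $L^1$), the term $\iint\zeta(u_h^\beta-w^\beta)\partial_t u_h$ equals $-\iint\partial_t\zeta\,\b[u_h,w]+\iint\zeta(u_h-w)\partial_t w^\beta$. The leftover term satisfies
\[
\iint_{\Omega_T}\zeta(u^\beta-u_h^\beta)\,\partial_t u_h\d x\d t=\frac1h\iint_{\Omega_T}\zeta\,(u^\beta-u_h^\beta)(u-u_h)\d x\d t\ \ge\ 0 ,
\]
because $s\mapsto s^\beta$ is nondecreasing and $\zeta\ge0$. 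Collecting the terms and discarding this nonnegative contribution gives
\[
\iint_{\Omega_T}\partial_t\zeta\,\b[u_h,w]\d x\d t\ \ge\ \iint_{\Omega_T}\mathcal A_h\cdot\nabla(\zeta(u^\beta-w^\beta))+\zeta(u_h-w)\partial_t w^\beta\d x\d t ,
\]
and the identical computation with $u_{\overline h},\mathcal A_{\overline h}$ in place of $u_h,\mathcal A_h$ produces the leftover term $\tfrac1h\iint\zeta(u^\beta-u_{\overline h}^\beta)(u_{\overline h}-u)\le0$, hence the same inequality with "$\ge$" replaced by "$\le$" and $h$ by $\overline h$.

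\textbf{Passage to the limit and the main obstacle.} By Lemma \ref{expmolproperties}, $u_h,u_{\overline h}\to u$ in $L^{\beta+1}(\Omega_T)$ and $\mathcal A_h,\mathcal A_{\overline h}\to\mathcal A$ in $L^{p/(p-1)}(\Omega_T)$; together with the elementary bound $|\b[a,w]-\b[b,w]|\le(a^\beta+b^\beta+w^\beta)|a-b|$, H\"older's inequality and $\norm{u_h}_{L^{\beta+1}(\Omega_T)}\le\norm{u}_{L^{\beta+1}(\Omega_T)}$ this yields $\b[u_h,w],\b[u_{\overline h},w]\to\b[u,w]$ in $L^1(\spt\zeta)$, while the remaining products converge by the duality pairings $L^{p/(p-1)}$–$L^p$ and $L^{\beta+1}$–$L^{\frac{\beta+1}{\beta}}$. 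Letting $h\to0$ shows that $\iint_{\Omega_T}\partial_t\zeta\,\b[u,w]\d x\d t$ is simultaneously $\ge$ and $\le$ the right-hand side of \eqref{eq:time_1}, hence equal to it. I expect the delicate points to be the first step — verifying that the mollified formulations genuinely extend to the non-smooth test function $\zeta(u^\beta-w^\beta)$, which is precisely where the integrability hypotheses $u\in L^{\beta+1}(\Omega_T)$ and $u^\beta\in L^p(0,T;W^{1,p}(\Omega))$ are needed — and the sign bookkeeping, namely that the forward and reversed mollifications really produce leftover terms of opposite sign; the rest is routine with the mollification estimates of Lemma \ref{expmolproperties} and the continuity of $a\mapsto\b[a,w]$.
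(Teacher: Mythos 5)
Your proof is correct in its main lines but takes a genuinely different route from the paper's. You mollify $u$, establish a mollified weak formulation with $\partial_t u_h$, and insert the non‐smooth test function $\zeta(u^\beta-w^\beta)$ into it; the paper instead mollifies $u^\beta$ and inserts $\varphi=\zeta(w^\beta-[u^\beta]_h)$ directly into the original formulation \eqref{weakform2}. The latter test function has a genuine weak time derivative (because $\partial_t[u^\beta]_h$ and $\partial_t w^\beta$ are both in $L^{(\beta+1)/\beta}$), so it is admissible in \eqref{weakform2} by density, and no mollified formulation is needed at all. Both arguments ultimately hinge on the same monotonicity of $s\mapsto s^\beta$: the paper observes $([u^\beta]_h^{1/\beta}-u)\,\partial_t[u^\beta]_h\le 0$ and discards that term, whereas you observe $\tfrac1h\zeta(u-u_h)(u^\beta-u_h^\beta)\ge 0$ and discard that term; the reversed mollification flips the sign in each case, which is how both proofs recover the equality.

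The one point you gloss over is the derivation of the mollified formulation. For $\varphi$ compactly supported in $\Omega_T$, the function $\varphi_{\overline h}$ does \emph{not} vanish near $t=0$ (the backward exponential mollification has an infinite left tail), so it is not an admissible test function in \eqref{weakform2}, and inserting a cutoff near $t=0$ produces a remainder. Because $\varphi$ vanishes for $t<a$ (some $a>0$), that remainder is of order $h^{-2}e^{-a/(2h)}$ and hence disappears in the limit $h\to 0$; but for fixed $h$ the mollified identity you write is only true up to this exponentially small error, not exactly. (Symmetrically, the version with $u_{\overline h}$ picks up an error from the boundary at $t=T$.) Your final conclusion survives because you send $h\to 0$ anyway, but this step should be stated with the error term, or else handled as in the paper's Lemma~\ref{lem:mollified} — which, however, is only derived \emph{after} the time continuity that this lemma is used to prove, so citing it here would be circular. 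This is precisely the reason the paper's own proof tests \eqref{weakform2} directly with $\zeta(w^\beta-[u^\beta]_h)$: it sidesteps the mollified formulation and the attendant boundary subtleties altogether. Everything else in your argument — the pointwise identity for $\partial_t\b[u_h,w]$ (which is best justified via the chain rule applied to $(w^\beta)^{(\beta+1)/\beta}$ rather than to $w$ itself, since only $\partial_t w^\beta$ is assumed to exist), the sign bookkeeping, and the limit passage via the Lipschitz estimate for $\b[\cdot,w]$ — is sound.
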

\begin{proof}[Proof]
Let $w\in \mathcal V$, $\zeta \in C^\infty_0(\Omega_T,\R_{\geq 0})$ and choose
$$
\varphi =\zeta \left( w^\beta-[u^\beta]_h \right)
$$
as test function in \eqref{weakform2}. Our goal is to pass to the limit $h\to 0$. It follows from Lemma \ref{expmolproperties} (iii) that
\begin{align*}
\iint_{\Omega_T}A(x,t,u,\nabla u^\beta)\cdot \nabla \varphi \d t \d t \xrightarrow[h\to 0]{} \iint_{\Omega_T} A(x,t,u,\nabla u^\beta)\cdot \nabla [\zeta (w^\beta-u^\beta)]\d x \d t.
\end{align*} Note that Lemma \ref{expmolproperties} (ii) implies
$$
\big( [u^\beta]_h^{\frac 1 \beta} -u\big) \partial_t [u^\beta]_h\leq 0,
$$
which shows that we can treat the parabolic part as follows.
\begin{align*}
\iint_{\Omega_T} u\partial_t \varphi \d x\d t&=\iint_{\Omega_T} \zeta u \partial_t w^\beta \d x\d t -\iint_{\Omega_T} \zeta [u^\beta]_h^{\frac 1 \beta} \partial_t [u^\beta]_h \d x\d t
\\
&\quad+ \iint_{\Omega_T} \zeta \big( [u^\beta]_h^{\frac 1 \beta} -u\big) \partial_t [u^\beta]_h \d x\d t+ \iint_{\Omega_T} \partial_t \zeta u \big( w^\beta-[u^\beta]_h \big) \d x\d t
\\
&\leq \iint_{\Omega_T} \zeta u \partial_t w^\beta\d x\d t+ \iint_{\Omega_T} \tfrac{\beta}{\beta+1} \partial_t \zeta [u^\beta]_h^{\frac{\beta+1}{\beta}} \d x\d t
\\
&\quad +\iint_{\Omega_T} \partial_t \zeta u \big(w^\beta-[u^\beta]_h \big) \d x\d t
\\
&\xrightarrow[h\to 0]{} \iint_{\Omega_T} \zeta u \partial_t w^\beta \d x\d t+ \iint_{\Omega_T} \partial_t \zeta \big(\tfrac{\beta}{\beta+1} u^{\beta+1}+u(w^\beta-u^\beta) \big) \d x\d t
\\
&\quad =\iint_{\Omega_T} \zeta \partial_t w^\beta (u-w) \d x\d t -\iint_{\Omega_T} \partial_t \zeta \b[u,w] \d x \d t,
\end{align*}
This shows ``$\leq$'' in \eqref{eq:time_1}. The reverse inequality can be derived in the same way by taking
$$
\varphi =\zeta \left( w^\beta-[v^\beta]_{\overline h} \right)
$$
as test function. \end{proof}

\begin{theo}\label{cont_into_Lbetaplusone}
Let $u$ be a weak solution in the sense of Definition \ref{weakdef}. Then
\\
$u\in C([0,T];L^{\beta+1}_\textrm{loc}(\Omega))$.
\end{theo}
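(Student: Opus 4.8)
The plan is to show that the exponential time mollifications $[u^\beta]_h^{1/\beta}$ converge to $u$ uniformly in $t$ and locally in $x$, exploiting that each mollification is already time-continuous.

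\emph{Step 1: reduction.} Since $u\in L^{\beta+1}(\Omega_T)$ we have $u^\beta\in L^{(\beta+1)/\beta}(\Omega_T)$, so Lemma \ref{expmolproperties}(iv) gives $[u^\beta]_h\in C([0,T];L^{(\beta+1)/\beta}(\Omega))$; composing with $g\mapsto g^{1/\beta}$, which is continuous from $L^{(\beta+1)/\beta}$ into $L^{\beta+1}$ on nonnegative functions because $|a^{1/\beta}-b^{1/\beta}|\leq|a-b|^{1/\beta}$, we obtain $[u^\beta]_h^{1/\beta}\in C([0,T];L^{\beta+1}(\Omega))$ for every $h$. By Lemma \ref{expmolproperties}(i), $[u^\beta]_h\to u^\beta$ in $L^{(\beta+1)/\beta}(\Omega_T)$, hence $[u^\beta]_h^{1/\beta}\to u$ a.e.\ along a subsequence. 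Therefore it suffices to prove that for each $\eta\in C^\infty_0(\Omega,\R_{\geq 0})$ and each $t_0\in(0,T)$,
\[
\esssup_{t\in[t_0,T]}\int_\Omega\eta\,\b\big[u(\cdot,t),[u^\beta]_h^{1/\beta}(\cdot,t)\big]\d x\xrightarrow[h\to0]{}0,
\]
together with the analogue on $[0,T-t_0]$ for the reversed mollification $[u^\beta]_{\overline h}$: indeed, by Lemma \ref{estimates:boundary_terms}(i) and the elementary bound $|a-b|^{\beta+1}\leq|a^{(\beta+1)/2}-b^{(\beta+1)/2}|^2$ for $a,b\geq 0$ (valid since $(\beta+1)/2\geq1$), this controls $\esssup_t\|u(\cdot,t)-[u^\beta]_h^{1/\beta}(\cdot,t)\|_{L^{\beta+1}(\{\eta=1\})}$, and since the maps $[u^\beta]_h^{1/\beta}$ are time-continuous, a triangle inequality makes $\{[u^\beta]_h^{1/\beta}\}$ Cauchy in $C([t_0,T];L^{\beta+1}_{\mathrm{loc}}(\Omega))$ with limit necessarily equal to $u$.

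\emph{Step 2: a pointwise-in-time identity.} For fixed $h$, the function $w:=[u^\beta]_h^{1/\beta}$ lies in $\mathcal V$: $w^\beta=[u^\beta]_h\in L^p(0,T;W^{1,p}(\Omega))$ by Lemma \ref{expmolproperties}(iii), $\partial_tw^\beta=\tfrac1h(u^\beta-[u^\beta]_h)\in L^{(\beta+1)/\beta}(\Omega_T)$ by Lemma \ref{expmolproperties}(i)--(ii), and $w\in L^{\beta+1}(\Omega_T)$. Apply \eqref{eq:time_1} with $\zeta(x,s)=\eta(x)\phi_\varepsilon(s)$, where $\phi_\varepsilon\in C^\infty_0((0,T),[0,1])$ approximates the indicator of $(\tau,t)$ for Lebesgue points $0<\tau<t<T$ of the $L^1$ function $s\mapsto\int_\Omega\eta\,\b[u,w](\cdot,s)\d x$. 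Letting $\varepsilon\to0$ (legitimate since $A\cdot\nabla[\eta(u^\beta-w^\beta)]\in L^1(\Omega_T)$ by \eqref{structcond1}, and $\eta\,\partial_tw^\beta(u-w)\in L^1(\Omega_T)$ because $\tfrac{\beta+1}{\beta}$ and $\beta+1$ are conjugate) yields, writing $g_h(s):=\int_\Omega\eta\,\b[u,w](\cdot,s)\d x$,
\[
g_h(\tau)-g_h(t)=\int_\tau^t\!\!\int_\Omega A\cdot\nabla[\eta(u^\beta-w^\beta)]+\eta\,\partial_tw^\beta(u-w)\,\d x\d s.
\]
Since $r\mapsto r^\beta$ is increasing, $u^\beta-[u^\beta]_h$ and $u-[u^\beta]_h^{1/\beta}$ always have the same sign, so $\eta\,\partial_tw^\beta(u-w)=\tfrac\eta h(u^\beta-[u^\beta]_h)(u-[u^\beta]_h^{1/\beta})\geq0$. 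Estimating the first term by \eqref{structcond1} and Hölder's inequality we get, for a.e.\ $0<\tau<t<T$,
\[
g_h(t)\leq g_h(\tau)+C_1\Big(\iint_{\Omega_T}|\nabla u^\beta|^p\Big)^{\frac{p-1}{p}}\Big(\iint_{\Omega_T}\big|\nabla[\eta(u^\beta-[u^\beta]_h)]\big|^p\Big)^{\frac1p}=:g_h(\tau)+\omega(h),
\]
where $\omega(h)\to0$ as $h\to0$ by Lemma \ref{expmolproperties}(i),(iii).

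\emph{Step 3: conclusion.} Integrating $g_h(t)\leq g_h(\tau)+\omega(h)$ in $\tau$ over $(t_0/2,t_0)$ gives, for a.e.\ $t\in[t_0,T]$, $g_h(t)\leq\tfrac2{t_0}\iint_{\Omega_T}\eta\,\b[u,[u^\beta]_h^{1/\beta}]\d x\d s+\omega(h)$, and the first term tends to $0$ by Lemma \ref{estimates:boundary_terms}(iii) together with $u^\beta-[u^\beta]_h\to0$ in $L^{(\beta+1)/\beta}(\Omega_T)$. This gives the required convergence on $[t_0,T]$, hence $u\in C((0,T];L^{\beta+1}_{\mathrm{loc}}(\Omega))$. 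Repeating the argument with $[u^\beta]_{\overline h}$ in place of $[u^\beta]_h$ — for which $\partial_tw^\beta=\tfrac1h([u^\beta]_{\overline h}-u^\beta)$, so the corresponding parabolic term is $\leq0$ and one obtains $\overline g_h(t)\leq\overline g_h(\sigma)+\overline\omega(h)$ for a.e.\ $t<\sigma$ — and integrating in $\sigma$ over $(T-t_0,T-t_0/2)$ gives convergence on $[0,T-t_0]$, hence continuity up to $t=0$. Combining the two yields $u\in C([0,T];L^{\beta+1}_{\mathrm{loc}}(\Omega))$, and the two representatives obtained agree on $(0,T)$, so there is a single continuous representative on $[0,T]$.

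\emph{Main obstacle.} The delicate point is Step 2: extracting a genuine pointwise-in-time statement from the space-time identity \eqref{eq:time_1} by localizing with a smoothed time indicator, and recognizing that the a priori ``wrong-sign'' parabolic contribution $\eta\,\partial_tw^\beta(u-w)$ is in fact of the favorable sign for the forward mollification (and of the opposite, equally favorable, sign for the reversed one). Everything else — the approximation limits and the elementary inequalities relating $\b[\cdot,\cdot]$ to the $L^{\beta+1}$-norm — is routine.
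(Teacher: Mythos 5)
Your proof is correct and arrives at the same conclusion, but it routes through a genuinely different bookkeeping of the mollifications and test functions than the paper does. The paper works on $[0,T/2]$ with the \emph{reversed} mollification $w=([u^\beta]_{\bar h})^{1/\beta}$ and the test function $\zeta=\eta\,\chi^\tau_\varepsilon\,\psi$ (a ramp at a single time $\tau$, tempered near $T$ by a fixed cutoff $\psi$); there the parabolic contribution $\zeta\,\partial_t w^\beta(u-w)$ is \emph{nonpositive} and is discarded, yielding directly a uniform-in-$\tau$ bound $\int_K\b[u,w](\cdot,\tau)\,dx\le \omega(h)$. You instead use the \emph{forward} mollification on $[t_0,T]$ with a test function approximating the indicator of an interval $(\tau,t)$; there the parabolic contribution is \emph{nonnegative}, which is equally favorable but on the other side, and it produces the clean monotonicity-in-time estimate $g_h(t)\le g_h(\tau)+\omega(h)$. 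You then recover the uniform bound by averaging over $\tau\in(t_0/2,t_0)$ — an extra integration step the paper avoids, but one which makes the interval-indicator cutoff self-contained (no auxiliary cutoff $\psi$ near the endpoint is needed). The pairing of mollification and time interval is consistent with the fact that the forward average degenerates near $t=0$ and the reversed one near $t=T$; you use each one on the interval where it behaves well, just as the paper does, only with the roles of the two endpoints swapped. Both arguments rest on the same two pillars (Lemma \ref{lem:time-cont} with $w$ a mollified power, the sign of the parabolic term, and the elementary inequalities from Lemma \ref{estimates:boundary_terms}); your averaging device is a nice, familiar alternative to the paper's single-time ramp plus fixed time cutoff, and arguably makes the role of the measure-zero exceptional set more transparent.
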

\begin{proof}[Proof]
We prove continuity on the interval $[0,\tfrac12 T]$ and describe later how the argument can be modified to show continuity also on $[\tfrac12 T,T]$, thus completing the proof. We first note that due to Lemma \ref{expmolproperties}, $w:=([u^\beta]_{\bar{h}} )^\frac{1}{\beta}$ belongs to the set of admissible comparison functions $\mathcal{V}$ of Lemma \ref{lem:time-cont}. Furthermore, since Lemma \ref{expmolproperties} (iv) guarantees that $w^\beta$ is continuous $[0,T]\to L^\frac{\beta+1}{\beta}(\Omega)$ and since
\begin{align*}
|w(x,s)-w(x,t)|^{\beta+1}\leq |w^\beta (x,s)-w^\beta (x,t)|^\frac{\beta+1}{\beta}=|[u^\beta]_{\bar{h}}(x,s) - [u^\beta]_{\bar{h}}(x,t)|^\frac{\beta+1}{\beta},
\end{align*}
we see that $w$ is continuous $[0,T]\to L^{\beta+1}(\Omega)$. We will show that $u$ is essentially the uniform limit on the time interval $[0,\tfrac12 T]$ of the functions $w$ as $h\to 0$, and the continuity will follow from this.
For a compact set $K\subset \Omega$ we take $\eta \in C^\infty_0(\Omega;[0,1])$ such that $\eta=1$ on $K$ and $|\nabla \eta|\leq C_K$. Furthermore, take $\psi\in C^\infty([0,T];[0,1])$ with $\psi=1$ on $[T,\tfrac12 T]$, $\psi=0$ on $[\tfrac34 T,T]$ and $|\psi'|\leq \tfrac8T$. For $\tau\in (0,\tfrac12 T)$ and $\varepsilon>0$ so small that $\tau+\varepsilon< \tfrac12 T$ we define
\begin{align*}
\chi^\tau_\varepsilon(t)=
\begin{cases}
0, & t<\tau \\
\varepsilon^{-1}(t-\tau), & t\in [\tau, \tau+\varepsilon] \\
1, & t> \tau+\varepsilon.
\end{cases}
\end{align*}
We use \eqref{eq:time_1} with $\zeta=\eta\chi^\tau_\varepsilon\psi$ and $w=([u^\beta]_{\bar{h}} )^\frac{1}{\beta}$ to obtain
\begin{align*}
\varepsilon^{-1} \int^{\tau+\varepsilon}_\tau\int_\Omega \b[u,w]\eta \d x\d t &= \iint_{\Omega_T} A(x,t,u,\nabla u^\beta) \cdot \nabla[\eta(u^\beta-w^\beta)]\chi^\tau_\varepsilon\psi \d x \d t
\\
&\quad + \iint_{\Omega_T}\eta\chi^\tau_\varepsilon\psi \partial_t w^\beta (u-w) \d x\d t - \iint_{\Omega_T} \b[u,w]\eta \psi'\d x \d t
\\
&\leq \iint_{\Omega_T} |A(x,t,u,\nabla u^\beta)| (|\nabla u^\beta -\nabla [u^\beta]_{\bar{h}}| + |\nabla\eta| |u^\beta - [u^\beta]_{\bar{h}}|)\d x \d t
\\
& \quad + \frac{8}{T}\iint_{\Omega_T} \b[u,w]\d x \d t.
\end{align*}
Here we were able to drop the term involving $\partial_t w^\beta$ since Lemma \ref{expmolproperties} (ii) shows that the factors $\partial_t w^\beta$ and $(u-w)$ are of opposite sign, and hence their product is nonpositive. Passing to the limit $\varepsilon\to 0$ we see that
\begin{align}\label{gs}
\int_K \b[u,w](x,\tau) \d x & \leq C_K \iint_{\Omega_T} |A(x,t,u,\nabla u^\beta)| (|\nabla u^\beta -\nabla [u^\beta]_{\bar{h}}| + |u^\beta - [u^\beta]_{\bar{h}}|)\d x \d t
\\
\notag & \quad + \frac{8}{T}\iint_{\Omega_T} \b[u,w]\d x \d t
\end{align}
for all $\tau\in [0,\tfrac12 T]\setminus N_h$, where $N_h$ is a set of measure zero. Note that the integrand on the left-hand side can be estimated using Lemma \ref{estimates:boundary_terms} (ii) and the fact that $\beta>1$ as follows
\begin{align*}
|u-w|^{\beta+1}=(|u-w|^\frac{\beta+1}{2})^2\leq \big| u^{\frac{\beta+1}{2}}-w^{\frac{\beta+1}{2}} \big|^2 \leq c \b[u,w].
\end{align*}
For the term on the last line of \eqref{gs} we can use Lemma \ref{estimates:boundary_terms} (iii) to make the estimate
\begin{align*}
\b[u,w] &\leq c |u^\beta-[u^\beta]_{\bar{h}} \big|^{\frac{\beta+1}{\beta}} =c |u^\beta-[u^\beta]_{\bar{h}} \big|^\frac{1}{\beta}|u^\beta-[u^\beta]_{\bar{h}} \big|
\leq c(u + ([u^\beta]_{\bar{h}})^\frac{1}{\beta})|u^\beta - [u^\beta]_{\bar{h}}|.
\end{align*}
The first factor stays bounded in $L^{\beta+1}$ as $h\to 0$ and the second factor converges to zero in $L^\frac{\beta+1}{\beta}$ as $h\to 0$. The fact that $|A(u,\nabla u^\beta)|\in L^{p'}(\Omega_T)$ combined with Lemma \ref{expmolproperties} (iii) show that also the first integral on the right-hand side of \eqref{gs} converges to zero as $h\to 0$. Picking now a sequence $h_j\to 0$ and $w_j=([u^\beta]_{\bar{h}_j} )^\frac{1}{\beta}$ and $N:= \cup N_{h_j}$ (which has measure zero) we see that \eqref{gs} combined with the previous observations implies
\begin{align}\label{unif_limit}
\lim_{j\to \infty} \sup_{\tau\in [0,\frac{1}{2}T]\setminus N} \int_K |u-w_j|^{\beta+1}(x,\tau) \d x = 0.
\end{align}
As noted earlier, each $w_j$ is continuous as a map $[0,T]\to L^{\beta+1}(K)$ so the uniform limit \eqref{unif_limit} shows that $u$ has a representative which is continuous on $[0,\tfrac12T]\setminus N$. By the completeness of $L^{\beta+1}(K)$ we find a representative of $u$ which is continuous $[0,\tfrac12T]\to L^{\beta+1}(K)$. The continuity on $[\tfrac12T,T]$ follows from a similar argument with $w=([u^\beta]_h)^\frac{1}{\beta}$ and with $\psi$ and $\chi^\tau_\varepsilon$ mirrored on the interval $[0,T]$ under the map $t\mapsto T-t$.\end{proof}

Now that we have established the continuity in time it is possible to show that weak solutions in the sense of Definition \ref{weakdef} satisfy a mollified weak formulation.
\begin{lem}\label{lem:mollified}
Let $u$ be a weak solution in the sense of Definition \ref{weakdef}. Then we have
\begin{align}\label{h-averaged-form}
\iint_{\Omega_T} [A(x,\cdot,u, \nabla u^\beta)]_h\cdot \nabla \phi+\partial_t u_h \phi\d x\d t- \int_\Omega u(x,0) \phi_{\bar{h}}(x,0) \d x = 0 %\iint_{\Omega_T} f_h\phi\d x\d t,
\end{align}
for all $\phi \in C^\infty(\Omega \times [0,T])$ with support contained in $K\times [0,\tau]$ ,where $K\subset \Omega$ is compact and $\tau \in (0,T)$. Here $u(x,0)$ refers to the value at time zero of the continuous representative of $u$ as a map $[0,T]\to L^{\beta+1}(K)$.
\end{lem}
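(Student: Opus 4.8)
The plan is to obtain \eqref{h-averaged-form} by mollifying the weak formulation \eqref{weakform2} and then extending the class of admissible test functions. First I would take an arbitrary $\varphi\in C^\infty_0(\Omega_T)$, use it in \eqref{weakform2}, and then apply the mollification $[\cdot]_h$ together with the basic identities of Lemma \ref{expmolproperties}. The key algebraic point is the ``integration by parts under mollification'' identity: for suitable $f,g$ one has
\begin{align*}
\iint_{\Omega_T} f_h\, g \d x\d t = \iint_{\Omega_T} f\, g_{\bar h}\d x\d t,
\end{align*}
which follows from Fubini after writing out the exponential kernels. Using this identity on the term $\iint A(x,t,u,\nabla u^\beta)\cdot\nabla\varphi$ and on the term $\iint u\,\partial_t\varphi$, and using Lemma \ref{expmolproperties} (ii) to rewrite $\partial_t u_h = \tfrac1h(u-u_h)$, one transfers the mollification from the (fixed) test function onto the coefficient $A$ and onto $u$, producing the expression $\iint [A]_h\cdot\nabla\varphi + \partial_t u_h\,\varphi$. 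For $\varphi\in C^\infty_0(\Omega_T)$ the boundary term at $t=0$ is absent because $\varphi_{\bar h}(x,0)$ pairs against nothing—more precisely one checks that the mollified identity holds in the form \eqref{h-averaged-form} with the boundary integral present but vanishing since $\varphi$ has compact support in time away from $0$.

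The second and main step is to upgrade from $\varphi\in C^\infty_0(\Omega_T)$ to $\phi\in C^\infty(\Omega\times[0,T])$ with $\spt\phi\subset K\times[0,\tau]$, i.e.\ test functions that need not vanish at $t=0$. Here I would use a cutoff in time: let $\sigma_\delta(t)$ be a smooth function equal to $0$ on $[0,\delta]$, equal to $1$ on $[2\delta,T]$, with $|\sigma_\delta'|\le C/\delta$, and apply the already-established identity to $\phi\,\sigma_\delta$. All terms not involving $\sigma_\delta'$ converge as $\delta\to 0$ to the corresponding terms with $\phi$ by dominated convergence, using $u^\beta\in L^p(0,T;W^{1,p}(\Omega))$, $[A]_h\in L^{p'}$, and $\partial_t u_h\in L^{\beta+1}$ (the latter from Lemma \ref{expmolproperties} (ii) since $u\in L^{\beta+1}$). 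The only delicate term is
\begin{align*}
\iint_{\Omega_T}\partial_t u_h\,\phi\,\sigma_\delta + u_h\,\phi\,\sigma_\delta'\d x\d t = \iint_{\Omega_T}\partial_t(u_h\sigma_\delta)\,\phi\d x\d t = -\iint_{\Omega_T} u_h\sigma_\delta\,\partial_t\phi\d x\d t,
\end{align*}
so it is cleaner to first integrate the $\partial_t u_h$ term by parts in $t$ (legitimate since $u_h\in C([0,T];L^{\beta+1}_{\mathrm{loc}})$ by Lemma \ref{expmolproperties} (iv)), obtaining a term $-\int_\Omega u_h(x,0)\phi(x,0)\sigma_\delta(0)\d x = 0$, and only then send $\delta\to0$. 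This reduces the whole identity to one with $\partial_t\phi$, at which point the boundary term at $t=0$ must reappear: reversing the integration by parts with the full $\phi$ gives exactly $\int_\Omega u_h(x,0)\phi(x,0)\d x$ on the boundary.

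Finally I would identify $u_h(x,0)$ with the correct quantity. By Theorem \ref{cont_into_Lbetaplusone}, $u$ has a representative continuous as a map $[0,T]\to L^{\beta+1}_{\mathrm{loc}}(\Omega)$; and a direct estimate on \eqref{def:moll} shows $u_h(\cdot,0)\to u(\cdot,0)$ in $L^{\beta+1}(K)$ as $h\to0$—indeed $u_h(x,0)=\tfrac1h\int_0^0=0$ literally, so one must be careful: the boundary term that appears is not $u_h(x,0)\phi(x,0)$ but rather comes from pairing $u$ with $\phi_{\bar h}$. Concretely, after transferring the mollification back onto $\phi$ via the Fubini identity, the term $\iint u\,\partial_t(\phi_{\bar h})$ integrated by parts produces $-\int_\Omega u(x,0)\phi_{\bar h}(x,0)\d x$, which is precisely the boundary term in \eqref{h-averaged-form}. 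The main obstacle is bookkeeping: keeping straight which integrations by parts are in $t$ versus which use the mollification-transfer identity, and ensuring every limit ($\varepsilon$, $\delta$, and the approximation of a general $\phi$ by $C^\infty_0$ functions in the interior) is justified by the integrability coming from Definition \ref{weakdef} and Lemma \ref{expmolproperties}. No single estimate is hard, but the order of operations must be chosen so that the $t=0$ boundary term lands on $\phi_{\bar h}$ rather than on $\phi$.
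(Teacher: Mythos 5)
Your first step contains a genuine error that undermines the rest of the argument: you claim that for $\varphi\in C^\infty_0(\Omega_T)$ the mollified identity holds with the boundary integral at $t=0$ vanishing, ``since $\varphi$ has compact support in time away from $0$.'' This is false for fixed $h$. The boundary term is $\int_\Omega u(x,0)\,\varphi_{\bar h}(x,0)\,\d x$, and $\varphi_{\bar h}(x,0)=\tfrac1h\int_0^T e^{-s/h}\varphi(x,s)\,\d s$ does \emph{not} vanish even when $\varphi$ is supported in $(\delta_0,T-\delta_0)$; it is merely small of order $e^{-\delta_0/h}$. (It does tend to $0$ as $h\to 0$, but the lemma is an identity at fixed $h$.) Because of this, your two-step strategy — establish \eqref{h-averaged-form} without boundary term for compactly supported $\phi$, then upgrade — starts from a wrong intermediate identity, and your subsequent cutoff-in-time argument with $\sigma_\delta$ then fails to produce the missing boundary term: as you yourself notice, $u_h(\cdot,0)\equiv 0$ identically, so the boundary term that your integration by parts in $t$ would create is literally zero, and you end up with the identity \emph{without} the $\int u(x,0)\phi_{\bar h}(x,0)$ term — which is wrong. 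Your closing self-correction (``the boundary term comes from pairing $u$ with $\phi_{\bar h}$'') identifies the right mechanism, but the proposal as written never actually executes it, and is not a proof.

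The paper's route avoids all of this by choosing the test function $\varphi=\eta_\varepsilon\,\phi_{\bar h}$ directly in \eqref{weakform2}, where $\eta_\varepsilon$ is a one-sided cutoff vanishing at $t=0$, and then sending $\varepsilon\to 0$. The elliptic term is handled by the Fubini identity $\iint A\cdot\nabla\phi_{\bar h}=\iint [A]_h\cdot\nabla\phi$; the parabolic term splits as $\iint u\,\eta_\varepsilon\,\tfrac{\phi_{\bar h}-\phi}{h} + \varepsilon^{-1}\int_0^\varepsilon\int_\Omega u\,\phi_{\bar h}$, and the first piece becomes $-\iint\partial_t u_h\,\phi$ via Fubini while the second piece converges, by the time continuity of Theorem \ref{cont_into_Lbetaplusone}, exactly to $\int_\Omega u(x,0)\phi_{\bar h}(x,0)\,\d x$. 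This produces \eqref{h-averaged-form} in one pass, with the boundary term appearing naturally from the cutoff rather than being chased down afterward. To repair your argument, you would need to abandon the claim that the $t=0$ boundary term vanishes for compactly supported $\phi$ and instead carry it from the start — at which point the two-step structure collapses into essentially the paper's single-step argument.
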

\begin{proof}[Proof]
Consider the piecewise smooth function
\begin{align*}
\eta_\varepsilon (t):=
\begin{cases}
\, \frac{t}{\varepsilon}, & t\in[0,\varepsilon]
\\
\, 1, & t\in (\varepsilon, T],
\end{cases}
\end{align*}
and use \eqref{weakform2} with the test function $\varphi=\eta_\varepsilon\phi_{\bar{h}}$. Taking the limit $\varepsilon \to 0$ and using Fubini's theorem we see that the elliptic term will converge to the integral of $[A(x,\cdot,u,\nabla u^\beta)]_h \cdot \nabla \phi$. Note now that
\begin{align*}
\iint_{\Omega_T} u\partial_t(\eta_\varepsilon\phi_{\bar{h}})\d x \d t = \iint_{\Omega_T}u\eta_\varepsilon\frac{\phi_{\bar{h}}-\phi}{h}\d x \d t +\varepsilon^{-1} \int^\varepsilon_0\int_\Omega u \phi_{\bar{h}} \d x \d t.
\end{align*}
In the first term we can pass to the limit $\varepsilon\to 0$, use Fubini's theorem and Lemma \ref{expmolproperties} (ii) to obtain the integral of $\partial_t u_h \varphi$. It remains to investigate what happens to the last term in the limit $\varepsilon\to 0$. Note that we can write this term as
\begin{align*}
\varepsilon^{-1} \int^\varepsilon_0\int_K u \phi_{\bar{h}} \d x \d t
=\varepsilon^{-1} \int^\varepsilon_0 \int_K u(x,t) \phi_{\bar{h}}(0) \d x \d t + \varepsilon^{-1} \int^\varepsilon_0 \int_K u(x,t)[ \phi_{\bar{h}}(t)- \phi_{\bar{h}}(0) ]\d x \d t.
\end{align*}
The second term on the right-hand side converges to zero since $\phi_{\bar{h}}$ is uniformly continuous and $\norm{u(t)}_{L^{\beta+1}(K)}$ is bounded independent of $t$. The first term on the right-hand side converges to the second integral on the left-hand side of \eqref{h-averaged-form} since $u\in C([0,T]; L^{\beta+1}(K))$ and $\phi_{\bar{h}}(0)\in L^\frac{\beta+1}{\beta}(\Omega)$.
\end{proof}

\section{Energy Estimates}\label{sec:energy}
Here we discuss various energy estimates. We begin by showing that the assumptions on $u$ made in Definition \ref{weakdef} allow suitable choices of test functions in the mollified weak formulation. This is a crucial step in obtaining a rigourous proof for the energy estimates.
 
We want to use test functions involving $(u^\beta-k^\beta)_\pm$ for some $k\geq0$. Since these functions have a $p$-integrable gradient, they automatically fit with the elliptic term in \eqref{h-averaged-form}. The minimal integrability of $u$ which justifies the test function becomes apparent from the diffusive part of the mollified weak formulation: If $u\in L^q$ then $\partial_t u_h\in L^q$ and $(u^\beta-k^\beta)_\pm \in L^\frac{q}{\beta}$. These exponents should be at least dual exponents so we need
\begin{align*}
\frac{1}{q}+\frac{1}{q/\beta}\leq 1,
\end{align*}
which is equivalent to $q\geq \beta +1$. This is exactly the integrability we required in Definition \ref{weakdef}.

We now show the energy estimate for solutions according to Definition \ref{weakdef}.
\begin{lem}\label{lem:Energy_Est}
Let $u$ be a weak solution in the sense of Definition \ref{weakdef}. Then
\begin{align}\label{caccioppoli}
\iint_{\Omega_T} |\nabla (u^\beta-k^\beta)_\pm|^p\varphi^p\d x \d t + \esssup_{\tau\in [0,T]}\int_\Omega \b[u,k] \chi_{\{(u-k)_\pm > 0\}}\varphi^p(x,\tau)\d x
\\
\notag \leq C\iint_{\Omega_T}(u^\beta-k^\beta)^p_\pm |\nabla \varphi|^p\d x\d t + \iint_{\Omega_T} \b[u,k] \chi_{\{(u-k)_\pm > 0\}} |\partial_t \varphi^p|\d x \d t,
\end{align}
for all smooth $\varphi\geq 0$ defined on $\bar{\Omega}_T$, vanishing for $x$ outside a compact $K\subset \Omega$ and for all times less than some $\delta>0$. The constant $C$ only depends on the data.
\end{lem}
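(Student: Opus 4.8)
The plan is to use the identity \eqref{eq:time_1} of Lemma~\ref{lem:time-cont} with a comparison function that is a time--mollification of $\min\{u,k\}$ (for the ``$+$'' case) or $\max\{u,k\}$ (for the ``$-$'' case), together with the cut--off $\zeta=\varphi^p\chi^\tau_\varepsilon$, where $\chi^\tau_\varepsilon\in[0,1]$ is Lipschitz, equal to $1$ on $[0,\tau]$ and to $0$ on $[\tau+\varepsilon,T]$. Working with \eqref{eq:time_1} is convenient because there the only time derivative falls on $\zeta$, so no mollified time derivatives of $u$ have to be differentiated. Thanks to the assumptions that $\varphi$ vanishes for small times and outside a compact $K\subset\Omega$, the function $\zeta$ is an admissible nonnegative, compactly supported test function as soon as $\tau+\varepsilon<T$; the restriction $\zeta\in C^\infty_0$ in Lemma~\ref{lem:time-cont} is relaxed to Lipschitz--in--time cut--offs by a routine approximation.

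Fix $k\ge 0$ and treat the ``$+$'' case. Put $m:=\min\{u^\beta,k^\beta\}$ and $w_h:=(m_h)^{1/\beta}$, where $m_h$ is the mollification of $m$ given by \eqref{def:moll}. Using Lemma~\ref{expmolproperties}, $m\le u^\beta$ and $u\in L^{\beta+1}(\Omega_T)$, one checks that $w_h$ belongs to the class $\mathcal V$ of Lemma~\ref{lem:time-cont}, that $w_h^\beta=m_h\to m$ in $L^p(0,T;W^{1,p}(\Omega))$ and that $w_h\to\min\{u,k\}$ in $L^{\beta+1}(\Omega_T)$; in particular $\b[u,w_h]\to\b[u,\min\{u,k\}]=\b[u,k]\chi_{\{u>k\}}$, the last equality because $\b[v,v]=0$. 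Inserting $w=w_h$ into \eqref{eq:time_1} and letting $h\to0$, the left--hand side tends to $\iint_{\Omega_T}\partial_t\zeta\,\b[u,k]\chi_{\{u>k\}}\d x\d t$ (dominated convergence, using Lemma~\ref{estimates:boundary_terms} to dominate $\b[u,w_h]$ by an $L^1$ majorant built from $u^{\beta+1}$), and the first term on the right tends to $\iint_{\Omega_T}A(x,t,u,\nabla u^\beta)\cdot\nabla[\zeta(u^\beta-k^\beta)_+]\d x\d t$ by Lemma~\ref{expmolproperties}(iii) and $|A(\cdot,u,\nabla u^\beta)|\in L^{p'}$. The remaining term $E_h:=\iint_{\Omega_T}\zeta\,\partial_t w_h^\beta\,(u-w_h)\,\d x\d t$ is nonnegative: decomposing $u-w_h=(m^{1/\beta}-m_h^{1/\beta})+(u-m^{1/\beta})$, the first contribution equals $\iint\zeta\,\tfrac1h(m-m_h)(m^{1/\beta}-m_h^{1/\beta})\ge0$ since $s\mapsto s^{1/\beta}$ is increasing, and in the second contribution $u-m^{1/\beta}=(u-k)_+$ is supported in $\{u>k\}$, where $m\equiv k^\beta$ while $m_h\le k^\beta(1-e^{-t/h})\le k^\beta$, so $(m-m_h)\ge0$ there as well. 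Since $E_h$ is the difference of the two convergent terms, $\lim_h E_h$ exists and is $\ge0$, whence
\[
\iint_{\Omega_T}\partial_t\zeta\,\b[u,k]\chi_{\{u>k\}}\,\d x\d t\ \ge\ \iint_{\Omega_T}A(x,t,u,\nabla u^\beta)\cdot\nabla[\zeta(u^\beta-k^\beta)_+]\,\d x\d t .
\]

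Now pass $\varepsilon\to0$. Since $\partial_t\chi^\tau_\varepsilon$ concentrates negatively at $t=\tau$ and $u\in C([0,T];L^{\beta+1}_{\mathrm{loc}}(\Omega))$ by Theorem~\ref{cont_into_Lbetaplusone}, the left--hand side produces $-\int_\Omega\b[u,k]\chi_{\{u>k\}}\varphi^p(x,\tau)\d x+\iint_{\{t<\tau\}}\b[u,k]\chi_{\{u>k\}}\,\partial_t\varphi^p\,\d x\d t$, and the right--hand side becomes $\iint_{\{t<\tau\}}A\cdot\nabla[\varphi^p(u^\beta-k^\beta)_+]$. Expanding $\nabla[\varphi^p(u^\beta-k^\beta)_+]=\varphi^p\nabla(u^\beta-k^\beta)_++p\varphi^{p-1}(u^\beta-k^\beta)_+\nabla\varphi$ and using \eqref{structcond2} on $\{u>k\}$ bounds the leading term from below by $C_0\iint\varphi^p|\nabla(u^\beta-k^\beta)_+|^p$, while \eqref{structcond1} and Young's inequality with exponents $\tfrac{p}{p-1},p$ control the other term by $\tfrac{C_0}{2}\iint\varphi^p|\nabla(u^\beta-k^\beta)_+|^p+C\iint(u^\beta-k^\beta)^p_+|\nabla\varphi|^p$. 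Absorbing, rearranging, enlarging the integration region on the right to $\Omega_T$, and finally taking the supremum over $\tau\in(0,T)$ in the boundary term while letting $\tau\to T$ (monotone convergence) in the gradient term, one arrives at \eqref{caccioppoli} with the ``$+$'' sign. The ``$-$'' case is entirely parallel with $w_h:=\big([\max\{u^\beta,k^\beta\}]_h\big)^{1/\beta}$: now $\nabla(u^\beta-k^\beta)_-=-\chi_{\{u<k\}}\nabla u^\beta$, so \eqref{structcond2} again gives the favourable sign, and $\liminf_h E_h\ge0$ is obtained as before, the one extra point being that on $\{u<k\}$ the potentially negative part of $E_h$ is bounded below by $-k^\beta\iint\zeta\,\tfrac1h e^{-t/h}(k-u)_+\,\d x\d t$, which tends to $0$ because $\zeta$ vanishes for $t<\delta$ and $\tfrac1h e^{-\delta/h}\to0$.

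The routine ingredients are the elliptic estimate and the bookkeeping around the time localisation. The genuinely delicate point is the control of the cross term $E_h$ as $h\to0$, and this is exactly where the precise choice of comparison function matters: taking $w_h$ built from $\min\{u^\beta,k^\beta\}$ (resp.\ $\max\{u^\beta,k^\beta\}$) guarantees that on the set where the truncation is active the competitor is constant in time, so its exponential mollification stays on the correct side and the relevant difference has a definite sign. Combined with the monotonicity of $s\mapsto s^{1/\beta}$, the contraction and convergence properties of the mollification in Lemma~\ref{expmolproperties}, and — for identifying the boundary term — the time continuity from Theorem~\ref{cont_into_Lbetaplusone}, this suffices to push the argument through.
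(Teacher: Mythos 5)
Your proposal is correct, but it follows a genuinely different route from the paper's. The paper proves the estimate directly from the mollified weak formulation of Lemma \ref{lem:mollified}, testing with $\phi=(u^\beta-k^\beta)_+\varphi^p\xi_{\tau,\varepsilon}$ and handling the diffusion term via the pointwise inequality
\begin{align*}
\partial_t u_h\,\phi=\tfrac1h(u-u_h)\big[(u^\beta-k^\beta)_+-(u_h^\beta-k^\beta)_+\big]\varphi^p\xi_{\tau,\varepsilon}+\partial_t G(u_h)\,\varphi^p\xi_{\tau,\varepsilon}\ \ge\ \partial_t G(u_h)\,\varphi^p\xi_{\tau,\varepsilon},
\end{align*}
where the first summand is nonnegative because $s\mapsto(s^\beta-k^\beta)_+$ is increasing. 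You instead invoke the comparison identity \eqref{eq:time_1} of Lemma \ref{lem:time-cont} and feed it the mollified truncation $w_h=\big([\min\{u^\beta,k^\beta\}]_h\big)^{1/\beta}$ (resp.\ $\max$ for the ``$-$'' case); the favourable sign of the cross term $\iint\zeta\,\partial_t w_h^\beta(u-w_h)$ then comes from two observations of a different flavour: monotonicity of $s\mapsto s^{1/\beta}$, and the fact that the exponential mollification of a function bounded above by $k^\beta$ stays $\le k^\beta$, so that on $\{u>k\}$ one has $m-m_h\ge 0$. Your route buys a conceptually clean reuse of the already established Lemma \ref{lem:time-cont} and avoids having to re-derive the vanishing of the initial-value term in \eqref{h-averaged-form}; the cost shows up in the ``$-$'' case, where the one-sided nature of the exponential mollification (it undershoots constants near $t=0$) forces you to control a potentially negative error of size $k^\beta\iint\zeta\,\tfrac1h e^{-t/h}(k-u)_+\to 0$. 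You handle this correctly by using that $\varphi$ vanishes for $t<\delta$, but note that this is a genuine extra step: in the paper's argument the ``$-$'' case is literally symmetric, whereas in yours it is not. Both approaches ultimately rely on $u\in L^{\beta+1}$, the structure conditions, and the time continuity of $u$ (or a Lebesgue-point argument) to make sense of the $\esssup$ term, and both are sound.
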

\begin{proof}[Proof]
We prove the case for the positive part. The case for the negative part is similar. We use the mollified weak formulation \eqref{h-averaged-form} with the test function

\noindent $\phi = (u^\beta-k^\beta)_+\varphi^p\xi_{\tau,\varepsilon}$ where $\varphi$ is as in the statement of the lemma and $\xi_{\tau,\varepsilon}$ is defined as
\begin{align}\label{def:xi}
\xi_{\tau,\varepsilon}(t):=\begin{cases}
\, 1, &t<\tau
\\
\, 1-\varepsilon^{-1}(\tau-t), &t\in [\tau, \tau+\varepsilon]
\\
\, 0, &t>\tau+\varepsilon.
\end{cases}
\end{align}
Even though $\phi$ is nonsmooth, it is still an admisssible test function since we can find a sequence of functions $\phi_j\in C^\infty_0(\Omega_T)$ converging to $\phi$ in $L^p(0,T;W^{1,p}_0(\Omega))\cap L^\frac{\beta+1}{\beta}(\Omega_T)$. Our goal is to make some estimates in \eqref{h-averaged-form} and pass to the limit $h\to 0$ and then $\varepsilon \to 0$. We first show that the term involving the initial value vanishes in this process. Taking into account the support of $\phi$ we have
\begin{align*}
\int_\Omega u(x,0) \phi_{\bar{h}}(x,0) \d x &= \iint_{\Omega_T} u(x,0) h^{-1}e^{-\frac{t}{h}}\phi(x,t)\d x\d t = \int^T_\delta \int_\Omega u(x,0) h^{-1}e^{-\frac{t}{h}}\phi(x,t)\d x\d t
\\
&\leq \int^T_\delta \int_\Omega u(x,0) \delta^{-1} (\tfrac{\delta}{h} e^{-\frac{\delta}{h}})\phi(x,t)\d x\d t\xrightarrow[h\to 0]{} 0,
\end{align*}
due to the dominated convergence theorem. The elliptic term can be treated using Lemma \ref{expmolproperties} (i) as
\begin{align*}
\iint_{\Omega_T} [A(x,\cdot,u, \nabla u^\beta)]_h\cdot \nabla \phi \d x \d t &\xrightarrow[h\to 0]{} \iint_{\Omega_T} A(x,t,u, \nabla u^\beta)\cdot \nabla \phi \d x \d t
\\
&\xrightarrow[\varepsilon\to 0]{} \iint_{\Omega_\tau} A(x,t,u, \nabla u^\beta)\cdot \nabla [(u^\beta-k^\beta)_+\varphi^p] \d x \d t
\end{align*}
We now calculate
\begin{align*}
\nabla \phi = \varphi^p\xi_{\tau,\varepsilon}\chi_{\{u>k\}}\nabla u^\beta + p(u^\beta-k^\beta)_+\xi_{\tau,\varepsilon}\varphi^{p-1}\nabla \varphi.
\end{align*}
From the properties of the vector field, here denoted only $A(u,\nabla u^\beta)$ for brevity, and Young's inequality we obtain
\begin{align*}
A(u,\nabla u^\beta) \cdot \nabla [(u^\beta-k^\beta)_+\varphi^p] &= A(u,\nabla u^\beta) \cdot \nabla u^\beta \chi_{\{u>k\}} \varphi^p + A(u,\nabla u^\beta) \cdot \nabla (\varphi^p)(u^\beta-k^\beta)_+
\\
&\geq c|\nabla u^\beta|^p \chi_{\{u>k\}} \varphi^p -|A(u,\nabla u^\beta)||\nabla \varphi|p\varphi^{p-1}(u^\beta-k^\beta)_+
\\
&\geq c|\nabla u^\beta|^p \chi_{\{u>k\}} \varphi^p -c|\nabla u^\beta|^{p-1}\varphi^{p-1}(u^\beta-k^\beta)_+|\nabla \varphi|
\\
&\geq c|\nabla u^\beta|^p \chi_{\{u>k\}} \varphi^p- c(u^\beta-k^\beta)_+^p|\nabla \varphi|^p.
\end{align*}
Using Lemma \ref{expmolproperties} (ii) and the fact that $s\mapsto (s^\beta-k^\beta)_+$ is increasing we can treat the diffusion term as
\begin{align*}
\partial_t u_h \phi &= \big(\frac{u-u_h}{h}\big)[(u^\beta-k^\beta)_+ - ([u_h]^\beta-k^\beta)_+]\varphi^p\xi_{\tau,\varepsilon} + \partial_t u_h([u_h]^\beta-k^\beta)_+\varphi^p\xi_{\tau,\varepsilon}
\\
&\geq \partial_t G(u_h) \varphi^p \xi_{\tau,\varepsilon},
\end{align*}
where
\begin{align}\label{G-def}
G(u):= \int^u_0 (s^\beta-k^\beta)_+ \d s= \b[u,k]\chi_{\{u>k\}}.
\end{align}
The chain rule works in our case since Lemma \ref{expmolproperties} guarantees that both $u_h$ and $\partial_t u_h$ are in $L^{\beta+1}(\Omega_T)$. Thus, we may estimate
\begin{align*}
\iint_{\Omega_T} \partial_t u_h \phi\d x \d t &\geq \iint_{\Omega_T} \partial_t G(u_h) \varphi^p \xi_{\tau,\varepsilon} \d x \d t= - \iint_{\Omega_T} G(u_h)\partial_t(\varphi^p\xi_{\tau,\varepsilon}) \d x \d t
\\
&\xrightarrow[h\to 0]{} - \iint_{\Omega_T} G(u)\partial_t(\varphi^p\xi_{\tau,\varepsilon}) \d x \d t
\\
&=- \iint_{\Omega_T} G(u)\partial_t\varphi^p\xi_{\tau,\varepsilon} \d x \d t + \varepsilon^{-1}\int^{\tau+\varepsilon}_\tau \int_\Omega G(u) \varphi^p\d x \d t
\\
&\xrightarrow[\varepsilon\to 0]{} - \iint_{\Omega_\tau} G(u)\partial_t\varphi^p\d x \d t + \int_\Omega G(u) \varphi^p(x,\tau)\d x,
\end{align*}
for a.e. $\tau$. Putting together the estimates for the elliptic and diffusion terms we have
\begin{align*}
c\iint_{\Omega_\tau}|\nabla u^\beta|^p \chi_{\{u>k\}} \varphi^p \d x \d t + \int_\Omega G(u) \varphi^p(x,\tau)\d x &\leq c \iint_{\Omega_\tau} (u^\beta-k^\beta)_+^p|\nabla \varphi|^p \d x \d t
\\
&\quad + \iint_{\Omega_\tau} G(u)|\partial_t\varphi^p|\d x \d t,
\end{align*}
for a.e. $\tau$. We obtain the desired estimate by using \eqref{G-def} and noting that the right-hand side can be estimated upwards by replacing $\tau$ by $T$.
\end{proof}
The following variant of the energy estimate will also be useful.
\begin{lem}\label{lem:alt-en-est}
Let $\varphi \in C^\infty_0(\Omega;\R_{\geq 0})$ and suppose that $[t_1,t_2]\subset (0,T)$. Then the time-continuous representative of $u$ satisfies
\begin{align}\label{klj}
c^{-1}\int^{t_2}_{t_1}\int_\Omega& |\nabla (u^\beta-k^\beta)_-|^p\d x \d t + \int_\Omega \b[u,k]\chi_{\{u<k\}}\varphi^p(x,t_2)\d x \d t
\\
\notag &\leq c \int^{t_2}_{t_1}\int_\Omega (u^\beta-k^\beta)_-^p|\nabla \varphi|^p\d x \d t+ \int_\Omega \b[u,k]\chi_{\{u<k\}}\varphi^p(x,t_1)\d x,
\end{align}
where $c>0$ is a constant depending only on $p,C_0,C_1$.
\end{lem}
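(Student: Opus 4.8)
The plan is to run the scheme of the proof of Lemma~\ref{lem:Energy_Est}, but with a time cut-off that localizes to $[t_1,t_2]$ instead of to a region $\{t>\delta\}$, so that \emph{both} endpoints produce genuine boundary terms. Fix $\varphi\in C^\infty_0(\Omega;\R_{\geq 0})$, put $K:=\supp\varphi$, and for small $\varepsilon>0$ let $\zeta_\varepsilon\in C([0,T];[0,1])$ be the piecewise-linear function which vanishes on $[0,t_1]$, rises linearly to $1$ on $[t_1,t_1+\varepsilon]$, equals $1$ on $[t_1+\varepsilon,t_2]$, falls linearly to $0$ on $[t_2,t_2+\varepsilon]$ and vanishes on $[t_2+\varepsilon,T]$; in the notation of the earlier proofs this is $\zeta_\varepsilon=\chi^{t_1}_\varepsilon\,\xi_{t_2,\varepsilon}$. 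I would test the mollified weak formulation \eqref{h-averaged-form} with $\phi=-(u^\beta-k^\beta)_-\varphi^p\zeta_\varepsilon$; as in Lemma~\ref{lem:Energy_Est}, $\phi$ is admissible after approximation in $L^p(0,T;W^{1,p}_0(\Omega))\cap L^{\frac{\beta+1}{\beta}}(\Omega_T)$, and because $\zeta_\varepsilon$ vanishes near $t=0$ while $t_1>0$, the initial-value term in \eqref{h-averaged-form} tends to $0$ as $h\to 0$ by exactly the dominated-convergence estimate used there.

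For the elliptic term the computation is that of Lemma~\ref{lem:Energy_Est} applied to the lower truncation: with $\nabla\phi=\zeta_\varepsilon\big(\varphi^p\chi_{\{u<k\}}\nabla u^\beta-p(u^\beta-k^\beta)_-\varphi^{p-1}\nabla\varphi\big)$, the structure conditions \eqref{structcond1}--\eqref{structcond2} and Young's inequality yield, after $h\to 0$, the bound $A(x,t,u,\nabla u^\beta)\cdot\nabla\phi\geq\zeta_\varepsilon\big(c^{-1}|\nabla(u^\beta-k^\beta)_-|^p\varphi^p-c\,(u^\beta-k^\beta)_-^p|\nabla\varphi|^p\big)$ with $c=c(p,C_0,C_1)$. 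For the diffusion term I would use the monotonicity trick of Lemma~\ref{lem:Energy_Est}: since $s\mapsto-(s^\beta-k^\beta)_-$ is nondecreasing, $\partial_t u_h\,\big(-(u^\beta-k^\beta)_-\big)\geq\partial_t u_h\,\big(-([u_h]^\beta-k^\beta)_-\big)=\partial_t\Psi(u_h)$, where $\Psi(s):=\int_0^s-(\sigma^\beta-k^\beta)_-\,\d\sigma=\b[s,k]\chi_{\{s<k\}}-\tfrac{\beta}{\beta+1}k^{\beta+1}$; multiplying by $\varphi^p\zeta_\varepsilon\geq 0$, integrating by parts in time (no endpoint term, since $\zeta_\varepsilon$ vanishes at $t=0$ and $t=T$), and then letting $h\to 0$ in \eqref{h-averaged-form} (using $[A(x,\cdot,u,\nabla u^\beta)]_h\to A(x,\cdot,u,\nabla u^\beta)$ in $L^{p'}$, $\Psi(u_h)\to\Psi(u)$ in $L^1(K\times(0,T))$, and the vanishing of the initial-value term) one is left with
\[
\iint_{\Omega_T}\zeta_\varepsilon\,c^{-1}|\nabla(u^\beta-k^\beta)_-|^p\varphi^p\,\d x\d t\ \leq\ \iint_{\Omega_T}\Psi(u)\,\varphi^p\,\zeta_\varepsilon'\,\d x\d t+c\iint_{\Omega_T}\zeta_\varepsilon\,(u^\beta-k^\beta)_-^p|\nabla\varphi|^p\,\d x\d t .
\]

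It remains to let $\varepsilon\to 0$. In the two $\zeta_\varepsilon$-weighted terms, $\zeta_\varepsilon\to\chi_{(t_1,t_2)}$ pointwise and dominated convergence gives the integrals over $\Omega\times(t_1,t_2)$. In the middle term, $\zeta_\varepsilon'$ equals $\varepsilon^{-1}$ on $(t_1,t_1+\varepsilon)$ and $-\varepsilon^{-1}$ on $(t_2,t_2+\varepsilon)$, so it equals $\varepsilon^{-1}\!\int_{t_1}^{t_1+\varepsilon}\!\!\int_\Omega\Psi(u)\varphi^p\,\d x\d t-\varepsilon^{-1}\!\int_{t_2}^{t_2+\varepsilon}\!\!\int_\Omega\Psi(u)\varphi^p\,\d x\d t$. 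Here I invoke Theorem~\ref{cont_into_Lbetaplusone}: since $\b[\cdot,k]\chi_{\{\cdot<k\}}$ (hence $\Psi$) is bounded and Lipschitz on $[0,\infty)$ and $K$ is compact, the map $t\mapsto\int_\Omega\Psi(u(x,t))\varphi^p\,\d x$ is continuous, so the two averages converge to $\int_\Omega\Psi(u(x,t_1))\varphi^p\,\d x$ and $\int_\Omega\Psi(u(x,t_2))\varphi^p\,\d x$. As $\varphi$ is independent of time, the additive constant $-\tfrac{\beta}{\beta+1}k^{\beta+1}$ contributes the same amount at $t_1$ and $t_2$ and cancels, so the middle term converges to $\int_\Omega\b[u,k]\chi_{\{u<k\}}\varphi^p(x,t_1)\,\d x-\int_\Omega\b[u,k]\chi_{\{u<k\}}\varphi^p(x,t_2)\,\d x$; moving the $t_2$-term to the left and renaming constants gives \eqref{klj} (with $\varphi^p$ weighting the gradient term, as in Lemma~\ref{lem:Energy_Est}). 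The one genuinely delicate point is this last step — converting the concentrating time-averages of $\b[u,k]\chi_{\{u<k\}}$ into honest pointwise boundary values at $t_1$ and $t_2$ — which is precisely why the lemma is stated for the time-continuous representative and leans on Theorem~\ref{cont_into_Lbetaplusone}; everything else is a bookkeeping variant of the proof of Lemma~\ref{lem:Energy_Est}.
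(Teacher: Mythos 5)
Your proof is correct and follows exactly the route of the paper, whose own proof consists of testing the mollified formulation \eqref{h-averaged-form} with the same trapezoidal time cut-off $\xi_\varepsilon$ and then deferring to the argument of Lemma \ref{lem:Energy_Est}; you have simply filled in the steps that the paper leaves implicit, and your handling of the concentrating time-averages via Theorem \ref{cont_into_Lbetaplusone} is precisely the intended use of the time-continuous representative. Your parenthetical remark that the gradient term should carry the weight $\varphi^p$ is also well taken: the statement \eqref{klj} as printed omits it, but the version with $\varphi^p$ is what the proof yields and what is actually used later (for instance in the proof of Lemma \ref{lem:VarGenDeGio}).
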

\begin{proof}[Proof]
We use the mollified weak formulation \eqref{h-averaged-form} with the test function
\noindent $\phi=-(u^\beta-k^\beta)_-\varphi^p(x)\xi_\varepsilon(t)$, where
\begin{align*}
\xi_\varepsilon(t)=\begin{cases}
0, &t\leq t_1,
\\
\varepsilon^{-1}(t-t_1), & t\in (t_1,t_1+\varepsilon),
\\
1, &t\in [t_1+\varepsilon,t_2],
\\
\varepsilon^{-1}(t_2+\varepsilon-t), &t\in (t_2,t_2+\varepsilon), t\in (t_2,t_2+\varepsilon),
\\
0, & t\geq t_2+\varepsilon.
\end{cases}
\end{align*}
Reasoning as in the proof of Lemma \ref{lem:Energy_Est} leads to \eqref{klj}.
\end{proof}

\section{$L^1$-Harnack inequality}
In order to obtain the reduction of the oscillation we will use the fact that weak solutions satisfy a local $L^1$-Harnack inequality. Such a result was already obtained in \cite[Theorem 5.1]{FoSoVe} in a quite general setting, allowing for all $m>0$ and also a source term satisfying certain structure conditions. However, the proofs were made under the assumption that $u$ itself has a gradient, whereas in our case we only know that $u^\beta$ has a gradient. It turns out that the same strategy as in \cite{FoSoVe} works also in our case with some modifications. In this section we present the full proof in the case $m>1$ and without a source term.
 
\begin{theo}[Harnack inequality]\label{harnack}
Let $u$ be a nonnegative weak solution to \eqref{general} where the vector field $A(x,t,u,\xi)$ satisfies the structure conditions \eqref{structcond1} and \eqref{structcond2}, and the parameters satisfy the conditions \eqref{parameter-range}. Then there exists a positive constant $\gamma$ depending only on $m,n,p,C_0,C_1$ such that for all cylinders $\bar{B}_{2\rho}(y)\times[s,t]\subset \Omega\times [0,T)$,
\begin{align*}
\esssup_{\tau\in [s,t]}\int_{B_\rho(y)}u(x,\tau)\d x \leq \gamma \essinf_{\tau\in [s,t]}\int_{B_{2\rho}(y)}u(x,\tau)\d x + \gamma \left(\frac{t-s}{\rho^\lambda}\right)^\frac{1}{3-m-p},
\end{align*}
where $\lambda= n(p+m-3)+p$.
\end{theo}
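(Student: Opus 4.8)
The strategy is the one developed in \cite{FoSoVe}, but carried out using only the gradient of $u^\beta$ rather than a gradient of $u$. The plan is to fix the cylinder $\bar B_{2\rho}(y)\times[s,t]$ and work with the time-continuous representative furnished by Theorem \ref{cont_into_Lbetaplusone}. First I would reduce to a local statement: by the continuity in time, the quantities $\tau\mapsto \int_{B_\rho(y)}u(x,\tau)\d x$ and $\tau\mapsto \int_{B_{2\rho}(y)}u(x,\tau)\d x$ are continuous, so it suffices to prove that for any two times $\tau_1,\tau_2\in[s,t]$ one has
\begin{align*}
\int_{B_\rho(y)}u(x,\tau_1)\d x \leq \gamma\int_{B_{2\rho}(y)}u(x,\tau_2)\d x + \gamma\Big(\tfrac{t-s}{\rho^\lambda}\Big)^{\frac{1}{3-m-p}},
\end{align*}
and then take sup over $\tau_1$ and inf over $\tau_2$. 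The core is therefore a quantitative comparison of the mass on the smaller ball at one time with the mass on the larger ball at another time.

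The main tool is a weighted energy/mass estimate obtained from the mollified weak formulation \eqref{h-averaged-form}. The plan is to test with $\phi=\zeta(x)^{\lambda_0}\xi_{\varepsilon}(t)$ where $\zeta\in C^\infty_0(B_{2\rho}(y))$ is a cutoff with $\zeta\equiv 1$ on $B_\rho(y)$, $0\le\zeta\le1$, $|\nabla\zeta|\le c/\rho$, and $\lambda_0$ a large exponent to be chosen, and $\xi_\varepsilon$ a piecewise-linear approximation of $\chi_{[\tau_1,\tau_2]}$. After letting $h\to0$ and $\varepsilon\to0$ this yields, for a.e.\ pair of times,
\begin{align*}
\int_{B_{2\rho}(y)} u(x,\tau_2)\zeta^{\lambda_0}\d x - \int_{B_{2\rho}(y)} u(x,\tau_1)\zeta^{\lambda_0}\d x = -\int_{\tau_1}^{\tau_2}\!\!\int_{B_{2\rho}(y)} A(x,t,u,\nabla u^\beta)\cdot\nabla(\zeta^{\lambda_0})\d x\d t.
\end{align*}
The right-hand side is controlled by $\int\!\int |\nabla u^\beta|^{p-1}\zeta^{\lambda_0-1}|\nabla\zeta|\d x\d t$ via \eqref{structcond1}; one then uses Young's inequality to split off a small multiple of $\int\!\int|\nabla u^\beta|^p\zeta^{\lambda_0}$ (absorbed using the Caccioppoli estimate of Lemma \ref{lem:Energy_Est} with $k=0$) plus a term of the form $c\rho^{-p}\int_{\tau_1}^{\tau_2}\!\!\int_{B_{2\rho}} (u^\beta)^{?}\cdot(\text{powers})\,\d x\d t$. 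Here the bookkeeping with the exponent $\beta$ is where the fast-diffusion restriction $\beta p<\beta+1$ enters: one rewrites $|\nabla u^\beta|^{p-1}$ using $\nabla u^\beta$, estimates $u^{\beta-1}$-type factors by interpolating between $L^\infty_tL^1_x$ and $L^p_tW^{1,p}_x$, and invokes the parabolic Sobolev inequality, Lemma \ref{lemma:Gagliardo}, to close the estimate. This produces a bound of the form
\begin{align*}
\sup_{\tau}\int_{B_\rho(y)} u(x,\tau)\d x \leq c\,\inf_{\tau}\int_{B_{2\rho}(y)} u(x,\tau)\d x + c\,(t-s)^{a}\rho^{b}\Big(\sup_\tau\int_{B_{2\rho}(y)}u(x,\tau)\d x\Big)^{\epsilon}
\end{align*}
for suitable exponents $a,b,\epsilon$ with $\epsilon<1$; a Young's-inequality argument on this self-improving inequality, together with the scaling identity $a\cdot(3-m-p)+\text{(contribution of $b$)}$ giving $\lambda=n(p+m-3)+p$, yields the claimed form with the error term $\big(\tfrac{t-s}{\rho^\lambda}\big)^{\frac{1}{3-m-p}}$.

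The hard part will be two intertwined issues. First, the test function $\zeta^{\lambda_0}$ multiplied by $u$ (not $u^\beta$) must be admissible in \eqref{h-averaged-form}, which is exactly what the integrability $u\in L^{\beta+1}$ and the discussion preceding Lemma \ref{lem:Energy_Est} are designed to guarantee; but here we want $u$ itself, not $(u^\beta-k^\beta)_\pm$, so I must verify separately (approximating $\zeta^{\lambda_0}$ by smooth functions and using $u\in C([0,T];L^{\beta+1}_{\mathrm{loc}})$ together with $\partial_t u_h\in L^{\beta+1}$) that the manipulation is legitimate. Second, and more substantially, the absorption step requires controlling $\int\!\int|\nabla u^\beta|^p$ on an intrinsic cylinder in terms of the mass, and this forces the introduction of an auxiliary comparison function or a De Giorgi–type iteration on the level sets of $u$ — this is the step where the proof in \cite{FoSoVe} genuinely uses $\nabla u$, and the modification consists in running the iteration on $u^\beta$ and translating back using Lemma \ref{estimates:boundary_terms}. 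I expect the exponent arithmetic to be delicate but routine once the structural inequality is set up; the conceptual obstacle is purely the replacement of $\nabla u$ by $\nabla u^\beta$ throughout, which is handled by the chain-rule identity \eqref{nabla_u} on the sets where $u$ is bounded away from $0$, combined with the energy estimates which remain valid without that assumption.
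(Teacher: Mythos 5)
Your opening move is the same as the paper's: take the time-continuous representative, test the mollified weak formulation \eqref{h-averaged-form} with a spatial cutoff times a piecewise-linear approximation of $\chi_{[\tau_1,\tau_2]}$, and pass to the limit to obtain the mass balance
\begin{align*}
\int_\Omega u\zeta(x,\tau_1)\,\d x = \int_{\tau_1}^{\tau_2}\int_\Omega A(u,\nabla u^\beta)\cdot\nabla\zeta\,\d x\,\d t + \int_\Omega u\zeta(x,\tau_2)\,\d x.
\end{align*}
This is exactly \eqref{psk}, and the identification that the whole theorem reduces to controlling $\int\!\!\int|\nabla u^\beta|^{p-1}|\nabla\zeta|$ by the supremum in time of the $L^1$ mass plus the error term $\big(\tfrac{t-s}{\rho^\lambda}\big)^{\frac{1}{3-m-p}}$ is correct.

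The gap is in the absorption step, and it is fatal to the route you describe. You propose to split $|\nabla u^\beta|^{p-1}\zeta^{\lambda_0-1}|\nabla\zeta|$ by Young's inequality, absorb $|\nabla u^\beta|^p\zeta^{\lambda_0}$ via the Caccioppoli estimate of Lemma \ref{lem:Energy_Est} with $k=0$, and close with Gagliardo--Nirenberg. But with $k=0$ the Caccioppoli inequality reads
\begin{align*}
\iint|\nabla u^\beta|^p\varphi^p + \esssup_\tau\int\tfrac{\beta}{\beta+1}u^{\beta+1}\varphi^p \;\lesssim\; \iint u^{\beta p}|\nabla\varphi|^p + \iint u^{\beta+1}|\partial_t\varphi^p|,
\end{align*}
and the right-hand side involves $u^{\beta p}$ and $u^{\beta+1}$, neither of which is dominated by the $L^1$ mass of $u$. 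Worse, $u^{\beta p}=u^{\beta+1}u^{m+p-3}$ with $m+p-3<0$, so the exponent arithmetic blows up exactly near $u=0$ and no amount of interpolation closes the gap. This negative exponent is the whole difficulty, and it is precisely why the paper does \emph{not} use Lemma \ref{lem:Energy_Est} here at all. Instead, the paper's key device (Lemma \ref{femtvaa}) is a \emph{different} weighted energy estimate obtained by testing with $-(u^\beta+\varepsilon^\beta)^{\frac{m+p-3}{\beta p}}t^{\frac{1}{p}}\varphi^p\xi_{\tau,\delta}$, with the regularization $\varepsilon=(\tau/\rho^p)^{\frac{1}{3-m-p}}$. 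The negative-power weight turns the troublesome $u^{\beta p}$ into $(u^\beta+\varepsilon^\beta)^{\frac{2p+m-3}{\beta p}}$, whose exponent $\frac{2p+m-3}{p}\in(0,1)$ \emph{is} controllable by the $L^1$ mass through H\"older; the $t^{1/p}$ weight produces the time factor; and the auxiliary function $F_\varepsilon(u)=\int_0^u(s^\beta+\varepsilon^\beta)^{\frac{m+p-3}{\beta p}}\d s$ plays the role your $\mathfrak{b}[u,0]$ cannot. Lemma \ref{lem:somestuff} is then needed to translate the $F_\varepsilon$-term back into powers of $(u^\beta+\varepsilon^\beta)$, and Lemma \ref{drphil} combines everything via H\"older's inequality to bound $\rho^{-1}\iint|\nabla u^\beta|^{p-1}$. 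No Gagliardo--Nirenberg is used anywhere in this chain.

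Your closing step is also not quite right. After the mass balance you cannot finish with a single Young's inequality on a self-improving estimate, because the $\sup$ on the right would sit over the larger ball $B_{2\rho}$ while the $\sup$ on the left sits over $B_\rho$. The paper resolves this with an iteration over dyadic radii $\rho_j=2(1-2^{-j})\rho$: Lemma \ref{drphil} (with a free small parameter $\delta$) gives $S_j\le\varepsilon_o S_{j+1}+cb^j(\tau/\rho^\lambda)^{\frac{1}{3-m-p}}+I$, and summing the geometric series closes the argument. So although your high-level plan captures the mass-balance skeleton, the two technical pillars you would actually need — the singular-weight test function with the $\varepsilon$-regularization, and the dyadic sup iteration — are both missing, and the substitutes you suggest (Caccioppoli at level $k=0$ plus Gagliardo--Nirenberg; a single Young absorption) do not work.
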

Note that $\lambda$ can have any sign. If we use the time continuous representative of $u$ we can replace the essential infimum and supremum by the actual infimum and supremum.
Before proceeding we note that by translation we may assume that $s=0$. All of the calculations will be performed under this assumption, and the time interval $[s,t]$ will henceforth be labelled $[0,\tau]$, where $\tau \in (0,T)$.
The first step of the argument is a lemma corresponding to \cite[Lemma 5.2]{FoSoVe}.
\begin{lem}\label{femtvaa}
Let $u$ be a weak solution, $\tau \in (0,T)$, $\sigma\in (0,1)$ and $B_\rho(x_o)\subset \Omega$. Then
\begin{align}\label{est:harnack-lemma}
\iint_{B_{\sigma\rho}(x_o)\times(0,\tau)}&|\nabla u^\beta|^p (u^\beta+\varepsilon^\beta)^{\frac{m+p-3}{\beta p} -1}t^\frac{1}{p} \d x \d t + \iint_{B_{\sigma\rho}(x_o)\times(0,\tau)} F_\varepsilon(u) t^{\frac{1}{p}-1}\d x \d t
\\
\notag &\leq \frac{c\rho}{(1-\sigma)^p} \Big(\frac{\tau}{\rho^\lambda}\Big)^\frac{1}{p}\Big[ \sup_{t\in[0,\tau]} \int_{B_\rho(x_o)} u(x,t) \d x + \varepsilon\rho^n \Big]^\frac{2p+m-3}{p},
\end{align}
where $\lambda= n(p+m-3)+p$, $\varepsilon = (\tfrac{\tau}{\rho^p})^\frac{1}{3-m-p}$ and $F_\varepsilon$ is defined in \eqref{def:F} below. The constant $c$ depends only on $m,n,p, C_0,C_1$.
\end{lem}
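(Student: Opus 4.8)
The plan is to test the mollified weak formulation \eqref{h-averaged-form} with a test function of the form $\phi = (u^\beta + \varepsilon^\beta)^{\frac{m+p-3}{\beta p}}\zeta^p t^{\frac1p}$, where $\zeta\in C^\infty_0(B_\rho(x_o);[0,1])$ is a cutoff with $\zeta \equiv 1$ on $B_{\sigma\rho}(x_o)$ and $|\nabla\zeta|\leq \frac{c}{(1-\sigma)\rho}$. The exponent $\frac{m+p-3}{\beta p}$ is negative in our parameter range, which is precisely why the regularization by $\varepsilon^\beta$ is essential: it keeps the test function bounded and admissible even where $u$ vanishes. First I would check admissibility — since $u^\beta \in L^p(0,T;W^{1,p})$ and the function $s\mapsto (s+\varepsilon^\beta)^{\frac{m+p-3}{\beta p}}$ is smooth and bounded with bounded derivative on $[0,\infty)$, the composition has a $p$-integrable gradient, and the factor $t^{1/p}$ is smooth on $(0,\tau]$; one approximates by $C^\infty_0$ functions as in the proof of Lemma \ref{lem:Energy_Est}.

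Next I would compute the elliptic and parabolic contributions. For the elliptic term, $\nabla\phi$ has a ``diagonal'' piece $\frac{m+p-3}{\beta p}(u^\beta+\varepsilon^\beta)^{\frac{m+p-3}{\beta p}-1}\nabla u^\beta\, \zeta^p t^{1/p}$ and a ``cutoff'' piece $p(u^\beta+\varepsilon^\beta)^{\frac{m+p-3}{\beta p}}\zeta^{p-1}\nabla\zeta\, t^{1/p}$. Pairing with $A(x,t,u,\nabla u^\beta)$ and using \eqref{structcond1}, \eqref{structcond2}: the diagonal piece, because $\frac{m+p-3}{\beta p}<0$, produces $-c\,|\nabla u^\beta|^p(u^\beta+\varepsilon^\beta)^{\frac{m+p-3}{\beta p}-1}\zeta^p t^{1/p}$ — note the sign, this is the good term we want on the left after moving it over, so I must track that the inequality goes the right way; the cutoff piece is bounded by Young's inequality by $\tfrac{c}{2}|\nabla u^\beta|^p(u^\beta+\varepsilon^\beta)^{\frac{m+p-3}{\beta p}-1}\zeta^p t^{1/p} + c\,(u^\beta+\varepsilon^\beta)^{\frac{m+p-3}{\beta p}+p-1}|\nabla\zeta|^p t^{1/p}$, absorbing half the gradient term. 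For the parabolic term, as in Lemma \ref{lem:Energy_Est} one uses Lemma \ref{expmolproperties}(ii) and a chain-rule/convexity argument: $\partial_t u_h \cdot (u_h+\varepsilon^\beta)^{\frac{m+p-3}{\beta p}}\zeta^p t^{1/p} \geq \partial_t\big(\Phi_\varepsilon(u_h)\big)\zeta^p t^{1/p}$ where $\Phi_\varepsilon(u):=\int_0^u (s^\beta+\varepsilon^\beta)^{\frac{m+p-3}{\beta p}}\,\d s$ (this is essentially $F_\varepsilon$), plus an error controlled via the monotonicity of $s\mapsto (s^\beta+\varepsilon^\beta)^{\frac{m+p-3}{\beta p}}$; integrating by parts in $t$ moves the derivative onto $\zeta^p t^{1/p}$, producing $-\iint \Phi_\varepsilon(u)\zeta^p(\partial_t t^{1/p}) = -\tfrac1p\iint \Phi_\varepsilon(u)\zeta^p t^{1/p-1}$, i.e. the second term on the left of \eqref{est:harnack-lemma}, together with a boundary term at $t=\tau$ of the right sign. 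The initial-value term vanishes as $h\to 0$ because of the $t^{1/p}$ factor killing the contribution near $t=0$, exactly as in Lemma \ref{lem:Energy_Est}.

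It remains to bound the resulting right-hand side $\tfrac{c}{(1-\sigma)^p\rho^p}\iint_{B_\rho\times(0,\tau)}(u^\beta+\varepsilon^\beta)^{\frac{m+p-3}{\beta p}+p-1}t^{1/p}\,\d x\,\d t$ by the quantity appearing in \eqref{est:harnack-lemma}. Since $p\in(1,2)$ and in the relevant range the exponent $\frac{m+p-3}{\beta p}+p-1$ is positive and less than $1$ — here one uses $\beta p < \beta+1$ and the algebra relating $\beta,m,p$ — one can write $(u^\beta+\varepsilon^\beta)^{\frac{m+p-3}{\beta p}+p-1} \leq (u+\varepsilon)^{\beta(\frac{m+p-3}{\beta p}+p-1)} = (u+\varepsilon)^{\frac{m+p-3}{p}+\beta(p-1)}$ and check this last exponent equals $\frac{2p+m-3}{p}\cdot\frac{1}{?}$... more carefully, I would interpolate: split $(u+\varepsilon)^{\text{exponent}} = (u+\varepsilon)\cdot(u+\varepsilon)^{\text{exponent}-1}$ and estimate the second factor by the sup of $\int u + \varepsilon\rho^n$ raised to an appropriate power using the $L^1$-bound $\sup_t\int_{B_\rho} u$, while the first factor integrates against $\d x$ to give the same $L^1$ quantity — this is the standard trick turning a higher power into $\big(\sup_t\int u + \varepsilon\rho^n\big)^{\frac{2p+m-3}{p}}$. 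Integrating $t^{1/p}$ over $(0,\tau)$ gives $\frac{p}{p+1}\tau^{1+1/p}$, and combining the powers of $\rho$ and $\tau$ with the definition $\varepsilon=(\tau/\rho^p)^{1/(3-m-p)}$ and $\lambda=n(p+m-3)+p$ must reproduce the factor $\rho(1-\sigma)^{-p}(\tau/\rho^\lambda)^{1/p}$. \textbf{The main obstacle} I expect is exactly this last bookkeeping step: verifying that the exponent $\frac{m+p-3}{\beta p}+p-1$ lands in $(0,1)$ throughout the full range \eqref{parameter-range} (not just the supercritical one), that the chosen power of $\varepsilon$ makes all the $\rho$- and $\tau$-exponents balance, and that the interpolation producing $\big(\sup_t\int u + \varepsilon\rho^n\big)^{\frac{2p+m-3}{p}}$ is dimensionally consistent — none of the individual inequalities is hard, but getting every exponent to match the stated $\lambda$ and the stated power $\frac{2p+m-3}{p}$ requires careful algebra with $\beta=1+\frac{m-1}{p-1}$.
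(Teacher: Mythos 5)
Your overall strategy matches the paper's: you test \eqref{h-averaged-form} with $(u^\beta+\varepsilon^\beta)^{\frac{m+p-3}{\beta p}}t^{1/p}$ times spatial and temporal cutoffs (the paper writes this test function with an overall minus sign, which simply flips the direction of the inequalities in the elliptic and parabolic manipulations and is immaterial), you absorb via Young's inequality, you treat the parabolic term via the primitive $F_\varepsilon$ using that $s\mapsto(s^\beta+\varepsilon^\beta)^{\frac{m+p-3}{\beta p}}$ is monotone, and you argue that the initial-value term is killed by the factor $t^{1/p}$. All of that is correct and essentially what the paper does.

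The genuine gap is exactly the one you flag at the end, and it is fatal as you wrote it. The exponent
$$E := p-1+\tfrac{m+p-3}{\beta p}=\tfrac{(p-1)\big[(m+p-2)p+m+p-3\big]}{(m+p-2)p}$$
is \emph{not} in $(0,1)$ throughout \eqref{parameter-range}; it can be negative. For instance $m=p=1.01$ gives $\beta=2$, $\beta p=2.02$, $\tfrac{m+p-3}{\beta p}\approx -0.485$, so $E\approx -0.475<0$. Your proposed bound $(u^\beta+\varepsilon^\beta)^{E}\le(u+\varepsilon)^{\beta E}$ and the subsequent interpolation down to the $L^1$-mass both presuppose $E>0$, so they break down. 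The paper circumvents this by a different factorization. One checks
$$E=\tfrac{m+p-3}{\beta}+\tfrac{2p+m-3}{\beta p},$$
so
$$(u^\beta+\varepsilon^\beta)^{E}=(u^\beta+\varepsilon^\beta)^{\frac{m+p-3}{\beta}}\,(u^\beta+\varepsilon^\beta)^{\frac{2p+m-3}{\beta p}}\le \varepsilon^{m+p-3}\,(u^\beta+\varepsilon^\beta)^{\frac{2p+m-3}{\beta p}},$$
where the last step uses $\tfrac{m+p-3}{\beta}<0$ and $u^\beta+\varepsilon^\beta\ge\varepsilon^\beta$. The remaining exponent $\tfrac{2p+m-3}{\beta p}$ is always positive (since $2p+m>p+(m+p)>3$), and $\tfrac{2p+m-3}{p}<1$ (since $m+p<3$), so H\"older with exponent $\tfrac{p}{2p+m-3}>1$ against the constant function brings the spatial integral down to $\big[\int_{B_\rho}(u^\beta+\varepsilon^\beta)^{1/\beta}\big]^{\frac{2p+m-3}{p}}|B_\rho|^{\frac{3-m-p}{p}}\le c\big[\sup_t\int u+\varepsilon\rho^n\big]^{\frac{2p+m-3}{p}}\rho^{1-\lambda/p}$, and the extra power $\varepsilon^{m+p-3}$ is then cancelled in the final bookkeeping against the factor $\tau/\rho^p$ once $\varepsilon=(\tau/\rho^p)^{1/(3-m-p)}$ is inserted. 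That $\varepsilon^{m+p-3}$-extraction, not an exponent confined to $(0,1)$, is the missing idea.
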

\begin{proof}[Proof]
Consider the mollified weak formulation \eqref{h-averaged-form} with the test function
\begin{align*}
\phi(x,t) = -(u^\beta+\varepsilon^\beta)^\frac{m+p-3}{\beta p} t^\frac{1}{p}\varphi^p(x)\xi_{\tau,\delta}(t),
\end{align*}
where $\varepsilon > 0$, $\xi_{\tau,\delta}$ is defined as in \eqref{def:xi} and $\varphi\in C^\infty_0(B_\rho(x_o);[0,1])$ satisfies $\varphi=1$ on $B_{\sigma\rho}(x_o)$. We may thus choose $\varphi$ such that
\begin{align*}
|\nabla \varphi|\leq 2(1-\sigma)^{-1}\rho^{-1}.
\end{align*}
We have
\begin{align*}
\nabla \phi = \tfrac{(3-m-p)}{\beta p}(u^\beta+\varepsilon^\beta)^{\frac{m+p-3}{\beta p} -1}t^\frac{1}{p}\varphi^p(x)\xi_{\tau,\delta}(t)\nabla u^\beta - (u^\beta+\varepsilon^\beta)^\frac{m+p-3}{\beta p} t^\frac{1}{p}\xi_{\tau,\delta}(t) \nabla \varphi^p(x).
\end{align*}
We see that
\begin{align*}
\iint_{\Omega_T}[A(x,\cdot,u,\nabla u^\beta)]_h \cdot \nabla \phi \d x \d t \xrightarrow[h\to 0]{} \iint_{\Omega_T} A(x,t,u,\nabla u^\beta) \cdot \nabla \phi \d x \d t,
\end{align*}
and
\begin{align*}
A&(x,t,u,\nabla u^\beta) \cdot \nabla \phi
= \tfrac{(3-m-p)}{\beta p}(u^\beta+\varepsilon^\beta)^{\frac{m+p-3}{\beta p} -1}t^\frac{1}{p}\varphi^p \xi_{\tau,\delta}A(x,t,u,\nabla u^\beta)\cdot \nabla u^\beta
\\
&\hspace{31mm} - p(u^\beta+\varepsilon^\beta)^\frac{m+p-3}{\beta p} t^\frac{1}{p}\xi_{\tau,\delta}\varphi^{p-1} A(x,t,u,\nabla u^\beta)\cdot \nabla \varphi
\\
&\geq c_0|\nabla u^\beta|^p (u^\beta+\varepsilon^\beta)^{\frac{m+p-3}{\beta p} -1}t^\frac{1}{p}\varphi^p\xi_{\tau,\delta}
- c_1(u^\beta+\varepsilon^\beta)^\frac{m+p-3}{\beta p} t^\frac{1}{p}\xi_{\tau,\delta}\varphi^{p-1}|\nabla u^\beta|^{p-1}|\nabla \varphi|
\\
&\geq \hat{c}_0|\nabla u^\beta|^p (u^\beta+\varepsilon^\beta)^{\frac{m+p-3}{\beta p} -1}t^\frac{1}{p}\varphi^p\xi_{\tau,\delta} - \hat{c}_1 (u^\beta+\varepsilon^\beta)^{p-1+\frac{m+p-3}{\beta p}}t^\frac{1}{p}\xi_{\tau,\delta}|\nabla \varphi|^p.
\end{align*}
Here $c_0,c_1,\hat{c}_0,\hat{c}_1$ are constants depending only on $m,p,C_0,C_1$. For the initial value term we note that
\begin{align*}
\Big| \int_\Omega u(x,0) \phi_{\bar{h}}(x,0) \d x \Big| &= \Big| \iint_{\Omega_T} u(x,0) h^{-1}e^{-\frac{t}{h}}\phi(x,t)\d x\d t \Big|
\\
&\leq c \iint_{\supp \varphi \times [0,T]} u(x,0) (\tfrac{t}{h} e^{-\frac{t}{h}}) t^{\frac{1}{p}-1}\d x\d t \xrightarrow[h\to 0]{} 0,
\end{align*}
by the dominated convergence theorem. The diffusion part is treated as follows:
\begin{align*}
\phi\partial_t u_h &=
\big( \big[([u_h]^\beta+\varepsilon^\beta)^\frac{m+p-3}{\beta p} - (u^\beta+\varepsilon^\beta)^\frac{m+p-3}{\beta p}\big]\tfrac{(u-u_h)}{h}
-([u_h]^\beta+\varepsilon^\beta)^\frac{m+p-3}{\beta p}\partial_t u_h \big) t^\frac{1}{p}\varphi^p\xi_{\tau,\delta}
\\
&\geq -([u_h]^\beta+\varepsilon^\beta)^\frac{m+p-3}{\beta p}\partial_t u_h t^\frac{1}{p}\varphi^p\xi_{\tau,\delta}
\\
&= -\partial_t[ F(u_h)] t^\frac{1}{p}\varphi^p\xi_{\tau,\delta},
\end{align*}
where
\begin{align}\label{def:F}
F_\varepsilon(s):=\int^s_0 (t^\beta+\varepsilon^\beta)^\frac{m+p-3}{\beta p}\d t \leq \int^s_0 t^\frac{m+p-3}{p}\d t = \tfrac{p}{2p+m-3} s^\frac{2p+m-3}{p}.
\end{align}
From this we see that
\begin{align*}
\iint_{\Omega_T} \phi\partial_t u_h\phi \d x \d t \geq &\iint_{\Omega_T} F_\varepsilon(u_h)\partial_t( t^\frac{1}{p}\varphi^p\xi_{\tau,\delta}) \d x \d t
\\
\xrightarrow[h\to 0]{} &\iint_{\Omega_T} F_\varepsilon(u)\partial_t( t^\frac{1}{p}\varphi^p\xi_{\tau,\delta}) \d x \d t
\\
=&\frac{1}{p}\iint_{\Omega_T} F_\varepsilon(u)\varphi^p t^{\frac{1}{p}-1}\xi_{\tau,\delta}\d x \d t - \delta^{-1}\int^{\tau+\delta}_\tau \int_\Omega F_\varepsilon(u)\varphi^p t^\frac{1}{p} \d x \d t
\\
\xrightarrow[\delta\to 0]{}& \frac{1}{p}\iint_{\Omega_\tau} F_\varepsilon(u)\varphi^p t^{\frac{1}{p}-1}\d x \d t - \tau^\frac{1}{p}\int_\Omega F_\varepsilon(u)\varphi^p (x,\tau)\d x.
\end{align*}
To conclude the limit in the last term we use the Lipschitz continuity of $F$ and the time-continuity of $u$. Combining these estimates we have
\begin{align}\label{sutevensu}
&\iint_{\Omega_\tau}|\nabla u^\beta|^p (u^\beta+\varepsilon^\beta)^{\frac{m+p-3}{\beta p} -1}t^\frac{1}{p}\varphi^p \d x \d t + \iint_{\Omega_\tau} F_\varepsilon(u)\varphi^p t^{\frac{1}{p}-1}\d x \d t
\\
\notag &\quad \leq c\iint_{\Omega_\tau} (u^\beta+\varepsilon^\beta)^{p-1+\frac{m+p-3}{\beta p}}t^\frac{1}{p}|\nabla \varphi|^p\d x \d t + c\tau^\frac{1}{p}\int_\Omega F_\varepsilon(u)\varphi^p (x,\tau)\d x.
\end{align}
Taking into account the estimate in \eqref{def:F} and the support of $\varphi$, and applying H\"older's inequality we see that
\begin{align*}
\tau^\frac{1}{p}\int_\Omega F_\varepsilon(u)\varphi^p (x,\tau)\d x &\leq c \tau^\frac{1}{p}\int_{B_\rho(x_o)} u^\frac{2p+m-3}{p}(x,\tau) \varphi^p(x) \d x
\\
&\leq \tau^\frac{1}{p}\Big[ \int_{B_\rho(x_o)} u(x,\tau) \d x \Big]^\frac{2p+m-3}{p} |B_\rho(x_o)|^\frac{3-m-p}{p}
\\
&\leq c\tau^\frac{1}{p}\Big[ \sup_{t\in[0,\tau]}\int_{B_\rho(x_o)} u(x,t) \d x \Big]^\frac{2p+m-3}{p}\rho^\frac{n(3-m-p)}{p}
\\
&=c\rho\Big(\frac{\tau}{\rho^\lambda}\Big)^\frac{1}{p} \Big[ \sup_{t\in[0,\tau]}\int_{B_\rho(x_o)} u(x,t) \d x \Big]^\frac{2p+m-3}{p}.
\end{align*}
Using the bound on the gradient of $\varphi$ we may now estimate the other term on the right-hand side of \eqref{sutevensu} as
\begin{align}\label{gottagofast}
\iint_{\Omega_\tau} &(u^\beta+\varepsilon^\beta)^{p-1+\frac{m+p-3}{\beta p}}t^\frac{1}{p}|\nabla \varphi|^p\d x \d t
\\
\notag &\leq \tfrac{c}{(1-\sigma)^p\rho^p}\iint_{B_\rho(x_o)\times(0,\tau)} (u^\beta+\varepsilon^\beta)^\frac{m+p-3}{\beta} (u^\beta+\varepsilon^\beta)^\frac{2p+m-3}{\beta p} t^\frac{1}{p}\d x \d t
\\
\notag &\leq \tfrac{c}{(1-\sigma)^p\rho^p}\varepsilon^{m+p-3} \int^\tau_0\int_{B_\rho(x_o)} (u^\beta+\varepsilon^\beta)^\frac{2p+m-3}{\beta p} \d x\, t^\frac{1}{p} \d t
\\
\notag &\leq \tfrac{c}{(1-\sigma)^p\rho^p}\varepsilon^{m+p-3} \tau^{\frac{1}{p}+1} \sup_{t\in [0,\tau]} \int_{B_\rho(x_o)} (u^\beta+\varepsilon^\beta)^\frac{2p+m-3}{\beta p}(x,t) \d x ,
\end{align}
In the second step we use the fact that the exponent $\beta^{-1}(m+p-3)$ is negative. In the last step we estimate the integral over the ball by the supremum in time of such integrals, leaving only an integral in time of the factor $t^\frac{1}{p}$. The integral appearing in the last expression may be estimated further using H\"older's inequality and the definition of $\lambda$ as
\begin{align}\label{mellansteg}
\int_{B_\rho(x_o)} (u^\beta+\varepsilon^\beta)^\frac{2p+m-3}{\beta p} \d x &\leq \Big[ \int_{B_\rho(x_o)} (u^\beta+\varepsilon^\beta)^\frac{1}{\beta} \d x \Big]^\frac{2p+m-3}{p}|B_\rho(x_o)|^\frac{3-m-p}{p}
\\
\notag &\leq c \Big[ \int_{B_\rho(x_o)} u \d x + \varepsilon\rho^n \Big]^\frac{2p+m-3}{p}\rho^{1-\frac{\lambda}{p}}.
\end{align}
Since the exponent $p^{-1}(2p+m-3)$ is positive, we can combine \eqref{gottagofast} and \eqref{mellansteg} taking the supremum inside the square brackets to obtain
\begin{align*}
\iint_{\Omega_\tau} (u^\beta+\varepsilon^\beta)^{p-1+\frac{m+p-3}{\beta p}}t^\frac{1}{p}|\nabla \varphi|^p\d x \d t &\leq \frac{c\rho}{(1-\sigma)^p}\Big(\frac{\tau}{\rho^p}\Big)\varepsilon^{m+p-3} \Big(\frac{\tau}{\rho^\lambda}\Big)^\frac{1}{p}
\\
&\quad \times\Big[ \sup_{t\in[0,\tau]} \int_{B_\rho(x_o)} u(x,t) \d x + \varepsilon\rho^n \Big]^\frac{2p+m-3}{p}
\end{align*}
Combining the estimate for the two terms on the right-hand side of \eqref{sutevensu} we end up with
\begin{align}\label{almostthere}
\iint_{\Omega_\tau}&|\nabla u^\beta|^p (u^\beta+\varepsilon^\beta)^{\frac{m+p-3}{\beta p} -1}t^\frac{1}{p}\varphi^p \d x \d t + \iint_{\Omega_\tau} F_\varepsilon(u)\varphi^p t^{\frac{1}{p}-1}\d x \d t
\\
\notag &\leq \frac{c\rho}{(1-\sigma)^p} \Big(\frac{\tau}{\rho^\lambda}\Big)^\frac{1}{p}\Big[1+\varepsilon^{m+p-3}\Big(\frac{\tau}{\rho^p}\Big)\Big]\Big[ \sup_{t\in[0,\tau]} \int_{B_\rho(x_o)} u(x,t) \d x + \varepsilon\rho^n \Big]^\frac{2p+m-3}{p}.
\end{align}
Choosing now $\varepsilon=(\tfrac{\tau}{\rho^p})^\frac{1}{3-m-p}$ confirms \eqref{est:harnack-lemma}.
\end{proof}

Because of the somewhat more complicated calculations in our setting, we also need the following result, which does not appear in \cite{FoSoVe}.
\begin{lem}\label{lem:somestuff}
Let $F_\varepsilon$ be defined by \eqref{def:F} and let $\varepsilon>0$. Then there is a constant $c=c(m,p)$ such that
\begin{align}\label{lem:u-eps-est}
(u^\beta+\varepsilon^\beta)^{[\frac{3-m-p}{\beta p}+1](p-1)} \leq cF_\varepsilon(u)+ c \varepsilon^{\frac{m+p-3}{p}+1},
\end{align}
for all $u\geq 0$.
\end{lem}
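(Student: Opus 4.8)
The plan is to establish \eqref{lem:u-eps-est} by an elementary two--case argument, splitting according to whether $u\le\varepsilon$ or $u\ge\varepsilon$. First I would introduce the shorthand $a:=\tfrac{m+p-3}{\beta p}$, which is \emph{negative} because $m+p<3$ by \eqref{parameter-range}; hence the exponent on the left of \eqref{lem:u-eps-est} is $(1-a)(p-1)>0$, and the integrand of $F_\varepsilon$ in \eqref{def:F} is $t\mapsto(t^\beta+\varepsilon^\beta)^a$, which is \emph{decreasing} in $t$. The crucial preliminary step is to record the two algebraic identities
\[
\beta(1-a)(p-1)=\frac{2p+m-3}{p}=\frac{m+p-3}{p}+1,\qquad (1-a)(p-1)-a=\frac1\beta ,
\]
both of which follow by expanding and inserting the relation $\beta(p-1)=m+p-2$ (equivalent to the definition \eqref{def:beta} of $\beta$). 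It is the second identity that makes the whole estimate collapse.

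In the regime $u\le\varepsilon$ I would simply use $u^\beta+\varepsilon^\beta\le 2\varepsilon^\beta$ and raise to the positive power $(1-a)(p-1)$, which by the first identity gives
\[
(u^\beta+\varepsilon^\beta)^{(1-a)(p-1)}\le 2^{(1-a)(p-1)}\varepsilon^{\beta(1-a)(p-1)}=2^{(1-a)(p-1)}\varepsilon^{\frac{m+p-3}{p}+1},
\]
i.e.\ a multiple of the second term on the right of \eqref{lem:u-eps-est}. In the regime $u\ge\varepsilon$ I would instead bound $F_\varepsilon$ from below: since the integrand is decreasing,
\[
F_\varepsilon(u)\ge\int_{u/2}^{u}(t^\beta+\varepsilon^\beta)^a\d t\ge\frac u2\,(u^\beta+\varepsilon^\beta)^a ,
\]
and then, using the second identity together with the trivial bound $(u^\beta+\varepsilon^\beta)^{1/\beta}\le(2u^\beta)^{1/\beta}=2^{1/\beta}u$ (valid since $u\ge\varepsilon$),
\[
(u^\beta+\varepsilon^\beta)^{(1-a)(p-1)}=(u^\beta+\varepsilon^\beta)^{a}(u^\beta+\varepsilon^\beta)^{1/\beta}\le 2^{1/\beta}u\,(u^\beta+\varepsilon^\beta)^{a}\le 2^{1+1/\beta}F_\varepsilon(u).
\]
Adding the contributions of the two regimes yields \eqref{lem:u-eps-est} with $c=\max\{2^{(1-a)(p-1)},\,2^{1+1/\beta}\}$, which depends only on $m$ and $p$ (through $\beta$ and $a$).

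I do not expect a genuine obstacle: the argument rests only on the monotonicity of $t\mapsto(t^\beta+\varepsilon^\beta)^a$ and on inequalities of the type $(A+B)^s\le 2^s\max\{A^s,B^s\}$ for $s>0$. The only point demanding real care is the verification of the two exponent identities---in particular $(1-a)(p-1)-a=\tfrac1\beta$---since it is exactly this cancellation that turns the leftover power of $u^\beta+\varepsilon^\beta$ into a single linear power of $u$, matching the factor produced by the lower bound for $F_\varepsilon$.
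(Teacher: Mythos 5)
Your proof is correct. The exponent identities you verify, $\beta(1-a)(p-1)=\frac{m+p-3}{p}+1$ and $(1-a)(p-1)-a=\frac1\beta$ with $a:=\frac{m+p-3}{\beta p}$, are exactly right, and the two-case argument closes without gaps. The paper also proceeds by splitting into two regimes (with threshold $2\varepsilon$ instead of $\varepsilon$), so the overall strategy is the same, but the execution differs: for large $u$ the paper integrates $\int_\varepsilon^u(2t^\beta)^a\,dt$ explicitly and then absorbs the resulting $\varepsilon$-term, whereas you use monotonicity of the integrand over $[u/2,u]$ together with the identity $(1-a)(p-1)-a=\frac1\beta$, which makes the cancellation very transparent. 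For small $u$ the paper lower-bounds $F_\varepsilon(u)$ and invokes the inequality $a^\alpha\ge 2^{-\alpha}(a+b)^\alpha-b^\alpha$; however, the intermediate step $c\varepsilon^{\frac{m+p-3}{p}}u\ge c\,u^{\frac{m+p-3}{p}+1}$ is in fact false as $u\to0$ when $u\le 2\varepsilon$ (the ratio of the two sides tends to $0$), so the paper's proof contains a small slip at this point, even though the conclusion holds. Your approach sidesteps this entirely by bounding the left-hand side of \eqref{lem:u-eps-est} directly by the $\varepsilon$-term in the small-$u$ regime, which is both simpler and rigorous.
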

\begin{proof}[Proof]
Assume first $u>2 \varepsilon$. Then since $m+p-3<0$,
\begin{align*}
F_\varepsilon(u)&=\int^u_0 (t^\beta+\varepsilon^\beta)^\frac{m+p-3}{\beta p} \d t \geq \int^u_\varepsilon (t^\beta+\varepsilon^\beta)^\frac{m+p-3}{\beta p} \d t\ \geq \int^u_\varepsilon (2t^\beta)^\frac{m+p-3}{\beta p} \d t
\\
&=c \big(u^{\frac{m+p-3}{p}+1}-\varepsilon^{\frac{m+p-3}{p}+1}\big) \geq \tilde{c}u^{\frac{m+p-3}{p}+1},
\end{align*}
where in the last step we used the assumption $u>2\varepsilon$ and the fact that the exponent of $\varepsilon$ is positive. On the other hand, since $u>2\varepsilon$ we also have
\begin{align*}
(u^\beta+\varepsilon^\beta)^{[\frac{3-m-p}{\beta p}+1](p-1)}\leq c u^{[\frac{3-m-p}{p}+\beta](p-1)}=cu^{\frac{m+p-3}{p}+1},
\end{align*}
and combining the two estimates we have verified the claim in the case $u>2\varepsilon$. Suppose now $u\leq 2\varepsilon$. Then
\begin{align*}
F_\varepsilon(u) &= \int^u_0 (t^\beta+\varepsilon^\beta)^\frac{m+p-3}{\beta p} \d t \geq \int^u_0 ((1+2^\beta)\varepsilon^\beta )^\frac{m+p-3}{\beta p} \d t = c \varepsilon^{\frac{m+p-3}{p}}u \geq c u^{\frac{m+p-3}{p}+1}
\\
&=c (u^\beta)^{[\frac{3-m-p}{\beta p}+1](p-1)} \geq c_1 (u^\beta+\varepsilon^\beta)^{[\frac{3-m-p}{\beta p}+1](p-1)} - c_2 \varepsilon^{\frac{m+p-3}{p}+1},
\end{align*}
where in the last step we used the fact that for positive $\alpha$ and nonnegative $a,b$ we have $a^\alpha\geq 2^{-\alpha}(a+b)^\alpha-b^\alpha$. Thus, we have verified the claim also in the case $u\leq 2\varepsilon$.
\end{proof}

The next lemma corresponds to \cite[Lemma 5.3]{FoSoVe}. A formal application of the chain rule shows that the the integrands on the left-hand side in both lemmas are essentially the same, although in our case the gradient of $u$ need not exist. The proof in our case is somewhat more complicated as we need also to use Lemma \ref{lem:somestuff}.
\begin{lem}\label{drphil} Let $u$ be a weak solution and $\delta\in (0,1)$. Then there is a constant $c$ depending only on $m,n,p,C_0,C_1$ such that
\begin{align*}
\frac{1}{\rho}\int^\tau_0 \int_{B_{\sigma\rho}(x_o)} |\nabla u^\beta|^{p-1} \d x \d t \leq \delta \sup_{t\in[0,\tau]} \int_{B_\rho(x_o)} u(x,t) \d x + \frac{c \delta^\frac{3-2p-m}{3-m-p}}{(1-\sigma)^\frac{p^2}{3-m-p}}\Big(\frac{\tau}{\rho^\lambda}\Big)^\frac{1}{3-m-p}.
\end{align*}
\end{lem}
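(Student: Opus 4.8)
The plan is to interpolate the quantity $|\nabla u^\beta|^{p-1}$ between the weighted gradient integral appearing on the left-hand side of Lemma \ref{femtvaa} and an $L^1$-type quantity, and then optimize in $\varepsilon$. First I would use H\"older's inequality in the space-time integral: writing
\begin{align*}
|\nabla u^\beta|^{p-1} = \Big(|\nabla u^\beta|^p (u^\beta+\varepsilon^\beta)^{\frac{m+p-3}{\beta p}-1}t^{\frac1p}\Big)^\frac{p-1}{p}\cdot \Big((u^\beta+\varepsilon^\beta)^{(1-\frac{m+p-3}{\beta p})(p-1)} t^{-\frac{p-1}{p}}\Big)^{\frac1p}\cdot (\text{something})^{\frac1p},
\end{align*}
split the exponents as $\tfrac{p-1}{p}+\tfrac1p=1$, so that the first factor raised to $p/(p-1)$ gives exactly the integrand controlled by Lemma \ref{femtvaa}, and the remaining factor raised to $p$ must be integrated. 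The leftover factor is (a power of) $(u^\beta+\varepsilon^\beta)$ times a negative power of $t$; here I have to be careful that $t^{-(p-1)}$ is not integrable near $t=0$, so the correct bookkeeping is to keep the weight $t^{1/p}$ inside the ``good'' factor and pull out $t^{(1/p-1)}\cdot t^{-1/p\cdot\frac{1}{p-1}}$ appropriately — in practice one pairs the $t^{1/p}$ weight so that the residual time exponent is integrable on $(0,\tau)$, producing a factor like $\tau^{\text{(something positive)}}$.

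Next, after applying H\"older, the first bracket is bounded by the right-hand side of \eqref{est:harnack-lemma}, i.e. by $\tfrac{c\rho}{(1-\sigma)^p}(\tau/\rho^\lambda)^{1/p}[\,S+\varepsilon\rho^n\,]^{\frac{2p+m-3}{p}}$ where I abbreviate $S:=\sup_{t\in[0,\tau]}\int_{B_\rho(x_o)}u\,dx$. For the second bracket, the integrand is a power of $(u^\beta+\varepsilon^\beta)$; I would invoke Lemma \ref{lem:somestuff} — this is precisely why that lemma was inserted — to replace $(u^\beta+\varepsilon^\beta)^{[\frac{3-m-p}{\beta p}+1](p-1)}$ by $cF_\varepsilon(u)+c\varepsilon^{\frac{m+p-3}{p}+1}$. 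The term with $F_\varepsilon(u)$ is then controlled again by Lemma \ref{femtvaa} (the second integral on its left-hand side, after restoring the $t^{1/p-1}$ weight), and the term with $\varepsilon^{\frac{m+p-3}{p}+1}$ integrates trivially over $B_\rho\times(0,\tau)$ to give a power of $\tau$, $\rho$ and $\varepsilon$. Collecting everything, one arrives at a bound of the shape
\begin{align*}
\frac1\rho\int_0^\tau\!\!\int_{B_{\sigma\rho}(x_o)}|\nabla u^\beta|^{p-1}\,dx\,dt \leq \frac{c}{(1-\sigma)^{p-1}}\Big(\frac{\tau}{\rho^\lambda}\Big)^\frac{1}{p}\big[S+\varepsilon\rho^n\big]^\frac{2p+m-3}{p} + (\text{lower order in }\varepsilon),
\end{align*}
where, because $\varepsilon=(\tau/\rho^p)^{1/(3-m-p)}$ was fixed in Lemma \ref{femtvaa}, the ``$\varepsilon\rho^n$'' term is exactly $\rho^n(\tau/\rho^p)^{1/(3-m-p)}=(\tau/\rho^\lambda)^{1/(3-m-p)}$ up to constants.

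The final step is the Young-type splitting that produces the stated inequality with the free parameter $\delta$. Since $\tfrac{2p+m-3}{p}<1$ in our parameter range (this is where $m+p<3$, equivalently $2p+m-3<p$, is used — note $2p+m-3>0$ as well so the exponent lies in $(0,1)$), the quantity $[S+\varepsilon\rho^n]^{\frac{2p+m-3}{p}}$ is a concave power of $S$ plus a controlled term, and Young's inequality $ab\le \delta a^{p/(2p+m-3)}\cdot(\text{const}) + c_\delta b^{p/(3-m-p)}$ lets me absorb the $S$-dependence into $\delta\, S$ at the cost of a factor $\delta^{-\frac{2p+m-3}{3-m-p}}=\delta^{\frac{3-2p-m}{3-m-p}}$ and a factor $(1-\sigma)^{-p^2/(3-m-p)}$ coming from raising $(1-\sigma)^{-(p-1)}$ (or $(1-\sigma)^{-p}$) to the power $p/(3-m-p)$; the residual $(\tau/\rho^\lambda)$-powers recombine, using $\tfrac1p\cdot\tfrac{p}{3-m-p}=\tfrac{1}{3-m-p}$, into the single term $(\tau/\rho^\lambda)^{1/(3-m-p)}$. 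The main obstacle I anticipate is the careful bookkeeping of the time weights $t^{1/p}$ through the H\"older splitting — one must arrange the exponents so that the residual power of $t$ is integrable on $(0,\tau)$ and the emerging power of $\tau$ is exactly the one needed for the $(\tau/\rho^\lambda)$-scaling to close; a secondary nuisance is tracking the $(1-\sigma)$ powers so they come out as $p^2/(3-m-p)$ as claimed. Everything else is a mechanical combination of Lemmas \ref{femtvaa} and \ref{lem:somestuff} with H\"older's and Young's inequalities.
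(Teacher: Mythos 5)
Your proposal is correct and follows essentially the same route as the paper: Hölder's inequality with exponents $p/(p-1)$ and $p$ to split $|\nabla u^\beta|^{p-1}$ into a factor controlled by the first term on the left of Lemma \ref{femtvaa} and a factor of the form $(u^\beta+\varepsilon^\beta)^{[\frac{3-m-p}{\beta p}+1](p-1)}t^{(1-p)/p}$, which is estimated via Lemma \ref{lem:somestuff} together with the second term on the left of Lemma \ref{femtvaa}, followed by Young's inequality with conjugate exponents $\tfrac{p}{2p+m-3}$ and $\tfrac{p}{3-m-p}$. (Minor slip: after the Hölder step the surviving power of $(1-\sigma)$ is $-p$, not $-(p-1)$, since both Hölder factors contribute; you note both possibilities, and only $(1-\sigma)^{-p}$ raised to $p/(3-m-p)$ produces the claimed $(1-\sigma)^{-p^2/(3-m-p)}$. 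Also, the residual time power $t^{(1-p)/p}$ is in fact integrable near $t=0$ for $p\in(1,2)$, so the worry you raise there is unnecessary.)
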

\begin{proof}[Proof]
Choose $\varepsilon$ as in Lemma \ref{femtvaa}. By H\"older's inequality and the previous lemma, we have
\begin{align}\label{pppp}
&\int^\tau_0 \int_{B_{\sigma\rho}(x_o)} |\nabla u^\beta|^{p-1}\d x \d t
\\
\notag &= \int^\tau_0\int_{B_{\sigma\rho}(x_o)} \Big[ |\nabla u^\beta|^{p-1}(u^\beta+\varepsilon^\beta)^{[\frac{m+p-3}{\beta p}-1]\frac{(p-1)}{p}}t^\frac{p-1}{p^2}\Big]
\Big[ (u^\beta+\varepsilon^\beta)^{[\frac{3-m-p}{\beta p}+1]\frac{(p-1)}{p}} t^\frac{1-p}{p^2} \Big] \d x \d t
\\
\notag & \leq \Big[ \int^\tau_0\int_{B_{\sigma\rho}(x_o)} |\nabla u^\beta|^p (u^\beta+\varepsilon^\beta)^{\frac{m+p-3}{\beta p}-1} t^\frac{1}{p}\d x \d t \Big]^\frac{p-1}{p}
\\
\notag &\quad \times \Big[ \int^\tau_0\int_{B_{\sigma\rho}(x_o)} (u^\beta+\varepsilon^\beta)^{[\frac{3-m-p}{\beta p}+1](p-1)}t^\frac{1-p}{p}\d x \d t\Big]^\frac{1}{p}.
\end{align}
The second integral in the last expression can be estimated by combining \eqref{lem:u-eps-est} and \eqref{est:harnack-lemma}:
\begin{align*}
\int^\tau_0&\int_{B_{\sigma\rho}(x_o)} (u^\beta+\varepsilon^\beta)^{[\frac{3-m-p}{\beta p}+1](p-1)}t^\frac{1-p}{p}\d x \d t \leq c \int^\tau_0\int_{B_{\sigma\rho}(x_o)} (F_\varepsilon(u) + \varepsilon^{\frac{m+p-3}{p}+1})t^\frac{1-p}{p}\d x \d t
\\
&\leq c \int^\tau_0\int_{B_{\sigma\rho}(x_o)} F_\varepsilon(u)t^\frac{1-p}{p}\d x \d t + c \rho^n\varepsilon^{\frac{m+p-3}{p}+1}\tau^\frac{1}{p}
\\
&\leq \frac{c\rho}{(1-\sigma)^p} \Big(\frac{\tau}{\rho^\lambda}\Big)^\frac{1}{p}\Big[ \sup_{t\in[0,\tau]} \int_{B_\rho(x_o)} u(x,t) \d x + \varepsilon\rho^n \Big]^\frac{2p+m-3}{p} + c\rho \Big(\frac{\tau}{\rho^\lambda}\Big)^\frac{1}{p}(\varepsilon\rho^n)^\frac{2p+m-3}{p}
\\
&\leq \frac{c\rho}{(1-\sigma)^p} \Big(\frac{\tau}{\rho^\lambda}\Big)^\frac{1}{p}\Big[ \sup_{t\in[0,\tau]} \int_{B_\rho(x_o)} u(x,t) \d x + \varepsilon\rho^n \Big]^\frac{2p+m-3}{p}.
\end{align*}
Since also the other integral appearing in the last expression of \eqref{pppp} can be estimated using \eqref{est:harnack-lemma}, we have
\begin{align*}
\int^\tau_0 \int_{B_{\sigma\rho}(x_o)} |\nabla u^\beta|^{p-1}\d x \d t \leq \frac{c\rho}{(1-\sigma)^p} \Big(\frac{\tau}{\rho^\lambda}\Big)^\frac{1}{p}\Big[ \sup_{t\in[0,\tau]} \int_{B_\rho(x_o)} u(x,t) \d x + \varepsilon\rho^n \Big]^\frac{2p+m-3}{p}.
\end{align*}
Dividing by $\rho$ and applying Young's inequality to the right-hand side yields the claim.
\end{proof}

Now we can finally prove the Harnack inequality.
\begin{proof}[Proof of Theorem \ref{harnack}].
For $j\in \N$ we choose
\begin{align*}
\rho_j&:=2(1-2^{-j})\rho,\hspace{4mm} \tilde{\rho}_j:=\frac{1}{2}(\rho_j+\rho_{j+1})
\\
B_j&:=B_{\rho_j}(x_o),\hspace{9mm} \tilde{B}_j :=B_{\tilde{\rho}_j}(x_o)
\end{align*}
Pick $\zeta_j\in C^\infty_0(B_{\tilde{\rho}_j}(x_o);[0,1])$ such that $\zeta_j=1$ on $B_{\rho_j}(x_o)$ and We use the weak formulation \eqref{weakform2} with the test function $\varphi=\zeta_j\xi^r_{\tau_1,\tau_2}$ where $r>0$, $\tau_1<\tau_2<\tau$ and
\begin{align*}
\xi^r_{\tau_1,\tau_2}(t)=
\begin{cases}
0, &t<\tau_1,
\\
r^{-1}(t-\tau_1), &t\in [\tau_1,\tau_1+r]
\\
1, & t\in (\tau_1+r,\tau_2)
\\
r^{-1}(\tau_2+r-t), &t\in [\tau_2,\tau_2+r],
\\
0, &t>\tau_2+r.
\end{cases}
\end{align*}
This implies
\begin{align*}
\frac{1}{r}\int^{\tau_2}_{\tau_1} \int_\Omega u\zeta_j\d x\d t = \iint_{\Omega_T} A(u,\nabla u^\beta) \cdot \nabla \zeta_j \xi^r_{\tau_1,\tau_2}\d x \d t + \frac{1}{r}\int^{\tau_2}_{\tau_1} \int_\Omega u\zeta_j\d x\d t.
\end{align*}
Passing to the limit $r\to 0$ and using the structure conditions and properties of $\zeta_j$ we have
\begin{align}\label{psk}
\int_{B_j} u(x,\tau_1)\d x \leq \int_\Omega u\zeta_j(x,\tau_1) \d x &= \int^{\tau_2}_{\tau_1}\int_\Omega A(u,\nabla u^\beta) \cdot \nabla \zeta_j \d x \d t + \int_\Omega u\zeta_j(x,\tau_2)\d x
\\
\notag &\leq \int^{\tau_2}_{\tau_1}\int_\Omega |A(u,\nabla u^\beta)| |\nabla \zeta_j| \d x \d t + \int_\Omega u\zeta_j(x,\tau_2)\d x
\\
\notag &\leq c\frac{2^j}{\rho} \int^{\tau}_0\int_{\tilde{B}_j}|\nabla u^\beta|^{p-1}\d x \d t + \int_{B_{j+1}}u(x,\tau_2)\d x,
\end{align}
for all $\tau_1,\tau_2$ due to the time-continuity of $u$. Although we assumed $\tau_1<\tau_2$, we see by a similar calculation that the estimate remains valid for $\tau_1\geq \tau_2$. We want to estimate the double integral in the last expression using Lemma \ref{drphil} with $\rho$ replaced by $\rho_{j+1}$, and consequently with $\sigma:=\tilde{\rho}_j/\rho_{j+1}$. Directly from the definition it follows that
\begin{align*}
\frac{1}{1-\sigma}< 2^{j+2}.
\end{align*}
Taking this into account, Lemma \ref{drphil} shows that
\begin{align*}
\int_{B_j} u(x,\tau_1)\d x \leq c 2^j \delta \sup_{t\in[0,\tau]} \int_{B_{j+1}} u(x,t) \d x + c \frac{2^{\frac{jp^2}{3-m-p}+j}}{\delta^\frac{2p+m-3}{3-m-p}} \Big(\frac{\tau}{\rho^\lambda}\Big)^\frac{1}{3-m-p} + \int_{B_{j+1}}u(x,\tau_2)\d x,
\end{align*}
for all $\delta\in (0,1)$. Here we also used the fact that all the elements of the sequence $(\rho_j)$ are comparable in size to $\rho$. Taking now $\delta=c^{-1}2^{-1}\varepsilon_o$ where $\varepsilon_o\in (0,1)$ and $c\geq 1$ is the constant from the previous estimate, we see that
\begin{align*}
\int_{B_j} u(x,\tau_1)\d x \leq \varepsilon_o \sup_{t\in[0,\tau]} \int_{B_{j+1}} u(x,t) \d x + cb^j \Big(\frac{\tau}{\rho^\lambda}\Big)^\frac{1}{3-m-p} + \int_{B_{2\rho}(x_o)}u(x,\tau_2)\d x,
\end{align*}
where $b=b(m,n,p, C_0, C_1)$ and $c=c(m,n,p, C_0, C_1, \varepsilon_o)$. We also used the fact that $B_{j+1}\subset B_{2\rho}(x_o)$. Recalling that the inequality holds for a.e. $\tau_1,\tau_2\in (0,\tau)$ we see that it implies
\begin{align*}
S_j\leq \varepsilon_o S_{j+1} + c b^j \Big(\frac{\tau}{\rho^\lambda}\Big)^\frac{1}{3-m-p} + I,
\end{align*}
where
\begin{align}\label{iterative_esssup_ineq}
S_j:= \sup_{t\in[0,\tau]} \int_{B_j} u(x,t) \d x, \hspace{5mm} I:= \inf_{t\in[0,\tau]} \int_{B_{2\rho}(x_o)}u(x,t)\d x.
\end{align}
Iterating \eqref{iterative_esssup_ineq} we have
\begin{align}\label{iterated}
\sup_{t\in[0,\tau]} \int_{B_\rho(x_o)} u(x,t) \d x = S_1 \leq \varepsilon_o^M S_{M+1}+ c b \Big(\frac{\tau}{\rho^\lambda}\Big)^\frac{1}{3-m-p} \sum^{M-1}_{j=0}(b\varepsilon_o)^j+ I\sum^{M-1}_{j=0}\varepsilon_o^j.
\end{align}
choose now for example $\varepsilon_o= \tfrac{1}{2b}$ so that both of the sums in \eqref{iterated} converge in the limit $M\to \infty$. Then, since
\begin{align*}
S_{M+1}\leq \sup_{t\in[0,\tau]} \int_{B_{2\rho}(x_o)} u(x,t) \d x,
\end{align*}
where the right-hand side finite due to the time-continuity of $u$, we see that we can pass to the limit $M\to \infty$ which yields the claim.
\end{proof}

\section{Expansion of Positivity}
In this section we show that weak solutions exhibit expansion of positivity. This type of result was already obtained in \cite{FoSoVe2}, but the calculations were made under the assumption that $u$ has a gradient, which is not necessarily true in our case. We demonstrate that the same strategy as in \cite{FoSoVe2} can nevertheless be applied with some modifications. For the reader's convenience detailed proofs are provided. We start with a lemma corresponding to Lemma 3.1 of \cite{FoSoVe}.
\begin{lem}[General De Giorgi type lemma]\label{lem:GenDeGio}
Suppose that $v:\Omega_T\to \R_{\geq 0}$ satisfies $v^\beta \in L^p(0,T;W^{1,p}(\Omega))$ and the energy estimate
\begin{align}\label{general_en-est}
c_g\iint_{\Omega_T} | \nabla & v^\beta |^p\varphi^p \chi_{\{v<k\}}\d x\d t + c_e\esssup_{t\in [0,T]} \int_\Omega \b[v,k]\chi_{\{v<k\}}\varphi^p(x,t) \d x
\\
\notag & \leq \iint_{\Omega_T} |\nabla \varphi|^p(v^\beta-k^\beta)^p_- + \big(v^{\frac{\beta+1}{2}}-k^{\frac{\beta+1}{2}} \big)_-^2 \varphi^{p-1}|\partial_t \varphi | \d x \d t,
\end{align}
for some positive constants $c_g$ and $c_e$ and all $k\geq 0$ and functions $\varphi \in C^\infty(\bar{\Omega}\times [0,T];[0,1])$ vanishing in a neighborhood of $\partial_p \Omega_T$. Suppose $K>0$, $a\in (0,1)$ and that $Q_{\rho,\theta\rho^p}(z_o)\subset \Omega_T$ Then there is a constant $c>0$ depending only on $m,n,p$ such that if
\begin{align}\label{meas_cond_for_v}
|Q_{\rho,\theta\rho^p}(z_o)\cap \{ v < K\}|\leq c c_e c_g^{\frac{n}{p}} (1-a^\beta)^{n+2} \frac{ (\theta K^{m+p-3})^\frac{n}{p}}{\big[ 1 + \theta K^{m+p-3}\big]^{\frac{(n+p)}{p}}} |Q_{\rho,\theta\rho^p}(z_o)|,
\end{align}
then $v\geq a K$ a.e. in $Q_{\frac{\rho}{2},\theta(\frac{\rho}{2})^p}(z_o)$.
\end{lem}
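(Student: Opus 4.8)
The plan is to run a De Giorgi iteration on a sequence of shrinking cylinders, using the energy estimate \eqref{general_en-est} with carefully chosen truncation levels and cutoff functions, and then invoke the fast-convergence Lemma \ref{fastconvg}. Concretely, for $j\in\N_0$ set $k_j := aK + 2^{-j}(1-a)K$ (so $k_0=K$, $k_j\downarrow aK$), let $\rho_j := \tfrac{\rho}{2}(1+2^{-j})$ (so $\rho_0=\rho$, $\rho_j\downarrow\tfrac{\rho}{2}$), and take $Q_j := Q_{\rho_j,\theta\rho_j^p}(z_o)$. Choose $\varphi_j\in C^\infty$ with $\varphi_j=1$ on $Q_{j+1}$, $\varphi_j=0$ outside $Q_j$, and $|\nabla\varphi_j|\le c\,2^j/\rho$, $|\partial_t\varphi_j|\le c\,2^{jp}/(\theta\rho^p)$. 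The iteration quantity will be the normalized measure
\begin{align*}
Y_j := \frac{|Q_j\cap\{v<k_j\}|}{|Q_j|}.
\end{align*}

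The core of the argument is to bound $Y_{j+1}$ in terms of $Y_j$. First, by the parabolic Sobolev inequality (Lemma \ref{lemma:Gagliardo}) applied to $w_j:=(k_j^\beta - v^\beta)_+ \varphi_j$ — using that $v^\beta$ has a weak $L^p$ gradient — one controls $\iint |w_j|^{p(1+\frac{q}{n})}$ by $\big(\esssup_t\int |w_j|^q\big)^{p/n}\iint|\nabla w_j|^p$ for a suitable $q$; the natural choice is $q$ such that the quantities $\b[v,k]$ and $(v^{(\beta+1)/2}-k^{(\beta+1)/2})_-^2$ appearing in the energy estimate match, which by Lemma \ref{estimates:boundary_terms}(i) means working with $q=\beta+1$ and the exponent $p(1+\tfrac{\beta+1}{n})$. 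Both factors on the right are then estimated by the right-hand side of \eqref{general_en-est}; the $|\nabla\varphi|^p$ term contributes $2^{jp}\rho^{-p}(k_j^\beta-k_{j+1}^\beta)^p\approx 2^{jp}\rho^{-p}(2^{-j}(1-a)K)^{\beta p}$ times $|Q_j\cap\{v<k_j\}|$ via Lemma \ref{estimates:boundary_terms}(iii), and the $\varphi^{p-1}|\partial_t\varphi|$ term contributes $2^{jp}(\theta\rho^p)^{-1}$ times an analogous bound using that on $\{v<k_j\}$ one has $(v^{(\beta+1)/2}-k_j^{(\beta+1)/2})_-^2\le c(k_j^\beta-v^\beta)^{(\beta+1)/\beta}\le c k_j^{\beta+1}$, roughly $K^{\beta+1}$. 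On the left side, the key step is to bound $Y_{j+1}|Q_{j+1}|$ from below: since on $Q_{j+1}\cap\{v<k_{j+1}\}$ we have $w_j \ge (k_j^\beta - k_{j+1}^\beta)\approx 2^{-j}(1-a)\beta K^\beta\cdot(\text{factor})$, Chebyshev gives $|Q_{j+1}\cap\{v<k_{j+1}\}| \le (k_j^\beta-k_{j+1}^\beta)^{-p(1+\frac{\beta+1}{n})}\iint |w_j|^{p(1+\frac{\beta+1}{n})}$. Combining and tracking all powers of $2^j$, $(1-a)$, $K$, $\theta$, $\rho$, one arrives at an inequality of the form $Y_{j+1}\le C\,b^j\,Y_j^{1+\delta}$ with $\delta = \tfrac{p}{n+\beta+1}\cdot\tfrac{\beta+1}{\beta}$ or a similarly explicit positive exponent, and $C,b$ depending on $m,n,p,c_g,c_e,(1-a^\beta),\theta K^{m+p-3}$ precisely as in the statement of \eqref{meas_cond_for_v}.

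Once this recursive inequality is in place, Lemma \ref{fastconvg} tells us $Y_j\to 0$ provided $Y_0 = |Q_{\rho,\theta\rho^p}(z_o)\cap\{v<K\}|/|Q_{\rho,\theta\rho^p}(z_o)| \le C^{-1/\delta} b^{-1/\delta^2}$, and unwinding the explicit forms of $C$ and $b$ shows this smallness condition is exactly \eqref{meas_cond_for_v} for an appropriate choice of the constant $c=c(m,n,p)$. Since $Y_j\to 0$ and $k_j\to aK$, $\rho_j\to\rho/2$, we conclude $|Q_{\rho/2,\theta(\rho/2)^p}(z_o)\cap\{v<aK\}|=0$, i.e. $v\ge aK$ a.e. there.

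The main obstacle I anticipate is the careful bookkeeping of the homogeneity: the combination $\theta K^{m+p-3}$ enters because $m+p-3 = \beta p - (\beta+1)$ up to the factor $p-1$ (more precisely the scaling balances the intrinsic time scale $\theta\rho^p$ against the $K^{\beta+1}$ from the $\partial_t\varphi$ term and the $K^{\beta p}$ from the $|\nabla\varphi|^p$ term), and one must verify that the dependence on $a$ collapses to the single factor $(1-a^\beta)^{n+2}$ rather than separate powers of $(1-a)$ and $(1-a^\beta)$ — this requires using $k_j^\beta - k_{j+1}^\beta \approx 2^{-j}(k_0^\beta - (aK)^\beta)\cdot(\text{bounded})$ together with $k_j^\beta-(aK)^\beta \ge (1-a^\beta)K^\beta$ and comparing with the lower bound $\b[v,aK]$ on the good set. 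Getting the exponent on $(1-a^\beta)$ and the precise shape of the factor $(\theta K^{m+p-3})^{n/p}[1+\theta K^{m+p-3}]^{-(n+p)/p}$ to come out right is the delicate part; everything else is a standard, if lengthy, De Giorgi iteration.
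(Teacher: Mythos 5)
Your overall plan — a De Giorgi iteration on shrinking intrinsic cylinders, Chebyshev to pass to a higher power, the parabolic Sobolev embedding, the energy estimate to close the loop, and the fast geometric convergence Lemma~\ref{fastconvg} — is exactly what the paper does, so the framework is sound. However, your choice of truncation levels $k_j = aK + 2^{-j}(1-a)K$ is a genuine gap, not just bookkeeping. You need the Chebyshev step $(k_j^\beta-k_{j+1}^\beta)\gtrsim 2^{-j}(1-a^\beta)K^\beta$ uniformly in $j$ and $a$, and you assert ``$k_j^\beta - k_{j+1}^\beta \approx 2^{-j}(k_0^\beta-(aK)^\beta)$''; this is false for your $k_j$ when $a$ is small. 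For $a\to 0$ your levels tend to $k_j\approx 2^{-j}K$, so $k_j^\beta - k_{j+1}^\beta\approx (1-2^{-\beta})2^{-j\beta}K^\beta$ decays at rate $2^{-j\beta}$ rather than $2^{-j}$, and the best uniform lower bound is $\gtrsim a^{\beta-1}(1-a)2^{-j}K^\beta$. Since $a^{\beta-1}(1-a)\le (1-a^\beta)/\beta$ with the ratio going to zero as $a\to 0$, you would end up with a smallness condition of the form $Y_0\lesssim \big(a^{\beta-1}(1-a)\big)^{n+2}(\cdots)$, which is \emph{strictly stronger} than \eqref{meas_cond_for_v} and hence does not prove the lemma as stated. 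The fix, used in the paper, is to interpolate geometrically in the $\beta$-power: $k_j^\beta := \big(a^\beta + 2^{-j}(1-a^\beta)\big)K^\beta$, which gives $k_j^\beta - k_{j+1}^\beta = 2^{-(j+1)}(1-a^\beta)K^\beta$ exactly.

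A secondary (but repairable) issue is the choice $q=\beta+1$ in Lemma~\ref{lemma:Gagliardo}. The energy estimate controls $\esssup_t\int\b[v,k_j]\chi_{\{v<k_j\}}\varphi_j^p$, and to convert $\esssup_t\int(v^\beta-k_j^\beta)_-^{\beta+1}\varphi_j^{\beta+1}$ into that quantity you pay extra powers of $k_j$ through Lemma~\ref{estimates:boundary_terms}(ii). Moreover, your $\delta$ in the fast-convergence iteration changes with $q$, which changes every exponent (on $c_e$, $c_g$, $(1-a^\beta)$, and $\theta K^{m+p-3}$) in the final smallness threshold, so you cannot assert without computation that it ``comes out right.'' The paper sidesteps this by taking $q=p$: after splitting $\iint(v^\beta-k_j^\beta)_-^{p(n+p)/n}$ via spatial H\"older into a $p$-power factor and a $p^*$-power factor, it rewrites $(v^\beta-k_j^\beta)_-^p\chi_{A_{j+1}}=(v^\beta-k_j^\beta)_-^{p-2}\chi_{A_{j+1}}(v^\beta-k_j^\beta)_-^2$ and, since $p<2$, absorbs the negative-power factor using the Chebyshev lower bound $(v^\beta-k_j^\beta)_-\ge k_j^\beta-k_{j+1}^\beta$, reducing cleanly to $k_j^{1-\beta}(v^\beta-k_j^\beta)_-^2\lesssim\b[v,k_j]$ by Lemma~\ref{estimates:boundary_terms}(ii). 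That specific splitting is where the $(1-a^\beta)^{(p-2)p/(n+p)}$ factor and the $K^{(m+p-3)p/n}$ homogeneity both emerge with exactly the exponents that unwind to \eqref{meas_cond_for_v}. I would strongly recommend adopting both the paper's $k_j^\beta$ levels and the $q=p$ splitting; your outline does not establish the quantitative form of \eqref{meas_cond_for_v} as it stands.
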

\begin{proof}[Proof]
Define
\begin{align*}
&\rho_j :=\frac{\rho}{2}+\frac{\rho}{2^{j+1}}, \hspace{5mm} k_j^\beta := \Big(a^\beta+\frac{(1-a^\beta)}{2^j}\Big)K^\beta, \hspace{5mm} B_j:=B_{\rho_j}(y_o), \hspace{5mm} T_j:=(t_o-\theta \rho_j^p,t_o),
\\
&Q_j:= B_j\times T_j = Q_{\rho_j,\theta \rho_j^p}(y_o,t_o), \hspace{5mm} A_j:=Q_j\cap \{ v<k_j\}, \hspace{5mm} Y_j:=|A_j|/|Q_j|.
\end{align*}
Pick $\varphi_j \in C^\infty(Q_j;[0,1])$ such that $\varphi_j=1$ on $Q_{j+1}$ and $\varphi_j=0$ in a neighborhood of $\partial_p Q_j$, and
\begin{align*}
|\nabla \varphi_j|\leq 2^{j+3}\rho^{-1}, \hspace{5mm} |\partial_t\varphi|\leq c_p\theta^{-1}2^j\rho^{-p}.
\end{align*}
In the set where $v<k_{j+1}$ we have
\begin{align*}
(v^\beta-k_j^\beta)_-\geq k_j^\beta-k_{j+1}^\beta = \frac{(1-a^\beta)}{2^{j+1}}K^\beta,
\end{align*}
so
\begin{align}\label{ffff}
\frac{(1-a^\beta)^p}{2^{(j+1)p}}K^{\beta p}|A_{j+1}| &\leq \iint_{A_{j+1}} (v^\beta-k_j^\beta)_-^p\d x \d t
\\
\notag &\leq \Big( \iint_{A_{j+1}} (v^\beta-k_j^\beta)_-^{p \frac{(n+p)}{n}} \d x \d t \Big)^\frac{n}{n+p} |A_{j+1}|^\frac{p}{n+p}.
\end{align}
We treat the integral inside the brackets by applying H\"older's inequality to the integral over the space variables. One of the resulting integrals is then estimated by taking the essential supremum over the time interval, and the Gagliardo-Nirenberg inequality provides an upper bound for the other integral. All in all, we have
\begin{align*}
\iint_{A_{j+1}} & (v^\beta-k_j^\beta)_-^{p \frac{(n+p)}{n}} \d x \d t = \int_{T_{j+1}}\int_{B_{j+1}} (v^\beta-k_j^\beta)_-^{p \frac{p}{n}}\chi_{A_{j+1}} (v^\beta-k_j^\beta)_-^p \d x \d t
\\
&\leq \int_{T_{j+1}}\Big[\int_{B_{j+1}} (v^\beta-k_j^\beta)_-^p \chi_{A_{j+1}}\d x \Big]^\frac{p}{n} \Big[\int_{B_{j+1}} (v^\beta-k_j^\beta)_-^{p^*} \d x \Big]^\frac{p}{p^*} \d t
\\
&\leq \Big[\esssup_{T_{j+1}} \int_{B_{j+1}} (v^\beta-k_j^\beta)_-^p \chi_{A_{j+1}}\d x \Big]^\frac{p}{n} \int_{T_j} \Big[\int_{B_j} \big((v^\beta-k_j^\beta)_-\varphi_j\big)^{p^*} \d x \Big]^\frac{p}{p^*} \d t
\\
&\leq c\Big[\esssup_{T_{j+1}} \int_{B_{j+1}} (v^\beta-k_j^\beta)_-^{p-2} \chi_{A_{j+1}}(v^\beta-k_j^\beta)_-^2\d x \Big]^\frac{p}{n} \iint_{Q_j} |\nabla \big((v^\beta-k_j^\beta)_-\varphi_j\big)|^p \d x \d t
\\
&\leq c (1-a^\beta)^{(p-2)\frac{p}{n}} K^{\beta(p-2)\frac{p}{n}}2^{j(2-p)\frac{p}{n}} k_j^{(\beta-1)\frac{p}{n}} \Big[\esssup_{T_{j+1}} \int_{B_{j+1}} k_j^{1-\beta}(v^\beta-k_j^\beta)_-^2\d x \Big]^\frac{p}{n}
\\
&\quad \times \iint_{Q_j} |\nabla \big((v^\beta-k_j^\beta)_-\varphi_j\big)|^p \d x \d t
\\
&\leq c(1-a^\beta)^{(p-2)\frac{p}{n}} K^{\frac{p}{n}(m+p-3)}2^{j(2-p)\frac{p}{n}} \Big[\esssup_{T_j} \int_{B_j} \b[v,k_j]\chi_{\{v<k_j\}} \varphi_j^p \d x \Big]^\frac{p}{n}
\\
&\quad \times \iint_{Q_j} |\nabla \big((v^\beta-k_j^\beta)_-\varphi_j\big)|^p \d x \d t
\\
&\leq c c_e^{-\frac{p}{n}} c_g^{-1}(1-a^\beta)^{(p-2)\frac{p}{n}} K^{\frac{p}{n}(m+p-3)}2^{j(2-p)\frac{p}{n}}
\\
&\quad \times \Big[\iint_{Q_j} |\nabla \varphi_j|^p (v^\beta-k_j^\beta)^p_- + \big(v^{\frac{\beta+1}{2}}-k_j^{\frac{\beta+1}{2}} \big)_-^2 \varphi_j^{p-1}|\partial_\tau \varphi_j| \d x \d t\Big]^\frac{p+n}{n},
\end{align*}
where $c=c(m,n,p)$. We have also used \eqref{estimates:boundary_terms} (ii) and the fact that $k_j\leq K$. In the last step we use \eqref{general_en-est}. Taking also into account the bounds on the derivatives of $\varphi_j$ and the bound for $k_j$ we end up with
\begin{align*}
\Big[ \iint_{A_{j+1}} (v^\beta-k_j^\beta)_-^{p \frac{(n+p)}{n}} \d x \d t \Big]^\frac{n}{n+p} &\leq c c_e^{-\frac{p}{n+p}}c_g^{-\frac{n}{n+p}}(1-a^\beta)^{(p-2)\frac{p}{n+p}}K^\frac{p(m+p-3)}{n+p}2^{j\frac{p(2-p)}{n+p}}
\\
&\quad \times \big[K^{\beta p} 2^{j p}\rho^{-p} + K^{\beta+1}\theta^{-1}2^{-j}\rho^{-p}\big]|A_j|
\\
&\leq c c_e^{-\frac{p}{n+p}}c_g^{-\frac{n}{n+p}}(1-a^\beta)^{\frac{(p-2)p}{n+p}} K^{\frac{p(m+p-3)}{n+p} + \beta p}2^{j\frac{p(n+2)}{n+p}}\rho^{-p}
\\
&\quad \times \big[ 1 + K^{3-m-p}\theta^{-1}\big]|A_j|.
\end{align*}
Combining the last estimate with \eqref{ffff} we end up with
\begin{align*}
|A_{j+1}| &\leq c c_e^\frac{-p}{n+p}c_g^\frac{-n}{n+p}(1-a^\beta)^{\frac{(p-2)p}{n+p}-p} K^{\frac{p(m+p-3)}{n+p}} 2^{j[\frac{p(n+2)}{n+p}+p]}\rho^{-p} \big[ 1 + K^{3-m-p}\theta^{-1}\big]|A_j|^{1+\frac{p}{n+p}}
\end{align*}
Dividing by $|Q_j|$ gives us the desired iterative estimate
\begin{align*}
Y_{j+1}\leq c c_e^\frac{-p}{n+p}c_g^\frac{-n}{n+p}(1-a^\beta)^{\frac{(p-2)p}{n+p}-p} \big[\theta K^{m+p-3}\big]^\frac{p}{n+p} 2^{j[\frac{p(n+2)}{n+p}+p]} \big[ 1 + K^{3-m-p}\theta^{-1}\big]Y_j^{1+\frac{p}{n+p}}.
\end{align*}
Thus Lemma \ref{fastconvg} shows that if
\begin{align*}
Y_0 \leq c c_e c_g^{\frac{n}{p}} (1-a^\beta)^{n+2} \frac{ (\theta K^{m+p-3})^\frac{n}{p}}{\big[ 1 + \theta K^{m+p-3}\big]^{\frac{(n+p)}{p}}}
\end{align*}
for a suitable constant $c$ depending only on $m,n,p$, then $Y_j\to 0$, which means that $v\geq a K$ in $Q_{\frac{\rho}{2},\theta(\frac{\rho}{2})^p}(z_o)$.
\end{proof}
The following variant of the De Giorgi lemma will also be useful. The extra assumption \eqref{ularge}, regarding the values of $u$ at the initial time of the space-time cylinder, allows us to get a lower bound which holds on a cylinder which has only been reduced in the spatial dimensions. It is understood that we consider the time-continuous representative of $u$, so that \eqref{ularge} makes sense.
\begin{lem}[Variant of the general De Giorgi type lemma]\label{lem:VarGenDeGio}
Let $u$ be a weak solution in the sense of Definition \ref{weakdef}. Suppose that $Q_{\rho,\theta\rho^p}(z_o)\subset \Omega_T$ and that
\begin{align}\label{ularge}
u(x,t_o-\theta\rho^p)\geq K,
\end{align}
for a.e. $x\in B_\rho(x_o)$.
Then there is a constant $c$ depending only on $m,n,p,C_0,C_1$ such that if
\begin{align}\label{assumption:meas}
|Q_{\rho,\theta\rho^p}(z_o)\cap \{u< K\}| \leq c\frac{(1-a^\beta)^{n+2}}{\theta K^{m+p-3}}|Q_{\rho,\theta\rho^p}(z_o)|,
\end{align}
then $u\geq aK$ a.e. in $Q_{\frac{\rho}{2},\theta\rho^p}(z_o)$.
\end{lem}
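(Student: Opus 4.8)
The plan is to adapt the De Giorgi iteration from the proof of Lemma~\ref{lem:GenDeGio}, with the essential modification that the nested cylinders are contracted only in the spatial directions while the time interval $(t_o-\theta\rho^p,t_o)$ is kept fixed; the hypothesis \eqref{ularge} is precisely what is needed so that the initial boundary term in the energy estimate drops out under this choice. Concretely, for $j\in\N$ I would set $\rho_j:=\tfrac{\rho}{2}+\tfrac{\rho}{2^{j+1}}$, $k_j^\beta:=\bigl(a^\beta+\tfrac{1-a^\beta}{2^j}\bigr)K^\beta$, $B_j:=B_{\rho_j}(x_o)$, $Q_j:=B_j\times(t_o-\theta\rho^p,t_o)$, $A_j:=Q_j\cap\{u<k_j\}$ and $Y_j:=|A_j|/|Q_j|$, and choose cut-offs $\varphi_j\in C^\infty_0(B_j;[0,1])$ \emph{depending only on $x$}, with $\varphi_j=1$ on $B_{j+1}$ and $|\nabla\varphi_j|\le 2^{j+3}\rho^{-1}$. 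Since $Q_0=Q_{\rho,\theta\rho^p}(z_o)$, $k_0=K$, $\rho_j\to\rho/2$ and $k_j\to aK$, while the time interval is unchanged, it suffices to show that $Y_j\to 0$ under the smallness assumption \eqref{assumption:meas}.

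The first step is to feed $\varphi=\varphi_j$ and $k=k_j$ into the energy estimate of Lemma~\ref{lem:alt-en-est}, with $t_1\searrow t_o-\theta\rho^p$ and $t_2\nearrow t_o$. Because $\varphi_j$ is independent of $t$, no $\partial_t\varphi$ term appears on the right-hand side. Because of \eqref{ularge}, the time-continuous representative of $u$ satisfies $u(\cdot,t_o-\theta\rho^p)\ge K\ge k_j$ a.e.\ on $B_\rho(x_o)$, so the initial datum term $\int_{B_j}\b[u,k_j]\chi_{\{u<k_j\}}\varphi_j^p(x,t_1)\,\d x$ tends to $0$ as $t_1\searrow t_o-\theta\rho^p$. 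Using that $(u^\beta-k_j^\beta)_-\le k_j^\beta\le K^\beta$ on $A_j$, this yields
\[
\iint_{Q_j}|\nabla(u^\beta-k_j^\beta)_-|^p\varphi_j^p\,\d x\d t+\esssup_{t}\int_{B_j}\b[u,k_j]\chi_{\{u<k_j\}}\varphi_j^p(x,t)\,\d x\le c\,\frac{2^{jp}}{\rho^p}K^{\beta p}|A_j|,
\]
which is the analogue of the energy bound used in Lemma~\ref{lem:GenDeGio}, but \emph{without} the additional factor $1+K^{3-m-p}\theta^{-1}$, exactly because $\partial_t\varphi_j=0$.

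Next I would run the measure-theoretic step just as in Lemma~\ref{lem:GenDeGio}. On $\{u<k_{j+1}\}$ one has $(u^\beta-k_j^\beta)_-\ge(1-a^\beta)2^{-(j+1)}K^\beta$, hence
\[
\Bigl(\tfrac{1-a^\beta}{2^{j+1}}\Bigr)^{p}K^{\beta p}|A_{j+1}|\le\iint_{A_{j+1}}(u^\beta-k_j^\beta)_-^p\,\d x\d t\le\Bigl(\iint_{A_{j+1}}(u^\beta-k_j^\beta)_-^{p\frac{n+p}{n}}\,\d x\d t\Bigr)^{\frac{n}{n+p}}|A_{j+1}|^{\frac{p}{n+p}}.
\]
The middle integral is estimated by Hölder's inequality in space, the essential supremum in time, and the parabolic Sobolev inequality of Lemma~\ref{lemma:Gagliardo} applied to $(u^\beta-k_j^\beta)_-\varphi_j$, converting powers of $(u^\beta-k_j^\beta)_-$ into $\b[u,k_j]$ via Lemma~\ref{estimates:boundary_terms}(ii) and using $k_j\le K$; inserting the energy bound above and dividing by $|Q_j|\simeq\theta\rho^{n+p}$ produces an iterative inequality of the form
\[
Y_{j+1}\le c\,b^{j}\bigl(\theta K^{m+p-3}\bigr)^{\frac{p}{n+p}}(1-a^\beta)^{-\bigl[p-\frac{(p-2)p}{n+p}\bigr]}\,Y_j^{\,1+\frac{p}{n+p}},
\]
with $b,c$ depending only on $m,n,p,C_0,C_1$. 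Applying Lemma~\ref{fastconvg} with $\delta=p/(n+p)$ gives $Y_j\to 0$ as soon as $Y_0\le C^{-1/\delta}b^{-1/\delta^2}$; since $\bigl[p-\tfrac{(p-2)p}{n+p}\bigr]\cdot\tfrac{n+p}{p}=n+2$ and $\bigl(\theta K^{m+p-3}\bigr)^{\frac{p}{n+p}\cdot(-\frac{n+p}{p})}=(\theta K^{m+p-3})^{-1}$, this is exactly the smallness condition \eqref{assumption:meas} for a suitable $c=c(m,n,p,C_0,C_1)$. As $k_j\to aK$ on the fixed time interval and $\rho_j\to\rho/2$, it follows that $u\ge aK$ a.e.\ in $Q_{\rho/2,\theta\rho^p}(z_o)$.

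The only genuinely new point compared with Lemma~\ref{lem:GenDeGio}, and the step I expect to require the most care, is the justification that the initial boundary term in Lemma~\ref{lem:alt-en-est} really vanishes in the limit $t_1\searrow t_o-\theta\rho^p$: one must invoke the time-continuity of $u$ into $L^{\beta+1}_{\mathrm{loc}}(\Omega)$ from Theorem~\ref{cont_into_Lbetaplusone}, pass to an a.e.\ convergent subsequence on $B_\rho(x_o)$, and dominate $\b[u(\cdot,t_1),k_j]\chi_{\{u(\cdot,t_1)<k_j\}}$ (using the bound $\b[v,k_j]\le c|v^\beta-k_j^\beta|^{\frac{\beta+1}{\beta}}$ from Lemma~\ref{estimates:boundary_terms}(iii) together with the $L^1$-convergence of $u(\cdot,t_1)^{\beta+1}$) so that the limiting integrand is identically zero thanks to \eqref{ularge}. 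Everything else is a routine — and, since the cut-offs no longer depend on time, slightly shorter — repetition of the computation in Lemma~\ref{lem:GenDeGio}.
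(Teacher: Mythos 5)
Your proposal is correct and follows essentially the same route as the paper: fix the time interval, use $x$-only cut-offs, invoke Lemma~\ref{lem:alt-en-est}, and run the De Giorgi iteration as in Lemma~\ref{lem:GenDeGio} without the $\partial_t\varphi_j$ contribution. The one thing you flag as delicate — the limit $t_1\searrow t_o-\theta\rho^p$ — is actually not needed: Lemma~\ref{lem:alt-en-est} is stated for the time-continuous representative, so one simply takes $t_1=t_o-\theta\rho^p$ directly, and then \eqref{ularge} makes the integrand $\b[u,k_j]\chi_{\{u<k_j\}}(\cdot,t_1)$ vanish identically a.e. (since $k_j\le K\le u(\cdot,t_1)$), with no convergence argument required.
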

\begin{proof}[Proof] Define $k_j$, $\rho_j$ and $B_j$ as in Lemma \ref{lem:GenDeGio}, but choose
\begin{align*}
Q_j:=B_j\times \Delta=B_j\times (t_o-\theta\rho^p,t_o)=Q_{\rho_j,\theta\rho^p}.
\end{align*}
As before, we denote $A_j=Q_j\cap \{u<k_j\}$ and $Y_j=|A_j|/|Q_j|$. Choose $\varphi_j\in C^\infty_0(B_j;[0,1])$ such that $\varphi_j=1$ on $B_{j+1}$ and
\begin{align*}
|\nabla \varphi_j|\leq \rho^{-1}2^{j+3}.
\end{align*}
We use the energy estimate \eqref{klj} of Lemma \ref{lem:alt-en-est} with $\varphi=\varphi_j$, $k=k_j$, $t_1=t_o-\theta\rho^p$ and $t_2\in \Delta$. The assumption \eqref{ularge} guarantees that the second term on the right-hand side of \eqref{klj} vanishes and we end up with
\begin{align*}
\iint_{Q_j} |\nabla (u^\beta-k_j^\beta)_-|^p\varphi_j^p\d x \d t + \esssup_\Delta \int_{B_j}\b[u,k_j] \chi_{\{u<k_j \}} \varphi_j^p\d x
\\ \leq c \iint_{Q_j} (u^\beta-k_j^\beta)_-^p|\nabla \varphi_j|^p\d x \d t,
\end{align*}
where $c=c(p,C_0,C_1)$. As in Lemma \ref{lem:GenDeGio} we see that
\begin{align}\label{jli}
\frac{(1-a^\beta)^p}{2^{(j+1)p}}K^{\beta p}|A_{j+1}|
\leq \Big( \iint_{A_{j+1}} (u^\beta-k_j^\beta)_-^{p \frac{(n+p)}{n}} \d x \d t \Big)^\frac{n}{n+p} |A_{j+1}|^\frac{p}{n+p}.
\end{align}
Similarly as in the proof of Lemma \ref{lem:GenDeGio} we may estimate the integral inside the brackets as
\begin{align*}
\iint_{A_{j+1}}& (u^\beta-k_j^\beta)_-^{p \frac{(n+p)}{n}} \d x \d t
\\
&\leq \Big[\esssup_\Delta \int_{B_j} (u^\beta-k_j^\beta)_-^p \chi_{A_{j+1}}\varphi_j^p \d x \Big]^\frac{p}{n} \int_\Delta \Big[\int_{B_j} \big((u^\beta-k_j^\beta)_-\varphi_j\big)^{p^*} \d x \Big]^\frac{p}{p^*} \d t
\\
&\leq c (1-a^\beta)^{(p-2)\frac{p}{n}} K^{\beta(p-2)\frac{p}{n}}2^{j(2-p)\frac{p}{n}} k_j^{(\beta-1)\frac{p}{n}} \Big[\esssup_\Delta \int_{B_j} (u^\beta-k_j^\beta)_-^2 k_j^{1-\beta} \varphi_j^p \d x \Big]^\frac{p}{n}
\\
& \quad \times \iint_{Q_j} |\nabla \big( (u^\beta-k^\beta)_- \varphi_j\big)|^p\d x \d t
\\
& \leq c (1-a^\beta)^{(p-2)\frac{p}{n}} K^{(m+p-3)\frac{p}{n}}2^{j(2-p)\frac{p}{n}} \Big[ \iint_{Q_j} (u^\beta-k_j^\beta)_-^p|\nabla \varphi_j|^p\d x \d t \Big]^\frac{p+n}{n}
\\
& \leq c (1-a^\beta)^{(p-2)\frac{p}{n}} K^{(m+p-3)\frac{p}{n}}2^{j(2-p)\frac{p}{n}} \big( K^{\beta p} \rho^{-p}2^{jp}|A_j| \big)^\frac{p+n}{n},
\end{align*}
where the constant $c$ only depends on $m,n,p,C_0,C_1$. Combining this estimate with \eqref{jli} we have
\begin{align*}
|A_{j+1}|\leq c (1-a^\beta)^{\frac{(p-2)p}{n+p}-p} K^{\frac{p(m+p-3)}{n+p}} 2^{j[\frac{p(n+2)}{n+p}+p]}\rho^{-p}|A_j|^{1+\frac{p}{n+p}}
\end{align*}
Dividing by $|Q_j|$ we obtain
\begin{align*}
Y_{j+1} \leq c (1-a^\beta)^{\frac{(p-2)p}{n+p}-p} (\theta K^{m+p-3})^\frac{p}{n+p} 2^{j[\frac{p(n+2)}{n+p}+p]} Y_j^{1+\frac{p}{n+p}}.
\end{align*}
In light of Lemma \ref{fastconvg}, this means that there exists a constant $c=c(m,n,p,C_0,C_1)$ such that if
\begin{align*}
Y_0\leq c\frac{(1-a^\beta)^{n+2}}{\theta K^{m+p-3}},
\end{align*}
then $Y_j\to 0$.
\end{proof}

A version of the following result was proven in Lemma 1.1 of Chapter 4 of \cite{DiGiaVe} for the parabolic $p$-Laplace equation. We use the same strategy.
\begin{lem}\label{snails}
Let $u$ be a weak solution on $\Omega_T$. Suppose that $B_\rho(y)\times\{s\} \subset \Omega_T$ and that
\begin{align}\label{exposlemma-assumption1}
|B_\rho(y)\cap \{u(\cdot,s)\geq M\}| \geq \alpha |B_\rho(y)|.
\end{align}
Then there are $\delta=\delta(m, n, p, C_0, C_1, \alpha)$ and $\epsilon=\epsilon(\alpha)$ such that
\begin{align}\label{exposlemma-consequence}
|B_\rho(y)\cap \{u(\cdot,t)\geq \epsilon M\}|\geq \frac{1}{2}\alpha |B_\rho(y)|,
\end{align}
for all $t \in (s, \min\{T, s+\delta M^{3-m-p}\rho^p \})$.
\end{lem}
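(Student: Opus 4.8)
The plan is to establish a logarithmic energy estimate for $u$ and then read off the measure information by a De Giorgi comparison, in the spirit of \cite[Ch.~4, Lemma~1.1]{DiGiaVe}, but taking into account that only $u^\beta$ (not $u$) is known to have a $p$-integrable gradient. After translating in time we may assume $s=0$. It is convenient (though not strictly necessary) to rescale first: by the change of variables underlying Lemma~\ref{re-scaling}, the function $v(x,t):=M^{-1}u(y+\rho x,\,M^{3-m-p}\rho^p t)$ is a nonnegative weak solution in the sense of Definition~\ref{weakdef}, on $B_1(0)\times(0,\delta)$, of an equation of the same form with the \emph{same} structure constants $C_0,C_1$; the assertion then reduces to showing that there are $\epsilon=\epsilon(\alpha)$ and $\delta$ depending on the data and $\alpha$ with $|B_1\cap\{v(\cdot,t)\ge\epsilon\}|\ge\tfrac12\alpha|B_1|$ for $t\in(0,\delta)$, given $|B_1\cap\{v(\cdot,0)\ge1\}|\ge\alpha|B_1|$. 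Renaming $v$ back to $u$ we work with this normalized problem ($M=\rho=1$); if $\essinf_{B_1\times(0,\delta)}u\ge\epsilon$ the conclusion is trivial, so we assume the opposite, and then, writing $w:=u^\beta$ and $H:=\esssup(1-w)_+$, we have $\tfrac12\le H\le1$ once $\epsilon$ is small.

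Fix a spatial cutoff $\zeta\in C_0^\infty(B_1;[0,1])$ with $\zeta\equiv1$ on $B_{1-\sigma}$ and $|\nabla\zeta|\le2/\sigma$ (the parameter $\sigma=\sigma(n,\alpha)$ to be chosen), set $c:=\epsilon$, and let $\psi(w):=\ln^+\!\big(H/(H-(1-w)_++c)\big)$, a bounded Lipschitz function of $w=u^\beta$ vanishing for $w\ge1-c$, with $0\le\psi\le\ln(1/\epsilon)=:\Lambda$ and $(\psi^2)''\ge0$. I would test the mollified weak formulation \eqref{h-averaged-form} with $\phi=\zeta^p(x)\,(\psi^2)'(u^\beta)\,\xi_\varepsilon(t)$, where $(\psi^2)'$ denotes the $w$-derivative and $\xi_\varepsilon$ is a Lipschitz time cutoff as in Lemma~\ref{lem:alt-en-est}, ramping up on $(0,\varepsilon)$ and down on $(\tau,\tau+\varepsilon)$. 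The crucial structural facts are that $s\mapsto(\psi^2)'(s^\beta)$ is bounded and nondecreasing: the first makes $\phi$ admissible (its spatial gradient is $(\psi^2)''(u^\beta)\nabla u^\beta\in L^p$ plus a bounded term), and the second allows the monotonicity argument from the proof of Lemma~\ref{lem:Energy_Est} to handle the diffusion term, giving $\partial_t u_h\,(\psi^2)'(u^\beta)\ge\partial_t P(u_h)$ with $P(u):=\int_0^u(\psi^2)'(s^\beta)\,\mathrm{d}s\le0$. Passing to the limits $h\to0$ then $\varepsilon\to0$, using the structure conditions \eqref{structcond1} and \eqref{structcond2} (the term $(\psi^2)''(u^\beta)\zeta^p|\nabla u^\beta|^p\ge0$ being favourable and the $\nabla\zeta$-term absorbed by Young's inequality, which produces the factor $|(\psi^2)'|^p/[(\psi^2)'']^{p-1}\le2\psi\,c^{-(2-p)}\le C\Lambda\,\epsilon^{p-2}$), I expect the estimate
\[
\int_{B_1}\zeta^p\,Q(u(\cdot,\tau))\,\mathrm{d}x\;\ge\;\int_{B_1}\zeta^p\,Q(u(\cdot,0))\,\mathrm{d}x\;-\;C\,\Lambda\,\epsilon^{p-2}\,\sigma^{-p}\,|B_1|\,\tau
\]
for a.e.\ $\tau\in(0,\delta)$, where $Q:=-P\ge0$ is nondecreasing, $Q\equiv P_0:=Q(1)$ on $[1,\infty)$, and $C=C(n,p,C_0,C_1)$.

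The rest is the usual De Giorgi comparison. On $\{u(\cdot,0)\ge1\}$ we have $Q(u(\cdot,0))=P_0$, so (using $\zeta\equiv1$ on $B_{1-\sigma}$) the right-hand side above is at least $P_0(\alpha-n\sigma)|B_1|-C\Lambda\epsilon^{p-2}\sigma^{-p}|B_1|\tau$; splitting the left-hand side according to whether $u(\cdot,\tau)<\epsilon$ or $\ge\epsilon$ and using that $Q$ is nondecreasing with $Q\le P_0$ gives $\int_{B_1}\zeta^pQ(u(\cdot,\tau))\le Q(\epsilon)|B_1|+(P_0-Q(\epsilon))\,|\{u(\cdot,\tau)\ge\epsilon\}\cap B_1|$, whence
\[
\big|\{u(\cdot,\tau)\ge\epsilon\}\cap B_1\big|\;\ge\;\frac{\alpha-n\sigma-q-C\Lambda\epsilon^{p-2}\sigma^{-p}\tau/P_0}{1-q}\,|B_1|,\qquad q:=\frac{Q(\epsilon)}{P_0}.
\]
The decisive point is that $q\to0$ as $\epsilon\to0$: substituting $w=s^\beta$ in the defining integrals one finds, up to constants depending on $\beta$, that $Q(\epsilon)\asymp\Lambda$ and $P_0\asymp\Lambda\,\epsilon^{1/\beta-1}$, so $q\asymp\epsilon^{\,1-1/\beta}\to0$ since $\beta>1$. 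This is exactly why the slightly unusual normalization $c=\epsilon$ is used: the ``natural'' choice $c=\epsilon^\beta$ would only give $q\asymp1$. One then fixes, in order, $\sigma$ with $n\sigma\le\alpha/8$, then $\epsilon$ small enough that $q\le\alpha/8$ (which also fixes $\Lambda$ and $P_0$), and finally $\delta$ small enough that $C\Lambda\epsilon^{p-2}\sigma^{-p}\delta/P_0\le\alpha/4$; for these choices the displayed lower bound exceeds $\tfrac12\alpha|B_1|$ for all $\tau\in(0,\delta)$, and undoing the rescaling yields \eqref{exposlemma-consequence}.

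I expect the main obstacle to be the logarithmic energy estimate itself. In the $p$-Laplacian case one tests with a logarithmic function of $u$, but here its spatial gradient would involve $\nabla u$, which need not be $p$-integrable; the remedy is to build the test function from a logarithmic function of $w=u^\beta$, and one must then verify — with the same care as in Lemma~\ref{lem:Energy_Est}, and using the extra integrability $u\in L^{\beta+1}$ and the machinery of Section~\ref{sec:energy} — that this nonsmooth test function is admissible in the mollified formulation and that the monotonicity of $s\mapsto(\psi^2)'(s^\beta)$ really does make the time term cooperate. The secondary, purely computational, difficulty is the sharp comparison $Q(\epsilon)\ll P_0$, an estimate on the primitive $Q$ analogous to the auxiliary Lemma~\ref{lem:somestuff} needed for the $L^1$-Harnack inequality.
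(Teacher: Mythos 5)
Your argument is correct, but it takes a genuinely different route from the paper. You derive a logarithmic energy estimate à la De~Giorgi/DiBenedetto, testing with $\zeta^p\,(\psi^2)'(u^\beta)\,\xi_\varepsilon$, whereas the paper simply plugs $k=M$ into the quadratic energy estimate of Lemma~\ref{lem:alt-en-est}, drops the gradient term, and works directly with the quantity $\b[u,M]\chi_{\{u<M\}}$: this is bounded pointwise above by $\tfrac{\beta}{\beta+1}M^{\beta+1}$ (handling the initial-time integral via \eqref{exposlemma-assumption1}), the $\nabla\varphi$-term is bounded crudely by $c\delta\sigma^{-p}M^{\beta+1}|B_\rho|$, and the $\tau$-integral is bounded below on $\{u<\epsilon M\}$ by $\b[\epsilon M,M]\ge\tfrac{\beta}{\beta+1}M^{\beta+1}(1-2\epsilon)$; fixing $\sigma$, then $\delta$, then $\epsilon$ gives the claim in a few lines. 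Your approach requires genuine extra work that the paper avoids: one must verify admissibility of the nonsmooth test function built from $(\psi^2)'(u^\beta)$, carry out the chain-rule/mollification argument for the parabolic term with the primitive $P(u)=\int_0^u(\psi^2)'(s^\beta)\,\mathrm{d}s$ (correctly identified by you as the right object, rather than $\psi^2(u^\beta)$), and then make the delicate asymptotic comparison $Q(\epsilon)\ll P_0$, for which your normalization $c=\epsilon$ (rather than the na\"ive $c=\epsilon^\beta$) is indeed essential. In exchange you recover the classical DiBenedetto-style proof and a sharper ``large values at time $0$ are retained with large measure'' estimate, but for the Lemma as stated the paper's quadratic route is shorter, uses only Lemma~\ref{lem:alt-en-est} and elementary properties of $\b$, and delivers $\epsilon$ depending on $\alpha$ alone, whereas your $\epsilon$ inherits a dependence on $\beta$ (hence on $m,p$) through the exponent in $q\asymp\epsilon^{1-1/\beta}$ -- harmless for all applications, but worth noting against the precise statement $\epsilon=\epsilon(\alpha)$. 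One small slip: the coefficient produced by Young's inequality is $|(\psi^2)'|^p/((\psi^2)'')^{p-1}\le 2\psi^p|\psi'|^{2-p}\le 2\Lambda^{p}\epsilon^{p-2}$, i.e.\ $\Lambda^p$ rather than $\Lambda$; this is absorbed into your constant once $\epsilon$ (hence $\Lambda$) is fixed, so it does not affect the outcome.
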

\begin{proof}[Proof] Let $\tau < \min\{T, s+\delta M^{3-m-p}\rho^p\}$, where $\delta$ is a positive number which is yet to be chosen and consider \eqref{klj} of Lemma \ref{lem:alt-en-est} with $t_1=s$, $t_2=\tau$ and $k=M$. Discarding the first term on the left-hand side, which is non-negative we end up with
\begin{align}\label{kultaa}
\int_\Omega \big[\b[u,M]\chi_{\{u<M\}} \varphi^p\big](x,\tau) \d x &\leq \int_\Omega \big[\b[u,M]\chi_{\{u<M\}} \varphi^p\big](x,s) \d x
\\
\notag &\quad + c\int_s^\tau\int_\Omega (u^\beta-M^\beta)_-^p |\nabla \varphi|^p\d x \d t,
\end{align}
where $\varphi\in C^\infty_0(\Omega;\R_{\geq 0})$ and the constant $c$ only depends on $p,C_0,C_1$. Taking $\sigma \in (0,1)$ and $\varphi \in C^\infty_0(B_\rho(y);[0,1])$ such that
$\varphi=1$ on $B_{(1-\sigma)\rho}(y)$ and $|\nabla \varphi|\leq \tfrac{2}{\sigma \rho}$, the estimate \eqref{kultaa} implies
\begin{align}\label{MOERKOE}
\int_{B_{(1-\sigma)\rho}(y)} \b[u,M]\chi_{\{u<M\}} (x,\tau) \d x &\leq \int_{B_\rho(y)} \b[u,M]\chi_{\{u<M\}} (x,s) \d x
\\
\notag &\quad + \frac{c}{\sigma^p\rho^p}\int^\tau_0\int_{B_\rho(y)}(u^\beta-M^\beta)_-^p \d x \d t.
\end{align}
From the properties of $\b$ it follows that when $u<M$ we have
\begin{align*}
\b[u,M]\leq \b[0,M]=\tfrac{\beta}{\beta+1}M^{\beta+1}.
\end{align*}
Using this result and the assumption \eqref{exposlemma-assumption1} we conclude that
\begin{align*}
\int_{B_\rho(y)} \b[u,M]\chi_{\{u<M\}} (x,s)\d x &\leq \tfrac{\beta}{\beta+1} M^{\beta+1}|B_\rho(y)\cap \{u(\cdot,s)<M\}|
\\
& \leq \tfrac{\beta}{\beta+1}M^{\beta+1}(1-\alpha)|B_\rho(y)|.
\end{align*}
Recall that $\tau\in (s,s+\delta M^{3-m-p}\rho^p)$ where $\delta$ is to be chosen so
\begin{align*}
\frac{c}{\sigma^p\rho^p}\int^\tau_0\int_{B_\rho(y)}(u^\beta-M^\beta)_-^p \d x \d t \leq \frac{c\delta}{\sigma^p}M^{\beta+1}|B_\rho(y)|.
\end{align*}
We estimate the term on the left-hand side of \eqref{MOERKOE} as
\begin{align*}
\int_{B_{(1-\sigma)\rho}(y)} \b[u,M]\chi_{\{u<M\}} (x,\tau) \d x &= \int_{B_\rho(y)} \b[u,M]\chi_{\{u<M\}} (x,\tau) \d x
\\
&\quad - \int_{B_\rho(y)\setminus B_{(1-\sigma)\rho}(y)} \b[u,M]\chi_{\{u<M\}} (x,\tau) \d x
\\
&\geq \int_{B_\rho(y)} \b[u,M]\chi_{\{u<M\}} (x,\tau) \d x - c_n^\beta\sigma M^{\beta+1}|B_\rho(y)|.
\end{align*}
Picking $\epsilon \in (0,1)$ we can estimate the last integral as
\begin{align*}
\int_{B_\rho(y)} \b[u,M]\chi_{\{u<M\}} (x,\tau) \d x &\geq \int_{B_\rho(y)\cap \{u<\epsilon M\}} \b[u,M] (x,\tau) \d x
\\
&\geq \b[\epsilon M, M] |B_\rho(y)\cap \{u(\cdot,\tau)<\epsilon M\}|
\\
&\geq \tfrac{\beta}{\beta+1}M^{\beta+1}(1-2\epsilon ) |B_\rho(y)\cap \{u(\cdot,\tau)<\epsilon M\}|.
\end{align*}
Combining all the estimates we have
\begin{align}\label{rrrrr}
|B_\rho(y)\cap \{u(\cdot,\tau)<\epsilon M\}|\leq \frac{|B_\rho(y)|}{(1-2\epsilon)}[\tilde{c}_n^\beta \sigma + (1-\alpha) + c\sigma^{-p}\delta].
\end{align}
where $\tilde{c}_n^\beta=\tilde{c}_n^\beta(\beta,n)$ and $c=c(m,n,p,C_0,C_1)$. Choose $\sigma=\sigma(\alpha,n,m,p)$ so small that $\tilde{c}_n^\beta\sigma < \alpha/8$. With this choice of $\sigma$, choose
\noindent$\delta=\delta(m,n,p,C_0,C_1,\alpha)$ so small that $c\sigma^{-p}\delta<\alpha/8$. Here $c$ denotes the constant in \eqref{rrrrr}. This leads to
\begin{align*}
|B_\rho(y)\cap \{u(\cdot,\tau)<\epsilon M\}|\leq \frac{|B_\rho(y)|}{(1-2\epsilon)}(1-3\alpha/4).
\end{align*}
From this it follows that \eqref{exposlemma-consequence} is true for any
\begin{align*}
0<\epsilon \leq \frac{\alpha}{4(2-\alpha)}.
\end{align*}
\end{proof}
 
We are now ready to prove the main result of this section.
\begin{theo}[Expansion of Positivity]\label{theo:exppos}
Suppose that $(x_o,s)\in \Omega_T$ and $u$ is a weak solution satisfying
\begin{align}\label{assumpt-exppos}
|B_\rho(x_o)\cap\{u(\cdot,s) \geq M\}|\geq \alpha |B_\rho(x_o)|,
\end{align}
for some $M>0$ and $\alpha \in (0,1)$. Then there exist $\varepsilon,\delta,\eta \in (0,1)$ depending only on $m,p,n,C_0,C_1,\alpha$ such that if $B_{16\rho}(x_o)\times (s,s+\delta M^{3-m-p}\rho^p) \subset \Omega_T$ then
\begin{align*}
u\geq \eta M \textrm{ in } B_{2\rho}(x_o)\times (s+(1-\varepsilon)\delta M^{3-m-p}\rho^p, s+\delta M^{3-m-p}\rho^p) .
\end{align*}
\end{theo}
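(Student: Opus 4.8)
The plan follows the strategy of \cite{FoSoVe2}: propagate positivity in measure forward in time, shrink the measure of the complement by a De Giorgi iteration, turn this into a genuine pointwise lower bound, and finally carry that bound across the full time interval. Throughout one works in the intrinsic geometry in which, since $3-m-p>0$ by \eqref{parameter-range}, the natural height at level $K$ on a ball of radius $\rho$ is $K^{3-m-p}\rho^p$. By translation we take $s=0$, and since $B_\rho(x_o)\subset B_{4\rho}(x_o)$ with $|B_\rho(x_o)|=4^{-n}|B_{4\rho}(x_o)|$, the hypothesis \eqref{assumpt-exppos} upgrades to the same type of density statement on $B_{4\rho}(x_o)$ with $\alpha$ replaced by $4^{-n}\alpha$; the extra room up to $B_{16\rho}(x_o)$ will be spent on the spatial reductions below.

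First I would propagate the density. Applying Lemma \ref{snails} on $B_{4\rho}(x_o)$ gives $\epsilon_1=\epsilon_1(\alpha)\in(0,1)$ and $\delta_1=\delta_1(m,n,p,C_0,C_1,\alpha)\in(0,1)$ such that, with $Q_1:=B_{4\rho}(x_o)\times(0,\delta_1 M^{3-m-p}(4\rho)^p)$, the set $\{u(\cdot,t)\ge\epsilon_1 M\}$ occupies at least a fixed fraction of $B_{4\rho}(x_o)$ on every time slice of $Q_1$; equivalently $\{u<\epsilon_1 M\}$ occupies at most a fixed fraction strictly below $1$ of each slice.

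Next comes the shrinking step. For the levels $k_j:=2^{-j}\epsilon_1 M$ I would estimate $|\{u(\cdot,t)<k_{j+1}\}\cap B_{4\rho}(x_o)|$ slice by slice by feeding $v=-u$ into the isoperimetric Lemma \ref{isoperim}, so that $\{v>l\}$ is the set to be bounded while the density of $\{u\ge\epsilon_1 M\}$ from the previous step controls $|\{v<k\}|^{-1}$, then integrate in $t$ and apply H\"older. Since $u$ need not have a weak gradient, on the annulus $\{k_{j+1}<u<k_j\}$ I pass to $u^\beta$ by the chain rule as in Lemma \ref{re-scaling}, using $|\nabla u|\le\beta^{-1}k_{j+1}^{1-\beta}|\nabla u^\beta|$ there, and bound $\iint_{Q_1}|\nabla(u^\beta-k_j^\beta)_-|^p$ by the energy estimate \eqref{klj} with a cutoff equal to $1$ on $B_{4\rho}(x_o)$. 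The algebraic identity $\beta+1-\beta p=3-m-p$ makes the powers of $k_j$ in the resulting recursion for the normalized measures $Y_j$ assemble into a geometrically decaying coefficient, so that summation and the monotonicity of $Y_j$ give $Y_N\le C\alpha^{-1}N^{-(p-1)/p}$ with $C$ depending only on the data; hence any prescribed smallness of $|Q_1\cap\{u<2^{-N}\epsilon_1 M\}|/|Q_1|$ is achieved for $N$ large.

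Finally, with $K:=2^{-N}\epsilon_1 M$ (a fixed fraction of $M$, so that the intrinsic height $K^{3-m-p}\rho^p$ is a fixed fraction of $M^{3-m-p}\rho^p$) I would apply the general De Giorgi Lemma \ref{lem:GenDeGio} on a cylinder $\hat Q\subset Q_1$ on which $\theta K^{m+p-3}$ is of order one, so that the measure condition \eqref{meas_cond_for_v} reduces to the requirement that $\{u<K\}$ fill at most a fixed fraction of $\hat Q$; once $N$ is large enough this holds, and Lemma \ref{lem:GenDeGio} gives $u\ge\tfrac12 K$ on $\hat Q$ halved in space, hence $u(\cdot,t_\star)\ge\tfrac12 K$ a.e.\ on a ball of radius comparable to $\rho$ at some time $t_\star$ near $0$. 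Starting from this slice, the variant Lemma \ref{lem:VarGenDeGio} — whose conclusion \eqref{ularge}--\eqref{assumption:meas} is not shortened in time and for which $\theta$ can be taken small enough that \eqref{assumption:meas} is automatic — is then used a bounded number of times (using the room up to $B_{16\rho}(x_o)$) to push $u\ge\eta M$ across the interval ending at $\delta M^{3-m-p}\rho^p$, with $\eta$ the resulting fixed fraction of $M$. The constants are pinned down in reverse: $\eta$ from the iteration, $\delta$ small enough that all the intrinsic cylinders used fit inside $Q_1$, and $\varepsilon$ small enough that $(1-\varepsilon)\delta M^{3-m-p}\rho^p$ still lies above the bottom of the last intrinsic cylinder. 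I expect the genuinely delicate point to be exactly this coordination of scales: each De Giorgi lemma is invoked on a cylinder adapted to the level at which it is used, and one must verify that the merely polynomial gain of the shrinking step survives the passage between $Q_1$ and these intrinsic sub-cylinders, and that the level and radius lost through the finite iteration remain fixed fractions of $M$ and $\rho$ — this is what forces $\delta$ (and hence the whole time window) to be small, and where a careless choice of geometry breaks the proof. The shrinking step itself is only technically heavy: the real work there is running the iteration without a gradient for $u$, via the truncation/chain-rule device and the bookkeeping of the negative power $u^{1-\beta}$.
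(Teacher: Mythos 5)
Your high-level plan — propagate density in time via Lemma \ref{snails}, shrink the measure of the bad set with the isoperimetric Lemma \ref{isoperim}, convert smallness of measure to a pointwise bound with Lemma \ref{lem:GenDeGio}, and then push the bound forward with Lemma \ref{lem:VarGenDeGio} — matches the paper's scheme. You are also right that the lack of a weak gradient for $u$ itself must be negotiated via $u^\beta$ and the energy estimates. But there is a genuine gap, and it lives precisely where you yourself identify the ``delicate point'': the coordination of scales that makes the final time level land on $s+\delta M^{3-m-p}\rho^p$ with $\eta$ independent of the geometry.

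The paper does not work in the original variables at all for Steps 2--4. It performs a logarithmic change of time $-e^{-\tau}=(t-\delta M^{3-m-p}\rho^p)/(\delta M^{3-m-p}\rho^p)$ together with the rescaling $v(y,\tau)=M^{-1}e^{\tau/(3-m-p)}u(\rho y,t)$. This maps the finite cylinder to $B_{16}\times(0,\infty)$, produces a transformed equation with an extra zero-order term, and — crucially — turns the density propagation of Lemma \ref{snails} into a slice-wise bound valid for \emph{all} $\tau\in[0,\infty)$. Because of that, the whole shrinking/De Giorgi machinery can be run on a cylinder $Q_{\tau_o}$ starting at any $\tau_o\ge0$, and $\tau_o$ is fixed only at the very end so that one single application of Lemma \ref{lem:VarGenDeGio} carries the bound exactly up to the top of the cylinder; the dependence on the unknown time $\tau_1$ at which Lemma \ref{lem:GenDeGio} delivers the lower bound cancels out of the alignment equation $t_1+c\,M_o^{3-m-p}(4\rho)^p=\delta M^{3-m-p}\rho^p$ precisely because of the exponential factor in $v$. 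Your proposal omits this device entirely and has no substitute mechanism for the final-time alignment.

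Your plan B — iterating Lemma \ref{lem:VarGenDeGio} ``a bounded number of times'' — does not close the gap as written. Each use of that lemma halves the spatial radius, so with the available room $B_{16\rho}(x_o)$, and accounting for the initial halving from Lemma \ref{lem:GenDeGio}, you have at most two iterations before the ball falls below $B_{2\rho}(x_o)$. More importantly, the time $t_\star$ at which the pointwise lower bound first appears is not controlled: it comes from whichever subcylinder of the segmentation happens to satisfy the smallness condition, and could be anywhere in the window. You would then need to argue both that the cylinders of the (at most two) forward iterations fit inside $B_{16\rho}(x_o)\times(s,s+\delta M^{3-m-p}\rho^p)$ \emph{and} that their union reaches the top; you only mention the first. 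One can imagine repairing this by exploiting the freedom in $\theta$ in Lemma \ref{lem:VarGenDeGio} (since \eqref{assumption:meas} is automatic for any $\theta\lesssim K^{3-m-p}$, one could pick $\theta$ so that $t_\star+\theta(4\rho)^p$ equals $s+\delta M^{3-m-p}\rho^p$ on the nose), but then one must check that $s+\delta M^{3-m-p}\rho^p-t_\star\lesssim K^{3-m-p}\rho^p$ uniformly in $t_\star$, which forces a coupled choice of $\delta$ and of the level obtained from the shrinking step — exactly the bookkeeping that the change of variables makes trivial. Without addressing this, the proof as sketched does not deliver the claimed universal $\varepsilon,\delta,\eta$.

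A smaller remark: in the shrinking step, rather than feeding $v=-u$ into Lemma \ref{isoperim} and then invoking a chain rule on an annulus where $u$ is bounded below, it is cleaner (and what the paper does) to apply Lemma \ref{isoperim} directly to $u^\beta$ at levels $k_j^\beta$; this sidesteps the question of whether $u\in W^{1,1}$ entirely. Also, the paper's choice of levels $k_j^\beta=2^{-j}k_o^\beta$ (rather than $k_j=2^{-j}\epsilon_1 M$) is tuned so that $k_j^\beta-k_{j+1}^\beta=\tfrac12 k_j^\beta$, which is the form that feeds most directly into the isoperimetric and energy estimates.
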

\begin{proof}[Proof]
The proof is divided into several steps.

\noindent \textbf{Step 1: Change of variables, transformed equation and energy estimates.} Let $\delta=\delta(m,n,p,C_0,C_1,\alpha)\in (0,1)$ be the constant from Lemma \eqref{snails}. By translation we may assume that $(y,s)=(\bar{0},0)$. Furthermore, we assume that $B_{16\rho}(\bar{0})\times (0, \delta M^{3-m-p}\rho^p)\subset \Omega_T$, since otherwise there is nothing to prove. Introduce the new variables $(y,\tau)$ defined by the equations
\begin{align*}
y=\frac{x}{\rho}, \hspace{15mm} -e^{-\tau}= \frac{t-\delta M^{3-m-p}\rho^p}{\delta M^{3-m-p}\rho^p}.
\end{align*}
These coordinates transform the cylinder $B_{16\rho}(\bar{0})\times (0,\delta M^{3-m-p}\rho^p)$ into $B_{16}(\bar{0})\times (0,\infty)$, preserving the direction of time. Define the function $v: B_{16}(\bar{0})\times (0, \infty)\to \R$,
\begin{align*}
v(y,\tau)=\frac{e^\frac{\tau}{3-m-p}}{M}u(x,t)= \frac{e^\frac{\tau}{3-m-p}}{M} u\big(\rho y, \delta M^{3-m-p}\rho^p(1-e^{-\tau})\big).
\end{align*}
A routine calculation confirms that $v^\beta \in L^p(0,S; W^{1,p}(B_{16}(\bar{0})))$, for all $S>0$, and that $v$ is a weak solution to the equation
\begin{align*}
\partial_\tau v -\nabla\cdot \tilde{A}(y,\tau,v,\nabla v^\beta)=\tfrac{1}{3-m-p}v,
\end{align*}
where 
\begin{align*}
\tilde{A}(y,\tau,v,\xi)=\delta \rho^{p-1} \frac{e^{(\frac{m+p-2}{3-m-p})\tau}}{M^{m+p-2}} A\big(\rho y, \delta M^{3-m-p}\rho^p(1-e^{-\tau}), M e^{-\frac{\tau}{3-m-p}}v,\rho^{-1}M^\beta e^{-\frac{\beta \tau}{3-m-p}}\xi \big)
\end{align*} 
satisfies the structure conditions
\begin{align}
\label{tildestructcond0} \tilde{A}(y,\tau,v,\xi)\cdot \xi &\geq \delta C_0 |\xi|^p,
\\
\label{tildestructcond1} |\tilde{A}(y,\tau,v,\xi)| &\leq \delta C_1 |\xi|^{p-1},
\end{align}
where $C_0$ and $C_1$ are the constants appearing in the structure conditions \eqref{structcond1} and \eqref{structcond2}.
The time continuity of $u$ obtained in Subsection \ref{subsec:time-cont} implies that $v\in C([0,\infty);L^{\beta+1}_{\textrm{loc}}(B_{16}(\bar{0})) $. This allows us to reason as in the proof of Lemma \ref{lem:mollified}, to conclude that $v$ satisfies the mollified weak formulation
\begin{align}\label{molform_for_v}
\int^\infty_0\int_{B_{16}(\bar{0})} [\tilde{A}(y,\cdot,v, \nabla v^\beta)]_h\cdot \nabla \phi+\partial_\tau v_h \phi\d y\d \tau - \int_{B_{16}(\bar{0})} (v \phi_{\tilde{h}})(y,0) \d y
\\
\notag = \tfrac{1}{3-m-p}\int^\infty_0 \int_{B_{16}(\bar{0})} v_h\phi\d y\d \tau,
\end{align}
for all $\phi \in C^\infty_0(B_{16}(\bar{0})\times (0,\infty))$. The only difference is that we have replaced $\phi_{\bar{h}}$ by
\begin{align*}
\phi_{\tilde{h}}(y,\tau):=\frac{1}{h}\int^\infty_\tau e^\frac{\tau-s}{h}\phi(y,s)\d s,
\end{align*}
which in practice always can be written as a finite integral due to the support of $\phi$. This enables us to prove an energy estimate for $v$. Namely, we use \eqref{molform_for_v} with the test function $\phi = -(v^\beta-k^\beta)_-\varphi^p \xi_r(\tau)$, where $\varphi$ is a smooth function vanishing near $\partial_p (B_{16}(\bar{0})\times(0,\infty))$, and
\begin{align*}
\xi_r(\tau)= \begin{cases}
1, &\tau \leq \tilde{\tau},
\\
r^{-1}(\tilde{\tau}+r-t), &\tau \in [\tilde{\tau},\tilde{\tau}+r],
\\
0, &\tau>\tilde{\tau}+r.
\end{cases}
\end{align*}
Here $\tilde{\tau}>0$. We see that
\begin{align*}
\int^\infty_0\int_{B_{16}(\bar{0})} [\tilde{A}(y,\tau,v, \nabla v^\beta)]_h\cdot \nabla \phi\d y \d \tau \xrightarrow[h\to 0]{} \int^\infty_0\int_{B_{16}(\bar{0})} \tilde{A}(y,\tau,v, \nabla v^\beta) \cdot \nabla \phi\d y \d \tau
\\
\xrightarrow[r\to 0]{} -\int^{\tilde{\tau}}_0\int_{B_{16}(\bar{0})} \tilde{A}(y,\tau,v, \nabla v^\beta) \cdot \nabla [(v^\beta-k^\beta)_-\varphi^p ]\d y \d \tau
\\
= \int^{\tilde{\tau}}_0\int_{B_{16}(\bar{0})} \tilde{A}(y,\tau,v, \nabla v^\beta) \cdot \nabla v^\beta \chi_{\{v<k\}} \varphi^p - \tilde{A}(y,\tau,v, \nabla v^\beta) \cdot \nabla \varphi^p (v^\beta-k^\beta)_- \d y \d \tau
\\
\geq \int^{\tilde{\tau}}_0\int_{B_{16}(\bar{0})} \delta C_0|\nabla v^\beta|^p\varphi^p \chi_{\{v<k\}} - \delta \tilde{C}_1|\nabla v^\beta|^{p-1}\varphi^{p-1}|\nabla \varphi|(v^\beta-k^\beta)_- \d y \d \tau
\\
\geq \int^{\tilde{\tau}}_0\int_{B_{16}(\bar{0})} \delta \frac{C_0}{2}|\nabla v^\beta|^p\varphi^p \chi_{\{v<k\}} - \delta \hat{C}_1 |\nabla \varphi|^p(v^\beta-k^\beta)^p_- \d y \d \tau,
\end{align*}
where $\hat{C}_1=\hat{C}_1(C_1,p)$. Reasoning similarly as in the proof of Lemma \ref{lem:Energy_Est}, the parabolic term can be treated as
\begin{align*}
\int^\infty_0&\int_{B_{16}(\bar{0})}\partial_\tau v_h \phi\d y\d \tau \geq -\int^\infty_0\int_{B_{16}(\bar{0})} \b[v_h,k]\chi_{\{v_h<k\}} \partial_\tau( \varphi^p \xi_r )\d y\d \tau
\\
&\xrightarrow[h\to 0]{} -\int^\infty_0\int_{B_{16}(\bar{0})} \b[v,k]\chi_{\{v<k\}} \xi_r \partial_\tau \varphi^p\d y \d \tau + r^{-1} \int^{\tilde{\tau}+r}_{\tilde{\tau}} \int_{B_{16}(\bar{0})}\b[v,k]\chi_{\{v<k\}} \varphi^p\d y \d \tau.
\\
&\xrightarrow[r\to 0]{} -\int_0^{\tilde{\tau}}\int_{B_{16}(\bar{0})} \b[v,k]\chi_{\{v<k\}} \partial_\tau \varphi^p \d y \d \tau - \int_{B_{16}(\bar{0})}\b[v,k]\chi_{\{v<k\}} \varphi^p(y,\tilde{\tau}) \d y.
\end{align*}
As in the proof of Lemma \ref{lem:Energy_Est}, one can see that the second term on the left-hand side of \eqref{molform_for_v} vanishes in the limit $h\to 0$. Combining the estimates for all terms we end up with
\begin{align}
\notag \delta \frac{C_0}{2}& \int^{\tilde{\tau}}_0\int_{B_{16}(\bar{0})} |\nabla v^\beta|^p\varphi^p \chi_{\{v<k\}}\d y\d \tau + \int_{B_{16}(\bar{0})}\b[v,k]\chi_{\{v<k\}}\varphi^p(z,\tilde{\tau}) \d y
\\
\label{solstizio} &\leq \delta \hat{C}_1 \int^{\tilde{\tau}}_0\int_{B_{16}(\bar{0})} |\nabla \varphi|^p(v^\beta-k^\beta)^p_- \d y \d \tau + \int_0^{\tilde{\tau}}\int_{B_{16}(\bar{0})} \b[v,k]\chi_{\{v<k\}} \partial_\tau \varphi^p \d y \d \tau
\end{align}
Note that we were able to drop the term on the right-hand side of \eqref{molform_for_v} since it is non-positive. Using Lemma \ref{estimates:boundary_terms} (i) to estimate $\b[v,k]$ on the right-hand side and taking into account that $\delta \in (0,1)$ we finally obtain the desired energy estimate
\begin{align}\label{en_est_for_v}
\notag c\delta \int^{\tilde{\tau}}_0 &\int_{B_{16}(\bar{0})} |\nabla v^\beta|^p\varphi^p \chi_{\{v<k\}}\d y\d \tau + c\esssup_{\tau\in [0,\tilde{\tau}]} \int_{B_{16}(\bar{0})}\b[v,k]\chi_{\{v<k\}}\varphi^p(y,\tau) \d y
\\
& \leq \int^{\tilde{\tau}}_0\int_{B_{16}(\bar{0})} |\nabla \varphi|^p(v^\beta-k^\beta)^p_- + \big(v^{\frac{\beta+1}{2}}-k^{\frac{\beta+1}{2}} \big)_-^2 \varphi^{p-1} |\partial_\tau \varphi | \d y \d \tau,
\end{align}
where $c=c(C_0,C_1,p,m)$ and $\tilde{\tau}$ is any positive number.

\noindent \textbf{Step 2: Measure estimates of sublevel sets.} From the assumption \eqref{assumpt-exppos} and Lemma \ref{snails} it follows that there is an $\epsilon=\epsilon(\alpha)$ such that
\begin{align}\label{measure_est_for_v}
|B_1(\bar{0}) \cap \{v(\cdot,\tau)>\epsilon e^\frac{\tau}{3-m-p}\}|\geq \frac{\alpha}{2}|B_1(\bar{0})|,
\end{align}
for all $\tau\in [0,\infty)$. Pick $\tau_o>0$ to be determined later and define
\begin{align}\label{k_j-def}
k_o:=\epsilon e^\frac{\tau_o}{3-m-p}, \hspace{7mm} k_j:=\frac{k_o}{(2^\frac{1}{\beta})^j}, \hspace{5mm} j\in \N_0.
\end{align}
With these definitions, \eqref{measure_est_for_v} implies that
\begin{align}\label{est_on_B8}
|B_8(\bar{0}) \cap \{v(\cdot,\tau)> k_j\}|\geq \frac{\alpha}{2}8^{-n}|B_8(\bar{0})|,
\end{align}
for all $\tau\in [\tau_o,\infty)$ and $j\in \N_0$.
We introduce the cylinders
\begin{align*}
Q_{\tau_o} :=B_8(\bar{0})\times (\tau_o+k_o^{3-m-p}, \tau_o+2k_o^{3-m-p}),
\hspace{5mm}
Q_{\tau_o}' :=B_{16}(\bar{0})\times (\tau_o, \tau_o+2k_o^{3-m-p})
\end{align*}
Pick $\zeta_1\in C^\infty_0(B_{16}(\bar{0}))$ such that $\zeta_1=1$ on $B_8(\bar{0})$ and $|\nabla \zeta_1|\leq \frac{1}{4}$. Pick $\zeta_2 \in C^\infty(\R)$ such that $\zeta_2(\tau)=0$ for $\tau <\tau_o$, $\zeta_2(\tau)=1$ for $\tau \geq \tau_o + k_o^{3-m-p}$ and $0\leq \zeta_2'(\tau)\leq \frac{2}{k_o^{3-m-p}}$. Using the energy estimate \eqref{en_est_for_v} with $\varphi(y,\tau)=\zeta_1(y)\zeta_2(\tau)$, $k=k_j$ and $\tau_1 = \tau_o+2k_o^{3-m-p}$ yields
\begin{align}\label{midsommar}
\iint_{Q_{\tau_o}} |\nabla v^\beta|^p \chi_{\{v<k_j\}}\d y\d \tau &\leq \frac{c}{\delta} \iint_{Q_{\tau_o}'}(v^\beta-k_j^\beta)_-^p + \frac{2}{ k_o^{3-m-p}} \big(v^{\frac{\beta+1}{2}}-k_j^{\frac{\beta+1}{2}} \big)_-^2 \d y \d \tau
\\
\notag &\leq c\delta^{-1}\Big(k_j^{\beta p} + \frac{k_j^{\beta+1}}{k_o^{3-m-p}}\Big)|Q_{\tau_o}|
\\
\notag &\leq c \delta^{-1}k_j^{\beta p} |Q_{\tau_o}|,
\end{align}
where in the second step we used the fact that the measures of $Q_{\tau_o}$ and $Q_{\tau_o}'$ are comparable. In the last step we used that $k_j\leq k_o$. The constant $c$ still depends only on $C_0,C_1,p,m$. We define the sets
\begin{align*}
A_j:= Q_{\tau_o}\cap \{v<k_j\}, \hspace{5mm} A_j(\tau):=B_8(\bar{0}) \cap \{v(\cdot, \tau) <k_j\}.
\end{align*}
By the isoperimetric inequality \eqref{isoperim} and \eqref{est_on_B8} we have
\begin{align*}
\frac{k_j^\beta}{2}|A_{j+1}(\tau)|=(k_j^\beta-k_{j+1}^\beta)|A_{j+1}(\tau)|&\leq \frac{c_n}{|B_8(\bar{0})\setminus A_j(\tau)|}\int_{A_j(\tau)\setminus A_{j+1}(\tau)}|\nabla v^\beta(y,\tau)|\d y
\\
&\leq \frac{\tilde{c}_n}{\alpha}\int_{A_j(\tau)\setminus A_{j+1}(\tau)}|\nabla v^\beta(y,\tau)|\d y.
\end{align*}
Integrating the estimate over the time interval $(t_o+k_o^{3-m-p}, \tau_o+2k_o^{3-m-p})$ and using H\"older's inequality and \eqref{midsommar} we obtain
\begin{align*}
\frac{k_j^\beta}{2}|A_{j+1}| &\leq \frac{\tilde{c}_n}{\alpha}\int_{A_j\setminus A_{j+1}}|\nabla v^\beta(y,\tau)|\d y
\\
&\leq \frac{\tilde{c}_n}{\alpha}\Big[\int_{A_j\setminus A_{j+1}}|\nabla v^\beta(y,\tau)|^p\d y\Big]^\frac{1}{p} |A_j\setminus A_{j+1}|^{1-\frac{1}{p}}
\\
&\leq \frac{c k_j^\beta}{\alpha \delta^\frac{1}{p}}|Q_{\tau_o}|^\frac{1}{p}|A_j\setminus A_{j+1}|^\frac{p-1}{p},
\end{align*}
where $c$ depends on $m,n,p,C_0, C_1$. Hence,
\begin{align*}
|A_{j+1}|^\frac{p}{p-1}\leq \frac{c}{\alpha^\frac{p}{p-1}\delta^\frac{1}{p-1}}|Q_{\tau_o}|^\frac{1}{p-1}|A_j\setminus A_{j+1}|=\gamma |Q_{\tau_o}|^\frac{1}{p-1}|A_j\setminus A_{j+1}|,
\end{align*}
where $\gamma:=c\alpha^{-\frac{p}{p-1}}\delta^{-\frac{1}{p-1}}$. Adding this equation for $j\in \{0,\dots, j_0-1\}$ where $j_0\in \N$ and noting that $|A_j|$ is decreasing in $j$ we have
\begin{align*}
j_0|A_{j_0}|^\frac{p}{p-1}\leq \gamma |Q_{\tau_o}|^\frac{1}{p-1}|\Big(\sum_{j=0}^{j_0-1}(|A_j|-|A_{j+1}|)\Big)\leq \gamma |Q_{\tau_o}|^\frac{p}{p-1}|.
\end{align*}
Taking into account the definition of $A_j$, this means that
\begin{align*}
|Q_{\tau_o}\cap \{v<k_{j_0}\}|\leq \Big(\frac{\gamma}{j_0}\Big)^\frac{p-1}{p}|Q_{\tau_o}|.
\end{align*}
Recalling that $\delta$ is already determined in terms of $m,n,p,C_0,C_1,\alpha$, this estimate shows that any $\nu>0$ we may choose $j_0=j_0(m,n,p,C_0,C_1,\alpha,\nu)\in \N$ such that
\begin{align*}
|Q_{\tau_o}\cap \{v<k_{j_0}\}|\leq \nu |Q_{\tau_o}|.
\end{align*}
Let $j_*\in [j_0,\infty)$ be the smallest real number for which $(2^{j_*})^\frac{3-m-p}{\beta}$ is an integer. Then $j_*$ only depends on $m,n,p,C_0,C_1,\alpha,\nu$ and
\begin{align}\label{gggg}
|Q_{\tau_o}\cap \{v<k_{j_*}\}|\leq \nu |Q_{\tau_o}|,
\end{align}
where we have extended the definition of $k_j$ in \eqref{k_j-def} to all real numbers.

\noindent \textbf{Step 3: Segmenting the cylinder.}
For $i$ belonging to $\{0,\dots (2^{j_*})^\frac{3-m-p}{\beta} - 1\} $ We define the subcylinders
\begin{align*}
Q_i=B_8(\bar{0})\times \big(\tau_o +k_o^{3-m-p} + ik_{j_*}^{3-m-p}, \tau_o +k_o^{3-m-p} + (i+1)k_{j_*}^{3-m-p}\big),
\end{align*}
which is a parition of $Q_{\tau_o}$ (discarding only a set of measure zero). Thus, \eqref{gggg} implies that for at least one of the subcylinders we must have
\begin{align*}
|Q_i\cap \{v<k_{j_*}\}|\leq \nu |Q_i|.
\end{align*}
Since $v$ satisfies the energy estimates \eqref{en_est_for_v}, we may apply Lemma \ref{lem:GenDeGio} to $Q_i$ with $\rho=8$, $\theta=8^{-p}k_{j_*}^{3-m-p}$, $K=k_{j_*}$ and $a=\frac{1}{2}$. Now $c_g=c\delta$ for a $c$ only depending on $m,n,p,C_1,C_0$ and also $c_e$ only depends on these parameters. Plugging in everything into \eqref{meas_cond_for_v} we see that there is a constant $c$ depending only on $m,n,p,C_1,C_0$, such that if $\nu\leq \nu_o:= c\delta^\frac{n}{p}$ then
\begin{align}\label{nypotatis}
v\geq \frac{1}{2}k_{j_*} \textrm{ in } B_4(\bar{0})\times \big(\tau_o+k_o^{3-m-p} + (i+1-2^{-p})k_{j_*}^{3-m-p}, \tau_o+k_o^{3-m-p} + (i+1)k_{j_*}^{3-m-p}\big).
\end{align}
Fixing $j_0:=j_0(m,n,p,C_0,C_1,\alpha,\nu_o)$, we obtain by the definitions of $\nu_o$ and $\delta$ that the corresponding $j_*$ ultimately depends only on $m,n,p,C_0,C_1,\alpha$, and that \eqref{nypotatis} is indeed valid. Hence, there is a $\tau_1 \in (\tau_o+k_o^{3-m-p}, \tau_o+2k_o^{3-m-p})$ such that for a.e. $y\in B_4(\bar{0})$,
\begin{align}\label{doors}
v(y,\tau_1)\geq \frac{1}{2}k_{j_*}=\frac{k_o}{2^{\frac{j_*}{\beta}+1}} =\frac{\epsilon}{2^{\frac{j_*}{\beta}+1}} e^\frac{\tau_o}{3-m-p}=\sigma_o e^\frac{\tau_o}{3-m-p},
\end{align}
where $\sigma_o=\sigma_o(m,n,p,C_0,C_1,\alpha)$.

\noindent \textbf{Step 4: Returning to the original coordinates.}
By the definition of $v$, \eqref{doors} says that for a.e. $x\in B_{4\rho}(\bar{0})$
\begin{align*}
u(x,t_1)\geq \sigma_o M e^\frac{\tau_o-\tau_1}{3-m-p}=:M_o,
\end{align*}
where $t_1:=\delta M^{3-m-p}\rho^p(1-e^{-\tau_1})$. We want to apply Lemma \ref{lem:VarGenDeGio} with $K=M_o$, $a=\frac{1}{2}$ and $\theta = c 2^{-n-2}M_o^{3-m-p}$, where $c$ is the constant from the assumption \eqref{assumption:meas}. With these choices the assumption in Lemma \ref{lem:VarGenDeGio} is automatically true since it becomes the statement $|Q\cap \{u < M_o\}|\leq |Q|$ for a certain cylinder $Q$. As a consequence, Lemma \ref{lem:VarGenDeGio} implies that
\begin{align}\label{thunder}
u\geq \frac{1}{2}M_o,
\end{align}
in $B_{2\rho}(\bar{0})\times(t_1,t_1+ c 2^{-n-2}M_o^{3-m-p} (4\rho)^p)$. In order to complete the proof, it is sufficient that
\begin{align*}
t_1+ c 2^{-n-2}M_o^{3-m-p} (4\rho)^p) = \delta M^{3-m-p}\rho^p.
\end{align*}
Using the definition of $t_1$ we see that this is equivalent to
\begin{align*}
\tau_o= \ln\Big(\frac{2^{n+2}\delta}{c 4^p\sigma_o^{3-m-p}}\Big),
\end{align*}
where $c$ is the constant from assumption \eqref{assumption:meas}. The right hand side depends only on $m,n,p,C_0,C_1,\alpha$. Hence, with this choice of $\tau_o$, \eqref{thunder} and the upper bound for $\tau_1$ imply that
\begin{align*}
u\geq \frac{1}{2}M_o = \frac{\sigma_o}{2} e^\frac{\tau_o-\tau_1}{3-m-p} M > \frac{\sigma_o}{2} e^{-\frac{2k_o^{3-m-p}}{3-m-p}} M =: \eta M.
\end{align*}
in $B_{2\rho}(\bar{0})\times(t_1, \delta M^{3-m-p}\rho^p)$. Note that $\eta$ only depends on $m,n,p,C_0,C_1,\alpha$. From the upper bound for $\tau_1$ it also follows that
\begin{align*}
t_1 =\delta M^{3-m-p}\rho^p(1-e^{-\tau_1})< \delta M^{3-m-p}\rho^p(1-e^{-\tau_0-2k_o^{3-m-p}}),
\end{align*}
so the claim of the theorem is true if we take
\begin{align*}
\varepsilon= e^{-\tau_0-2k_o^{3-m-p}},
\end{align*}
and the right-hand side clearly only depends only on $m,n,p,C_0,C_1,\alpha$.
\end{proof}

\section{Local Boundedness}
We prove that in the range \eqref{nicerange} all weak solutions are locally bounded. We use a De Giorgi iteration combining the energy estimates obtained in Lemma \ref{lem:Energy_Est} with a Sobolev embedding.
\begin{theo}\label{theo:local_bdd}
Let $u$ be a weak solution in the sense of Definition \ref{weakdef} and suppose that the parameters $m$ and $p$ satisfy \eqref{nicerange}. Then $u$ is locally bounded and for any cylinder of the form $Q_{\rho,2\tau}(z_o)$ contained in $\Omega_T$ and any $\sigma \in (0,1)$ we have the explicit bound
\begin{align*}
\esssup_{Q_{\sigma \rho, \sigma\tau}(z_o)}u\leq c\Big[\big((1-\sigma)^p \tau\big)^{-\frac{n+p}{p}}\iint_{Q_{\rho, \tau}(z_o)}u^{\beta+1}\d x \d t\Big]^\frac{p}{p\beta n+(\beta+1)(p-n)} + \Big(\frac{\tau}{\rho^p}\Big)^\frac{1}{3-m-p},
\end{align*}
where $c$ is a constant depending only on $m,n,p, C_0, C_1$.
\end{theo}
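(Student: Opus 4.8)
The strategy is a De Giorgi iteration on a nested family of cylinders, based on the energy estimate of Lemma~\ref{lem:Energy_Est}. For $j\in\N_0$ set $\rho_j:=\sigma\rho+(1-\sigma)2^{-j}\rho$ and $\tau_j:=\sigma\tau+(1-\sigma)2^{-j}\tau$, so that $Q_j:=Q_{\rho_j,\tau_j}(z_o)$ shrinks from $Q_{\rho,\tau}(z_o)$ down to $Q_{\sigma\rho,\sigma\tau}(z_o)$. Let the truncation levels be $k_j^\beta:=(1-2^{-j})\ell^\beta$ for a pivot level $\ell\ge 0$ to be chosen at the end; then $k_j\uparrow\ell$. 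Choose cutoffs $\varphi_j\in C^\infty(\bar\Omega_T)$ with $\varphi_j=1$ on $Q_{j+1}$, supported in $Q_j$, with $|\nabla\varphi_j|\lesssim 2^j/((1-\sigma)\rho)$ and $|\partial_t\varphi_j^p|\lesssim 2^j/((1-\sigma)\tau)$. The quantity to iterate is
\begin{align*}
Y_j:=\iint_{Q_j}(u^\beta-k_j^\beta)_+^{\beta+1}\d x\d t,
\end{align*}
or a suitably normalised version of it; note that on $Q_{j+1}$ one has $(u^\beta-k_j^\beta)_+\ge (k_{j+1}^\beta-k_j^\beta)=2^{-(j+1)}\ell^\beta$ on the set $\{u>k_{j+1}\}$, which converts smallness of $Y_j$ into smallness of the measure $|Q_{j+1}\cap\{u>k_{j+1}\}|$.

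\textbf{Key steps.} First, apply Lemma~\ref{lem:Energy_Est} with $k=k_j$ and $\varphi=\varphi_j$ to control $\iint|\nabla(u^\beta-k_j^\beta)_+|^p\varphi_j^p$ and the sup-in-time of $\int\b[u,k_j]\chi_{\{u>k_j\}}\varphi_j^p$ by $C2^{jp}((1-\sigma)\rho)^{-p}\iint_{Q_j}(u^\beta-k_j^\beta)_+^p + C2^j((1-\sigma)\tau)^{-1}\iint_{Q_j}\b[u,k_j]\chi_{\{u>k_j\}}$. Using Lemma~\ref{estimates:boundary_terms}(i) and (iii), the boundary term $\b[u,k_j]$ is comparable to $|u^\beta-k_j^\beta|^{(\beta+1)/\beta}$ and bounded below by $(u^{(\beta+1)/2}-k_j^{(\beta+1)/2})_+^2$, so the left side controls $\esssup_t\int (u^{(\beta+1)/2}-k_j^{(\beta+1)/2})_+^2\varphi_j^p$. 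Second, feed $w:=(u^{(\beta+1)/2}-k_j^{(\beta+1)/2})_+\varphi_j^{p/2}$ — which lies in $L^\infty_t L^2_x\cap L^p_t W^{1,p}_{0,x}$ after noting $|\nabla (u^\beta)^{(\beta+1)/(2\beta)}|$ is comparable to $|\nabla u^\beta|\,(u^\beta)^{(\beta+1)/(2\beta)-1}$ on $\{u>k_j\}$ and that $k_j>0$ keeps the negative power bounded once $\ell>0$ — into the parabolic Sobolev inequality Lemma~\ref{lemma:Gagliardo} with $q=2$, giving an $L^{p(1+2/n)}$ bound in terms of the energy quantities. Third, interpolate/Hölder to return from the exponent $p(1+2/n)$ on $w$ back to the exponent $\beta+1$ on $(u^\beta-k_j^\beta)_+$, paying a factor $|Q_j\cap\{u>k_j\}|^{\text{power}}$; bounding that measure by $Y_{j-1}$ via the observation above and collecting the $2^{jp}$ factors produces an inequality $Y_{j+1}\le C\,b^j\,\big(\text{geometric factors}\big)\,Y_j^{1+\delta}$ with $b>1$ and $\delta>0$ — here is precisely where the restriction \eqref{nicerange}, i.e.\ \eqref{nicerange-rephrased}, enters to guarantee $\delta>0$ and to fix the scaling exponents. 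Fourth, invoke Lemma~\ref{fastconvg}: $Y_j\to0$, hence $u^\beta\le \ell^\beta$ a.e.\ on $Q_{\sigma\rho,\sigma\tau}(z_o)$, provided $Y_0$ satisfies the smallness condition $Y_0\le C^{-1/\delta}b^{-1/\delta^2}$, which by the scaling of the problem translates into a lower bound on $\ell$ of the form $\ell\ge c\big[((1-\sigma)^p\tau)^{-(n+p)/p}\iint_{Q_{\rho,\tau}}u^{\beta+1}\big]^{p/(p\beta n+(\beta+1)(p-n))}+(\tau/\rho^p)^{1/(3-m-p)}$; taking $\ell$ equal to the right-hand side of the theorem yields the claim.

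\textbf{Main obstacle.} The delicate point is the bookkeeping of exponents in the third step: because we are in the fast-diffusion regime $\beta p<\beta+1$, the natural quantity produced by the energy estimate ($L^p$ of $(u^\beta-k^\beta)_+$ and $L^\infty_tL^2$ of $(u^{(\beta+1)/2}-k^{(\beta+1)/2})_+$) does not match the $L^{\beta+1}$ norm of $u$ appearing in the hypothesis, so one must carefully track powers of $\ell$ and $k_j$, of $|Q_j|$, and of the cylinder dimensions $\rho,\tau$ through the Hölder steps and verify that the resulting self-improving exponent $\delta=\delta(m,n,p)$ is strictly positive exactly under \eqref{nicerange}; this is also where the two terms in the final bound — the integral term and the intrinsic-scaling term $(\tau/\rho^p)^{1/(3-m-p)}$ — separate out, the latter coming from the $\b$-term on the right of the energy estimate whose homogeneity in $u$ is $\beta+1$ rather than $\beta p$. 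The reduction to the case $\ell>0$ (needed so the chain rule and the negative power $(u^\beta)^{(\beta+1)/(2\beta)-1}$ make sense on $\{u>k_j\}$) is handled automatically since the claimed $\ell$ is strictly positive whenever $\tau>0$, and the case of general $\ell$ replacing $0$ in $k_j^\beta=(1-2^{-j})\ell^\beta$ does not create a singularity because $k_0=0$ is only used in $Y_0=\iint_{Q_0}u^{\beta(\beta+1)/\beta}=\iint u^{\beta+1}$, which is finite by Definition~\ref{weakdef}.
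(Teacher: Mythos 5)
Your high-level plan — De Giorgi iteration on shrinking cylinders, with Lemma \ref{lem:Energy_Est} for the energy bounds, Lemma \ref{lemma:Gagliardo} for the embedding, and Lemma \ref{fastconvg} to close — is the right architecture, but the specific Sobolev exponent you use breaks the argument on the claimed range \eqref{nicerange}. You apply Lemma \ref{lemma:Gagliardo} with $q=2$ to $w=(u^{\frac{\beta+1}{2}}-k_j^{\frac{\beta+1}{2}})_+\varphi_j^{p/2}$, obtaining an $L^{p(1+2/n)}$ bound. For the H\"older interpolation in your third step to pay a positive power of the sublevel measure, the target exponent must strictly exceed the one defining $Y_j$, which forces $p(1+\tfrac{2}{n})>2$, i.e.\ $p>\tfrac{2n}{n+2}$. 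This is the pure parabolic $p$-Laplace threshold and does not coincide with \eqref{nicerange} once $m>1$. Concretely, take $n=3$, $p=1.1$, $m=1.5$ (so $\beta=6$): then $p(\beta+1)/(1-\beta(p-1))=7.7/0.4=19.25>3=n$, so \eqref{nicerange-rephrased} holds, yet $p=1.1<1.2=\tfrac{2n}{n+2}$, so your H\"older step is illegal (the conjugate exponent is negative). So, as written, your scheme proves the theorem only on a strictly smaller range.

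The paper closes this gap by, in effect, applying the Gagliardo--Nirenberg inequality with the nonstandard exponent $q=M:=\tfrac{\beta+1}{\beta}<2$, while avoiding the gradient of the fractional power. With $\phi:=(u^{\frac{\beta+1}{2}}-k_{j+1}^{\frac{\beta+1}{2}})_+^{2/M}$ one has two matches: $\phi^M=(u^{\frac{\beta+1}{2}}-k_{j+1}^{\frac{\beta+1}{2}})_+^2\approx\b[u,k_{j+1}]\chi$ is exactly the $\esssup$-in-time quantity in \eqref{caccioppoli}, and the pointwise domination $\phi\le(u^\beta-k_{j+1}^\beta)_+$ (from $a^t-b^t\le(a-b)^t$, $t=\tfrac{\beta+1}{2\beta}\in(0,1)$) lets the spatial Sobolev step act on $(u^\beta-k_{j+1}^\beta)_+\varphi_j$, whose gradient is the other term the energy estimate controls — no chain rule for $u^{(\beta+1)/2}$ is ever invoked. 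The H\"older exponent in the final split is then $q=p(\tfrac1M+\tfrac1n)$, and $q>1$ is precisely \eqref{nicerange-rephrased}, while the self-improving exponent is $\delta=p/(nq)$, giving the stated power $\tfrac{p}{p\beta n+(\beta+1)(p-n)}$. Two smaller issues in your write-up: (a) with $p<2$, the cutoff $\varphi_j^{p/2}$ makes $\nabla\varphi_j^{p/2}=\tfrac{p}{2}\varphi_j^{p/2-1}\nabla\varphi_j$ carry a negative power of $\varphi_j$, so your $\iint|\nabla w|^p$ is not controlled near $\partial_p Q_j$; use $\varphi_j$ to the first power and note $\varphi_j^2\le\varphi_j^p$ for $\varphi_j\in[0,1]$. (b) The definition $Y_j:=\iint(u^\beta-k_j^\beta)_+^{\beta+1}$ is inconsistent with your later reconciliation $Y_0=\iint u^{\beta+1}$; the exponent should be $\tfrac{\beta+1}{\beta}$, and this typo must be fixed before the power counting can even be checked.
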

\begin{proof}[Proof]
Suppose that $Q_{\rho,\tau}(z_o)\subset \Omega_T$. Define sequences
\begin{align*}
\rho_j:= \sigma \rho +\frac{(1-\sigma)}{2^j}\rho, \hspace{15mm} \tau_j&:= \sigma\tau +\frac{(1-\sigma)}{2^j}\tau,
\hspace{15mm} k_j:=k(1-2^{-j})^\frac{2}{\beta+1},
\end{align*}
where $k>0$ is a number to be fixed later. We also define the cylinders $Q_j:=Q_{\rho_j,\tau_j}(z_o)=B_j\times T_j$. Choose functions $\varphi_j\in C^\infty(Q_j;[0,1])$ vanishing near the parabolic boundary of $Q_j$ and satisfying $\phi_j=1$ on $Q_{j+1}$ and for which
\begin{align*}
|\nabla \varphi_j|\leq \frac{2^{j+2}}{(1-\sigma)\rho}, \hspace{10mm} |\partial_t \varphi_j|\leq \frac{2^{j+2}}{(1-\sigma)\tau}.
\end{align*}
Furthermore, we define the sequence
\begin{align*}
Y_j:=\iint_{Q_j}\big(u^\frac{\beta+1}{2}-k_j^\frac{\beta+1}{2}\big)_+^2\d x \d t.
\end{align*}
Note that $Y_j$ is finite for every $j$ since $u\in L^{\beta+1}(\Omega_T)$. Define the auxiliary parameters
\begin{align*}
M:=\tfrac{\beta+1}{\beta}, \hspace{10mm} q:= p(\tfrac{1}{M}+\tfrac{1}{n})=\tfrac{p}{M}(\tfrac{n+M}{n}).
\end{align*}
A straightforward calculation shows that \eqref{nicerange-rephrased} (and hence \eqref{nicerange}) guarantees that $q>1$. Thus, we may use H\"older's inequality to estimate
\begin{align}\label{Y-jplusone_estim1}
Y_{j+1}\leq \Big[\iint_{Q_{j+1}} \big(u^\frac{\beta+1}{2}-k_{j+1}^\frac{\beta+1}{2}\big)_+^{2q}\d x \d t\Big]^\frac{1}{q}|Q_{j+1}\cap \{u>k_{j+1}\}|^\frac{1}{q'}.
\end{align}
We will use the shorthand notation
\begin{align*}
\phi:= \big(u^\frac{\beta+1}{2}-k_{j+1}^\frac{\beta+1}{2}\big)_+^\frac{2}{M}\leq (u^\beta-k^\beta_{j+1})_+.
\end{align*}
The upper bound, is a consequence of the definition of $M$ and the fact that $\tfrac{2}{M}>1$.
In the following calculation we express the integral on the right-hand side of \eqref{Y-jplusone_estim1} in terms of $\phi$ and split the integral into space and time variables.
We apply H\"older's inequality to the integral over the space variables, and then estimate one of the resulting factors upwards by the essential supremum over time. After this, we introduce the cut-off function $\varphi_j$ which allows us to apply the Gagliardo-Nirenberg inequality. We also apply Lemma \ref{estimates:boundary_terms} (i). Thus, we obtain two factors which both are bounded by the right-hand side of the energy estimate \eqref{caccioppoli}. All in all, we have
\begin{align}\label{longcalc}
\iint_{Q_{j+1}} &\phi^{p(\frac{n+M}{n})} \d x \d t = \int_{T_{j+1}}\int_{B_{j+1}} \phi^p \phi^\frac{pM}{n}\d x \d t
\\
\notag &\leq \int_{T_{j+1}}\Big[ \int_{B_{j+1}} \phi^{p^*}\d x \Big]^\frac{p}{p^*} \Big[\int_{B_{j+1}} \phi^M \d x\Big]^\frac{p}{n}\d t
\\
\notag &\leq \Big[\esssup_{T_{j+1}}\int_{B_{j+1}}\phi^M \d x\Big]^\frac{p}{n}\int_{T_{j+1}}\Big[ \int_{B_{j+1}} \phi^{p^*}\d x \Big]^\frac{p}{p^*} \d t
\\
\notag &\leq \Big[\esssup_{T_{j+1}}\int_{B_{j+1}}\big(u^\frac{\beta+1}{2}-k_{j+1}^\frac{\beta+1}{2}\big)_+^2 \d x\Big]^\frac{p}{n}\int_{T_{j+1}}\Big[ \int_{B_{j+1}} (u^\beta-k^\beta_{j+1})_+^{p^*}\d x \Big]^\frac{p}{p^*} \d t
\\
\notag &\leq \Big[\esssup_{T_{j}}\int_{B_{j}}\big(u^\frac{\beta+1}{2}-k_{j+1}^\frac{\beta+1}{2}\big)_+^2\varphi_j^p \d x\Big]^\frac{p}{n}\int_{T_{j}}\Big[ \int_{B_{j}} ((u^\beta-k^\beta_{j+1})_+\varphi_j)^{p^*}\d x \Big]^\frac{p}{p^*} \d t
\\
\notag &\leq c \Big[\esssup_{T_{j}}\int_{B_{j}} \b[u,k_j]\chi_{\{u>k_j\}}\varphi_j^p \d x\Big]^\frac{p}{n}\int_{T_{j}} \int_{B_{j}} |\nabla [(u^\beta-k^\beta_{j+1})_+\varphi_j]|^p\d x \d t
\\
\notag &\leq c\Big[ \iint_{Q_j} (u^\beta-k_{j+1}^\beta)^p_+ |\nabla \varphi_j|^p + \big(u^\frac{\beta+1}{2}-k_{j+1}^\frac{\beta+1}{2}\big)_+^2\varphi_j^{p-1}|\partial_t \varphi_j| \d x \d t \Big]^\frac{n+p}{n}.
\end{align}
The constant $c$ depends only on $m,n,p,C_0,C_1$. In the set where $u>k_{j+1}$ we can estimate
\begin{align*}
\frac{(u^\beta-k_{j+1}^\beta)^p_+}{\big(u^\frac{\beta+1}{2}-k_j^\frac{\beta+1}{2}\big)_+^2} = u^{m+p-3} \frac{(1-(k_{j+1}/u)^\beta)^p_+}{\big(1- (k_j/u)^\frac{\beta+1}{2}\big)_+^2} &\leq \frac{u^{m+p-3}}{\big(1- (k_j/k_{j+1})^\frac{\beta+1}{2}\big)^2} 
\\
&\leq \frac{c k^{m+p-3}}{\big(1- (k_j/k_{j+1})^\frac{\beta+1}{2}\big)^2}< c 2^{j} k^{m+p-3},
\end{align*}
where the constant $c$ only depends on $m,p$. In the second last step we used $m+p<3$, and the fact that $k_{j+1}$ is comparable in size to $k$. Applying the previous estimate to the first term to the last line of \eqref{longcalc} and noting that in the second term we can replace $k_{j+1}$ by $k_j$ we obtain
\begin{align*}
\iint_{Q_{j+1}} &\phi^{p(\frac{n+M}{n})} \d x \d t \leq c\Big[ \iint_{Q_j} \big[ 2^{j} k^{m+p-3} |\nabla \varphi_j|^p + \varphi_j^{p-1}|\partial_t \varphi_j| \big] \big(u^\frac{\beta+1}{2}-k_j^\frac{\beta+1}{2}\big)_+^2 \d x \d t \Big]^\frac{n+p}{n}.
\end{align*}
Combining this estimate with the bounds for $\varphi_j$ and its derivatives leads to
\begin{align*}
\iint_{Q_{j+1}} \phi^{p(\frac{n+M}{n})} \d x \d t \leq c  \Big( \frac{2^{(p+1)j}}{(1-\sigma)^p\tau}\Big[\Big(\frac{\tau}{\rho^p}\Big)k^{m+p-3}+1\Big] Y_j \Big)^\frac{n+p}{n}.
\end{align*}
From the last expression we see that if $k\geq \big(\frac{\tau}{\rho^p}\big)^\frac{1}{3-m-p}$ then 
\begin{align}\label{phi-integ-est}
\iint_{Q_{j+1}} \phi^{p(\frac{n+M}{n})} \d x \d t \leq c  \Big( \frac{2^{(p+1)j}}{(1-\sigma)^p\tau} Y_j \Big)^\frac{n+p}{n}.
\end{align}
Observe now that
\begin{align}\label{Q_j-measure-est}
|Q_j\cap \{u>k_{j+1}\}|k^{\beta+1}2^{-2(j+1)} &= |Q_j\cap \{u>k_{j+1}\}|(k_{j+1}^\frac{\beta+1}{2}-k_j^\frac{\beta+1}{2})^2
\\
\notag &\leq \iint_{Q_j\cap \{u>k_{j+1}\}}(u^\frac{\beta+1}{2}-k_j^\frac{\beta+1}{2})^2\d x \d t \leq Y_j.
\end{align}
Using \eqref{phi-integ-est} and \eqref{Q_j-measure-est} in \eqref{Y-jplusone_estim1} we end up with
\begin{align}\label{iter-estim}
Y_{j+1}\leq C b^j Y_j^{1+\delta},
\end{align}
where
\begin{align*}
b=2^{(\frac{n+p}{n})(p+1)\frac{1}{q}+\frac{2}{q'}}, \hspace{5mm} C= \frac{c k^{-(\beta+1)\frac{1}{q'}}}{\big((1-\sigma)^p\tau\big)^\frac{n+p}{nq}}, \hspace{5mm}\delta=\frac{p}{nq}=\frac{M}{n+M}.
\end{align*}
and $c$ only depends on $m,n,p,C_0,C_1$. We want to show that $Y_j\to 0$. According to Lemma \ref{fastconvg} this is true provided that
\begin{align*}
Y_0\leq C^{-\frac{1}{\delta}}b^{-\frac{1}{\delta^2}}.
\end{align*}
Using the definition of $Y_0$ and the parameters we see that this is equivalent to
\begin{align}\label{kcond}
k\geq c\Big[   \big((1-\sigma)^p\tau\big)^{-\frac{n+p}{p}}     \iint_{Q_{\rho,\tau}(z_o)}u^{\beta+1}\d x \d t\Big]^\frac{p}{p\beta n+(\beta+1)(p-n)},
\end{align}
where $c$ is a constant depending only on $m,n,p,C_0,C_1$. Since
\begin{align*}
\iint_{Q_{\sigma \rho,\sigma\tau}(z_o)} (u^\frac{\beta+1}{2}-k^\frac{\beta+1}{2}\big)_+^2\d x \d t \leq Y_j \to 0,
\end{align*}
this means that $u\leq k$ almost everywhere in $Q_{\sigma \rho,(1+\sigma)\tau}(z_o)$. The only lower bounds for $k$ required in this argument were $k\geq \big(\frac{\tau}{\rho^p}\big)^\frac{1}{3-m-p}$ and \eqref{kcond}, so we have verified the estimate for the essential supremum.
\end{proof}
We end this section by proving that the estimate of Theorem \ref{theo:local_bdd} can be somewhat improved. This result will also be used in the reasoning leading to the Harnack estimate in Section \ref{sec:Harnack-est}. Note first that \eqref{nicerange-rephrased} can be rephrased as
\begin{align*}
(\beta+1)p+ n(m+p-3)>0.
\end{align*}
Thus there exists $r\in (0,\beta+1)$ such that
\begin{align}\label{def:lambda-r}
\lambda_r:=rp + n(m+p-3)>0.
\end{align}
The next theorem shows that there is an upper bound in terms of the $L^r$-norm of $u$. 
\begin{theo}\label{theo:Lr-Linfty}
Let $r\in (0,\beta+1)$ be such that \eqref{def:lambda-r} is valid. Then for any cylinder $Q_{2\rho,2\tau}(z_o)\subset \Omega_T$,
\begin{align}\label{essup-Lr-estim}
\esssup_{Q_{\rho,\tau}(z_o)} u \leq c\Big[ \tau^{-\frac{n+p}{p}}\iint_{Q_{2\rho,2\tau}(z_o)}u^r\d x \d t\Big]^\frac{p}{\lambda_r} + c\Big(\frac{\tau}{\rho^p}\Big)^\frac{1}{3-m-p}
\end{align}
where the constant $c$ depends only on $r$ and the data.
\end{theo}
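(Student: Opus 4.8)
The plan is to bootstrap off the local boundedness already furnished by Theorem~\ref{theo:local_bdd}: one upgrades its $L^{\beta+1}$‑based bound to the $L^r$‑based bound \eqref{essup-Lr-estim} by interpolation combined with an iteration over a chain of nested cylinders. Concretely, set $\rho_j:=(2-2^{-j})\rho$, $\tau_j:=(2-2^{-j})\tau$, so that $Q^{(0)}:=Q_{\rho_0,\tau_0}(z_o)=Q_{\rho,\tau}(z_o)$ and $Q^{(j)}:=Q_{\rho_j,\tau_j}(z_o)$ increases to $Q_{2\rho,2\tau}(z_o)$, and put $M_j:=\esssup_{Q^{(j)}}u$; all $M_j$ are finite by Theorem~\ref{theo:local_bdd}. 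Writing $\lambda_{\beta+1}:=(\beta+1)p+n(m+p-3)=p\beta n+(\beta+1)(p-n)$, which is positive by \eqref{nicerange-rephrased}, I apply Theorem~\ref{theo:local_bdd} to the inclusion $Q^{(j)}\subset Q^{(j+1)}$ --- with its ``$\rho$'' taken to be $\rho_{j+1}$, its ``$\tau$'' taken to be $\tau_{j+1}$, and its scaling parameter $\sigma_j=\rho_j/\rho_{j+1}=\tau_j/\tau_{j+1}$, for which $(1-\sigma_j)^{-1}\le 2^{j+2}$ --- and then use the elementary interpolation $\iint_{Q^{(j+1)}}u^{\beta+1}\d x\d t\le M_{j+1}^{\beta+1-r}\iint_{Q_{2\rho,2\tau}(z_o)}u^r\d x\d t$ (legitimate precisely because $r<\beta+1$ and $M_{j+1}<\infty$), together with the comparability of $\rho_{j+1},\tau_{j+1}$ to $\rho,\tau$. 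This produces a recursion
\[
M_j\le c\,b^{\,j}A\,M_{j+1}^{\kappa}+c\,B,\qquad A:=\Big[\tau^{-\frac{n+p}{p}}\iint_{Q_{2\rho,2\tau}(z_o)}u^r\d x\d t\Big]^{\frac{p}{\lambda_{\beta+1}}},\quad B:=\Big(\tfrac{\tau}{\rho^p}\Big)^{\frac{1}{3-m-p}},
\]
with $b>1$ depending only on the data and $\kappa:=\dfrac{(\beta+1-r)p}{\lambda_{\beta+1}}$.

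The decisive observation --- and the precise point at which the hypothesis \eqref{def:lambda-r} enters --- is that $\kappa<1$ \emph{exactly when} $\lambda_r=rp+n(m+p-3)>0$, and moreover $1-\kappa=\lambda_r/\lambda_{\beta+1}$. Since the recursion is therefore \emph{sublinear} in $M_{j+1}$, it cannot be closed via Lemma~\ref{fastconvg} (which needs a superlinear power); instead I argue by a case distinction. If $M_0\le 2cB$, then $\esssup_{Q_{\rho,\tau}(z_o)}u=M_0$ is already dominated by the lower‑order term of \eqref{essup-Lr-estim} and there is nothing to prove. Otherwise $M_0>2cB$, and since the $Q^{(j)}$ are nested we get $M_{j}\ge M_0>2cB$ for every $j$; hence $cB\le cB\big(M_{j+1}/(2cB)\big)^{\kappa}\le c'B^{1-\kappa}M_{j+1}^{\kappa}$, and the recursion collapses to the purely multiplicative form $M_j\le c''\,b^{\,j}\big(A+B^{1-\kappa}\big)M_{j+1}^{\kappa}$.

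Iterating this $N$ times gives $M_0\le\prod_{j=0}^{N-1}\big(c''b^{\,j}(A+B^{1-\kappa})\big)^{\kappa^{j}}M_N^{\kappa^{N}}$. Because $\kappa\in(0,1)$ the series $\sum_j\kappa^j$ and $\sum_j j\kappa^j$ converge, so the product tends as $N\to\infty$ to $C\,(A+B^{1-\kappa})^{1/(1-\kappa)}$ with $C$ depending only on the data, while $M_N^{\kappa^N}\to1$ because $\sup_N M_N\le\esssup_{Q_{2\rho,2\tau}(z_o)}u<\infty$, again by Theorem~\ref{theo:local_bdd}. Thus $\esssup_{Q_{\rho,\tau}(z_o)}u=M_0\le C(A+B^{1-\kappa})^{1/(1-\kappa)}\le c\,A^{1/(1-\kappa)}+c\,B$ (using $(x+y)^s\le 2^{s-1}(x^s+y^s)$ with $s=1/(1-\kappa)>1$), and inserting $1-\kappa=\lambda_r/\lambda_{\beta+1}$ turns $A^{1/(1-\kappa)}$ into $\big[\tau^{-(n+p)/p}\iint_{Q_{2\rho,2\tau}(z_o)}u^r\big]^{p/\lambda_r}$, which is precisely \eqref{essup-Lr-estim}.

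I expect the main obstacle to lie in the iteration/absorption step rather than in any fresh energy estimate: one must carry the exponents carefully enough that the $L^r$‑integral emerges raised to the sharp power $p/\lambda_r$ and the lower‑order term keeps the sharp power $1$ on $(\tau/\rho^p)^{1/(3-m-p)}$, and it is this second requirement that forces the case distinction above. Everything else --- the comparability of the $\rho_j,\tau_j$ to $\rho,\tau$, the bookkeeping of the safety margins in the hypotheses of Theorem~\ref{theo:local_bdd}, and the convexity estimate at the end --- is routine, and the whole scheme rests on $u$ being \emph{already} known to be locally bounded, which is what makes both the interpolation $u^{\beta+1}\le M^{\beta+1-r}u^r$ and the limit $M_N^{\kappa^N}\to1$ meaningful.
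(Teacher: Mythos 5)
Your proof is correct, and it shares the paper's skeleton exactly: nested cylinders $\rho_j=(2-2^{-j})\rho$, $\tau_j=(2-2^{-j})\tau$, an application of Theorem~\ref{theo:local_bdd} at each step with $\sigma_j=\rho_j/\rho_{j+1}$, and the interpolation $u^{\beta+1}\le M_{j+1}^{\beta+1-r}u^r$, which is valid because $u$ is already known to be locally bounded. Where you part ways with the paper is in how you close the recursion $M_j\le c\,b^jA\,M_{j+1}^{\kappa}+cB$ with $\kappa\in(0,1)$. The paper applies Young's inequality, $c\,b^jA\,M_{j+1}^{\kappa}\le\varepsilon M_{j+1}+c(\varepsilon)(b^jA)^{1/(1-\kappa)}$, which turns the iteration into the additive recursion $M_j\le\varepsilon M_{j+1}+c(\varepsilon)\tilde b^{\,j}A^{1/(1-\kappa)}+cB$; choosing $\varepsilon=1/(2\tilde b)$ then lets the two geometric series converge and $\varepsilon^NM_N\to 0$. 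You instead absorb the additive term $cB$ into the multiplicative one, at the cost of the case distinction $M_0\le 2cB$ versus $M_0>2cB$, and then iterate the purely multiplicative recursion $M_j\le c''b^j(A+B^{1-\kappa})M_{j+1}^{\kappa}$, using $\sum\kappa^j<\infty$ and $\sum j\kappa^j<\infty$. Both routes are sound and produce the sharp exponent $1/(1-\kappa)=\lambda_{\beta+1}/\lambda_r$. The paper's Young step is marginally slicker since it handles the lower‑order term $cB$ automatically and avoids the bifurcation; your version has the small advantage of making the algebra $A^{1/(1-\kappa)}=[\tau^{-(n+p)/p}\iint u^r]^{p/\lambda_r}$ and $B^{(1-\kappa)/(1-\kappa)}=B$ appear directly, so the sharp powers are visibly forced rather than emerging from the choice of Young exponents. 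Either way, both proofs rely on the finiteness of $\sup_N M_N\le\esssup_{Q_{2\rho,2\tau}(z_o)}u$, which is where the standing hypothesis $Q_{2\rho,2\tau}(z_o)\subset\Omega_T$ together with Theorem~\ref{theo:local_bdd} is used, and you correctly identify $1-\kappa=\lambda_r/\lambda_{\beta+1}$ as the exact point where the hypothesis $\lambda_r>0$ enters.
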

\begin{proof}[Proof]
Define the increasing sequences
\begin{align*}
\rho_j:=(2-2^{-j})\rho, \hspace{7mm} \tau_j:=(2-2^{-j})\tau.
\end{align*}
Define cylinders $Q_j=Q_{\rho_j,\tau_j}(z_o)$. Applying Theorem \ref{theo:local_bdd} to the cylinder $Q_{j+1}$ with $\sigma=\rho_j/\rho_{j+1}=\tau_j/\tau_{j+1}$ and noting that $1-\sigma> 2^{-(j+2)}$ we end up with
\begin{align*}
\esssup_{Q_j} u &\leq c\Big[ 2^{j(n+p)}\tau_j^{-\frac{n+p}{p}}\iint_{Q_{j+1}}u^{\beta+1}\d x \d t\Big]^\frac{p}{p\beta n+(\beta+1)(p-n)} + \Big(\frac{\tau_j}{\rho_j^p}\Big)^\frac{1}{3-m-p}
\\
& \leq c\Big[ 2^{j(n+p)}\tau^{-\frac{n+p}{p}}\iint_{Q_{j+1}}u^{\beta+1}\d x \d t\Big]^\frac{p}{p\beta n+(\beta+1)(p-n)} + \Big(\frac{2\tau}{\rho^p}\Big)^\frac{1}{3-m-p},
\end{align*}
where in the second step we used the fact that $\rho_j\geq \rho$ and $\tau\leq \tau_j < 2\tau$. Denoting now $M_j:= \esssup_{Q_j} u$ and noting that $u\leq M_{j+1}$ a.e. in $Q_{j+1}$ we see that
\begin{align*}
M_j \leq c M_{j+1}^\frac{p(\beta+1-r)}{p\beta n+(\beta+1)(p-n)}\Big[ 2^{j(n+p)}\tau^{-\frac{n+p}{p}}\iint_{Q_{2\rho,2\tau}(z_o)}u^r\d x \d t\Big]^\frac{p}{p\beta n+(\beta+1)(p-n)} + \Big(\frac{2\tau}{\rho^p}\Big)^\frac{1}{3-m-p}.
\end{align*}
Due to \eqref{def:lambda-r}, the exponent of $M_{j+1}$ lies in the interval $(0,1)$. Applying Young's inequality to increase the exponent of $M_{j+1}$ to $1$ we end up with
\begin{align*}
M_j &\leq \varepsilon M_{j+1}+c(\varepsilon)\Big[ 2^{j(n+p)}\tau^{-\frac{n+p}{p}}\iint_{Q_{2\rho,2\tau}(z_o)}u^r\d x \d t\Big]^\frac{p}{\lambda_r} + \Big(\frac{2\tau}{\rho^p}\Big)^\frac{1}{3-m-p}
\\
&= \varepsilon M_{j+1}+c(\varepsilon) b^j \Big[ \tau^{-\frac{n+p}{p}}\iint_{Q_{2\rho,2\tau}(z_o)}u^r\d x \d t\Big]^\frac{p}{\lambda_r} + \Big(\frac{2\tau}{\rho^p}\Big)^\frac{1}{3-m-p},
\end{align*}
where $b=2^{\frac{p(n+p)}{\lambda_r}}$ and the constant $\varepsilon>0$ can be chosen freely. Iterating the last inequality we obtain
\begin{align*}
M_0 \leq \varepsilon^N M_N + c(\varepsilon)\Big[ \tau^{-\frac{n+p}{p}}\iint_{Q_{2\rho,2\tau}(z_o)}u^r\d x \d t\Big]^\frac{p}{\lambda_r}\sum^{N-1}_{j=0}(\varepsilon b)^j + \Big(\frac{2\tau}{\rho^p}\Big)^\frac{1}{3-m-p}\sum^{N-1}_{j=0}\varepsilon^j,
\end{align*}
for $N\geq 1$. Choosing $\varepsilon=\frac{1}{2b}$ we see that both sums on the right-hand side converge as $N\to \infty$. Since $M_N$ is bounded from above by the essential supremum of $u$ over $Q_{2\rho,2\tau}(z_o)$, the term $\varepsilon^N M_N$ vanishes in the limit and we end up with \eqref{essup-Lr-estim}.
\end{proof}

\section{H\"older continuity}
In this setion we consider only $m$ and $p$ in the supercritical range \eqref{supercritical}. We show that in this case weak solutions are locally H\"older continuous. The starting point of the argument is a De Giorgi type lemma providing a sufficient condition for the reduction of the oscillation from above. First we introduce some notation. For $0 <\mu_+<\infty$ we denote
\begin{align}\label{def:theta}
\theta=\varepsilon\mu_+^{3-m-p},
\end{align}
where $\varepsilon \in (0,1)$. A sufficiently small value of $\varepsilon$ will be chosen later in this section. Initially it is important that our results work for all $\varepsilon \in (0,1)$.
 
\begin{lem}\label{lem:De_Giorgi_above}
Let $u$ be a weak solution to \eqref{general} in the sense of Definition \ref{weakdef}. Suppose that we are given a number $0<\mu_+<\infty$, and let $\theta$ be chosen as in \eqref{def:theta}. Moreover, suppose $Q_{\rho,\theta\rho^p}(z_o) \subset \Omega_T$ is a parabolic cylinder satisfying
$$
\esssup_{Q_{\rho,\theta \rho^p}(z_o)} u \leq \mu_+.
$$
then there exists a constant $\nu_o$ depending only on $m,n,p, C_0, C_1$ such that if
\begin{align}
\label{assumption_level_1}
|Q_{\rho,\theta\rho^p}(z_o)\cap \{u^\beta > \mu_+^\beta/2 \}|\leq \nu_o \varepsilon^\frac{n}{p} |Q_{\rho,\theta\rho^p}(z_o)|
\end{align}
then
$$
u^\beta \leq \frac{3}{4}\mu_+^\beta
$$
a.e.\ in $Q_{\rho/2,\theta(\rho/2)^p}(z_o)$.
\end{lem}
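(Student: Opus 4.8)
The plan is to run a De Giorgi iteration on the super-level sets of $u^\beta$, using the intrinsic scaling $\theta=\varepsilon\mu_+^{3-m-p}$ from \eqref{def:theta} to absorb all the $\mu_+$-dependence. For integers $j\ge0$ put $\rho_j:=\tfrac{\rho}{2}+2^{-j-1}\rho$, $Q_j:=Q_{\rho_j,\theta\rho_j^p}(z_o)=B_j\times T_j$, and let $k_j^\beta$ increase from $\tfrac12\mu_+^\beta$ to $\tfrac34\mu_+^\beta$, say $k_j^\beta:=\tfrac34\mu_+^\beta-2^{-j-2}\mu_+^\beta$, so that $k_{j+1}^\beta-k_j^\beta=2^{-j-3}\mu_+^\beta$. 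Pick cut-offs $\varphi_j\in C^\infty(Q_j;[0,1])$, equal to $1$ on $Q_{j+1}$, vanishing near $\partial_p Q_j$ (in particular near the bottom time level of $Q_j$, so that $\varphi_j$ vanishes for all times below some positive number and is thus admissible in \eqref{caccioppoli}), with $|\nabla\varphi_j|\le c2^j\rho^{-1}$ and $|\partial_t\varphi_j|\le c2^j(\theta\rho^p)^{-1}$. With $A_j:=Q_j\cap\{u^\beta>k_j^\beta\}$ and $Y_j:=|A_j|/|Q_j|$, the objective is an inequality $Y_{j+1}\le Cb^jY_j^{1+\delta}$ whose constants $C,b,\delta$ depend only on the data and which satisfies $C^{-1/\delta}=\nu_o\varepsilon^{n/p}$; then Lemma \ref{fastconvg} yields $Y_j\to0$, i.e. $u^\beta\le\tfrac34\mu_+^\beta$ a.e. in $\bigcap_jQ_j=Q_{\rho/2,\theta(\rho/2)^p}(z_o)$.

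To produce the iterative inequality, observe that on $A_{j+1}$ one has $(u^\beta-k_j^\beta)_+\ge k_{j+1}^\beta-k_j^\beta\ge c\,2^{-j}\mu_+^\beta$, hence
\[
\big(2^{-j}\mu_+^\beta\big)^p|A_{j+1}|\le c\iint_{Q_j}\big[(u^\beta-k_j^\beta)_+\varphi_j\big]^p\d x\d t .
\]
Applying Hölder over $Q_j$ (the integrand being supported in $A_j$) and then the parabolic Sobolev inequality of Lemma \ref{lemma:Gagliardo} with $q=2$, so $p(1+\tfrac{q}{n})=p\tfrac{n+2}{n}$, the last integral is bounded by a constant times
\[
\Big(\esssup_{T_j}\int_{B_j}\big[(u^\beta-k_j^\beta)_+\varphi_j\big]^2\d x\Big)^{\frac{p}{n+2}}\Big(\iint_{Q_j}\big|\nabla\big[(u^\beta-k_j^\beta)_+\varphi_j\big]\big|^p\d x\d t\Big)^{\frac{n}{n+2}}|A_j|^{\frac{2}{n+2}} .
\]
Both factors are controlled by the energy estimate \eqref{caccioppoli} of Lemma \ref{lem:Energy_Est} with $k=k_j$, $\varphi=\varphi_j$: since $u\le\mu_+$ and $k_j^\beta\ge\tfrac12\mu_+^\beta$ on the cylinder, one has $(u^\beta-k_j^\beta)_+\le\mu_+^\beta$ and, by Lemma \ref{estimates:boundary_terms}(i), $\b[u,k_j]\chi_{\{u>k_j\}}\le c\mu_+^{\beta+1}$, so the right-hand side of \eqref{caccioppoli} is at most $c\big(2^{jp}\rho^{-p}\mu_+^{\beta p}+2^j(\theta\rho^p)^{-1}\mu_+^{\beta+1}\big)|A_j|$; and by Lemma \ref{estimates:boundary_terms}(ii), $(u^\beta-k_j^\beta)_+^2\le c\mu_+^{\beta-1}\b[u,k_j]\chi_{\{u>k_j\}}$, so the $\esssup$-factor is bounded by $c\mu_+^{\beta-1}$ times the $\esssup$-term on the left of \eqref{caccioppoli} (using $\varphi_j^2\le\varphi_j^p$, as $p<2$).

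The decisive step is the bookkeeping of the powers of $\mu_+$ and $\varepsilon$. From \eqref{def:beta} one has $\beta(p-1)=m+p-2$, equivalently $\beta+1-(3-m-p)=p\beta$, so the scaling $\theta=\varepsilon\mu_+^{3-m-p}$ turns $\mu_+^{\beta+1}/\theta$ into $\mu_+^{p\beta}/\varepsilon$. Substituting the above into the chain and using $|Q_j|\simeq|Q_0|=c_n\theta\rho^{n+p}$, every explicit power of $\mu_+$ cancels and the $\rho$-powers balance, leaving
\[
Y_{j+1}\le c\,\varepsilon^{-\frac{n}{n+2}}\,b^{\,j}\,Y_j^{\,1+\frac{p}{n+2}},\qquad b:=2^{\,p\left(1+\frac{n+p}{n+2}\right)} .
\]
Thus $\delta=\tfrac{p}{n+2}$, $C=c\varepsilon^{-n/(n+2)}$, and $C^{-1/\delta}b^{-1/\delta^2}=\nu_o\varepsilon^{n/p}$ with $\nu_o=\nu_o(m,n,p,C_0,C_1)$; since hypothesis \eqref{assumption_level_1} reads exactly $Y_0\le\nu_o\varepsilon^{n/p}$ (recall $k_0^\beta=\mu_+^\beta/2$), Lemma \ref{fastconvg} gives $Y_j\to0$ and the lemma follows. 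The main obstacle is precisely verifying this cancellation: one must check that the data-independent exponents conspire so that all powers of $\mu_+$ disappear and the $\varepsilon$-dependence collapses to the single factor $\varepsilon^{n/p}$ demanded by \eqref{assumption_level_1}. Once the cut-offs, the choice $q=2$, and the conversions provided by Lemma \ref{estimates:boundary_terms} are in place, the remaining manipulations are routine.
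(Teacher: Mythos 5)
Your proof is correct and follows essentially the same De Giorgi iteration as the paper: same shrinking cylinders, same choice of levels $k_j$, same use of the energy estimate \eqref{caccioppoli}, Lemma \ref{estimates:boundary_terms} to convert $(u^\beta-k_j^\beta)_+^2$ into $\mu_+^{\beta-1}\b[u,k_j]$, and the fast-geometric-convergence Lemma \ref{fastconvg}, with the intrinsic scaling $\theta=\varepsilon\mu_+^{3-m-p}$ cancelling all $\mu_+$ powers and leaving the $\varepsilon^{n/p}$ threshold. The only (inconsequential) deviation is that you apply H\"older once over the whole spacetime cylinder and then Gagliardo--Nirenberg with $q=2$, yielding $\delta=p/(n+2)$, whereas the paper H\"olders slice-by-slice in space against the exponent $p^*$, yielding $\delta=p/(n+p)$; both give $C^{-1/\delta}\propto\varepsilon^{n/p}$, so the conclusion is unchanged.
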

\begin{proof}[Proof]
Define sequences of numbers and sets as follows:
\begin{align*}
\rho_j:= \frac{1}{2}\big(1+\frac{1}{2^j}\big)\rho, \hspace{5mm} k_j^\beta:=\big(1-\frac{1}{4}-\frac{1}{2^{j+2}}\big)\mu_+^\beta, \hspace{5mm} Q_j&:=Q_{\rho_j,\theta\rho_j^p}(z_o),
\\
A_j:=Q_j\cap \{u>k_j\}, \hspace{5mm} Y_j&:=\frac{|A_j|}{|Q_j|}.
\end{align*}
We can now choose functions $\varphi_j\in C^\infty(Q_j;[0,1])$ vanishing near the parabolic boundary of $Q_j$ and satisfying $\phi_j=1$ on $Q_j$ and for which
\begin{align*}
|\nabla \varphi_j|\leq \rho^{-1} 2^{j+2}, \hspace{10mm} |\partial_t \varphi_j|\leq c_p \theta^{-1}\rho^{-p} 2^{jp}.
\end{align*}
Note that in the set where $u>k_{j+1}$ we have
\begin{align}\label{shtevensh}
u^\beta-k_j^\beta>k_{j+1}^\beta-k_j^\beta=\frac{\mu_+^\beta}{2^{j+3}}.
\end{align}
This observation and H\"older's inequality show that
\begin{align}\label{A_jplusone-est}
\frac{\mu_+^{\beta p}}{2^{(j+3)p}}|A_{j+1}|&\leq \iint_{A_{j+1}}(u^\beta-k_j^\beta)_+^p \d x \d t
\\
&\leq \Big[ \iint_{A_{j+1}}(u^\beta-k_j^\beta)_+^{p\frac{(n+p)}{n}} \d x \d t\Big]^\frac{n}{n+p}|A_{j+1}|^\frac{p}{n+p}.
\end{align}
The integral in the last expression can be estimated using H\"older's inequality and \eqref{shtevensh} as
\begin{align*}
&\iint_{A_{j+1}}(u^\beta-k_j^\beta)_+^{p\frac{(n+p)}{n}} \d x \d t \leq \int_{T_{j+1}}\int_{B_{j+1}}(u^\beta-k_j^\beta)_+^{p\frac{p}{n}}\chi_{A_{j+1}} (u^\beta-k_j^\beta)_+^p \d x \d t
\\
&\leq \int_{T_{j+1}}\Big[\int_{B_{j+1}}(u^\beta-k_j^\beta)_+^p\chi_{A_{j+1}}\d x\Big]^\frac{p}{n}\Big[\int_{B_{j+1}}(u^\beta-k_j^\beta)_+^{p^*}\d x \Big]^\frac{p}{p^*}\d t
\\
&\leq c2^{j(2-p)\frac{p}{n}}\mu_+^{\beta(p-2)\frac{p}{n}}\int_{T_{j+1}}\Big[\int_{B_{j+1}}(u^\beta-k_j^\beta)_+^2\d x\Big]^\frac{p}{n}\Big[\int_{B_{j+1}}(u^\beta-k_j^\beta)_+^{p^*}\d x \Big]^\frac{p}{p^*}\d t
\\
&\leq c2^{j(2-p)\frac{p}{n}}\mu_+^{\beta(p-2)\frac{p}{n}}\Big[\esssup_{T_{j+1}}\int_{B_{j+1}}(u^\beta-k_j^\beta)_+^2\d x\Big]^\frac{p}{n}\int_{T_{j+1}}\Big[\int_{B_{j+1}}(u^\beta-k_j^\beta)_+^{p^*}\d x \Big]^\frac{p}{p^*}\d t,
\end{align*}
where in the last step we have estimated one of the integrals over space upwards by taking the essential supremum in time. Note that by Lemma \eqref{estimates:boundary_terms} (ii) we have
\begin{align*}
\tfrac1c (u^\beta-k_j^\beta)_+^2\leq (u^{\beta-1}+k_j^{\beta-1})\b[u,k_j]\leq 2\mu_+^{\beta-1}\b[u,k_j].
\end{align*}
Using this observation and introducing the cut-off functions $\varphi_j$ puts us into a position to apply Sobolev inequality and the energy estimate \eqref{caccioppoli} as follows.
\begin{align*}
&\iint_{A_{j+1}}(u^\beta-k_j^\beta)_+^{p\frac{(n+p)}{n}} \d x \d t
\\
&\quad \leq c2^{j(2-p)\frac{p}{n}}\mu_+^{\frac{p}{n}(m+p-3)}\Big[\esssup_{T_j}\int_{B_j}\b[u,k_j]^+\varphi_j^p\d x\Big]^\frac{p}{n}\int_{T_j}\Big[\int_{B_j}[(u^\beta-k_j^\beta)_+\varphi_j]^{p^*}\d x \Big]^\frac{p}{p^*}\d t
\\
&\quad \leq c2^{j(2-p)\frac{p}{n}}\mu_+^{\frac{p}{n}(m+p-3)}\Big[\esssup_{T_j}\int_{B_j}\b[u,k_j]^+\varphi_j^p\d x\Big]^\frac{p}{n}\int_{T_j} \int_{B_j}|\nabla [(u^\beta-k_j^\beta)_+\varphi_j]|^p\d x \d t
\\
&\quad \leq c2^{j(2-p)\frac{p}{n}}\mu_+^{\frac{p}{n}(m+p-3)}\Big[\iint_{A_j}(u^\beta-k_j^\beta)_+^p|\nabla \varphi_j|^p + \b[u,k_j]\varphi_j^{p-1}|\partial_t\varphi_j|\d x \d t\Big]^\frac{n+p}{n}.
\end{align*}
The second term in the last integral can be estimated using Lemma \ref{estimates:boundary_terms} (iii) and the bound for $|\partial_t\varphi_j|$ as
\begin{align*}
\b[u,k_j]\varphi_j^{p-1}|\partial_t\varphi_j|\leq c \mu_+^{\beta+1} \theta^{-1}\rho^{-p}2^{jp} = c\varepsilon^{-1}\mu_+^{\beta p}\rho^{-p} 2^{jp}.
\end{align*}
Using this and the bound for $|\nabla \varphi_j|$ and $u$ we see that
\begin{align*}
\iint_{A_{j+1}}(u^\beta-k_j^\beta)_+^{p\frac{(n+p)}{n}} \d x \d t
\leq c2^{j\frac{p}{n}(n+2)}\mu_+^{\frac{p}{n}(m+p-3)}(\varepsilon^{-1}\rho^{-p}\mu_+^{\beta p}|A_j|)^\frac{n+p}{n}.
\end{align*}
Combining this estimate with \eqref{A_jplusone-est} and the observation that $|A_{j+1}|\leq |A_j|$ we have
\begin{align*}
|A_{j+1}|&\leq c\varepsilon^{-1} 2^{jp[1+\frac{n+2}{n+p}]}\mu_+^{\frac{p}{n+p}(m+p-3)}\rho^{-p}|A_j|^{1+\frac{p}{n+p}}
\\
&= c\varepsilon^{-\frac{n}{n+p}} b^j\theta^{-\frac{p}{n+p}}\rho^{-p}|A_j|^{1+\frac{p}{n+p}},
\end{align*}
where $c$ and $b$ only depend on $m,n,p, C_0,C_1$. Dividing the last expression by $|Q_j|$ and noting that $|Q_j|$ is proportional to $\theta \rho^{n+p}$ we obtain
\begin{align*}
Y_{j+1}\leq c\varepsilon^{-\frac{n}{n+p}}b^j Y_j^{1+\frac{p}{n+p}}.
\end{align*}
Setting $\delta:=\frac{p}{n+p}$ we see that Lemma \ref{fastconvg} guarantees that $Y_j\to 0$ provided that
\begin{align*}
\frac{|Q_{\rho,\theta\rho^p}(z_o)\cap \{ u^\beta>\mu_+^\beta/2\}|}{|Q_{\rho,\theta\rho^p}(z_o)|} = Y_0 \leq (c\varepsilon^{-\frac{n}{n+p}})^{-\frac{1}{\delta}} b^{-\frac{1}{\delta^2}}= \varepsilon^\frac{n}{p}\nu_o,
\end{align*}
where $\nu_o= c^{-\frac{1}{\delta}}b^{-\frac{1}{\delta^2}}$ only depends on $m,n,p, C_0,C_1$. Since $|Q_j|$ is bounded from above, this also means that $|A_j|\to 0$. Furthermore, since
\begin{align*}
Q_{\rho/2,\theta(\rho/2)^p}(z_o)\cap \{u^\beta >\tfrac{3}{4}\mu_+^\beta\}\subset A_j,
\end{align*}
for all $j$, the measure of the set on the left hand side must be zero.
\end{proof}
\subsection{Reduction of the oscillation}
We are now ready to prove the reduction of the oscillation in the case $\mu_-=0$.
If the condition of the De Giorgi lemma holds, then we have a reduction of the oscillation from above. Suppose now that the condition in the De Giorgi lemma fails, i.e.
\begin{align*}
|Q_{\rho,\theta\rho^p}(z_o)\cap \{u^\beta > \mu_+^\beta/2 \}|> \nu_o \varepsilon^\frac{n}{p}|Q_{\rho,\theta\rho^p}(z_o)|.
\end{align*}
Then there is a set $\Delta \subset (t_o-\theta\rho^p,t_o)$ of positive measure such that
\begin{align*}
|\{x\in B_\rho(x_o)\,|\, u(x,\tau)>2^{-\frac{1}{\beta}}\mu_+\}|>\nu_o\varepsilon^\frac{n}{p} |B_\rho(x_o)|,
\end{align*}
for all $\tau\in \Delta$. Provided that $\bar{Q}_{2\rho,\theta\rho^p}\subset \Omega\times[0,T)$, the $L^1$-Harnack inequality of Theorem \ref{harnack} for the time-continuous representative of $u$ shows that for $\tau\in \Delta$,
\begin{align}\label{harnapplic}
\nu_o \varepsilon^\frac{n}{p} c_n\rho^n 2^{-\frac{1}{\beta}}\mu_+ &< 2^{-\frac{1}{\beta}}\mu_+|\{x\in B_\rho(x_o)\,|\, u(x,\tau)>2^{-\frac{1}{\beta}}\mu_+\}|\leq \int_{B_\rho(x_o)} u(x,\tau)\d x
\\
&\leq \gamma \inf_{t\in (t_o-\theta\rho^p,t_o)} \int_{B_{2\rho}(x_o)} u(x,t)\d x + \gamma \Big(\frac{\theta \rho^p}{\rho^\lambda}\Big)^\frac{1}{3-m-p}
\end{align}
By the definition of $\lambda$ and $\theta$ we see that
\begin{align*}
\Big(\frac{\theta \rho^p}{\rho^\lambda}\Big)^\frac{1}{3-m-p}=\varepsilon^\frac{1}{3-m-p}\mu_+\rho^n.
\end{align*}
Moving this term to the right-hand side of \eqref{harnapplic} we obtain
\begin{align}\label{jytkytsunami}
\varepsilon^\frac{n}{p}(c - \gamma\varepsilon^\kappa)\mu_+\rho^n \leq \gamma \inf_{t\in (t_o-\theta\rho^p,t_o)} \int_{B_{2\rho}(x_o)} u(x,t)\d x,
\end{align}
where $c=c(m,n,p, C_0, C_1)$ and
\begin{align*}
\kappa=\frac{1}{3-m-p}-\frac{n}{p},
\end{align*}
is a positive number by \eqref{supercritical}. If we now choose
\begin{align*}
\varepsilon:= \Big(\frac{c}{2\gamma}\Big)^\frac{1}{\kappa},
\end{align*}
which clearly only depends on $m,n,p, C_0, C_1$ we also see from \eqref{jytkytsunami} that
\begin{align}\label{nunnuka}
C\mu_+\rho^n \leq \inf_{t\in (t_o-\theta\rho^p,t_o)} \int_{B_{2\rho}(x_o)} u(x,t)\d x,
\end{align}
for a constant $C=C(m,n,p, C_0, C_1)\leq 1$. Take now $\zeta>0$ and note that\
\begin{align*}
\int_{B_{2\rho}(x_o)} u(x,t)\d x &= \int_{B_{2\rho}(x_o)\cap \{u(x,t)\geq \zeta\mu_+\}} u(x,t)\d x + \int_{B_{2\rho}(x_o)\cap \{u(x,t)<\zeta\mu_+\}} u(x,t)\d x
\\
&\leq \mu_+|B_{2\rho}(x_o)\cap \{u(x,t)\geq \zeta \mu_+\}| + \zeta\mu_+ |B_{2\rho}(x_o)|.
\end{align*}
With the choice $\zeta:=C 2^{-(n+1)}/c_n$ where $C$ is the constant from \eqref{nunnuka}, the last estimate and \eqref{nunnuka} show that
\begin{align}\label{sss}
|B_{2\rho}(x_o)\cap \{u(x,t)\geq \zeta\mu_+\}|\geq \alpha |B_{2\rho}(x_o)|
\end{align}
for all $t\in (t_o-\theta\rho^p,t_o)$ for a constant $\alpha$ depending only on $m,n,p, C_0,C_1$. Suppose now that $Q_{32\rho, \theta \rho^p}(z_o)\subset \Omega_T$. This puts us in a position to apply Theorem \ref{theo:exppos} for a sufficiently small $M$. Namely, taking $M= \min\{\zeta,(\varepsilon/2^p)^\frac{1}{3-m-p}\}\mu_+$, we see that \eqref{sss} is still valid with $\zeta \mu_+$ replaced by $M$ and furthermore that
\begin{align*}
B_{32\rho}(x_o)\times(t_o-\delta M^{3-m-p}(2\rho)^p, t_o)\subset Q_{32\rho, \theta \rho^p}(z_o)\subset \Omega_T,
\end{align*}
where $\delta\in (0,1)$ is the constant from Theorem \ref{theo:exppos}. Hence, we may apply Theorem \ref{theo:exppos} with $s= t_o-\delta M^{3-m-p}(2\rho)^p$ and $\rho$ replaced by $2\rho$ to conclude that there is a $\xi\in (0,1)$ and $\tilde{\varepsilon}<\varepsilon$ depending only on $m,n,p,C_0,C_1$ such that
\begin{align}\label{red_from_below}
u\geq \xi \mu_+ \textrm{ in } B_{4\rho}(x_o)\times (t_o-\tilde{\varepsilon}\mu_+^{3-m-p}\rho^p,t_o),
\end{align}
which is the reduction of the oscillation from below.
Combining the previous reasoning and Lemma \ref{lem:De_Giorgi_above}, we have shown the following.
\begin{lem}\label{lem:mu-plus_red_osc}
There are constants $\varepsilon, \gamma, \eta \in (0,1)$ depending only on $m,n,p,C_0,C_1$ such that for any weak solution $u$ and number $\mu_+>0$ satisfying the conditions $Q_{32\rho, \varepsilon\mu_+^{3-m-p}\rho^p}(z_o)\subset \Omega_T$ and $u\leq \mu_+$ on $Q_{\rho, \varepsilon\mu_+^{3-m-p}\rho^p}(z_o)$, we have
\begin{align}\label{osc-red1}
\essosc_{Q_{\frac{\rho}{2},\gamma\varepsilon\mu_+^{3-m-p}\rho^p}(z_o)} u \leq \eta \mu_+.
\end{align}
Furthermore, one of the following condition must hold in the cylinder $Q_{\frac{\rho}{2},\gamma\varepsilon\mu_+^{3-m-p}\rho^p}(z_o)$:
\begin{align}\label{gandauli}
&(i)\quad \esssup_{Q_{\frac{\rho}{2},\gamma\varepsilon\mu_+^{3-m-p}\rho^p}(z_o)} u\leq \big(\frac{1+\eta}{2}\big)\mu_+, \textrm{ or }
\\
\notag &(ii)\quad \essinf_{Q_{\frac{\rho}{2},\gamma\varepsilon\mu_+^{3-m-p}\rho^p}(z_o)} u\geq \big(\frac{1-\eta}{2}\big)\mu_+.
\end{align}
\end{lem}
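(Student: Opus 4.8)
I would prove the lemma exactly as a wrap-up of the two possibilities for the De Giorgi alternative in the cylinder $Q_{\rho,\theta\rho^p}(z_o)$ with $\theta=\varepsilon\mu_+^{3-m-p}$, the value of $\varepsilon$ being pinned down in the second case and then used throughout. \emph{Case 1: hypothesis \eqref{assumption_level_1} of Lemma \ref{lem:De_Giorgi_above} holds.} Then Lemma \ref{lem:De_Giorgi_above} directly gives $u^\beta\leq\tfrac34\mu_+^\beta$, hence $u\leq(3/4)^{1/\beta}\mu_+$, a.e.\ in $Q_{\rho/2,\theta(\rho/2)^p}(z_o)$. Since $(3/4)^{1/\beta}<1$, this is the reduction of the oscillation from above; it yields alternative $(i)$ and the bound $\essosc u\leq(3/4)^{1/\beta}\mu_+$ on any subcylinder, provided $\eta$ is taken $\geq(3/4)^{1/\beta}$ (which also forces $(3/4)^{1/\beta}\leq\tfrac{1+\eta}{2}$).

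\emph{Case 2: hypothesis \eqref{assumption_level_1} fails.} Here I would first pass, by Fubini's theorem, to a positive-measure set of times $\Delta\subset(t_o-\theta\rho^p,t_o)$ on each slice of which a fixed fraction $\nu_o\varepsilon^{n/p}$ of $B_\rho(x_o)$ carries $u(\cdot,\tau)>2^{-1/\beta}\mu_+$. Feeding this into the $L^1$-Harnack inequality of Theorem \ref{harnack} (applicable once one assumes $\bar Q_{2\rho,\theta\rho^p}\subset\Omega\times[0,T)$), the one subtle point is that the inhomogeneous term $(\theta\rho^p/\rho^\lambda)^{1/(3-m-p)}$ equals $\varepsilon^{1/(3-m-p)}\mu_+\rho^n$, because $p-\lambda=n(3-m-p)$; since $\kappa:=\tfrac{1}{3-m-p}-\tfrac np>0$ by the supercritical assumption \eqref{supercritical}, choosing $\varepsilon=(c/2\gamma)^{1/\kappa}$ (depending only on the data) absorbs this term and leaves $C\mu_+\rho^n\leq\inf_{t}\int_{B_{2\rho}(x_o)}u(x,t)\,dx$ with $C\leq1$ depending only on the data. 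A Chebyshev-type splitting of this integral at level $\zeta\mu_+$, with $\zeta$ comparable to $C$, then produces a density bound $|B_{2\rho}(x_o)\cap\{u(\cdot,t)\geq M\}|\geq\alpha|B_{2\rho}(x_o)|$ for every $t$ in the slice, where $M=\min\{\zeta,(\varepsilon/2^p)^{1/(3-m-p)}\}\mu_+$; the truncation by $(\varepsilon/2^p)^{1/(3-m-p)}\mu_+$ is precisely what makes the cylinder $B_{32\rho}(x_o)\times(t_o-\delta M^{3-m-p}(2\rho)^p,t_o)$ required by the expansion of positivity fit inside $Q_{32\rho,\theta\rho^p}(z_o)$. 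Applying Theorem \ref{theo:exppos} with spatial radius $2\rho$ and $s=t_o-\delta M^{3-m-p}(2\rho)^p$ then yields $u\geq\xi\mu_+$ in $B_{4\rho}(x_o)\times(t_o-\tilde\varepsilon\mu_+^{3-m-p}\rho^p,t_o)$ for constants $\xi,\tilde\varepsilon\in(0,1)$ depending only on the data — the reduction of the oscillation from below — which gives alternative $(ii)$ and $\essosc u\leq(1-\xi)\mu_+$ on any subcylinder contained in this set.

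\emph{Conclusion and obstacle.} Finally I would pick $\gamma\in(0,1)$ small enough (e.g.\ $\gamma\leq\min\{2^{-p},\tilde\varepsilon/\varepsilon\}$) so that $Q_{\rho/2,\gamma\varepsilon\mu_+^{3-m-p}\rho^p}(z_o)$ lies both in $Q_{\rho/2,\theta(\rho/2)^p}(z_o)$ (the Case 1 cylinder) and in $B_{4\rho}(x_o)\times(t_o-\tilde\varepsilon\mu_+^{3-m-p}\rho^p,t_o)$ (the Case 2 set), and set $\eta:=\max\{(3/4)^{1/\beta},\,1-\xi\}\in(0,1)$; a short check shows that with this $\eta$ both alternatives in \eqref{gandauli} and the bound \eqref{osc-red1} hold in each case. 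The main obstacle I anticipate is not a single estimate but the bookkeeping of geometric constraints: verifying that the radii $32\rho,2\rho,\rho,\rho/2$ and the heights $\theta\rho^p$, $\delta M^{3-m-p}(2\rho)^p$, $\tilde\varepsilon\mu_+^{3-m-p}\rho^p$ are mutually compatible, that the single $\varepsilon$ forced by the Harnack absorption is still admissible for Lemma \ref{lem:De_Giorgi_above} and for the inclusion $Q_{32\rho,\theta\rho^p}(z_o)\subset\Omega_T$, and that all constants produced along the way depend only on $m,n,p,C_0,C_1$.
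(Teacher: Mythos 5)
Your proposal is correct and takes essentially the same approach as the paper: the De Giorgi alternative of Lemma \ref{lem:De_Giorgi_above}, in the failing case the $L^1$-Harnack inequality combined with the supercritical exponent $\kappa=\tfrac{1}{3-m-p}-\tfrac{n}{p}>0$ to fix $\varepsilon$ and obtain a density bound, then expansion of positivity, with the same final choices $\gamma=\min\{2^{-p},\tilde\varepsilon/\varepsilon\}$ and $\eta=\max\{(3/4)^{1/\beta},1-\xi\}$. The only cosmetic difference is that you verify alternative $(i)$ or $(ii)$ directly in each branch, whereas the paper deduces the dichotomy \eqref{gandauli} purely from the oscillation bound \eqref{osc-red1} (if $(i)$ fails then the supremum exceeds $\tfrac{1+\eta}{2}\mu_+$ and subtracting $\eta\mu_+$ gives $(ii)$) — both routes are valid.
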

\begin{proof}[Proof]
By Lemma \ref{lem:De_Giorgi_above} and the previous reasoning, \eqref{osc-red1} is valid with $\gamma=\min\{ 2^{-p}, \frac{\tilde{\varepsilon}}{\varepsilon}\}$ and $\eta=\max \{ (3/4)^\frac{1}{\beta}, 1-\xi\}$, where $\tilde{\varepsilon}$ and $\xi$ are the constants appearing in \eqref{red_from_below}. Furthermore, if \eqref{gandauli} $(i)$ fails, \eqref{osc-red1} shows that we must have
\begin{align*}
\essinf_{Q_{\frac{\rho}{2},\gamma\varepsilon\mu_+^{3-m-p}\rho^p}(z_o)} u\geq \big(\frac{1+\eta}{2}\big)\mu_+ -\eta\mu_+= \big(\frac{1-\eta}{2}\big)\mu_+,
\end{align*}
so that \eqref{gandauli} (ii) holds.
\end{proof}

\begin{lem}\label{lem:osc-red}
There are constants $c$ and $\nu$ depending only on $m,n,p,C_0,C_1$ such that for any weak solution $u$ and number $\mu_+>0$ for which $Q_{32\rho, \varepsilon\mu_+^{3-m-p}\rho^p}(z_o)\subset \Omega_T$ and $u\leq \mu_+$ on $Q_{\rho, \varepsilon\mu_+^{3-m-p}\rho^p}(z_o)$, we have
\begin{align}\label{r-rho-osc-red}
\essosc_{Q_{r,\varepsilon\mu_+^{3-m-p}r^p}(z_o)} u \leq c \mu_+\Big(\frac{r}{\rho}\Big)^\nu,
\end{align}
for all $0< r \leq \rho$. Here, $\varepsilon$ is the constant from Lemma \ref{lem:mu-plus_red_osc}.
\end{lem}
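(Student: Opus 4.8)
The plan is to iterate Lemma~\ref{lem:mu-plus_red_osc} along a geometric chain of intrinsically scaled cylinders, and to switch to the nondegenerate parabolic $p$-Laplace theory at the first scale at which the alternative \eqref{gandauli}$(i)$ fails. Write $a:=\tfrac{1+\eta}{2}\in(0,1)$ and, using $3-m-p\in(0,1)$ from \eqref{parameter-range}, fix the universal ratio $\sigma:=\min\{\tfrac12,\,(\gamma a^{-(3-m-p)})^{1/p}\}\in(0,\tfrac12]$, where $\varepsilon,\gamma,\eta$ are the constants of Lemma~\ref{lem:mu-plus_red_osc}. Put $\rho_j:=\sigma^j\rho$, $\mu_j:=a^j\mu_+$ and $Q_j:=Q_{\rho_j,\varepsilon\mu_j^{3-m-p}\rho_j^p}(z_o)$. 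Because $\rho_j\le\rho$ and $\mu_j\le\mu_+$, every $Q_{32\rho_j,\varepsilon\mu_j^{3-m-p}\rho_j^p}(z_o)$ is contained in $Q_{32\rho,\varepsilon\mu_+^{3-m-p}\rho^p}(z_o)\subset\Omega_T$, so Lemma~\ref{lem:mu-plus_red_osc} can be applied at scale $j$ with its ``$\rho$'' taken to be $\rho_j$ and its ``$\mu_+$'' taken to be $\mu_j$.

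First I would prove, by induction on $j$ and for as long as the alternative \eqref{gandauli}$(i)$ keeps occurring, that $u\le\mu_j$ on $Q_j$; the case $j=0$ is the hypothesis. In the inductive step \eqref{gandauli}$(i)$ gives $u\le a\mu_j=\mu_{j+1}$ on $Q_{\rho_j/2,\,\gamma\varepsilon\mu_j^{3-m-p}\rho_j^p}(z_o)$, and $\sigma$ is chosen precisely so that $Q_{j+1}$ lies inside that cylinder — the spatial inclusion uses $\sigma\le\tfrac12$ and the temporal one uses $a^{3-m-p}\sigma^p\le\gamma$. If the iteration never breaks down, then $\essosc_{Q_j}u\le\mu_j=a^j\mu_+$ for every $j$. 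Otherwise let $j_0$ be the first index at which \eqref{gandauli}$(i)$ fails; then \eqref{gandauli}$(ii)$ holds there, so on the cylinder $Q':=Q_{\rho_{j_0}/2,\,\gamma\varepsilon\mu_{j_0}^{3-m-p}\rho_{j_0}^p}(z_o)\subset Q_{j_0}$ one has the two-sided bound $\tfrac{1-\eta}{2}\mu_{j_0}\le u\le\mu_{j_0}$. On $Q'$ I would invoke Lemma~\ref{re-scaling} with $M=\mu_{j_0}$, $\beta_0=\tfrac{1-\eta}{2}$, $\beta_1=1$: the rescaled function $v(x,t)=M^{-1}u(x,M^{3-m-p}t)$ then is a bounded weak solution of a parabolic $p$-Laplace type equation whose structure constants depend only on the data and which satisfies $\tfrac{1-\eta}{2}\le v\le1$, so Lemma~\ref{lem:p-parabolic_hold_cont} supplies a universal H\"older exponent and modulus for $v$ — hence, undoing the rescaling, for $u$ — on cylinders compactly contained in $Q'$.

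It then remains to convert these estimates on the intrinsic cylinders $Q_j$ into estimates on the cylinders $Q_{r,\varepsilon\mu_+^{3-m-p}r^p}(z_o)$ of the statement \eqref{r-rho-osc-red}. Since those carry the fixed time-coefficient $\mu_+^{3-m-p}\ge\mu_j^{3-m-p}$, a direct computation shows $Q_{r,\varepsilon\mu_+^{3-m-p}r^p}(z_o)\subset Q_j$ exactly when $r\le\lambda^j\rho$ with $\lambda:=a^{(3-m-p)/p}\sigma<\sigma$. Given $0<r\le\rho$, pick $j$ maximal with $r\le\lambda^j\rho$. If $j\le j_0$ (or if \eqref{gandauli}$(i)$ held at every scale), then $\essosc_{Q_{r,\varepsilon\mu_+^{3-m-p}r^p}(z_o)}u\le\essosc_{Q_j}u\le a^j\mu_+$, and the maximality $r>\lambda^{j+1}\rho$ converts this into $a^{-1}\mu_+(r/\rho)^{\nu_1}$ with $\nu_1:=\ln a/\ln\lambda>0$. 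If $j>j_0$, then $r$ lies below the ``stopping scale'' $\sim\lambda^{j_0}\rho$ and the rescaled H\"older estimate of the nondegenerate branch applies; after tracking the factors $a^{j_0}$ and $\rho_{j_0}^{-1}$ produced by the rescaling of Lemma~\ref{re-scaling}, this again yields an estimate of the form $c\mu_+(r/\rho)^{\text{power}}$ with a universal positive exponent. Taking $\nu$ to be a sufficiently small positive number depending only on $m,n,p,C_0,C_1$ — small enough to be dominated by $\nu_1$ and by the exponent from the nondegenerate branch, and to absorb the bounded gap of scales around $\lambda^{j_0}\rho$ — then gives \eqref{r-rho-osc-red}.

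The main obstacle is the clash between the two intrinsic time-scales: the iteration controls oscillation only on the cylinders $Q_j$, whose height is slaved to the \emph{current} bound $\mu_j$, while \eqref{r-rho-osc-red} asks for control on the taller cylinders whose height is slaved to the \emph{original} $\mu_+$; reconciling the two necessarily degrades the H\"older exponent, and one must check it remains strictly positive and depends on the data alone. The secondary difficulty is that the two alternatives of Lemma~\ref{lem:mu-plus_red_osc} are of genuinely different nature — a reduction of the supremum, which can be iterated, versus a reduction of the infimum, which instead places $u$ in the nondegenerate range — so the argument must verify that the geometric decay from the case $(i)$ steps and the H\"older decay from the case $(ii)$ branch splice together across the stopping scale $\lambda^{j_0}\rho$ without a loss.
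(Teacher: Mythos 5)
Your proposal is correct and follows essentially the same route as the paper: iterate Lemma~\ref{lem:mu-plus_red_osc} along a geometrically shrinking chain of intrinsically scaled cylinders with the same inclusion condition on the shrinking ratio, stop at the first scale where alternative~\eqref{gandauli}$(ii)$ holds, pass via Lemma~\ref{re-scaling} to the nondegenerate parabolic $p$-Laplace Hölder estimate of Lemma~\ref{lem:p-parabolic_hold_cont}, and reconcile the intrinsic heights $\mu_j^{3-m-p}\rho_j^p$ with the fixed height $\mu_+^{3-m-p}r^p$ of the statement by picking the largest $j$ with $r\le\lambda^j\rho$, $\lambda=a^{(3-m-p)/p}\sigma$ — exactly the dichotomy and scale bookkeeping of the paper's proof (your $\sigma$ plays the role of the paper's $1/C$ and your stopping index $j_0$ that of $k$).
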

\begin{proof}[Proof]
Denote $C:= 2\max\{2,\gamma^{-\frac{1}{p}}\}$ where $\gamma$ is the constant from Lemma \ref{lem:mu-plus_red_osc} and define
\begin{align*}
\mu_+^j:=\Big(\frac{1+\eta}{2}\Big)^j\mu_+,\hspace{5mm} \rho_j:=\rho/C^j.
\end{align*}
With these choices,
\begin{align*}
Q_{\rho_1,\varepsilon(\mu^1_+)^{3-m-p}\rho_1^p}(z_o)\subset Q_{\frac{\rho}{2},\gamma\varepsilon\mu_+^{3-m-p}\rho^p}(z_o),
\end{align*}
and Lemma \ref{lem:mu-plus_red_osc} guarantees that
\begin{align*}
\essosc_{Q_{\rho_1,\varepsilon(\mu^1_+)^{3-m-p}\rho_1^p}(z_o)} u\leq \eta \mu_+
\end{align*}
Furthermore, if we are in the case \eqref{gandauli} (i), we have $u\leq \mu_1$ on $Q_{\rho_1,\varepsilon(\mu^1_+)^{3-m-p}\rho_1^p}(z_o)$ and we may apply Lemma \ref{lem:mu-plus_red_osc} to this subcylinder instead to conclude that
\begin{align*}
\essosc_{Q_{\rho_2,\varepsilon(\mu^1_+)^{3-m-p}\rho_2^p}(z_o)} u\leq \eta \mu^1_+
\end{align*}
Also, Lemma \ref{lem:mu-plus_red_osc} guarantees that one of the conditions of \eqref{gandauli} holds with $\rho$ replaced by $\rho_1$ and $\mu_+$ replaced by $\mu^1_+$. If condition (i) is true, we are again in a position to continue the iteration. Continuing in this way, we see that as long as we stay in case (i) at every step of the iteration we have
\begin{align}\label{rutilus}
\essosc_{Q_{\rho_j,\varepsilon(\mu^j_+)^{3-m-p}\rho_j^p}(z_o)} u &\leq \eta \mu^{j-1}_+,
\\
\label{perca}\esssup_{Q_{\rho_{j-1}, \varepsilon(\mu^{j-1}_+)^{3-m-p}\rho_{j-1}^p}(z_o)} u &\leq \mu^{j-1}_+
\end{align}
Either this estimate holds for every $j \in \N$, or there is a $k\in \N$ such that \eqref{rutilus} holds for all $j\in \{1,\dots, k\}$ and
\begin{align}\label{lower_bnd_u}
\essinf_{Q_{\frac{\rho_{k-1}}{2},\gamma\varepsilon(\mu^{k-1}_+)^{3-m-p}\rho_{k-1}^p}(z_o)} u\geq \big(\frac{1-\eta}{2}\big)\mu^{k-1}_+ = \big(\frac{1-\eta}{1+\eta}\big)\mu^k_+.
\end{align}
We assume for now the existence of such a $k$ and investigate its consequences. In the end we will show that the estimate \eqref{r-rho-osc-red} holds whether $k$ exists or not. Since
\begin{align*}
\frac{\rho_{k-1}}{2}=\tfrac{C}{2}\rho_k\geq 2\rho_k, \hspace{5mm} \gamma\varepsilon(\mu^{k-1}_+)^{3-m-p}\rho_{k-1}^p &=\gamma C^p \big( \tfrac{2}{1+\eta}\big)^{3-m-p}\varepsilon(\mu^k_+)^{3-m-p}\rho_k^p
\\
&\geq 2^p \big( \tfrac{2}{1+\eta}\big)^{3-m-p}\varepsilon(\mu^k_+)^{3-m-p}\rho_k^p
\\
&> \varepsilon (\mu^k_+)^{3-m-p}(2\rho_k)^p,
\end{align*}
it follows from \eqref{lower_bnd_u} and \eqref{perca} with $j=k$ that
\begin{align*}
\big(\frac{1-\eta}{1+\eta}\big)\mu^k_+ \leq u \leq \frac{2}{1+\eta}\mu_+^k,\hspace{7mm} \textrm{in } Q_{2\rho_k, \varepsilon (\mu^k_+)^{3-m-p} (2\rho_k)^p}(z_o).
\end{align*}
Up to a translation in the time variable this is exactly the situation of Lemma \ref{re-scaling} with $M=\mu_+^k$.
By translation we may assume that $t_o=0$. Lemma \ref{re-scaling} shows that the function
\begin{align*}
v(x,t)=(\mu_+^k)^{-1}u\big(x, (\mu_+^k)^{3-m-p}t\big), \hspace{3mm}(x,t)\in Q_{2\rho_k, \varepsilon(2\rho_k)^p}(x_o,0).
\end{align*}
solves an equation of parabolic $p$-Laplace type, where the constants in the structure conditions only depend on $m,n,p,C_0,C_1$. Applying Lemma \ref{lem:p-parabolic_hold_cont} to $v$ then shows that for all $(x,t), (y,s)\in Q_{\rho_k, \varepsilon(\rho_k)^p}(x_o,0)$,
\begin{align*}
|v(x,t)-v(y,s)|\leq c\Big[\frac{|x-y|+|t-s|^\frac{1}{p}}{\rho_k}\Big]^{\nu_o},
\end{align*}
where the constants $c$ and $\nu_o$ only depend on $m,n,p,C_0,C_1$. Since the fraction in the last estimate is bounded from above by $2+\varepsilon^\frac{1}{p}$ we see that for any $0<\nu\leq\nu_o$ we have
\begin{align*}
|v(x,t)-v(y,s)|\leq c(2+\varepsilon^\frac{1}{p})^{\nu_o-\nu}\Big[\frac{|x-y|+|t-s|^\frac{1}{p}}{\rho_k}\Big]^\nu < c(2+\varepsilon^\frac{1}{p})^{\nu_o}\Big[\frac{|x-y|+|t-s|^\frac{1}{p}}{\rho_k}\Big]^\nu,
\end{align*}
for all $(x,t), (y,s)\in Q_{\rho_k, \varepsilon(\rho_k)^p}(x_o,0)$.
For the original function $u$ this translates into
\begin{align}\label{u-hold-est}
|u(x,t)-u(y,s)|\leq c\mu^k_+\Big[\frac{|x-y|+(\mu^k_+)^\frac{m+p-3}{p}|t-s|^\frac{1}{p}}{\rho_k}\Big]^\nu,
\end{align}
for all $(x,t),(y,s) \in Q_{\rho_k, \varepsilon (\mu^k_+)^{3-m-p} \rho_k^p}(z_o)$ and $0<\nu\leq\nu_o$. The constant $c$ still depends only on $m,n,p,C_0,C_1$. Now we are ready to prove \eqref{r-rho-osc-red}. For this, take $0< r \leq \rho$. Pick $j\in \N_0$ such that
\begin{align*}
\Big(\frac{1+\eta}{2}\Big)^{(j+1)\frac{(3-m-p)}{p}}\frac{\rho}{C^{j+1}} < r\leq \Big(\frac{1+\eta}{2}\Big)^{j\frac{(3-m-p)}{p}}\frac{\rho}{C^j}.
\end{align*}
From the left inequality we can deduce that
\begin{align*}
\ln \Big[\frac{r}{\rho}\Big] > \ln \Big[\frac{1}{C}\Big(\frac{1+\eta}{2}\Big)^\frac{3-m-p}{p}\Big] + j \ln \Big[\frac{1}{C}\Big(\frac{1+\eta}{2}\Big)^\frac{3-m-p}{p}\Big],
\end{align*}
and with some further manipulations that
\begin{align}\label{j-estim}
j> -1 -b \ln \Big[\frac{r}{\rho}\Big] ,
\end{align}
for some $b>0$ depending only on $m,n,C_0,C_1$.
Note that $r<\rho_j$ and
\begin{align*}
\varepsilon\mu_+^{3-m-p}r^p \leq \varepsilon (\mu^j_+)^{3-m-p}\rho_j^p,
\end{align*}
so $Q_{r, \varepsilon\mu_+^{3-m-p}r^p}(z_o)\subset Q_{\rho_j, \varepsilon (\mu^j_+)^{3-m-p}\rho_j^p}(z_o)$. If $j\leq k$ (or if $k$ does not exist, which means that \eqref{rutilus} is valid for all $j$) then \eqref{rutilus} and \eqref{j-estim} imply that
\begin{align*}
\essosc_{ Q_{r, \varepsilon\mu_+^{3-m-p}r^p}(z_o) } u\leq \eta \mu^{j-1}_+ = \frac{2\eta}{1+\eta}\mu_+ \Big[\frac{1+\eta}{2}\Big]^j< \eta\mu_+\Big[\frac{2}{1+\eta}\Big]^2 \Big[\frac{2}{1+\eta}\Big]^{b\ln[r/\rho]} = c \mu_+\Big(\frac{r}{\rho}\Big)^{\nu_1},
\end{align*}
for some positive constants $c$ and $\nu_1$ depending only on $m,n,p,C_0,C_1$. Suppose now instead that $j>k$. Then $Q_{r, \varepsilon\mu_+^{3-m-p}r^p}(z_o)\subset Q_{\rho_k, \varepsilon (\mu^k_+)^{3-m-p}\rho_k^p}(z_o)$ so from \eqref{u-hold-est} we see that
\begin{align*}
\essosc_{ Q_{r, \varepsilon\mu_+^{3-m-p}r^p}(z_o) } u &\leq c\mu^k_+\Big[\frac{2r+(\mu^k_+)^\frac{m+p-3}{p}\varepsilon^\frac{1}{p}\mu_+^\frac{(3-m-p)}{p}r }{\rho_k}\Big]^\nu
\\
&= c \Big(\frac{1+\eta}{2}\Big)^k\mu_+ \Big[\frac{2r+(\frac{2}{1+\eta})^{k\frac{(3-m-p)}{p}}\varepsilon^\frac{1}{p}r }{\rho/C^k}\Big]^\nu
\\
&\leq c\Big[\Big(\frac{2}{1+\eta}\Big)^{\frac{\nu(3-m-p)}{p}-1}C^\nu\Big]^k \mu_+\Big(\frac{r}{\rho}\Big)^\nu.
\end{align*}
Observe now that the expression inside the square brackets can be made smaller than or equal to one by taking $\nu\leq \nu_2$ where the upper bound $\nu_2$ depends only on $C$ and $\eta$ and hence only on $m,n,p,C_0,C_1$. Taking now $\nu:=\min\{\nu_1,\nu_2\}$ we finally have verified that \eqref{r-rho-osc-red} holds in all cases.
\end{proof}
\subsection{H\"older continuity} Using Lemma \ref{osc-red1} we can now easily prove the local H\"older continuity.
\begin{theo}
Let $u$ be a weak solution in the sense of Definition \ref{weakdef}. Let $m$ and $p$ be in the supercritical range \eqref{supercritical}. Then $u$ is locally H\"older continuous in $\Omega_T$ and the H\"older exponent depends only on $m,n,p,C_0,C_1$.
\end{theo}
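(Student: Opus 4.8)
The plan is to deduce the theorem directly from the intrinsic oscillation decay of Lemma~\ref{lem:osc-red} together with the local boundedness from Theorem~\ref{theo:local_bdd}; all the genuinely hard analysis is already contained in the earlier sections, and what remains is the standard passage from ``oscillation decay in intrinsic cylinders'' to ``a modulus of continuity''.

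First I would fix a compact set $K\subset\Omega_T$, enlarge it to a compact $K'$ with $K\subset\operatorname{int}K'\subset K'\subset\Omega_T$, and put $\mu_+:=\max\{1,\esssup_{K'}u\}$, which is finite by Theorem~\ref{theo:local_bdd}. Let $\varepsilon$, $c$ and $\nu$ be the constants supplied by Lemma~\ref{lem:osc-red}. Since $3-m-p>0$ and $K$ lies at positive parabolic distance from $\partial K'$, I can choose $\rho_o>0$ so small that
\[
Q_{32\rho,\varepsilon\mu_+^{3-m-p}\rho^p}(z_o)\subset K'\qquad\text{for all }z_o\in K,\ \rho\in(0,\rho_o].
\]
Because $u\le\mu_+$ a.e.\ on $K'$, and in particular on $Q_{\rho,\varepsilon\mu_+^{3-m-p}\rho^p}(z_o)$, Lemma~\ref{lem:osc-red} applies at every $z_o\in K$ and yields, for all $0<r\le\rho_o$,
\begin{align*}
\essosc_{Q_{r,\varepsilon\mu_+^{3-m-p}r^p}(z_o)}u\le c\,\mu_+\big(r/\rho_o\big)^{\nu}.
\end{align*}

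Next I would convert this into a two-point estimate by introducing the intrinsic parabolic distance $d\big((x,t),(y,s)\big):=|x-y|+\varepsilon^{-1/p}\mu_+^{(m+p-3)/p}|t-s|^{1/p}$. Given two points of $K$ with $d\le\rho_o$, assume $t\le s$ (the other case being symmetric) and set $r:=\max\{|x-y|,\varepsilon^{-1/p}\mu_+^{(m+p-3)/p}|t-s|^{1/p}\}\le d$. A short check shows that both points lie in $Q_{2r,\varepsilon\mu_+^{3-m-p}(2r)^p}(y,s)$ (one only needs $|x-y|<2r$ and $s-t<\varepsilon\mu_+^{3-m-p}(2r)^p$, both of which hold by the choice of $r$), so the oscillation estimate above with $z_o=(y,s)$ and radius $2r$ gives $|u(x,t)-u(y,s)|\le C\,\mu_+\rho_o^{-\nu}\,d\big((x,t),(y,s)\big)^{\nu}$ for a.e.\ such pair, with $C$ depending only on $m,n,p,C_0,C_1$; for pairs with $d>\rho_o$ the trivial bound $2\mu_+$ is of the same form. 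Hence $u$ obeys an a.e.\ H\"older modulus of continuity in the metric $d$ on $K$, and since the fixed constant $\mu_+$ makes $d$ comparable on $K$ to the standard parabolic metric $|x-y|+|t-s|^{1/p}$, the usual argument (an $L^\infty$ function satisfying such an a.e.\ modulus coincides a.e.\ with a continuous one having the same modulus) produces a representative of $u$ that is locally H\"older continuous in $\Omega_T$ with exponent $\nu$.

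I expect the only delicate point to be the bookkeeping of the intrinsic scaling --- the time-extent of the cylinders in Lemma~\ref{lem:osc-red} is tied to $\mu_+^{3-m-p}$, so one must verify that $\mu_+$ and $\rho_o$ can be fixed uniformly over $K$ and that the decay of $\essosc$ over cylinders centered at a point genuinely yields a two-point estimate in the (intrinsic, hence also ordinary) parabolic metric. This is routine; the H\"older exponent $\nu$ inherited from Lemma~\ref{lem:osc-red} depends only on $m,n,p,C_0,C_1$, as claimed.
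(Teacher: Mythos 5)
Your proposal follows essentially the same route as the paper's proof: fix a compact set, use Theorem~\ref{theo:local_bdd} to produce a finite uniform bound $\mu_+$, choose a uniform $\rho_o$ so that the intrinsic cylinders $Q_{32\rho,\varepsilon\mu_+^{3-m-p}\rho^p}(z)$ stay inside the domain, and invoke Lemma~\ref{lem:osc-red} for the intrinsic oscillation decay. That part is correct and matches the paper.

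However, there is a genuine gap at the step you label ``routine'', and you also misidentify where the delicate point lies. Lemma~\ref{lem:osc-red} gives a bound on the \emph{essential} oscillation of $u$ over each intrinsic cylinder; for each fixed cylinder, the bound $|u(z_1)-u(z_2)|\le c\mu_+(r/\rho_o)^\nu$ can fail on a null set that depends on the cylinder. When you write that the oscillation estimate ``with $z_o=(y,s)$ and radius $2r$ gives $|u(x,t)-u(y,s)|\le C\mu_+\rho_o^{-\nu}d^\nu$ for a.e.\ such pair'', you are quantifying over an uncountable family of cylinders (one for each $(y,s)\in K$ and each admissible $r$), and the union of the associated exceptional sets need not be null. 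The paper addresses exactly this: it remarks that ``we must first exclude a set of measure zero so that the different types of oscillation coincide\ldots this should be done only for a countable number of cylinders'', and then runs the argument over cylinders with rational centers in $\bar B^{n+1}_R(z_o)\cap\Q^{n+1}$ and rational radii, taking a single null set $N=\bigcup N^z_r$ and approximating arbitrary centers from above in time. Your conversion would be complete if you inserted that countable reduction (or, equivalently, a Lebesgue-point argument), but as written the inference ``essential oscillation decay for all cylinders $\Rightarrow$ a.e.\ two-point modulus'' is asserted, not proved. The bookkeeping you flag as the delicate point (uniformity of $\mu_+$ and $\rho_o$ over $K$) is indeed routine and you handle it correctly; the passage from $\essosc$ to a genuine pointwise modulus is where the real work is.
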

\begin{proof}[Proof]
Let $z_o\in \Omega_T$. Pick $R>0$ such that the $(n+1)$-dimensional closed ball $\bar{B}^{n+1}_{2R}(z_o)$ centered at $z_o$ is contained in $\Omega_T$ and define
\begin{align*}
\mu_+=\esssup_{ \bar{B}^{n+1}_{2R}(z_o) } u <\infty.
\end{align*}
The number $\mu_+$ is finite since the range \eqref{supercritical} is contained in the range \eqref{nicerange} which according to Theorem \ref{theo:local_bdd} guarantees local boundedness. By picking a suitable representative of $u$, we may assume that $\mu_+$ is the actual supremum of $u$ on the ball $\bar{B}^{n+1}_{2R}(z_o)$. We can now choose $\rho>0$ so small that for all $z\in \bar{B}^{n+1}_{R}(z_o)$, we have
\begin{align*}
Q_{\rho,\varepsilon\mu_+^{3-m-p}\rho^p}(z)\subset \bar{B}^{n+1}_{2R}(z_o),\hspace{10mm} Q_{32\rho,\varepsilon\mu_+^{3-m-p}\rho^p}(z)\subset \Omega_T.
\end{align*}
From the first condition it follows that $u\leq \mu_+$ in every cylinder $Q_{\rho,\varepsilon\mu_+^{3-m-p}\rho^p}(z)$ where $z\in \bar{B}^{n+1}_{R}(z_o)$. Thus, according to Lemma \ref{lem:osc-red},
\begin{align}\label{aaaaa}
\essosc_{Q_{r,\varepsilon\mu_+^{3-m-p}r^p}(z)} u \leq c \mu_+\Big(\frac{r}{\rho}\Big)^\nu,
\end{align}
for every $r\in (0,\rho)$ and $z\in \bar{B}^{n+1}_{R}(z_o)$. If in the above estimate we had the oscillation rather than the essential oscillation we could now apply \eqref{aaaaa} to any pair of points that are sufficiently close to each other. Since this is not case, we must first exclude a set of measure zero so that the different types of oscillation coincide. In order to ensure that we are only disregarding a set of measure zero, this should be done only for a countable number of cylinders. We now make this idea precise. For every $(z,r)$ in the countable set $[\bar{B}^{n+1}_{R}(z_o)\cap \Q^{n+1} ] \times [(0,\rho)\cap \Q]$, there is a set $N_r^z\subset Q_{r,\varepsilon\mu_+^{3-m-p}r^p}(z)$ of measure zero such that for all $(y,s)\in Q_{r,\varepsilon\mu_+^{3-m-p}r^p}(z)\setminus N^z_r$,
\begin{align*}
\essinf_{ Q_{r,\varepsilon\mu_+^{3-m-p}r^p}(z) } u \leq u(y,s) \leq \esssup_{ Q_{r,\varepsilon\mu_+^{3-m-p}r^p}(z) } u.
\end{align*}
Define $N=\cup_{(z,r)} N^z_r$, and suppose that $z_1,z_2 \in B^{n+1}_{R}(z_o)\setminus N$. We may also assume that $t_1\leq t_2$. Suppose first that $z_1\in Q_{\rho,\varepsilon\mu_+^{3-m-p}\rho^p}(z_2)\cup (B_\rho(x_2)\times\{t_2\})$. Then there is a sequence of numbers $(z^j)\subset B^{n+1}_{R}(z_o)\cap \Q^{n+1}$ such that $z^j\to z_2$, $t^j\geq t_2$, and $z_1 \in Q_{\rho,\varepsilon\mu_+^{3-m-p}\rho^p}(z^j)$ for all $j\in \N$. Define
\begin{align*}
\hat{r}_j&:= |x_1-x^j|<\rho,\hspace{7mm} \tilde{r}_j:= \Big(\frac{|t_1-t^j|}{\varepsilon\mu_+^{3-m-p}}\Big)^\frac{1}{p}<\rho,
\\
\hat{r}&:= |x_1-x_2|<\rho,\hspace{7mm} \tilde{r}:= \Big(\frac{|t_1-t_2|}{\varepsilon\mu_+^{3-m-p}}\Big)^\frac{1}{p}<\rho.
\end{align*}
Take now $r_j\in (\max\{\hat{r}_j,\tilde{r}_j\}, \max\{\hat{r}_j,\tilde{r}_j\}+\frac{1}{j})\cap \Q$ such that $r_j<\rho$. Then $r_j$ converges to $\max\{\hat{r},\tilde{r}\}=:r$. Moreover, $z_1 \in Q_{r_j,\varepsilon\mu_+^{3-m-p}r^p_j}(z^j)\setminus N$ and also $z_2$ belongs to this set for large $j$ so
\begin{align*}
|u(z_1)-u(z_2)|\leq \essosc_{Q_{r_j,\varepsilon\mu_+^{3-m-p}r^p_j}(z^j)}u &\leq c \mu_+\Big(\frac{r_j}{\rho}\Big)^\nu
\\
&\xrightarrow[j\to \infty]{} c \mu_+\Big(\frac{r}{\rho}\Big)^\nu
\\
&\leq c \mu_+\rho^{-\nu} (|x_1-x_2| + \Big(\frac{|t_1-t_2|}{\varepsilon\mu_+^{3-m-p}}\Big)^\frac{1}{p})^\nu
\\
&\leq C |z_1-z_2|^\frac{\nu}{p},
\end{align*}
where the constant $C$ depends on the data and $\rho,\mu_+$. Suppose now instead that $z_1$ does not belong to the set $Q_{\rho,\varepsilon\mu_+^{3-m-p}\rho^p}(z_2)\cup (B_\rho(x_2)\times\{t_2\})$. Then
\begin{align*}
|u(z_1)-u(z_2)|\leq \frac{\max\{u(z_1),u(z_2)\}}{|z_1-z_2|^\frac{\nu}{p}}|z_1-z_2|^\frac{\nu}{p}\leq \mu_+ \min\{ \rho, \varepsilon\mu_+^{3-m-p}\rho^p\}^{-\frac{\nu}{p}} |z_1-z_2|^\frac{\nu}{p}.
\end{align*}
Thus, we have verified that for all $z_1,z_2\in \bar{B}^{n+1}_{R}(z_o)\setminus N$,
\begin{align}\label{holder-est}
|u(z_1)-u(z_2)|\leq C |z_1-z_2|^\frac{\nu}{p},
\end{align}
for a constant $C=C(m,n,p,C_0,C_1,\mu_+,R)$. (Note that $\rho$ depends only on $R$, the data and $\mu_+$.) Since the set $N$ has measure zero, we can re-define $u$ at every point of $N$ as the unique limit guaranteed by \eqref{holder-est} when approaching the point through the set $\bar{B}^{n+1}_{R}(z_o)\setminus N$. In this way we obtain a representative of $u$ which satisfies \eqref{holder-est} for all points $z_1,z_2 \in \bar{B}^{n+1}_{R}(z_o)$.
\end{proof}

\section{Harnack estimates}\label{sec:Harnack-est}
We conclude this paper considering the Harnack inequality for solutions of parabolic singular supercritical equations. Such results were proved in \cite{DiGiaVe1} for equations of parabolic $p$-Laplace and porous medium type. For doubly nonlinear equations see \cite{FoSoVe3} under more restrictive assumptions. Our method is based on the pattern scheme of \cite{DuMoVe}.
\

Let us state and prove some lemmata.

\begin{lem}[Measure-to-point estimate]\label{lem:MTP-est}
Let $u\ge 0$ be a weak solution of \eqref{general}. Suppose that $B_{16\rho}(x_o)\times [t_o,t_o + M^{3-m-p}\rho^p] \subset \Omega_T$. Let $\mu\in(0,1]$ and suppose that 
\begin{align}\label{assumption-hehu}
|B_\rho(x_o)\cap \{u(\cdot, t_o)\geq M\}|\geq \mu |B_\rho(x_o)|.
\end{align}
Then there exist constants $\xi,\tau\in (0,1)$ depending only on the data and $\mu$, such that 
\begin{align*}
u\geq \xi M, \textrm{ in } B_{2\rho}(x_o)\times [t_o + \frac{\tau}{2} M^{3-m-p}\rho^p, t_o + \tau M^{3-m-p}\rho^p].
\end{align*}
Moreover, $\tau$ can be chosen arbitrarily small by decreasing $\xi$.
\end{lem}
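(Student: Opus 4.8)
The plan is to derive this measure-to-point estimate by combining the expansion of positivity (Theorem \ref{theo:exppos}) with the variant De Giorgi lemma (Lemma \ref{lem:VarGenDeGio}) and the auxiliary lemma on propagation of measure (Lemma \ref{snails}), much as in Step 4 of the proof of Theorem \ref{theo:exppos}, but keeping careful track of the fact that we now want the final time interval to sit at a controllable \emph{positive} distance from $t_o$ and to be shrinkable. First I would apply Theorem \ref{theo:exppos} with $s=t_o$, $\alpha=\mu$, and the given $M$: the hypothesis $B_{16\rho}(x_o)\times[t_o,t_o+M^{3-m-p}\rho^p]\subset\Omega_T$ together with $\mu\le 1$ ensures that the required inclusion $B_{16\rho}(x_o)\times(s,s+\delta_{\mathrm{EP}} M^{3-m-p}\rho^p)\subset\Omega_T$ holds, since $\delta_{\mathrm{EP}}\in(0,1)$. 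This yields constants $\varepsilon_{\mathrm{EP}},\delta_{\mathrm{EP}},\eta_{\mathrm{EP}}\in(0,1)$ depending only on the data and $\mu$ such that
\begin{align*}
u\ge \eta_{\mathrm{EP}} M \quad\text{in}\quad B_{2\rho}(x_o)\times\big(t_o+(1-\varepsilon_{\mathrm{EP}})\delta_{\mathrm{EP}} M^{3-m-p}\rho^p,\ t_o+\delta_{\mathrm{EP}} M^{3-m-p}\rho^p\big).
\end{align*}
Setting $\xi:=\eta_{\mathrm{EP}}$, this already gives a lower bound of the desired form with $\tau:=\delta_{\mathrm{EP}}$, provided one checks that the interval $[\tfrac{\tau}{2}M^{3-m-p}\rho^p,\tau M^{3-m-p}\rho^p]$ (shifted by $t_o$) is contained in the time interval produced above; this holds as soon as $\varepsilon_{\mathrm{EP}}\le \tfrac12$, which can be arranged, or alternatively one simply takes $\tau$ a bit smaller than $\delta_{\mathrm{EP}}$ so that $1-\varepsilon_{\mathrm{EP}}\le \tau/(2\delta_{\mathrm{EP}})$... actually the cleanest route is the next paragraph.

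To get the quantitative ``$\tau$ arbitrarily small'' feature cleanly, I would instead argue as follows. Having produced, via Theorem \ref{theo:exppos}, a time $t_1$ and a bound $u\ge M_o:=\xi_0 M$ a.e.\ on $B_{2\rho}(x_o)\times\{t_1\}$ for some $t_1$ within $O(M^{3-m-p}\rho^p)$ of $t_o$ (this is exactly the conclusion \eqref{red_from_below}-type statement, or one can read it off \eqref{sss} and Theorem \ref{theo:exppos} directly with $\alpha=\mu$), I then invoke Lemma \ref{lem:VarGenDeGio} with initial time $t_1$, with $K=M_o$, $a=\tfrac12$, and $\theta$ chosen as the constant $c\,2^{-n-2}M_o^{3-m-p}$ appearing there, exactly as in Step 4 of the proof of Theorem \ref{theo:exppos}. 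The assumption \eqref{ularge} is satisfied by the lower bound on $\{t_1\}$, and \eqref{assumption:meas} is automatic (it becomes $|Q\cap\{u<M_o\}|\le|Q|$). This propagates the bound $u\ge \tfrac12 M_o$ forward in time over a cylinder $B_\rho(x_o)\times(t_1,t_1+\theta(2\rho)^p)$, i.e.\ over a time-length comparable to $M^{3-m-p}\rho^p$. Concatenating finitely many such steps — which is legitimate precisely because Lemma \ref{lem:VarGenDeGio} only shrinks the spatial radius by a factor $2$, and we started with radius $2\rho$ but can re-apply Lemma \ref{snails} to regenerate the measure density on larger balls — one covers the whole interval $[t_o+\tfrac{\tau}{2}M^{3-m-p}\rho^p,\ t_o+\tau M^{3-m-p}\rho^p]$ for any prescribed $\tau\le\delta_{\mathrm{EP}}$, at the cost of a lower $\xi$ after each concatenation. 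Choosing $\tau$ small simply means we need fewer concatenation steps, not more, so $\xi$ in fact improves; but to make the statement uniform we just fix, for each desired $\tau$, the number of steps $N(\tau)$ and set $\xi=\xi(\tau)$ accordingly, which depends only on the data and $\mu$.

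The main obstacle I anticipate is the bookkeeping of the spatial radii under iteration: each application of Lemma \ref{lem:VarGenDeGio} halves the radius, so a naive concatenation would collapse the ball to a point after finitely many steps. The fix is to alternate: use the forward-in-time measure-propagation Lemma \ref{snails} to recover the hypothesis \eqref{exposlemma-assumption1}/\eqref{assumption-hehu} on a \emph{fixed} radius $2\rho$ (or $16\rho$) at the new time slice after each De Giorgi step, so that one never actually iterates the radius reduction — one restarts the whole Theorem \ref{theo:exppos} machine at each slice. The time-steps in Lemma \ref{snails} and Theorem \ref{theo:exppos} are both of length comparable to $M^{3-m-p}\rho^p$ as long as the intrinsic scaling factor $M_o^{3-m-p}$ stays comparable to $M^{3-m-p}$, which it does because $M_o=\xi M$ with $\xi$ bounded below throughout a \emph{fixed finite} number of steps. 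Keeping $\mu\le 1$ guarantees $M\le$ (any relevant upper bound is unnecessary — the equation is invariant under the scaling of Lemma \ref{re-scaling} only when $u$ is trapped between constant multiples of $M$, which is not needed here), so no extra smallness of $M$ is required. Once the radii and time-lengths are under control, assembling the final statement is routine, and the clause ``$\tau$ can be chosen arbitrarily small by decreasing $\xi$'' falls out of the freedom in choosing how many propagation steps to perform.
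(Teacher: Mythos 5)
Your proposal has a genuine gap at the exact point where the lemma goes beyond a plain application of Theorem~\ref{theo:exppos}, namely the clause that $\tau$ can be made arbitrarily small by decreasing~$\xi$.

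Theorem~\ref{theo:exppos} (and likewise Step~4 of its proof via Lemma~\ref{lem:VarGenDeGio}) produces positivity only on an interval whose \emph{left} endpoint sits at distance of order $(1-\varepsilon)\delta M^{3-m-p}\rho^p$ from $t_o$, with $\delta,\varepsilon$ fixed by the data and~$\mu$. Both of your devices only push positivity \emph{forward} in time: re-applying Lemma~\ref{snails} regenerates a measure estimate at a later slice, and each De Giorgi step propagates the bound further forward. None of this moves the left endpoint closer to $t_o$, so concatenation cannot achieve small~$\tau$; your remark that ``choosing $\tau$ small simply means we need fewer concatenation steps, so $\xi$ improves'' has the logic backwards, since fewer steps simply cover a shorter interval starting at the \emph{same} waiting time. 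Your fallback ``this holds as soon as $\varepsilon_{\mathrm{EP}}\le\tfrac12$, which can be arranged'' is also not justified: $\varepsilon_{\mathrm{EP}}$ is an output of Theorem~\ref{theo:exppos}, not a free parameter, and even if it were, that would fix one value of $\tau$, not an arbitrarily small one.

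The missing idea is the down-scaling of the threshold level. After Lemma~\ref{snails} one has $|B_\rho(x_o)\cap\{u(\cdot,t)\ge\epsilon M\}|\ge\tfrac{\mu}{2}|B_\rho(x_o)|$ for all $t\in(t_o,t_o+\delta M^{3-m-p}\rho^p)$. Since $\{u\ge\epsilon M\}\subset\{u\ge\epsilon\theta M\}$ for $\theta\in(0,1]$, the same measure bound holds at level $\epsilon\theta M$. Applying Theorem~\ref{theo:exppos} at level $\epsilon\theta M$ replaces the intrinsic waiting time $(1-\varepsilon)\delta M^{3-m-p}\rho^p$ by $(1-\varepsilon)\delta(\epsilon\theta M)^{3-m-p}\rho^p$, and because $3-m-p>0$ this tends to zero as $\theta\to 0$. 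This is precisely the trade-off ``smaller $\tau$ at the cost of smaller $\xi=\eta\epsilon\theta$'' asserted in the lemma. Without this scaling argument, the proof does not go through; with it, the concatenation machinery is unnecessary.
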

\begin{proof}[Proof]
Assumption \eqref{assumption-hehu} and the fact that $B_\rho(x_o)\times [t_o,t_o+ M^{3-m-p}\rho^p]$ is contained in the domain $\Omega_T$ allow us to apply Lemma \ref{snails} to conclude that there exists $\epsilon(\mu)$ such that 
\begin{align}\label{pellini}
|B_\rho(x_o)\cap \{ u(\cdot,t)\geq \epsilon M\}|\geq \frac{\mu}{2}|B_\rho(x_o)|,
\end{align}
for all $t \in (t_o, t_o + \delta M^{3-m-p}\rho^p)$. Here, $\delta =\delta(data, \mu) \in (0,1)$ is the constant from Lemma \ref{snails}. In order to facilitate the latter part of the proof we note that we may instead use $\delta =\delta(data, \frac{\mu}{2})$ which by the construction in the proof of Lemma \ref{snails} is a smaller number. Note that \eqref{pellini} remains valid if we replace $M$ by any $\theta M$, where $\theta \in (0,1]$. Since $B_{16\rho}(x_o)\times [t_o,t_o+M^{3-m-p}\rho^p]$ is contained in the domain, we may apply Theorem \ref{theo:exppos} with $M$ replaced by $\epsilon\theta M$, $\alpha=\frac{\mu}{2}$ and considering all $s$ in $(t_o, t_o + \delta M^{3-m-p}\rho^p)$ for which $s+\delta (\epsilon \theta M)^{3-m-p}\rho^p)\leq t_o +M^{3-m-p}\rho^p$. Thus, we obtain
\begin{align*}
u\geq \eta \epsilon \theta M \textrm{ in } B_{2\rho}(x_o)\times (t_o+(1-\varepsilon)\delta (\epsilon \theta M)^{3-m-p}\rho^p, t_o + \delta M^{3-m-p}\rho^p).
\end{align*}
Here, $\eta$ and $\varepsilon$ only depend on the data and $\mu$. For any $\tau \in (0, \delta)$ we may thus first choose $\theta$ so small that 
\begin{align*}
(1-\varepsilon)\delta (\epsilon \theta )^{3-m-p} < \tau/2
\end{align*}
and then choose $\xi = \eta \epsilon\theta$.
\end{proof}

We now prove an alternative form of the reduction of the oscillation which will be convenient in the sequel.
\begin{lem}[Estimates of H\"older regularity]
Let $u$ be a weak solution of \eqref{general} in $\Omega_{T}$ in the supercritical range. Then for any $S>0$ there exist constants $\bar C >0$ and $\bar \alpha>0$ depending only on $S$ and the data, such that if
$Q_{32R, k^{3-m-p}R^p}(z_o)\subset \Omega_T$ for some $k, R>0$ then
\begin{equation}
\label{oscp<2}
\sup_{Q_{R, k^{3-m-p}R^p}(z_o)}u\le S\, k\quad \Rightarrow\quad \osc_{Q_{r, k^{3-m-p}r^p}} u  \le \bar C\, k\, \big(\frac{r}{R}\big)^{\bar\alpha},\quad r\le R.
\end{equation}
\end{lem}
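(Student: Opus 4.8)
The plan is to deduce \eqref{oscp<2} from Lemma~\ref{lem:osc-red}, whose conclusion is almost identical except that it is phrased in terms of an a~priori upper bound $u\le\mu_+$ rather than the weaker hypothesis $\sup_{Q_{R,k^{3-m-p}R^p}(z_o)}u\le S\,k$, and except that the cylinders in Lemma~\ref{lem:osc-red} have the scaling factor $\varepsilon\mu_+^{3-m-p}$ with the specific $\varepsilon=\varepsilon(m,n,p,C_0,C_1)$ from Lemma~\ref{lem:mu-plus_red_osc}, whereas here the cylinders carry the scaling factor $k^{3-m-p}$ with an arbitrary $k$. So the whole issue is a bookkeeping matter of reconciling the two time-scalings and absorbing the factor $S$ into the constants. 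First I would set $\mu_+:=S\,k$, so that by hypothesis $u\le\mu_+$ on $Q_{R,k^{3-m-p}R^p}(z_o)$. I want to apply Lemma~\ref{lem:osc-red} with this $\mu_+$, but that lemma works with cylinders $Q_{\rho,\varepsilon\mu_+^{3-m-p}\rho^p}(z_o)$. Note $\varepsilon\mu_+^{3-m-p}=\varepsilon S^{3-m-p}k^{3-m-p}$, and since $3-m-p>0$ while $S$ is a fixed constant, the quantity $a:=\varepsilon S^{3-m-p}$ is a fixed positive constant depending only on $S$ and the data.

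Next I would choose the radius at which to invoke Lemma~\ref{lem:osc-red}. Put $\bar\rho:=c_S R$ where $c_S:=\min\{1,a^{-1/p}\}\wedge\tfrac1{32}$, chosen so small that $\varepsilon\mu_+^{3-m-p}\bar\rho^p=a\,k^{3-m-p}\bar\rho^p\le k^{3-m-p}R^p$ and also $32\bar\rho\le 32R$. Then $Q_{\bar\rho,\varepsilon\mu_+^{3-m-p}\bar\rho^p}(z_o)\subset Q_{R,k^{3-m-p}R^p}(z_o)$, so $u\le\mu_+$ on it, and $Q_{32\bar\rho,\varepsilon\mu_+^{3-m-p}\bar\rho^p}(z_o)\subset Q_{32R,k^{3-m-p}R^p}(z_o)\subset\Omega_T$. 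Hence Lemma~\ref{lem:osc-red} applies and gives
\begin{align*}
\essosc_{Q_{r',\varepsilon\mu_+^{3-m-p}(r')^p}(z_o)}u\le c\,\mu_+\Big(\frac{r'}{\bar\rho}\Big)^\nu,\qquad 0<r'\le\bar\rho,
\end{align*}
with $c,\nu$ depending only on the data. Since $u$ is continuous (Hölder continuity was proved above in the supercritical range), $\essosc=\osc$ here, so I may drop the "ess".

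Now I would translate this back to the cylinders $Q_{r,k^{3-m-p}r^p}(z_o)$ appearing in \eqref{oscp<2}. Given $0<r\le R$, I want to find $r'$ with $\varepsilon\mu_+^{3-m-p}(r')^p=k^{3-m-p}r^p$, i.e.\ $r'=a^{-1/p}r$. For $r$ small enough (specifically $r\le a^{1/p}\bar\rho=:R_1$) this $r'$ satisfies $r'\le\bar\rho$ and the matching cylinder is $Q_{r',\varepsilon\mu_+^{3-m-p}(r')^p}(z_o)=Q_{a^{-1/p}r,\,k^{3-m-p}r^p}(z_o)\supset Q_{r,k^{3-m-p}r^p}(z_o)$ (since $a^{-1/p}r\ge$ or $\le r$; in either case one can bound the oscillation on the smaller cylinder by that on $Q_{\max\{r,r'\},k^{3-m-p}\max\{r,r'\}^p}$ — a trivial monotonicity — so I'll just use the slightly larger radius $\max\{r,a^{-1/p}r\}=:r''\le C_S r$). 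Thus
\begin{align*}
\osc_{Q_{r,k^{3-m-p}r^p}(z_o)}u\le\osc_{Q_{r'',k^{3-m-p}(r'')^p}(z_o)}u\le c\,\mu_+\Big(\frac{r''}{\bar\rho}\Big)^\nu\le c'\,S\,k\,\Big(\frac{r}{R}\Big)^\nu,
\end{align*}
where $c'$ absorbs the fixed factors $(C_S/c_S)^\nu$ and $S^{?}$-powers, hence depends only on $S$ and the data. For the remaining range $R_1<r\le R$ the desired estimate is trivial: $\osc_{Q_{r,k^{3-m-p}r^p}(z_o)}u\le 2S k\le 2S k(R/R_1)^{\bar\alpha}(r/R)^{\bar\alpha}$, and $(R/R_1)^{\bar\alpha}$ is a constant depending only on $S$ and the data. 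Taking $\bar\alpha:=\nu$ and $\bar C$ the maximum of the two constants produced finishes the proof.

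The only genuinely delicate point — and the step I expect to need the most care — is the reconciliation of the two time-scalings: one must check that the inclusion $Q_{r,k^{3-m-p}r^p}(z_o)\subset Q_{r'',\varepsilon\mu_+^{3-m-p}(r'')^p}(z_o)$ really does hold for the chosen $r''$, i.e.\ that simultaneously $r\le r''$ and $k^{3-m-p}r^p\le\varepsilon\mu_+^{3-m-p}(r'')^p$, which is why I take $r''=\max\{r,a^{-1/p}r\}$; and one must verify that all the comparison constants ($c_S$, $C_S$, the power of $S$ picked up from $\mu_+=Sk$) collapse into constants depending on $S$ and the data only, using crucially that $3-m-p>0$ so that powers of the fixed number $S$ stay bounded. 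Everything else is routine monotonicity of oscillation under cylinder inclusion together with a direct application of the already-established Lemma~\ref{lem:osc-red}.
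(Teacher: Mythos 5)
Your proposal is correct in substance and follows essentially the same route as the paper: both arguments reduce the lemma to Lemma~\ref{lem:osc-red}. The paper does it via a rescaling of the solution --- it sets $v(x,t)=S^{-1}u(x,t_o+\varepsilon^{-1}t)$, which transfers the constant $S$ into the structure constants of the rescaled equation and then applies Lemma~\ref{lem:osc-red} to $v$ with $\mu_+=k$. You instead keep the original solution and structure constants, apply Lemma~\ref{lem:osc-red} directly with $\mu_+=Sk$, and reconcile the two time-scalings (the $\varepsilon\mu_+^{3-m-p}$ from Lemma~\ref{lem:osc-red} versus the bare $k^{3-m-p}$ in the statement) by explicit cylinder inclusions. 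Both are legitimate; the paper's rescaling is notationally slicker, while yours makes the $S$-dependence of the constants transparent. In fact, the paper's write-up tacitly treats the $\varepsilon$-parameter of the rescaled equation as if it coincides with the original $\varepsilon$, glossing over precisely the reconciliation that you carry out explicitly.

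Two small bookkeeping slips, neither of which affects the validity of the idea. First, with your definitions $R_1:=a^{1/p}\bar\rho$ can exceed $\bar\rho$ when $a=\varepsilon S^{3-m-p}>1$; in that regime $r''=\max\{r,a^{-1/p}r\}=r$, so for $\bar\rho<r\le R_1$ you have $r''>\bar\rho$ and Lemma~\ref{lem:osc-red} no longer applies on $Q_{r'',\varepsilon\mu_+^{3-m-p}(r'')^p}(z_o)$. The fix is to take $R_1:=\min\{1,a^{1/p}\}\bar\rho$, which guarantees $r''\le\bar\rho$ for all $r\le R_1$; your trivial-bound argument then covers $R_1<r\le R$ unchanged. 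Second, the intermediate term in your final chain should read $\essosc_{Q_{r'',\varepsilon\mu_+^{3-m-p}(r'')^p}(z_o)}u$ rather than $\osc_{Q_{r'',k^{3-m-p}(r'')^p}(z_o)}u$: only the former is controlled by Lemma~\ref{lem:osc-red}, and the inclusion $Q_{r,k^{3-m-p}r^p}(z_o)\subset Q_{r'',\varepsilon\mu_+^{3-m-p}(r'')^p}(z_o)$ that you establish makes the chain close without passing through the intermediate cylinder. With these two cosmetic corrections the proof is complete.
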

\begin{proof}[Proof]
Let $\varepsilon$ be the constant from Lemma \ref{lem:mu-plus_red_osc} and define the re-scaled function
\begin{align*}
v(x,t)= S^{-1}u(x,t_o+\varepsilon^{-1}t), \hspace{7mm}(x,t)\in Q_{32R, \varepsilon k^{3-m-p} R^p}(x_o,0).
\end{align*}
Then $v$ satisfies an equation of type \eqref{general}, where the constants appearing in the structure conditions depend only on $S$ and the data from the original problem. Furthermore,  
\begin{align*}
\sup_{Q_{R, \varepsilon k^{3-m-p} R^p}(x_o,0)} \leq k,
\end{align*}
so Lemma \ref{lem:osc-red} implies that for all $r\in (0,R]$, 
\begin{align*}
\osc_{Q_{r,\varepsilon k^{3-m-p} r^p}(x_o,0)} v \leq \tilde{c} k \big(\frac{r}{R}\big)^{\bar\alpha},
\end{align*}
where $\tilde{c}$ and $\bar{\alpha}$ only depend on $S$ and the data of the original problem. Expressing this estimate in terms of $u$ and the original coordinates we obtain the desired estimate with $\bar C = \tilde{c} S$.
\end{proof}

We will also use the following version of the expansion of positivity. 
\begin{lem}[Expansion of positivity] \label{epossing}
There exists $\bar \lambda>p/(3-m-p)$ and, for any $\mu>0$, $c(\mu), \gamma_{1}(\mu), \gamma_{2}(\mu)\in \ (0, 1)$ depending only on $\mu$ and the data, such that if $u\ge 0$ is a solution in $B_{16R}(\bar{0})\times[0, k^{3-m-p}R^p]$ then
\begin{align}\label{exposingu}
&|B_{r}(\bar{0}) \cap \{u(\cdot, 0)\ge k\}|\ge \mu |B_r(\bar{0})| 
\\
\notag &\Rightarrow\quad \inf_{B_{\rho}}u\big(\cdot, k^{3-m-p}\, r^{p}\, \big(\gamma_{1}(\mu)+\gamma_{2}(\mu)\big(1-(r/\rho)^{\bar\lambda(3-m-p)-p}\big)\big)\ge c(\mu)\, k \Big(\frac{r}{\rho}\Big)^{\bar\lambda},
\end{align}
whenever $r< \rho\le R$. Here, $\gamma_1(\mu)$ and $\gamma_2(\mu)$ are so small that  $\gamma_1(\mu) + \gamma_2(\mu) \leq 1 $, which guarantees that the time level is contained in the interval $k^{3-m-p}R^p$. Moreover, the $\gamma_{i}(\mu)$ can be chosen arbitrarily small by lowering $c(\mu)$.
\end{lem}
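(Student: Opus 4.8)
\textbf{Overview.} The plan is to iterate the measure-to-point estimate (Lemma \ref{lem:MTP-est}) together with the De Giorgi reduction of oscillation, exactly in the spirit of the pattern scheme of \cite{DuMoVe}. The measure assumption on the left-hand side of \eqref{exposingu} gives, via Lemma \ref{lem:MTP-est} (applied with $\rho$ there equal to our $r$, and $M=k$), a pointwise lower bound $u\geq \xi k$ on a ball $B_{2r}(\bar 0)$ at a controlled later time, with $\xi,\tau$ depending only on $\mu$ and the data, and with $\tau$ as small as we like at the cost of shrinking $\xi$. This is the ``seed'' of positivity. The point of the lemma is to propagate this seed forward in time while letting the spatial radius grow from $r$ up to $\rho$, tracking precisely how the value degrades (the factor $(r/\rho)^{\bar\lambda}$) and how much time is consumed (the factor $k^{3-m-p}r^p(\gamma_1+\gamma_2(1-(r/\rho)^{\bar\lambda(3-m-p)-p}))$).

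\textbf{Main steps.} First I would fix a geometric sequence of radii $r_j := 2^j r$ for $j=0,1,\dots,J$, where $J$ is the largest index with $r_J \leq \rho$ (so $2^{-J}\rho < r \leq 2^{-(J-1)}\rho$, giving $2^{-J}\asymp r/\rho$). I maintain the inductive hypothesis that at a certain time level $t_j$ one has $u(\cdot,t_j)\geq k_j$ on $B_{r_j}(\bar 0)$, with $k_0 = \xi k$ and $t_0$ coming from Lemma \ref{lem:MTP-est}. The inductive step combines two ingredients. (a) Apply the pointwise lower bound $u\geq k_j$ on $B_{r_j}$ at time $t_j$ as the hypothesis \eqref{assumption-hehu} of Lemma \ref{lem:MTP-est} (here the measure fraction is $\mu=1$, so the constants $\xi_1,\tau_1$ are \emph{absolute}, depending only on the data, and $\tau_1$ can be taken small): this yields $u \geq \xi_1 k_j$ on $B_{2r_j}(\bar 0) = B_{r_{j+1}}(\bar 0)$ throughout a time interval of length $\asymp \tau_1 k_j^{3-m-p} r_j^p$ starting from $t_j$. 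So set $k_{j+1} := \xi_1 k_j$, giving $k_j = \xi_1^j \xi k$. (b) Pick $t_{j+1}$ to be (say) the midpoint of that interval; one must check that $t_{j+1}$ stays inside $[0,k^{3-m-p}R^p]$, which follows by summing the time increments.

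\textbf{Bookkeeping the exponents.} With $k_j = \xi k\,\xi_1^{\,j}$ and $2^{-J}\asymp r/\rho$, the final value is $k_J \asymp \xi k\, \xi_1^{J} = \xi k\, (2^{J})^{\log_2 \xi_1} \asymp k\,(r/\rho)^{-\log_2\xi_1}$, so one sets $\bar\lambda := -\log_2 \xi_1 = \log_2(1/\xi_1)$; by choosing $\xi_1$ small enough in Lemma \ref{lem:MTP-est} we can force $\bar\lambda > p/(3-m-p)$, which is exactly the stated requirement and is what makes the exponent $\bar\lambda(3-m-p)-p$ positive. The total elapsed time is $\sum_{j=0}^{J-1} c\,\tau_1 k_j^{3-m-p} r_j^p = c\tau_1 \sum_j (\xi k\xi_1^j)^{3-m-p}(2^j r)^p = c\tau_1 (\xi k)^{3-m-p} r^p \sum_j (\xi_1^{3-m-p} 2^p)^j$. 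Since $\bar\lambda(3-m-p) > p$ means $\xi_1^{3-m-p}2^p < 1$, this geometric sum is comparable to $1 - (\xi_1^{3-m-p}2^p)^J \asymp 1 - (r/\rho)^{\bar\lambda(3-m-p)-p}$ up to a constant; absorbing constants into $\gamma_1(\mu),\gamma_2(\mu)$ (the $\mu$-dependence entering only through the very first step via Lemma \ref{lem:MTP-est}) produces exactly the time level displayed in \eqref{exposingu}, and the smallness of $\gamma_1,\gamma_2$ at the price of smaller $c(\mu)$ follows by taking $\tau_1$ small. Finally one reads off $c(\mu)\asymp \xi(\mu)\xi_1^{\,O(1)}$ and, at the end, passes from the pointwise bound on $B_{r_J}\supset B_\rho$... wait — $r_J \leq \rho$, so in fact one last application of Lemma \ref{lem:MTP-est} (with $\mu=1$) expands $B_{r_J}$ to $B_{2r_J}\supseteq B_\rho$, which is already folded into the iteration by choosing $J$ with $r_J \geq \rho/2$; this only changes constants.

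\textbf{Main obstacle.} The delicate part is not the iteration itself but the quantitative matching of the time level: one must verify that the \emph{cumulative} time used is bounded above by $k^{3-m-p}R^p$ (so that Lemma \ref{lem:MTP-est} and Theorem \ref{theo:exppos} remain applicable at every stage, since $B_{16R}\times[0,k^{3-m-p}R^p]$ is all we are given), and simultaneously bounded below and above by the prescribed expression $k^{3-m-p}r^p(\gamma_1(\mu)+\gamma_2(\mu)(1-(r/\rho)^{\bar\lambda(3-m-p)-p}))$ up to constants absorbable into $\gamma_1,\gamma_2$. This requires that at each step the lower bound on $u$ be large enough that the intrinsically-scaled time increment $k_j^{3-m-p} r_j^p$ does not overshoot — which is precisely why $\bar\lambda$ must exceed $p/(3-m-p)$ (so the increments form a \emph{convergent} geometric series), and why we need the freedom in Lemma \ref{lem:MTP-est} to take $\tau$ small. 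Handling the start of the iteration, where the constants legitimately depend on $\mu$, versus the steady state, where they are absolute, is the book-keeping one has to be careful about.
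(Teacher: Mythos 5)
Your proposal is correct and matches the paper's proof essentially step by step: the same doubling sequence of radii $r_j=2^j r$, the same use of Lemma \ref{lem:MTP-est} with $\mu=1$ (hence absolute constants $\xi_1,\tau_1$) after the $\mu$-dependent initial seeding, the same definition $\bar\lambda=-\log_2\xi_1$ with the convergence condition $2^p\xi_1^{3-m-p}<1$, the same geometric-series bookkeeping for the cumulative time and the same verification that it stays below $k^{3-m-p}R^p$. The only detail you leave implicit (and which the paper treats explicitly) is the degenerate case $2r>R$, where the claim already follows from the first application of Lemma \ref{lem:MTP-est}; this is a one-line addendum.
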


\begin{proof}[Proof]
Suppose that the measure condition of \eqref{exposingu} holds. Then, by Lemma \ref{lem:MTP-est}, we have 
\begin{align}\label{claim1-reformulated}
u\geq \xi(\mu) k, \textrm{ in } B_{2r}(\bar{0})\times [\frac{\tau(\mu)}{2} k^{3-m-p}r^p,  + \tau(\mu) k^{3-m-p}r^p].
\end{align}
Denote $\xi_1:=\xi(1)$ and note that, since $m+p<3$, we can suppose that 
\begin{align}\label{b_1-bound}
b_1:= 2^{p}\, \xi_1^{3-m-p}\leq \tfrac{1}{2}.
\end{align} 
Consider first the case $2r\leq R$. We may now define
\begin{align*}
\rho_j &:=2^j r, \textrm{ for all } j\in \N \textrm{ such that } \rho_j \leq R,
\\
\tau_1 &:= \tau(1)\leq \tfrac{1}{7}.
\end{align*}
Note that we are considering the case where at least $\rho_1$ is defined. The bound on $\tau_1$ can be obtained due to Lemma \ref{lem:MTP-est}. This might require shrinking $\xi_1$, but this does not violate the bound on $b_1$.
We define recursively
\begin{equation}
\label{tn}
t_0= \frac{\tau(\mu)}{2} k^{3-m-p}r^p, \qquad t_{j+1} = t_j + \frac{ \tau_1}{2}(\xi(\mu) k \xi_1^j)^{3-m-p}\rho^p_{j+1}.
\end{equation}
From \eqref{claim1-reformulated} it follows that $|B_{r}(\bar{0})\cap \{u(\cdot,t_0)\ge \xi(\mu)k\}|=|B_{r}(\bar{0})|$. Hence, we may apply Lemma \ref{lem:MTP-est} with $\mu=1$ repeatedly and obtain
\begin{align}\label{hedeenand}
u\ge \xi(\mu)\xi_1^j k \qquad \text{in }\quad B_{\rho_{j+1}}\times \big[t_j, t_j+\frac{\tau_1}{2}\, (\xi(\mu) k\, \xi_1^{j-1})^{3-m-p}\, \rho_{j}^{p}\big]
\end{align}
for all integers $j\geq 1$ such that $\rho_j \leq R$, provided that the end time of the cylinder in \eqref{hedeenand} does not exceed $k^{3-m-p}R^p$. In fact, this cannot happen, since an explicit calculation shows that for all integers $N\geq 1$,
\begin{align}\label{t_N-ests}
t_N  &= \frac{\tau(\mu)}{2} k^{3-m-p}r^p + \frac{\tau_1}{2}k^{3-m-p}\xi(\mu)^{3-m-p}r^p2^p\sum^{N-1}_{j=0}b_1^j 
\\
\notag &\leq k^{3-m-p}R^p\frac{\tau_1}{2}\Big(1+2^p\frac{1-b_1^N}{1-b_1}\Big)
\\
\notag &\leq k^{3-m-p}R^p 5 \tau_1,
\end{align}
where in the first step we used the fact that $\xi(\mu)\leq \xi_1<1$ and $\tau(\mu)\leq \tau_1$. Thus, we have
\begin{align*}
t_N+\frac{\tau_1}{2}\, (\xi(\mu) k\, \xi_1^{N-1})^{3-m-p}\, \rho_{N}^{p} \leq t_N + k^{3-m-p} R^p 2\tau_1 \leq k^{3-m-p}R^p 7 \tau_1 \leq k^{3-m-p}R^p,
\end{align*}
which means that the cylinders are all contained in the domain of $u$. From \eqref{b_1-bound} we infer $t_j + \frac{\tau_1}{2}\, (\xi(\mu)\, k \xi_1^{j-1})^{3-m-p}\, \rho_{j}^{p} \geq t_{j+1}$, and thus \eqref{hedeenand} implies that
\[
u\ge \xi(\mu)\Big(\frac{r}{\rho_j}\Big)^{\bar{\lambda}} k \qquad \text{in }\quad B_{\rho_{j+1}}\times [t_j, t_{j+1}],
\]
where $\bar\lambda=-\log_{2}\xi_1 >p/(3-m-p)$.
Using the first line of \eqref{t_N-ests} we can re-write $t_N$ as  
\begin{align*}
t_N &= k^{3-m-p} r^p\Big[ \frac{\tau(\mu)}{2} + \frac{2^{p-1}\tau_1 \xi(\mu)^{3-m-p}}{(1-b_1)}(1-b_1^N)\Big]
\\
&= k^{3-m-p} r^p\Big[ \gamma_1(\mu) + \gamma_2(\mu)\Big(1-\Big( \frac{r}{\rho_N}\Big)^{\bar{\lambda}(3-m-p)-p}\Big)\Big].
\end{align*}
For an arbitrary $\rho \in [r,R]$ we now choose the smallest integer $N$ such that $\rho\leq 2^{N+1} r$. But this means that 
\begin{align*}
\rho_N=2^N r\leq \rho\leq R.
\end{align*}
Thus, we may conclude that 
\begin{align*}
u\geq \xi(\mu)\Big(\frac{r}{\rho_N}\Big)^{\bar{\lambda}} k \geq \xi(\mu)\Big(\frac{r}{\rho}\Big)^{\bar{\lambda}} k = c(\mu) \Big(\frac{r}{\rho}\Big)^{\bar{\lambda}} k \quad \textrm{ in } B_\rho\times [t_N, t_{N+1}].
\end{align*}
It now suffices to note that since $\rho_N\leq \rho \leq \rho_{N+1}$,
\begin{align*}
[t_N,t_{N+1}] \ni k^{3-m-p} r^p\Big[ \gamma_1(\mu) + \gamma_2(\mu)\Big(1-\Big( \frac{r}{\rho}\Big)^{\bar{\lambda}(3-m-p)-p}\Big)\Big].
\end{align*}
By the definitions it is clear that $\gamma_1(\mu)$ and  $\gamma_2(\mu)$ can be made arbitrarily small by lowering $c(\mu)$. It only remains to consider the case that $2r>R$. But in this case a bound of the correct form follows already from \eqref{claim1-reformulated} since $r<\rho<2r$.
\end{proof}

Since we are considering the super-critical range, Theorem \ref{theo:Lr-Linfty} holds with $r=1$. Combining this result with the $L^1$- Harnack estimate of Theorem \ref{harnack}, we immediately obtain the following lemma.
\begin{lem}\label{sl1}
Let $u$ be a solution to \eqref{general} for some $m, p$ satisfying \eqref{supercritical} and suppose that $\bar Q_{4\rho, 2\tau}(z_o) \subset \Omega\times[0,T)$. Then
\begin{equation}\label{l1har}
\sup_{Q_{\rho,\tau}(z_o)} u \leq c\tau^{-\frac{n}{\lambda}} \Big[\inf_{t\in [t_o-2\tau, t_o]}\int_{B_{4\rho}(x_o)}u(x, t)\d x\Big]^{\frac{p}{\lambda}}+ c\Big(\frac{\tau}{\rho^p}\Big)^{\frac{1}{3-m-p}},
\end{equation}
where $\lambda=p+n(m+p-3)$ and the constant $c$ only depends on $m,n,p,C_0,C_1$.
\end{lem}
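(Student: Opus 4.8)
The plan is to obtain \eqref{l1har} by simply composing two results already in hand: the $L^r$--$L^\infty$ bound of Theorem~\ref{theo:Lr-Linfty} with $r=1$, and the $L^1$-Harnack inequality of Theorem~\ref{harnack}. First I would observe that in the supercritical range \eqref{supercritical} the quantity $\lambda=p+n(m+p-3)$ is exactly $\lambda_1$ in the notation of \eqref{def:lambda-r}, and it is strictly positive; hence Theorem~\ref{theo:Lr-Linfty} applies with $r=1$ on the cylinder $Q_{2\rho,2\tau}(z_o)$, which is contained in $\Omega_T$ because $\bar Q_{4\rho,2\tau}(z_o)\subset\Omega\times[0,T)$. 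This gives $\sup_{Q_{\rho,\tau}(z_o)}u\le c\big[\tau^{-(n+p)/p}\iint_{Q_{2\rho,2\tau}(z_o)}u\,\d x\,\d t\big]^{p/\lambda}+c(\tau/\rho^p)^{1/(3-m-p)}$. The pointwise supremum is meaningful because $u$ is locally H\"older continuous in this range by the previous section.

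Next I would estimate the space--time integral crudely by $\iint_{Q_{2\rho,2\tau}(z_o)}u\,\d x\,\d t\le 2\tau\sup_{t\in[t_o-2\tau,t_o]}\int_{B_{2\rho}(x_o)}u(x,t)\,\d x$, and then invoke Theorem~\ref{harnack} with center $x_o$, radius $2\rho$ and time interval $[t_o-2\tau,t_o]$ --- admissible precisely because $\bar B_{4\rho}(x_o)\times[t_o-2\tau,t_o]\subset\Omega\times[0,T)$ --- to replace the time-supremum of $\int_{B_{2\rho}}u$ by $\gamma\inf_{t\in[t_o-2\tau,t_o]}\int_{B_{4\rho}(x_o)}u\,\d x+\gamma(2\tau/(2\rho)^\lambda)^{1/(3-m-p)}$.

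Substituting this back, using $-(n+p)/p+1=-n/p$, and raising to the power $p/\lambda$ (splitting $(a+b)^{p/\lambda}\le c(a^{p/\lambda}+b^{p/\lambda})$, valid whatever the sign of $p/\lambda-1$ since $\lambda>0$) produces the desired main term $c\tau^{-n/\lambda}\big[\inf_t\int_{B_{4\rho}}u\big]^{p/\lambda}$ together with a residual $c\tau^{-n/\lambda}(\tau/\rho^\lambda)^{p/(\lambda(3-m-p))}$ and the residual $c(\tau/\rho^p)^{1/(3-m-p)}$ inherited from Theorem~\ref{theo:Lr-Linfty}. I do not expect a real obstacle here --- this is a bookkeeping corollary --- but the one computation to do carefully, and the only place supercriticality enters beyond $\lambda>0$, is checking that the first residual has the advertised form: since $\lambda=p-n(3-m-p)$ one has $-\tfrac n\lambda+\tfrac{p}{\lambda(3-m-p)}=\tfrac{p-n(3-m-p)}{\lambda(3-m-p)}=\tfrac1{3-m-p}$, while the exponent of $\rho$ is $-\lambda\cdot\tfrac{p}{\lambda(3-m-p)}=-\tfrac p{3-m-p}$, so that term equals $c(\tau/\rho^p)^{1/(3-m-p)}$ and merges with the other residual. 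Collecting terms yields \eqref{l1har} with $c=c(m,n,p,C_0,C_1)$.
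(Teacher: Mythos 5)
Your proposal is correct and is exactly the route the paper intends: the paper states that Lemma \ref{sl1} follows "immediately" by combining Theorem \ref{theo:Lr-Linfty} with $r=1$ and the $L^1$-Harnack inequality of Theorem \ref{harnack}, which is precisely the decomposition you carry out, including the passage from $\esssup/\essinf$ to the actual $\sup/\inf$ via the time-continuous representative and the exponent bookkeeping showing the residual from Theorem \ref{harnack} collapses to $c(\tau/\rho^p)^{1/(3-m-p)}$. You have simply made explicit what the paper leaves to the reader.
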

Here we are able to use the actual infimum and supremum rather than their essential equivalents, since we are considering the continuous representative of $u$. Similar results have been shown previously in \cite[Appendix A]{DiGiaVe1} for the $p$-Laplacian with $p<2$  and in \cite{FoSoVe3} for singular doubly nonlinear equations under more restrictive assumptions. 

Now we are ready prove the final result of this paper. For simplicity, we have opted to formulate and prove the theorem for a cylinder centered at the origin, but obviously the result is translation invariant. Note that since an infimum can only increase when passing to a smaller set, we could replace the ball in the right estimate in \eqref{singharnack} by $B_{R/4}(\bar 0)$, so that the supremum and infimum are taken over the same ball.

\begin{theo}[Harnack inequality]
Let $u\ge 0$ solve \eqref{general} for some $m, p$ satisfying \eqref{supercritical}, in a domain containing $B_{34R}(\bar 0) \times [- T, T]$. Suppose that $u(0, 0)>0$ and 
\begin{equation}
\label{tass}
4\, R^{p}\, \sup_{B_{2R}(\bar{0})}u(\cdot, 0)^{3-m-p}\le T.
\end{equation}
Then there exist constants $\bar C\ge 1$, $\bar\theta>0$ depending only on the data such that 
\begin{align}
\label{singharnack}
\bar C^{-1}\, &\sup_{B_{R/4}(\bar 0)} u(\cdot, s)\le u(0, 0)\le \bar C\, \inf_{B_{R}(\bar 0)}u(\cdot, t),
\\
\notag &\textrm{for }- \bar\theta\, u(0, 0)^{3-m-p}\, R^{p}\le s, t\le\bar\theta\, u(0, 0)^{3-m-p}\, R^{p}.
\end{align}
\end{theo}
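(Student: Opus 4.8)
## Proof proposal

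\textbf{Overall strategy.} The plan is to follow the standard ``intrinsic scaling + expansion of positivity'' pattern of DiBenedetto--Gianazza--Vespri (as organized in \cite{DuMoVe}), using the machinery already developed in the excerpt: the intrinsic $L^1$-Harnack/$L^\infty$ estimate of Lemma \ref{sl1}, the expansion of positivity in the form of Lemma \ref{epossing}, and the H\"older-type oscillation decay \eqref{oscp<2}. Normalize by setting $k_0 := u(0,0) > 0$. The right inequality $u(0,0) \le \bar C \inf_{B_R(\bar 0)} u(\cdot,t)$ is the heart of the matter; the left inequality $\bar C^{-1}\sup_{B_{R/4}(\bar 0)} u(\cdot,s) \le u(0,0)$ will then follow by feeding the lower bound back into Lemma \ref{sl1}.

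\textbf{Step 1: a measure-theoretic alternative at the initial time.} First I would run the classical ``expansion of positivity starting from a point where the solution is positive'' argument. Using the continuity of $u$ (Theorem \ref{cont_into_Lbetaplusone}) together with the oscillation estimate \eqref{oscp<2}, for a suitable small radius $r$ and a suitable fraction of $k_0$ one shows that a definite portion of the ball $B_r(\bar 0)$ lies in the superlevel set $\{u(\cdot,0) \ge \lambda k_0\}$ for some $\lambda = \lambda(\text{data}) \in (0,1)$ — OR the supremum of $u$ over a small intrinsic cylinder around the origin is already comparable to $k_0$, in which case one is essentially done by \eqref{oscp<2}. Concretely, one considers the function $r \mapsto r^{\sigma}\sup_{B_r(\bar 0)} u(\cdot,0)$ (for an appropriate $\sigma$ tied to the exponent $\bar\alpha$), picks the radius $\bar\rho$ realizing a suitable extremal value, and argues that at that scale a fixed-fraction measure-density estimate $|B_{\bar\rho}(\bar 0) \cap \{u(\cdot,0) \ge \lambda k_0\}| \ge \mu |B_{\bar\rho}(\bar 0)|$ must hold. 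This is the step where the intrinsic geometry is calibrated; I expect some bookkeeping but no conceptual obstacle, since it is parallel to the $p$-Laplacian case in \cite{DiGiaVe1}.

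\textbf{Step 2: expand positivity forward and backward in time.} With the measure-density information from Step 1 in hand, apply Lemma \ref{epossing} (expansion of positivity) repeatedly. A single application gives a pointwise lower bound $u \ge c(\mu)\, (\bar\rho/\rho)^{\bar\lambda} k_0$ on a ball $B_\rho(\bar 0)$ at a time level of order $k_0^{3-m-p}\rho^p$; the semigroup-type structure of \eqref{exposingu} (the recursive times $t_j$) lets one iterate to reach the spatial scale $\rho \sim R$ while controlling the time at which positivity is attained, which is why the hypothesis \eqref{tass} (ensuring $B_{34R}(\bar 0)\times[-T,T]$ is large enough in the intrinsic sense) is needed — it guarantees all the intermediate intrinsic cylinders stay inside the domain. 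This produces $u \ge \bar C^{-1} k_0$ on $B_R(\bar 0)$ at \emph{some} time $t_* \sim \bar\theta\, k_0^{3-m-p} R^p$. To cover the whole time interval $-\bar\theta k_0^{3-m-p}R^p \le t \le \bar\theta k_0^{3-m-p}R^p$ rather than a single slice, one uses that once $u \ge c k_0$ on a full ball at one time, Lemma \ref{lem:VarGenDeGio} (the variant De Giorgi lemma, whose hypothesis \eqref{ularge} becomes automatic) propagates this lower bound forward in time on the spatially-halved ball, and one also re-runs the expansion with the origin's base time shifted; the backward-in-time direction is handled symmetrically using that the equation structure is preserved under $t \mapsto -t$ composed with the relevant reflection, or more simply by noting one can take the ``current time'' to be any point of $[-T,T]$ in Step 1. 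After adjusting constants, this yields the right-hand inequality of \eqref{singharnack}.

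\textbf{Step 3: the sup bound.} For the left inequality, apply Lemma \ref{sl1} on a cylinder $Q_{\rho,\tau}(z_o)$ with $\rho \sim R/4$, $\tau \sim \bar\theta k_0^{3-m-p}R^p$ and $z_o$ an appropriate point: the term $c\tau^{-n/\lambda}[\inf \int_{B_{4\rho}} u\, dx]^{p/\lambda}$ is controlled because the lower bound from Step 2 caps the integral $\int_{B_{4\rho}} u(x,t)\,dx$ from \emph{above}? — no: here one must instead use Lemma \ref{sl1} directly together with the \emph{upper} part of the intrinsic geometry. More precisely, choose $\bar\theta$ small enough that the intrinsic cylinder on which $\sup u$ is to be estimated has ``time radius'' matched to $(\sup u)^{3-m-p}$; a De Giorgi--Moser iteration argument (already packaged in Theorem \ref{theo:Lr-Linfty} / Lemma \ref{sl1}) then gives $\sup_{B_{R/4}(\bar 0)} u(\cdot,s) \le c\big(\frac{\tau}{\rho^p}\big)^{1/(3-m-p)} + (\text{$L^1$ term})$, and the $L^1$ term is estimated via the right-hand inequality of \eqref{singharnack} just proved (so $\int u \lesssim k_0 R^n$), while the first term is $\lesssim k_0$ by the choice of $\tau \sim \bar\theta k_0^{3-m-p}R^p$ with $\bar\theta$ small. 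Collecting constants gives $\sup_{B_{R/4}(\bar 0)} u(\cdot,s) \le \bar C k_0$.

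\textbf{Main obstacle.} The genuinely delicate point is Step 1 — setting up the intrinsic scaling so that a fixed-density superlevel estimate is extracted at a \emph{controlled} radius, together with ensuring in Step 2 that every intermediate intrinsic cylinder produced by the iterated expansion of positivity fits inside $B_{34R}(\bar 0)\times[-T,T]$; the factor $34$ and the hypothesis \eqref{tass} are exactly what make this work, and verifying the containments and the convergence of the geometric series of times (as in \eqref{t_N-ests}) is where the care is needed. The singular range $2 < m+p < 3$ makes the time-scaling exponent $3-m-p$ positive but small, so the intrinsic cylinders are ``long in time'', and one must check that the supercriticality \eqref{supercritical} (equivalently $\bar\lambda > p/(3-m-p)$ in Lemma \ref{epossing}) is what prevents the lower bound from degenerating as $\rho \to R$.
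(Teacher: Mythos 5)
Your overall architecture (rescale to dimensionless variables, counting-function/Krylov--Safonov to extract a density estimate, expansion of positivity for the infimum bound, then feed that into Lemma \ref{sl1} for the supremum bound) is the right one, and Step 2 is essentially what the paper does. But Step 3 contains a genuine gap, and Step 1 has an incorrect technical choice that would prevent the argument from closing.

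\textbf{Step 3 is the real problem.} You notice the issue yourself (``caps the integral from above? --- no'') but then go ahead and claim ``the $L^1$ term is estimated via the right-hand inequality of \eqref{singharnack} just proved (so $\int u \lesssim k_0 R^n$).'' This does not follow: the right-hand inequality is a \emph{lower} bound $u\ge k_0/\bar C$, which gives a lower bound on $\int_{B_{4\rho}}u$, not the upper bound needed on the $L^1$ term in \eqref{l1har}. In fact, nothing established up to this point bounds $u$ from above near time $0$ by a multiple of $u(\bar 0,0)$. The paper's key move, which is missing from your proposal, is to run the \emph{entire infimum argument a second time centered at the maximizing point} $x_*\in\bar B_R(\bar 0)$ of $u(\cdot,0)$: this produces $u\ge u(x_*,0)/\bar C$ on $\bar B_R(x_*)\ni\bar 0$, i.e.\ $\sup_{\bar B_R(\bar 0)}u(\cdot,0)\le \bar C\,u(\bar 0,0)$. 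Only with this upper bound on $u(\cdot,0)$ in hand can one bound $\int_{B_{4\rho}}u(\cdot,0)\,\d x$ and close Step 3 via Lemma \ref{sl1}. Without it your estimate of the $L^1$ term is unjustified.

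\textbf{Step 1 uses the wrong counting functional and the wrong exponent.} You propose $r\mapsto r^\sigma\sup_{B_r}u(\cdot,0)$ with $\sigma$ ``tied to $\bar\alpha$'' (the H\"older exponent). The paper uses $\psi(\rho)=(1-\rho)^{\bar\lambda}\sup_{\bar B_\rho}v(\cdot,0)$ on $[0,1]$, with $\bar\lambda$ the \emph{expansion-of-positivity} exponent from Lemma \ref{epossing}. The factor $(1-\rho)^{\bar\lambda}$ is essential: $\psi(0)=1$ and $\psi(1)=0$, so the maximizer $\rho_0$ is interior, the slack $1-\rho_0$ gives the radius $r=\bar\xi(1-\rho_0)$ on which the sup stays within a factor $2$ of $v_0$, and --- crucially --- the exponent $\bar\lambda$ is the one that reappears in the lower bound $\bar c\,v_0\,(r/\rho)^{\bar\lambda}$ from Lemma \ref{epossing}, so that \eqref{padf}, $v_0 r^{\bar\lambda}\ge\bar\xi^{\bar\lambda}$, cancels the $r$-dependence and produces a constant independent of $r$ and $v_0$. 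With $\rho^\sigma$ instead of $(1-\rho)^{\bar\lambda}$ the functional vanishes at $\rho=0$ and the normalization $v(\bar 0,0)=1$ can no longer be exploited; and with $\sigma$ tied to $\bar\alpha$ rather than $\bar\lambda$ the final cancellation fails.

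\textbf{Two smaller remarks.} In Step 2, the paper applies Lemma \ref{epossing} exactly once (with $\rho=R=2$); the iteration you describe is already internal to that lemma, not a repeated external application. Also, the claim that the ``equation structure is preserved under $t\mapsto -t$'' is false --- the equation is parabolic and time has a definite direction. The correct mechanism (which you do offer as an alternative) is that the H\"older estimate yields $v\ge v_0/2$ on a cylinder centered at $(x_o,0)$ that already extends in \emph{both} time directions, so one applies the forward-in-time expansion of positivity from each time slice in that interval, and the condition $\gamma_1(\bar\eta^n)+\gamma_2(\bar\eta^n)<\bar\eta^p/2$ ensures the resulting interval still straddles $t=0$.
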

\begin{proof}[Proof]
In the cylinder $B_{34}(\bar 0)\times [-T', T']$, where $T'= T\, R^{-p}\, u(0, 0)^{m+p-3}$, the function 
\begin{align*}
v(x,t)=u(0, 0)^{-1}\, u(R\, x, R^{p}\, u(0, 0)^{3-m-p}\, t), 
\end{align*}
satisfies a doubly singular equation with the same structure conditions as the original equation. With these definitions,  \eqref{tass} implies
\begin{equation}\label{tass2}
1\le M^{3-m-p}:=\sup_{B_{1}(\bar 0)}v(\cdot, 0)^{3-m-p}\le T'/4,
\end{equation}
where the left inequality follows from the fact that $v(\bar{0},0)=1$.
We first prove the $\inf$ bound in \eqref{singharnack}. Let $\bar\lambda > p/(3-m-p)$ be the expansion of positivity exponent, define $\psi(\rho)=(1-\rho)^{\bar\lambda}\, \sup_{\bar{B}_{\rho}}v(\cdot, 0)$ for $\rho\in [0, 1]$ and choose $\rho_{0}\in [0,1]$, $x_o\in \bar{B}_{\rho_{0}}(\bar 0)$ such that
\[
\max_{[0, 1]}\psi=\psi(\rho_{0})=(1-\rho_{0})^{\bar\lambda}\, v_{0},\qquad v_{0}=v(x_o, 0)\ge 1.
\]
Let $\bar\xi\in [0, 1)$ be the unique number such that $(1-\bar\xi)^{-\bar\lambda}=2$. Setting $r=\bar\xi\, (1- \rho_{0})$ we have
\begin{align}
\label{padf}
v_{0}\, r^{\bar\lambda}=  \psi(\rho_0) \bar\xi^{\bar{\lambda}}   \geq \bar\xi^{\bar\lambda}, 
\end{align}
where we used the fact that $\psi(\rho_0)\geq \psi(0)=1$. Furthermore, we may estimate
\begin{align}\label{padf2}
\sup_{\bar B_{r}(x_{0})}v(\cdot, 0) &\leq (1-[\bar \xi(1-\rho_0)+\rho_0])^{-\bar{\lambda}}  (1-[\bar \xi(1-\rho_0)+\rho_0])^{\bar{\lambda}}  \sup_{\bar B_{\xi(1-\rho_0)+\rho_0}(\bar{0})}v(\cdot, 0)  
\\
\notag &= (1-\bar \xi)^{-\bar{\lambda}}(1-\rho_0)^{-\bar{\lambda}}\psi(\bar \xi(1-\rho_0)+\rho_0)
\\
\notag &\leq (1- \bar \xi)^{-\bar{\lambda}}(1-\rho_0)^{-\bar{\lambda}}\psi(\rho_0)
\\
\notag &= (1-\bar\xi)^{-\bar\lambda}\, v_{0}
\\
\notag &=2\, v_{0}.
\end{align}
Let $a:=v_{0}^{3-m-p}\, r^{p}$. By construction $v_{0}\le M$ and by \eqref{tass2}, $B_{r}(x_o)\times [-4\, a, 4\, a]$ is contained in the domain of $v$. Thus we can apply Lemma \ref{sl1} to conclude that
\begin{align}\label{btut}
\sup_{B_{\frac{r}{4}}(x_o)\times [-a, a]}v&\le \frac{c}{a^{\frac{n}{n(m+p-3)+p}}} \Big( \int_{B_{r}(x_o)}v(x, 0)\, dx\Big)^{\frac{p}{n(m +p-3)+p}}+ c\, a^{\frac{1}{3-m-p}}\, r^{\frac{p}{m+p-3}}
\\
\notag &\le  c\,\frac{ (2\, v_{0}\, r^{n})^{\frac{p}{n(m+p-3)+p}}}{(v_{0}^{3-m-p}\, r^{p})^{\frac{n}{n(m+p-3)+p}}}+ c\, v_{0}\le  c\, v_{0},
\end{align}
where we used \eqref{padf2} to bound the integral. The constant $c$ depends only on the data. Since $a=v_{0}^{3-m-p}\, r^{p}$, we can apply \eqref{oscp<2} with $k=v_{0}$, and taking $S$ to be the constant $c$ from the last line of the previous estimate,  in both $B_{r/4}(x_o)\times [-v_0^{3-m-p}(r/4)^p, 0]$ and $B_{r/4}(x_o)\times [v_0^{3-m-p}\rho^p -v_0^{3-m-p}(r/4)^p, v_0^{3-m-p}\rho^p]$ for any $\rho\leq r/4$ to get
\[
{\rm osc}(v, B_{\rho}(x_o)\times [- v_0^{3-m-p}\rho^p, v_0^{3-m-p}\rho^p])\le \bar c\, v_{0}\, (\rho/r)^{\bar\alpha},\qquad \rho\le r/4,
\]
where the constants $\bar c$ and $\bar \alpha$ only depend on the data. This estimate also relies on the fact that $B_{8 r}(x_o)\times[-a,a]$ is contained in $B_{8}\times [-T', T']$, and hence in the domain of $v$. As $v(x_o)=v_o$ we infer that 
\begin{align*}
v \ge v_o/2 \quad \textrm{ in } B_{\bar\eta r}(x_o)\times [-\bar\eta^{p}\, a, \bar\eta^{p}\, a],
\end{align*}
for some suitable $\bar\eta\in \ (0, 1/4)$ depending only on the data. Thus, 
\begin{align*}
|B_r(x_o)\cap \{ v(\cdot,t)\ge v_{0}/2\}|\ge \bar\eta^{n}|B_r(x_o)|,
\end{align*} 
for all $|t|\le  v_{0}^{3-m-p}\, \bar\eta^p\, r^{p}$. For any such time, the cylinder $B_{32}(x_o)\times [t, t + (v_0/2)^{3-m-p}2^p]$ is contained in the domain of $v$, so we may apply Lemma \ref{epossing} with $k=v_0/2$ and $R=\rho=2$. 
Choosing the $\gamma_{i}(\bar\eta^{n})$ so small that $\gamma_{1}(\bar\eta^{n})+\gamma_{2}(\bar\eta^{n})<\bar\eta^p/2$, its conclusion implies, thanks to $B_{2}(x_{0})\supseteq B_{1}$,
\begin{align*}
\inf_{B_{1}} v(\cdot, t+ \gamma_{r} \, v_{0}^{3-m-p}\, r^{p})\ge \bar c\, v_{0}\, r^{\bar\lambda},\qquad \gamma_{r}:=\gamma_{1}(\bar\eta^{n})+\gamma_{2}(\bar\eta^{n})\big(1-(r/2)^{\bar\lambda(3-m-p)-p}\big)<\bar\eta^p/2
\end{align*}
for all $|t|\le \bar\eta^p\, v_{0}^{3-m-p}\, r^{p}$. The latter readily gives $v(x, t)\ge \bar c\, v_{0}\, r^{\bar\lambda}$ for $x\in B_{1}$ and $|t|\le\bar\eta^p\, v_{0}^{3-m-p} \, r^{p}/2$. Finally, observe that since $r\le 1$ and $\bar\lambda\ge p/(3-m-p)$, it holds $v_{0}^{3-m-p} \, r^{p}\ge (v_{0}\, r^{\bar\lambda})^{3-m-p}$, so that \eqref{padf} yields $v(x, t)\ge \bar c\, \bar\xi^{\bar\lambda}=:1/\bar C$ for $x\in B_{1}$ and $|t|\le\bar\eta^p\, \bar\xi^{\bar\lambda(3-m-p)}/2=:\bar\theta$. Expressing this in terms of $u$, we obtain the estimate for the infimum in \eqref{singharnack}.

To prove the bound for the supremum we proceed similarly. Indeed, let $x_{*}\in \bar B_{R}(\bar 0)$ be such that $u(x_{*},0)=\sup_{\bar B_{R}(\bar 0)}u(\cdot, 0)$ and define the rescaled translated function 
\begin{align*}
w(x,t)=u(x_*,0)^{-1}u\big(x_*+Rx, R^p u(x_*,0)^{3-m-p}t\big), \quad (x,t)\in B_{65}(\bar{0})\times [-\tilde{T}, \tilde{T}],
\end{align*}
where $\tilde{T}=R^{-p}u(x_*,0)^{m+p-3}T$. Proceeding as before, we obtain that $w(x,t)\geq \frac{1}{\bar C}$ for $x\in \bar B_1(\bar{0})$ and $|t|\leq \bar \theta$. Writing this estimate in terms of $u$ we see that 
\begin{align*}
u(x,t)\geq \frac{u(x_*,0)}{\bar C}, \quad x\in \bar B_{R}(x_*), \quad |t|\leq R^p u(x_*,0)^{3-m-p}\bar \theta.
\end{align*}
Noting that $\bar 0 \in \bar B_R(x_*)$, and taking into account the definition of $x_*$ we obtain
\begin{align}\label{aather}
\bar C^{-1} \sup_{\bar B_{R}(\bar 0)}u(\cdot, 0) \leq u(\bar 0, 0).
\end{align}
Since $u$ is a solution on $\bar B_R(\bar 0)\times [-H,H]$ with $H=4R^pu(\bar 0,0)^{3-m-p}$, we can combine \eqref{aather} and Lemma \ref{sl1} (with $t_o=H/4$ and $\tau=H/2$) to conclude similarly as in \eqref{btut} that 
\begin{align*}
\sup_{B_{R/4}(\bar 0)\times [-H/4,H/4]} u \leq c\, u(\bar 0, 0),
\end{align*}
which concludes the proof.
\end{proof}

\end{document}